\def\newaliasedtheorem#1[#2]#3{
	\newaliascnt{#1@alt}{#2}
	\newtheorem{#1}[#1@alt]{#3}
	\expandafter\newcommand\csname #1@altname\endcsname{#3}
}
\numberwithin{equation}{section}
\newtheoremstyle{slanted}{\topsep}{\topsep}{\slshape}{}{\bfseries}{.}{.5em}{}
\theoremstyle{plain}
\newtheorem{theorem}{Theorem}[section]
\theoremstyle{definition}
\theoremstyle{remark}
\newcommand{\setN}{\mathbb{N}}
\newcommand{\setQ}{\mathbb{Q}}
\newcommand{\setR}{\mathbb{R}}
\newcommand{\cC}{\mathcal{C}}
\newcommand{\cL}{\mathcal{L}}
\newcommand{\eps}{\varepsilon}
\let\altphi\phi
\let\phi\varphi
\let\varphi\altphi
\let\altphi\undefined
\newcommand{\abs}[1]{\left\lvert#1\right\rvert}
\newcommand{\norm}[1]{\left\lVert#1\right\rVert}
\newcommand{\Id}{\mathrm{Id}}
\let\div\undefined
\DeclareMathOperator{\div}{div}
\DeclareMathOperator{\Hess}{Hess}
\newcommand{\di}{\mathop{}\!\mathrm{d}}
\newcommand{\loc}{{\rm loc}}
\newcommand{\res}{\mathop{\hbox{\vrule height 7pt width .5pt depth 0pt
			\vrule height .5pt width 6pt depth 0pt}}\nolimits}
\newcommand{\restr}{\raisebox{-.1618ex}{$\bigr\rvert$}}
\DeclareMathOperator{\supp}{supp}
\newcommand{\Ch}{{\sf Ch}}
\DeclareMathOperator{\Lip}{Lip}
\DeclareMathOperator{\Lipb}{Lip_b}
\DeclareMathOperator{\lip}{lip} 
\DeclareMathOperator{\Per}{Per}
\DeclareMathOperator{\Ric}{Ric}
\newcommand{\haus}{\mathscr{H}}
\newcommand{\Leb}{\mathscr{L}}
\newcommand{\Prob}{\mathscr{P}}
\newcommand{\XX}{{\boldsymbol{X}}}
\newcommand{\dist}{\mathsf{d}}
\newcommand{\meas}{\mathfrak{m}}
\newcommand{\Test}{{\rm Test}}
\DeclareMathOperator{\CD}{CD}
\DeclareMathOperator{\RCD}{RCD}
\newfont{\tmpf}{cmsy10 scaled 2500}
\DeclareMathOperator{\Tan}{Tan}
\def\Xint#1{\mathchoice
	{\XXint\displaystyle\textstyle{#1}}%
	{\XXint\textstyle\scriptstyle{#1}}%
	{\XXint\scriptstyle\scriptscriptstyle{#1}}%
	{\XXint\scriptscriptstyle\scriptscriptstyle{#1}}%
	\!\int}
\def\XXint#1#2#3{{\setbox0=\hbox{$#1{#2#3}{\int}$ }
		\vcenter{\hbox{$#2#3$ }}\kern-.6\wd0}}
\def\dashint{\Xint-}
\begin{document}
	
\title{Boundary regularity and stability for spaces with Ricci bounded below}
\author{Elia Bru\`{e}, Aaron Naber, and Daniele Semola}

\address{School of Mathematics, Institute for Advanced Study\\
	1 Einstein Dr.\\
	Princeton NJ 05840\\
	U.S.A.}
\email{elia.brue@math.ias.edu}

\address{Northwestern University\\
633 Clark Street\\
Evanston, IL 60208\\
U.S.A.}
\email{anaber@math.northwestern.edu}

\address{Mathematical Institute, University of Oxford\\
         Oxford OX2 6GG \\
	United Kingdom}
\email{Daniele.Semola@maths.ox.ac.uk}
	
\maketitle

\begin{abstract}
		This paper studies the structure and stability of boundaries in noncollapsed $\RCD(K,N)$ spaces, that is, metric-measure spaces $(X,\dist,\haus^N)$ with lower Ricci curvature bounded below.  Our main structural result is that the boundary $\partial X$ is homeomorphic to a manifold away from a set of codimension 2, and is $N-1$ rectifiable.  Along the way we show effective measure bounds on the boundary and its tubular neighborhoods.  These results are new even for Gromov-Hausdorff limits $(M_i^N,\dist_{g_i},p_i) \rightarrow  (X,\dist,p)$ of smooth manifolds with boundary, and require new techniques beyond those needed to prove the analogous statements for the regular set, in particular when it comes to the manifold structure of the boundary $\partial X$.
\smallskip
				
		The key local result is an $\eps$-regularity theorem, which tells us that if a ball $B_{2}(p)\subset X$ is sufficiently close to a half space $B_{2}(0)\subset \setR^N_+$ in the Gromov-Hausdorff sense, then $B_1(p)$ is biH\"older to an open set of $\setR^N_+$.  In particular, $\partial X$ is itself homeomorphic to $B_1(0^{N-1})$ near $B_1(p)$.  Further, the boundary $\partial X$ is $N-1$ rectifiable and the boundary measure  $\haus^{N-1}\res \partial X$ is Ahlfors regular on $B_1(p)$ with volume close to the Euclidean volume.  		
			
		Our second collection of results involve the stability of the boundary with respect to noncollapsed mGH convergence $X_i\to X$.  Specifically, we show a boundary volume convergence which tells us that the $N-1$ Hausdorff measures on the boundaries converge $\haus^{N-1}\res \partial X_i\to \haus^{N-1}\res \partial X$ to the limit Hausdorff measure on $\partial X$.  We will see that a consequence of this is that if the $X_i$ are boundary free then so is $X$.  
	
\end{abstract}

\tableofcontents

\section{Introduction}

This paper studies structural and stability properties for noncollapsed $\RCD(K,N)$ spaces with boundary. In particular, we give affirmative answers to some of the recent conjectures presented in \cite{DePhilippisGigli18,KapovitchMondino19}.

Most of the statements are new and of interest even for noncollapsed limits of smooth Riemannian manifolds with convex boundary and interior lower Ricci curvature bounds.
\smallskip

Our main results can be grouped into 
	\begin{itemize}
		\item structure results for boundaries and spaces with boundary;
	    \item stability/gap theorems about the absence/presence of boundary.
	  \end{itemize}  
	  
In particular, we obtain the rectifiable structure of the boundary together with measure estimates. Moreover we prove that noncollapsed $\RCD$ spaces are homeomorphic to topological manifolds (possibly with boundary) up to sets of codimension two.

On the side of stability/gap results we are going to prove that the absence of boundary is preserved under noncollapsed (pointed) Gromov-Hausdorff convergence and that the boundary volume measures converge in full generality. We also show that the presence of boundary is stable, under an additional assumption which is satisfied for sequences of smooth manifolds with boundary. 

\smallskip

Below, after briefly introducing the relevant terminology and background, we outline the main achievements of the paper.
\smallskip

The Riemannian Curvature Dimension condition $\RCD(K,\infty)$ was introduced in \cite{AmbrosioGigliSavare14} (see also \cite{AmbrosioGigliMondinoRajala15}) coupling the Curvature Dimension condition $\CD(K,\infty)$, previously proposed in \cite{Sturm06a,Sturm06b} and independently in \cite{LottVillani}, with the infinitesimally Hilbertian assumption, corresponding to the Sobolev space $H^{1,2}$ being Hilbert.\\
The natural finite dimensional refinements subsequently led to the notions of $\RCD(K,N)$ and $\RCD^*(K, N)$ spaces, corresponding to $\CD(K, N)$ (resp. $\CD^*(K, N)$, see \cite{BacherSturm10}) coupled with linear heat flow. The class $\RCD(K,N)$ was proposed in \cite{Gigli15}, motivated by the validity of the sharp Laplacian comparison and of the Cheeger-Gromoll splitting theorem, proved in \cite{Gigli13}. The (a priori more general) $\RCD^*(K,N)$ condition was thoroughly analysed in \cite{ErbarKuwadaSturm15} and (subsequently and independently) in \cite{AmbrosioMondinoSavare15} (see also \cite{CavallettiMilman16} for the equivalence betweeen $\RCD^*$ and $\RCD$ in the case of finite reference measure).\\
Several geometric and analytic properties have been proved for $\RCD(K,N)$ spaces in the last years, often inspired by the theory of (weighted) Riemannian manifolds with lower Ricci bounds and of Ricci limits. Without the aim of being complete, let us mention the heat kernel estimates \cite{JangLiZhang}, the rectifiability \cite{MondinoNaber19}, the constancy of the dimension in the almost everywhere sense \cite{BrueSemola18} (cf. with \cite{ColdingNaber13} dealing with Ricci limit spaces) and the existence of a second order differential calculus \cite{Gigli18}.

In the theory of Ricci limit spaces, further regularity properties are satisfied under the noncollapsing assumption. If the approximating sequence of smooth Riemannian manifolds, besides the lower Ricci bound
\begin{equation}
\Ric_{M_i}\ge -(N-1)\, ,
\end{equation}
verifies also the lower volume bound
\begin{equation}
\haus^{N}(B_1(p_i))\ge v>0\, ,
\end{equation}
then by volume convergence \cite{Colding97,CheegerColding97} the volume measures converge to the $\haus^N$-measure on the limit metric space.  Noncollapsed Ricci limit spaces are much more regular than general Ricci limits, see \cite{CheegerColding97,CheegerNaber13,CheegerNaber15,CheegerJiangNaber18}.

Motivated by this refinement in the theory of Ricci limits, a notion of \emph{noncollapsed} $\RCD(K,N)$ metric measure space $(X,\dist,\meas)$ has been proposed in \cite{DePhilippisGigli18} by asking that $\meas=\haus^N$ (a weaker definition had been previously suggested in \cite{Kitabeppu19}). In the same work some properties valid for noncollapsed Ricci limits have been generalized to the synthetic framework, such as the volume convergence and the stratification of the singular set. More recent contributions dealt with topological regularity \cite{KapovitchMondino19}, volume bounds for the singular strata \cite{AntonelliBrueSemola19} and differential characterizations \cite{Honda19}.

\subsection{Singular strata and boundaries}  

On a noncollapsed $\RCD(K,N)$ metric measure space $(X,\dist,\haus^N)$ any tangent cone is a metric cone (see \cite{CheegerColding96,CheegerColding97,DePhilippisGigli16,DePhilippisGigli18}). Moreover, there is a natural \textit{stratification of the singular set}
\begin{equation}
\mathcal{S}^0\subset \mathcal{S}^1\subset\dots\subset\mathcal{S}^{N-1}=\mathcal{S}:=X\setminus\mathcal{R}\, ,
\end{equation}	
where
\begin{equation}
\mathcal{R}:=\left\lbrace x\in X: \Tan_x(X,\dist)=\{(\setR^N,\dist_{\mathrm{eucl}})\}\right\rbrace 
\end{equation}	 
is the set of regular points of $(X,\dist,\haus^N)$ and, for any $0\le k\le N-1$,
\begin{equation}
\mathcal{S}^k:=\left\lbrace x\in X: \text{no tangent cone at $x$ splits off $\setR^{k+1}$}\right\rbrace\, . 
\end{equation} 
This stratification was first introduced in \cite{CheegerColding97} for noncollapsed Ricci limits. Therein it was proven that
\begin{equation}\label{w1}
\mathcal{S}^{N-1}\setminus \mathcal{S}^{N-2} = \emptyset
\end{equation}
and that the following Hausdorff dimension estimate holds: 
\begin{equation}\label{eq:hausest}
\dim_H\mathcal{S}^{k}\le k\, ,
\quad \text{ $1\le k\le N-2$}\, .
\end{equation}	
A more quantitative analysis of singular strata was initiated in \cite{CheegerNaber13}, based on quantitative differentiation arguments and yielding to Minkowski-type estimates for the \textit{quantitative singular strata}
\[
\mathcal{S}^k_{\eps,r}:= \left\lbrace x\in X:\, \text{for no $r\le s <1$ $B_s(x)$ is a $(k+1,\eps)$-symmetric ball}\right\rbrace
\]
and 
\begin{equation}\label{eq:quantstrat}
\mathcal{S}^k_{\eps}:=\bigcap_{r>0}\mathcal{S}^k_{\eps,r}\, .
\end{equation}
We recall that $B_s(x)$ is said to be a $(k,\eps)$-symmetric ball provided 
\[
\dist_{GH}(B_s(x),B_s(z)) \le s \eps\, ,
\]
where $z\in C(Z)\times \setR^k$ is a tip of the metric cone $C(Z)\times \setR^k$. We refer to \autoref{sec:preliminaries} for the precise introduction of metric cones and of the Gromov-Hausdorff distance $\dist_{GH}$.

Later on, in \cite{CheegerJiangNaber18} the estimates for the quantitative singular strata have been sharpened, and the $k$-rectifiable structure of $\mathcal{S}^k$ has been shown for any $0\le k\le N-2$.

\medskip

In the framework of $\RCD$ spaces the top dimensional singular stratum $\mathcal{S}^{N-1}\setminus\mathcal{S}^{N-2}$ is not empty in general, since Riemannian manifolds with convex boundary and lower Ricci curvature bounds in the interior belong to this class. Still, the Hausdorff dimension estimate \eqref{eq:hausest} holds for any $0\le k\le N-1$ (see \cite{DePhilippisGigli18}). 
The same phenomenon happens in the theory of Alexandrov spaces, where the top dimensional singular stratum is strictly linked to the boundary of the space \cite{Perelman91,BuragoBuragoIvanov01}. Elementary examples suggest that this is the case also for noncollapsed $\RCD$ spaces.

\smallskip

In \cite{DePhilippisGigli18} and \cite{KapovitchMondino19} two different notions of boundary for an $\RCD(K,N)$ space $(X,\dist,\haus^N)$ have been proposed. For the sake of this introduction we are going to deal with the one introduced in \cite{DePhilippisGigli18}, where the authors define 
\begin{equation}\label{eq:bddpg}
\partial X:=\overline{\mathcal{S}^{N-1}\setminus\mathcal{S}^{N-2}}\, .
\end{equation}
Above we denoted by $\overline{\mathcal{S}^{N-1}\setminus\mathcal{S}^{N-2}}$ the topological closure of $\mathcal{S}^{N-1}\setminus\mathcal{S}^{N-2}$.\\ 
Let us point out that, since the density of $\haus^N$ at any point in $\mathcal{S}^{N-1}\setminus\mathcal{S}^{N-2}$ equals $1/2$, by lower semicontinuity of the density it holds
\begin{equation}
\partial X\setminus\left(\mathcal{S}^{N-1}\setminus\mathcal{S}^{N-2}\right)\subset\mathcal{S}^{N-2}\, ,
\end{equation}
in particular 
\begin{equation}
\dim_H\left(\partial X\setminus\left(\mathcal{S}^{N-1}\setminus\mathcal{S}^{N-2}\right)\right)\le N-2\, .
\end{equation}

A comparison with the notion of boundary introduced in \cite{KapovitchMondino19} will be investigated subsequently in the paper (cf. \autoref{thm:lipbdry} (i)).

 Given the above definition of boundary it sounds natural to introduce the following.

\begin{definition}\label{def:withoutboundary}
	We say that an $\RCD(K,N)$ space $(X,\dist,\haus^N)$ has boundary in $B_1(p)$ if
	\[
	(\mathcal{S}^{N-1}\setminus \mathcal{S}^{N-2})\cap B_1(p) \neq  \emptyset\, ,
	\]	
	otherwise we say that $(X,\dist,\haus^N)$ has no boundary in $B_1(p)$.
\end{definition}

\vspace{.2cm}

\subsection{An $\eps$-regularity theorem for top dimensional singularities}

For all the subsequent developments of the paper, the building block is an $\eps$-regularity theorem, dealing with the structure of balls sufficiently close in the GH sense to a ball centered on the boundary of a half-space.

 Let us preliminarily recall that a set $E\subset X$ is said to be $(N-1)$-rectifiable provided
\[
E\subset M\cup \bigcup_{i\in \setN} E_i\, , 
\]
where $\haus^{N-1}(M)=0$ and the $E_i$ are biLipschitz to a Borel subset of $\setR^{N-1}$ for any $i\in \setN$.\\

\begin{theorem}[$\eps$-regularity]\label{thm:epsregintro}
	
Let $1\le N<\infty$ be a fixed natural number and let $\eps>0$. If $\delta\le \delta(N,\eps)$, then for any $\RCD(-\delta(N-1),N)$ m.m.s. $(X,\dist,\haus^N)$ with $p\in X$ such that
		\begin{equation}\label{eq:GHclose}
		\dist_{GH}(B_{16}(p),B_{16}^{\setR_+^N}(0)) < \delta\, ,
		\end{equation}
		it holds that $\partial X\cap B_1(p)\neq\emptyset$. Moreover
		\begin{itemize}
			\item[(i)] (Ahlfors regularity) for any $x\in\partial X\cap B_1(p)$ and for any $0<r<1$
			\begin{equation}
			(1-\eps)\omega_{N-1}r^{N-1}\le \haus^{N-1}(\partial X\cap B_r(x))\le (1+\eps)\omega_{N-1}r^{N-1}\, ;
			\end{equation}
			\item[(ii)] (Rectifiable structure) $\partial X\cap B_1(p)$ is $(N-1)$-rectifiable;
			\item[(iii)] (Topological structure) 
			there exists a map $F:B_1(p)\to \setR_+^N$ satisfying
                \begin{itemize}		
				\item[(a)] $(1-\eps) \dist(x,y)^{1+\eps} \le |F(x) - F(y)| \le C(N) \dist(x,y)$ for any $x,y\in B_1(p)$;
				\item[(b)] $F(p)=0$ and $\partial \setR_+^N \cap B_{1-2\eps}(0) \subset F(\partial X \cap B_1(p)) = \partial \setR^N_+\cap F(B_1(p))$;
				\item[(c)] $F$ is open and a homeomorphism with its image;
				\item[(d)] $ B_{1-2\eps}^{\setR_+^N}(0) \subset F(B_1(p))$.
				\end{itemize}
				
		\end{itemize}		
\end{theorem}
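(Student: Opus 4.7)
The plan is to combine a quantitative propagation of the half-space approximation across scales, a Minkowski-type estimate on tubular neighborhoods of $\partial X$, and the construction of almost-splitting harmonic coordinates augmented by a ``boundary-distance'' coordinate. I would begin with a compactness argument showing that closeness to $\setR_+^N$ self-improves at smaller scales: for every $x\in\partial X\cap B_1(p)$ and every $0<s\le 1$, the ball $B_s(x)$ is $\eta(\delta)$-GH close to $B_s^{\setR_+^N}(0)$, with $\eta(\delta)\to 0$ as $\delta\to 0$. The key input is the cone structure of tangent cones together with a cone-splitting principle in the noncollapsed $\RCD$ setting: a tangent cone close to $\setR_+^N$ must be exactly $\setR_+^N$, since $(N-1)$ splitting directions are forced, and the remaining one-dimensional factor must be $[0,\infty)$ rather than $\setR$ by a volume density argument. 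This propagation, combined with the noncollapsed volume convergence of \cite{DePhilippisGigli18}, already yields $\haus^N(B_r(x))\approx \tfrac12 \omega_N r^N$ for $x\in\partial X\cap B_1(p)$ and $0<r<1$, and in particular $\partial X\cap B_1(p)\neq\emptyset$.

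The next, genuinely new, step is a Minkowski estimate on the tubular neighborhood: $\haus^N\bigl(T_r(\partial X)\cap B_1(p)\bigr)$ is close to $r\cdot\omega_{N-1}\cdot\haus^{N-1}\bigl(\partial X\cap B_1(p)\bigr)$ in an effective sense. This is obtained by comparing $\haus^N(B_1(p))\approx\tfrac12\omega_N$ with $\haus^N(B_1(p)\setminus T_r(\partial X))$, the latter estimated via the transverse $\eps$-splitting and Bishop-Gromov. Combining with the coarea formula applied to a Lipschitz approximation of $\dist(\plchldr,\partial X)$ and a Vitali covering of $\partial X\cap B_1(p)$ by balls centered at boundary points yields both the upper and lower Ahlfors bounds in item (i). Rectifiability (ii) then follows from (i) together with the Cheeger-Jiang-Naber machinery \cite{CheegerJiangNaber18}: at $\haus^{N-1}$-almost every boundary point one produces an $(N-1)$-splitting map whose regular locus has full measure, producing biLipschitz charts that cover $\partial X\cap B_1(p)$ up to an $\haus^{N-1}$-null set.

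For the topological structure (iii), I would construct $F=(F_1,\dots,F_N)$ component-wise. The first $N-1$ entries are $(\eps,N-1)$-splitting harmonic functions from the Mondino-Naber splitting theorem \cite{MondinoNaber19}, adapted to the $(N-1)$ flat directions of $\partial\setR_+^N$ and enjoying the integral Hessian bound
\begin{equation*}
\dashint_{B_2(p)}|\Hess F_i|^2\,\di\haus^N\le \eps
\end{equation*}
and $\eps$-closeness to the Euclidean coordinates in the GH sense. The component $F_N$ is built as a harmonic approximation of the boundary distance, for instance via a heat-semigroup mollification of $\dist(\plchldr,\partial X)$ on $B_4(p)$ coupled with a half-space barrier. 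The upper Lipschitz bound in (iii)(a) follows from standard gradient estimates; the bi-H\"older lower bound $(1-\eps)\dist(x,y)^{1+\eps}\le |F(x)-F(y)|$ is obtained by integrating $|\nabla F_i\cdot\nabla F_j-\delta_{ij}|$ along almost every geodesic via the segment inequality. The vanishing $F_N|_{\partial X}=0$ together with positivity $F_N>0$ in the interior, established through a Laplace comparison/barrier argument, forces $F(\partial X)\subset\partial\setR_+^N$.

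The hard part will be item (iii)(c)--(d), namely openness of $F$ and the identification $F(\partial X\cap B_1(p))=\partial\setR_+^N\cap F(B_1(p))$. My plan is a doubling argument: glue two copies of $B_1(p)$ along $\partial X$ to form a ``doubled'' space which is noncollapsed, close to $B_1^{\setR^N}(0)$, and free of boundary in $B_1$. One then invokes the interior biH\"older $\eps$-regularity theorem of \cite{CheegerColding97,KapovitchMondino19} for the doubled map $\tilde F=(F_1,\dots,F_{N-1},\pm F_N)$ to conclude that $\tilde F$ is an open topological embedding, and descends openness and surjectivity onto $B_{1-2\eps}^{\setR_+^N}(0)$ back to $F$. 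The delicate point, and the genuinely new structural ingredient, is that the doubling construction requires one to already know enough regularity of $\partial X$ to produce an $\RCD(K,N)$ double; this forces running (ii) and (iii) simultaneously by induction on scales, where the Ahlfors estimate of (i) provides the quantitative control needed to close the induction.
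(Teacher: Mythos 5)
Your proposal has a circular dependency at its very starting point, and a more serious gap in part (iii). Let me identify each.

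\textbf{Existence of boundary points.} You claim that the self-improvement of half-space closeness at small scales, combined with the resulting volume pinching $\haus^N(B_r(x))\approx\tfrac12\omega_N r^N$ for $x\in\partial X\cap B_1(p)$, ``in particular'' yields $\partial X\cap B_1(p)\neq\emptyset$. But every statement in that chain is conditioned on already having a boundary point $x$; nothing in the argument rules out the a priori possibility that $B_1(p)$ is entirely contained in $\mathcal{R}\cup\mathcal{S}^{N-2}$. Detecting that $\mathcal{S}^{N-1}\setminus\mathcal{S}^{N-2}$ is nonempty is precisely the hard ``stability'' content of the theorem, and it is what the paper's neck-region machinery (the existence of maximal neck regions, \autoref{thm:existenceofneck}, with the small content bound \eqref{eq:maxneck}, combined with the lower Ahlfors bound of \autoref{thm:neckstructure}) is engineered to deliver. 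Indeed, the self-improvement across scales you invoke --- that closeness to a half-space propagates downward --- is proved in the paper as a \emph{consequence} of the stability theorem (\autoref{cor:volrigidityb} uses \autoref{thm:stabilityS} inside a contradiction argument), so running it the way you propose would be circular.

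\textbf{Bi-H\"older lower bound.} Integrating $|\nabla F_i\cdot\nabla F_j-\delta_{ij}|$ along geodesics via the segment inequality yields a bi-Lipschitz estimate on a set of nearly full measure, not a two-sided bi-H\"older estimate for \emph{all} pairs $x,y\in B_1(p)$ as required in (iii)(a). To go from ``bi-Lipschitz on a good set'' to ``bi-H\"older everywhere'' one has to renormalize the splitting map scale by scale, which is what the transformation theorem (\autoref{prop:transformation}) does. This is not a cosmetic difference: without a transformation-type argument the lower bound in (iii)(a) can degenerate near boundary singularities where the splitting coefficients drift.

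\textbf{The doubling argument.} Gluing two copies of $X$ along $\partial X$ and appealing to the interior Reifenberg/biH\"older theorem for the doubled space requires knowing that the double is an $\RCD(K,N)$ space. For smooth manifolds with convex boundary this is true (via Kosovski--Schlichting, applied to the smooth approximants before the limit, which is how the paper handles \autoref{thm:manifoldlimits}), but for a general noncollapsed $\RCD$ space it is not known and appears to require more boundary regularity than the $C^{\alpha}$ charts provide. Your proposed fix --- running (ii)--(iii) jointly by ``induction on scales'' --- does not close this gap, because the regularity of $\partial X$ that would make the double an $\RCD$ space (e.g.\ curvature bounds in an Alexandrov or $C^{1,1}$ sense across the glued interface) is never produced at any finite stage of the induction. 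The paper avoids this entirely: it obtains openness and surjectivity of $F$ from invariance of domain, having first established via Reifenberg's theorem and the improved neck structure (\autoref{thm:improvedneckregion} (iii)) that $B_1(p)\setminus\partial X$ is a topological manifold, and using the bi-H\"older estimate to show $F$ is injective and sends $\partial X$ onto $\{x_N=0\}$.

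Your Minkowski-plus-coarea route to the Ahlfors bounds (i) is plausible as an alternative to the paper's two-stage argument (weak bounds from neck structure, sharp bounds from the bi-H\"older parametrization), but it too presupposes items you have not established: that $\partial X\cap B_1(p)$ is nonempty, $(N-1)$-rectifiable, and has a definite amount of measure --- which is again the neck region content. In short, the proposal rearranges the theorem's conclusions into its hypotheses without supplying the independent input (in the paper, the neck region existence/structure and the transformation theorem) that breaks the circle.
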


\begin{remark}		
In view of the volume $\eps$-regularity for the boundary \autoref{cor:volrigidityb} the conclusions of \autoref{thm:epsregintro} hold by assuming $p\in \partial X$ and the volume pinching condition
\begin{equation}
	\haus^N(B_{32}(p)) \ge \frac{1}{2} \omega_N (32)^N - \delta 
\end{equation}
in place of \eqref{eq:GHclose}.
\end{remark}

The proof of \autoref{thm:epsregintro} requires most of the tools developed in the paper and will be split into several intermediate results.
	\smallskip

One of the building blocks to prove the boundary measure estimates in \autoref{thm:epsregintro} is a weaker $\eps$-regularity theorem, \autoref{thm:stabilityS}. There we prove that there exist constants $c(N)>1$ and $\eta(N)>0$ such that if 

	\[
\dist_{GH}(B_{1}(p),B_{1}^{\setR_+^N}(0)) < \eta(N)\, ,
\]
then 
\[c(N)^{-1}\le \haus^{N-1}(\partial X\cap B_1(p))\le c(N)\, .
\]

 	Stability is a key feature of the top dimensional singular stratum. It is well known that codimension two singularities might appear even for limits of smooth manifolds.  The easiest example of this being that of a two dimensional singular cone, which can be obtained as a limit of smooth manifolds with uniform lower Ricci bounds by rounding off the tip.

  Among the other things, \autoref{thm:epsregintro} (and even its weaker version \autoref{thm:stabilityS}) implies that the top dimensional singular stratum $\mathcal{S}^{N-1}\setminus\mathcal{S}^{N-2}$ is empty for noncollapsed Ricci limits of manifolds without boundary, as known from the seminal paper \cite{CheegerColding97}. It is worth stressing that our strategy is completely different from the original one, which is based on a topological argument and seems not suitable to handle the general case of $\RCD$ spaces. 
A previous attempt in this direction has been made in \cite{KapovitchMondino19}, where the authors extended Cheeger-Colding's result to the setting of noncollapsed $\RCD$ spaces verifying an additional topological regularity assumption. In contrast, our proof is quantitative in nature and does not require any topological argument. Moreover, the statement we achieve is stronger, and new even in the smooth framework. Indeed we prove that closeness to the model boundary ball implies the presence of a definite amount of boundary points.

The Ahlfors regularity for the boundary measure in sharp form, \autoref{thm:epsregintro} (i), will be established through several steps. The key step is the improved structure theorem for boundary balls \autoref{thm:improvedneckregion}, which when combined with \autoref{thm:stabilityS} yields to Ahlfors regularity in weaker form, with a constant $c(N)>1$ and $1/c(N)$ in place of $1+\eps$ and $1-\eps$, respectively. The sharp version of the bound will be obtained later in \autoref{cor:bdryvolconvergencereg} by combining the stability of \autoref{thm:improvedneckregion} and the rectifiable and biH\"older structure of \autoref{thm:goodpar} (ii), (iii) and (iv).

	The topological regularity part of \autoref{thm:epsregintro} is new and of interest even in the case of limits of smooth Riemannian manifolds.  At its heart, the proof is based on two key points, (cf. with the proof of \autoref{thm:improvedneckregion}).  The first is the stability of \autoref{lemma:distancetodistance} which tells us that if a ball $B_r(x)$ is Gromov-Hausdorff close to a half space, then the boundary singularities $\partial X\cap B_r(x)$ must be $\eps$-close to a ball in $\setR^{n-1}\cap B_r(0^{n-1})$.  The second is a boundary volume $\eps$-regularity \autoref{cor:volrigidityb}, based in turn on \autoref{lemma:conerigidity}, which roughly tells us that if there are two balls $B_r(x)\subseteq B_R(x)$, both close to half spaces and centered at a boundary point, then the smaller ball $B_r(x)$ must be at least as close to a half-space as the larger ball $B_R(x)$.  The effect of these two results is that once boundary singularities start to appear, they cannot stop appearing and we can eventually put them together into a topological structure.

\subsection{Structure of boundaries and of spaces with boundary}

The $\eps$-regularity \autoref{thm:epsregintro}, when combined with a covering argument, yields a structural result for noncollapsed $\RCD$ spaces with boundary.

Here and throughout the paper we shall adopt the notation
\[
B_r(A):=\bigcup_{a\in A} B_r(a)
\]
to indicate the tubular neighbourhood of a set on a metric space.

\begin{theorem}[Boundary Structure]\label{thm:boundatystructure}
	Let $(X,\dist,\haus^N)$ be an $\RCD(-(N-1),N)$ space with $p\in X$ such that $\haus^{N}(B_1(p))>v>0$. Then, if $(\mathcal{S}^{N-1}\setminus \mathcal{S}^{N-2})\cap B_2(p)\neq \emptyset$ the following hold
	\begin{itemize}
		\item[(i)] (Rectifiability and volume estimates) $\partial X$ is $(N-1)$-rectifiable and
		\[
		\haus^{N-1}(B_r(x)\cap \partial X)\le C(N,v) r^{N-1}\quad \text{for any $x\in \partial X\cap B_1(p)$ and $r\in (0,1)$}\, ;
		\]
		\item[(ii)] (Volume estimate for the tubular neighbourhood) 
		\begin{equation}\label{eq:tubularestboundary}
		\haus^N(B_r(\partial X)\cap B_1(p)) \le C(N,v) r
		\quad \text{for any $r\in (0,1)$, $p\in X$},
		\end{equation}
		\item[(iii)] (Uniqueness of tangents) for any $x\in\mathcal{S}^{N-1}\setminus\mathcal{S}^{N-2}$ the tangent cone at $x$ is unique and isomorphic to $\setR^{N}_+$;
		\item[(iv)] (Topological regularity) for any $0<\alpha<1$ there exists a closed set $C_\alpha \subset\mathcal{S}^{N-2}(X)$ such that 
		\begin{itemize}
			\item[(a)] $\dim_H(X\setminus C_{\alpha})\le N-2$;
			 \item[(b)] $X\setminus C_\alpha$ is a topological manifold with boundary and $C^{\alpha}$-charts; 
			\item[(c)] the topological boundary of $X\setminus C_\alpha$ coincides with $\partial X\cap (X\setminus C_\alpha)$.
	\end{itemize}
\end{itemize}
\end{theorem}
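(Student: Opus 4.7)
The overall plan is to deduce the theorem from the $\eps$-regularity \autoref{thm:epsregintro} by exploiting that every point of $\mathcal{S}^{N-1}\setminus \mathcal{S}^{N-2}$ admits at least one tangent cone isomorphic to $\setR^N_+$. Indeed, by definition such an $x$ possesses a tangent cone splitting an $\setR^{N-1}$ factor, and the fact that $\haus^N$ has density $1/2$ at $x$, together with lower semicontinuity of the density, forces the remaining one-dimensional factor to be $[0,\infty)$. Consequently, arbitrarily small radii $r_x$ exist for which $B_{16r_x}(x)$ is $\delta(N,\eps)r_x$-close in the Gromov--Hausdorff sense to $B^{\setR^N_+}_{16r_x}(0)$, so the conclusions of \autoref{thm:epsregintro} apply on $B_{r_x}(x)$. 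Since $\partial X\setminus(\mathcal{S}^{N-1}\setminus \mathcal{S}^{N-2})\subset \mathcal{S}^{N-2}$ has Hausdorff dimension at most $N-2$ by \eqref{eq:hausest}, and in particular is $\haus^{N-1}$-negligible, any property established on $\mathcal{S}^{N-1}\setminus \mathcal{S}^{N-2}$ extends to all of $\partial X$.

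For (i), rectifiability follows from a countable cover of $\mathcal{S}^{N-1}\setminus \mathcal{S}^{N-2}$ by balls $B_{r_{x_i}}(x_i)$ on which \autoref{thm:epsregintro}(ii) produces rectifiable charts, together with the $\haus^{N-1}$-null exceptional set contained in $\mathcal{S}^{N-2}$. The effective upper Ahlfors bound $\haus^{N-1}(\partial X\cap B_r(x))\le C(N,v)r^{N-1}$ needs more care: I would Vitali-cover $\partial X\cap B_r(x)$ by balls $B_{s_i}(y_i)$ centered on $\partial X$ at the maximal scale $s_i$ for which the hypothesis of \autoref{thm:epsregintro} is met, apply its upper Ahlfors estimate on each, and control the packing through Bishop--Gromov using the noncollapsing $\haus^N(B_1(p))>v$. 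Item (ii) is then routine: cover $B_r(\partial X)\cap B_1(p)$ by balls $B_{2r}(x_i)$ with $x_i\in \partial X$ and bounded overlap, and bound
\[
\haus^N(B_{2r}(x_i)) \le C(N)\, r\cdot \haus^{N-1}(\partial X\cap B_r(x_i))
\]
through the sharp lower Ahlfors bound of \autoref{thm:epsregintro}(i); summing then yields the linear-in-$r$ estimate.

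For uniqueness of tangents (iii), the needed monotonicity is supplied by the remark immediately after \autoref{thm:epsregintro} combined with the boundary volume $\eps$-regularity \autoref{cor:volrigidityb}: once $B_R(x)$ is close to the half-space model, the same closeness is inherited, with at least as small error, by every subball $B_r(x)$ with $r<R$. Together with the existence of one $\setR^N_+$ tangent at points of $\mathcal{S}^{N-1}\setminus \mathcal{S}^{N-2}$, this rules out oscillation of the rescalings and forces the tangent to be unique and equal to $\setR^N_+$. For topological regularity (iv), I would take $C_\alpha$ to be the closed set of $x\in X$ for which no scale $r\le r_0(N,\alpha)$ makes $B_{16r}(x)$ $\delta(N,\eps_\alpha)r$-close in the GH sense to either $B^{\setR^N}_{16r}(0)$ or $B^{\setR^N_+}_{16r}(0)$, where $\eps_\alpha$ is chosen small enough that the biHölder exponent $1+\eps_\alpha$ appearing in \autoref{thm:epsregintro}(iii)(a) falls below $1/\alpha$. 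Then $X\setminus C_\alpha$ is covered by $C^\alpha$-charts coming from one of the two models, whose boundary portions reassemble into $\partial X\cap(X\setminus C_\alpha)$, giving (b) and (c). The inclusion $C_\alpha\subset \mathcal{S}^{N-2}$ follows from (iii) for the top stratum and from the standard interior $\eps$-regularity for the regular set, yielding the Hausdorff-dimension control via \eqref{eq:hausest}. The main technical obstacle throughout is the passage from existence of a single good scale at an individual point to a \emph{uniform} choice of good scales over a neighbourhood: this is precisely where the quantitative stability packaged into \autoref{thm:epsregintro} and \autoref{cor:volrigidityb} is essential.
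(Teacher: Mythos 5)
Your strategy of deducing the Boundary Structure theorem from the $\eps$-regularity \autoref{thm:epsregintro} (and \autoref{cor:volrigidityb}) is circular in the paper's logical structure. The sharp Ahlfors estimate in \autoref{thm:epsregintro}(i) is established in \autoref{cor:bdryvolconvergencereg}, whose proof relies on \autoref{thm:goodpar}(ii), which in turn invokes \autoref{thm:boundatystructure}(i) (and hence \autoref{thm:lipbdry}(ii)). Similarly, \autoref{cor:volrigidityb} is proved via \autoref{cor:stabwithlowerbound} and \autoref{cor:hauseprehau}, both of which are explicitly derived from the bound $\haus^{N-1}(\partial X\cap B_r(x))\le C(N,v)r^{N-1}$ of \autoref{thm:boundatystructure}(i). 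In the paper, \autoref{thm:boundatystructure}(i)--(iii) is really \autoref{thm:struc} together with \autoref{thm:lipbdry}, and its proof goes through the Neck Structure \autoref{thm:neckstructure} and the Neck Decomposition \autoref{thm:neckdecompositiontheorem} --- these are the actual primitives, with the $\eps$-regularity theorem a culmination, not a starting point.

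Even granting \autoref{thm:epsregintro} as a black box, the covering argument you propose for the effective $\haus^{N-1}$-bound in (i) does not close. A Vitali family with disjoint balls $B_{s_i/5}(y_i)\subset B_{2r}(x)$, combined with Bishop--Gromov and noncollapsing, gives $\sum_i s_i^N\le C(N,v)r^N$; this does \emph{not} imply $\sum_i s_i^{N-1}\le C(N,v)r^{N-1}$ unless the $s_i$ are uniformly comparable to $r$. Since $y_i$ can approach $\mathcal{S}^{N-2}$, the ``maximal scale at which the half-space hypothesis is met'' has no uniform lower bound, so the ratios $r/s_i$ are uncontrolled and the $(N-1)$-content of the cover can diverge. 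Exactly this $(N-1)$-content estimate is the hard part of the theorem: the paper obtains it from the Neck Decomposition Theorem's bound $\sum_a r_a^{N-1}+\sum_b r_b^{N-1}+\haus^{N-1}(\mathcal{S}^{\delta,\eta})\le C(N,v,\delta,\eta)$, which is proved by the weighted maximal function argument on $\int|\Hess u|^2$ for $\delta$-splitting maps (cf. \autoref{thm:existenceofweakneck}), not by volume comparison. The same missing uniformity afflicts your argument for (ii). Your sketch of (iii) via volume monotonicity and of (iv) via the biH\"older charts is closer in spirit to what the paper does, but both depend on the circularity noted above.
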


The rectifiability of the top dimensional singular stratum was conjectured both in \cite[Conjecture 4.10]{KapovitchMondino19} and in \cite{DePhilippisGigli18}, together with the local finiteness of the $\haus^{N-1}$-measure. 
Moreover, with \eqref{eq:tubularestboundary} we sharpen the volume bound for the tubular neighbourhood of the top dimensional singular set obtained in \cite[Corollary 2.7]{AntonelliBrueSemola19} by adapting the techniques developed in \cite{CheegerNaber13} to the synthetic framework. The topological regularity part of \autoref{thm:boundatystructure} improves upon \cite[Theorem 4.11]{KapovitchMondino19}, including the boundary in the statements.

The regularity results above are mostly peculiar of codimension one singularities:
\begin{itemize}
\item Volume estimates for the tubular neighbourhood and the measure estimate for the full singular stratum, and not only for the quantitative one, fail in codimension higher than one. Indeed there are examples of two dimensional Alexandrov spaces where the singular set $\mathcal{S}^0$ has not locally finite $\haus^0$-measure, see for instance \cite[Section 3.4]{CheegerJiangNaber18}.
\item  In \cite[Theorem 1.2]{ColdingNaber13b} a noncollapsed Ricci limit space $(X,\dist,\haus^N)$ with a point $x\in\mathcal{S}^{N-2}\setminus\mathcal{S}^{N-3}$ with non unique tangent cone is constructed (actually tangents with maximal splitting $\setR^k$ for any $0\le k\le N-2$ appear at that point). 
\item  As pointed out in \cite[Remark 1.11, Example 3.2]{CheegerJiangNaber18} based on \cite{LiNaber19}, there is an example of $N$-dimensional Alexandrov space such that the singular set $\mathcal{S}^{N-2}$ is a Cantor set, and in particular no point has a neighbourhood in which $\mathcal{S}^{N-2}$ is topologically a manifold.
\end{itemize}

In the case of Ricci limits, \autoref{thm:boundatystructure} (iv) can be sharpened to a finite $\haus^{N-2}$-measure estimate for the topologically singular set, relying on \cite{CheegerJiangNaber18}:
	
	\begin{theorem}\label{intro:top2}
		Let $(X,\dist,\haus^N)$ be an $\RCD$ m.m.s. arising as noncollapsed limit of a sequence of smooth Riemannian manifolds with convex boundaries and Ricci curvature bounded from below in the interior by $-(N-1)$. Then, for any $0<\alpha<1$, there exist a constant $C=C(N,\alpha,\haus^N(B_1(p)))$ and a closed set of codimension two $C_\alpha \subset\mathcal{S}^{N-2}(X)$ such that
		\begin{equation}
		\haus^{N-2}(C_\alpha\cap B_1(p)) \le C(N,\alpha,\haus^N(B_1(p))),\quad \text{for any $p\in X$}
		\end{equation}
		and $X\setminus C_\alpha$ is a topological manifold with boundary and $C^{\alpha}$-charts. Moreover the topological boundary of $X\setminus C_\alpha$ coincides with $\partial X\cap (X\setminus C_\alpha)$.
\end{theorem}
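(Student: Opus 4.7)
The plan is to take $C_\alpha$ to be a closed piece of the quantitative singular stratum $\mathcal{S}^{N-2}_{\eps}$, so that the topological regularity is supplied by \autoref{thm:epsregintro} together with the interior $\eps$-regularity, while the new content, the $\haus^{N-2}$ finiteness, comes from the Cheeger-Jiang-Naber theorem \cite{CheegerJiangNaber18} applied after passing to a doubled no-boundary space.

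Fix $\eps=\eps(N,\alpha)>0$ small enough that every $(N-1,\eps)$-symmetric ball admits a $(1+\eps)$-biH\"older chart provided either by \autoref{thm:epsregintro} (iii) (in the half-space case) or by the classical interior $\eps$-regularity of \cite{CheegerColding97,CheegerJiangNaber18} (in the Euclidean case), and so that $1/(1+\eps)>\alpha$. Set
\[
C_\alpha:=\overline{B_2(p)}\cap\mathcal{S}^{N-2}_{\eps}\, ,
\]
which is closed and contained in $\mathcal{S}^{N-2}(X)$. For any $x\in B_1(p)\setminus C_\alpha$ some ball $B_r(x)$ is $(N-1,\eps)$-symmetric, and the model cone must be $\setR^{N-1}\times C(Z)$ with the one-dimensional factor $C(Z)$ equal to $\setR$ or $\setR_+$ by the noncollapsed $\RCD(0,N)$ rigidity of $1$-dimensional cones. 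The two cases are handled respectively by the classical interior $\eps$-regularity and by \autoref{thm:epsregintro} (iii), both producing $C^\alpha$-charts at $x$; \autoref{thm:boundatystructure} (iii) ensures that the boundary portion of every boundary chart coincides with $\partial X$. This gives items (b) and (c) of \autoref{thm:boundatystructure} (iv).

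The remaining and main content is the bound $\haus^{N-2}(C_\alpha\cap B_1(p))\le C(N,\alpha,\haus^N(B_1(p)))$. For $x\in C_\alpha$ at a definite distance from $\partial X$, a small neighborhood of $x$ is a Gromov-Hausdorff limit of balls lying strictly inside the interior of the $M_i$, so \cite[Theorem 1.8]{CheegerJiangNaber18} applies directly. To treat $C_\alpha$ near $\partial X$, we perform the Perelman-type doubling $\tilde M_i:=M_i\sqcup_{\partial M_i}M_i$ along the convex boundaries: this yields a sequence of $\RCD(-(N-1),N)$ spaces whose noncollapsed limit $\tilde X=X\sqcup_{\partial X}X$ is a Ricci limit \emph{without} boundary. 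The natural $2$-to-$1$ folding $\tilde X\to X$ sends $(N-2,\eps')$-symmetric balls in $\tilde X$ to $(N-2,\eps)$-symmetric balls in $X$ for some $\eps'=\eps'(\eps,N)$, so applying \cite[Theorem 1.8]{CheegerJiangNaber18} on $\tilde X$ and pushing forward produces the required $\haus^{N-2}$-bound on $C_\alpha$.

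The main obstacle is justifying this doubling step. One must verify that $\tilde M_i$ carries an $\RCD(-(N-1),N)$ structure (so that the Cheeger-Jiang-Naber machinery is available for $\tilde X$) and that the quantitative symmetry classes are controllably preserved under the folding map, both of which rely on gluing theorems for $\RCD$ spaces across convex boundaries. Once these are in place the Hausdorff measure bound is immediate and the topological claims have already been settled.
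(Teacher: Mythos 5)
Your proposal follows the paper's route in all its essential features: $C_\alpha$ is taken to be (essentially) the quantitative singular stratum $\mathcal{S}^{N-2}_\eps$ with $\eps=\eps(N,\alpha)$; the topological claims (b) and (c) are obtained from the $\eps$-regularity theorems; and the $\haus^{N-2}$-bound is reduced, via doubling along $\partial X$, to the Cheeger--Jiang--Naber estimate on a boundaryless Ricci limit. The paper's proof of \autoref{thm:manifoldlimits} does exactly this (it works with the density-superlevel sets rather than $\mathcal{S}^{N-2}_\eps$, but these are equivalent formulations under volume $\eps$-regularity).

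The one place your proposal goes astray is in how you plan to close the doubling step. You say one must ``verify that $\tilde M_i$ carries an $\RCD(-(N-1),N)$ structure (so that the Cheeger--Jiang--Naber machinery is available for $\tilde X$)'' and that this relies on ``gluing theorems for $\RCD$ spaces across convex boundaries.'' This would not suffice: the estimate of \cite[Theorem 1.9]{CheegerJiangNaber18} is proved for noncollapsed \emph{Ricci limit} spaces, i.e.\ noncollapsed pGH limits of smooth manifolds (without boundary) with a lower Ricci bound, and is not available for general noncollapsed $\RCD(K,N)$ spaces. Exhibiting an $\RCD$ structure on the doubled spaces $\hat X_n$ (or on $\hat X$) would therefore not by itself license the application of CJN. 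What is actually needed, and what the paper invokes, is the \emph{Riemannian} gluing theorem of Kosovski and Schlichting (\cite{Kosolvsky02,Schlicting12}): the doubling of a smooth manifold with convex boundary and interior Ricci lower bound is itself a noncollapsed pGH limit of smooth manifolds without boundary with a (slightly worse) lower Ricci bound. A diagonal argument then exhibits $\hat X$ as a noncollapsed Ricci limit, after which CJN applies directly. Under this resolution the paper also adds a technical hypothesis (uniformly bounded boundary diameter of the approximating $M_i$) which your proposal does not note, and it additionally remarks that one can bypass the doubling altogether by observing that the proof of \cite[Theorem 1.9]{CheegerJiangNaber18} carries over verbatim to manifolds with convex boundary. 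Your remaining steps — the folding map carrying symmetry classes controllably, and using $\Theta_{\hat X}(\iota(x)) = 2\Theta_X(x)$ for $x\in\partial X$ — are correct and match the paper's argument.
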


  \vspace{.1cm} 
	
  \subsection{Stability and gap theorems for boundaries}

The following stability theorem gives an affirmative answer to \cite[Conjecture 5.11]{KapovitchMondino19}. Its proof follows directly from (a weak form of) the $\eps$-regularity theorem for boundary balls, \autoref{thm:epsregintro} (i).

  \begin{theorem}[Stability]\label{thm:stabilityboundary}
	    Let $N\in\setN^+$ and $K\in\setR$ be fixed. Let $(X_n,\dist_n,\haus^N,x_n)$ be a sequence of pointed $\RCD(K,N)$ spaces with no boundary on $B_2(x_n)$ converging in the pmGH topology to $(Y,\dist_Y,\haus^N,y)$. Then $Y$ has no boundary on $B_1(y)$.
   \end{theorem}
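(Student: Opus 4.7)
The plan is to argue by contradiction, reducing the claim directly to the $\eps$-regularity theorem \autoref{thm:epsregintro} applied to the approximating spaces $X_n$ at a small scale near a candidate boundary point of $Y$. Suppose towards a contradiction that $(\mathcal{S}^{N-1}\setminus \mathcal{S}^{N-2})\cap B_1(y)\ne \emptyset$ and pick a point $q$ in this intersection. Since $(Y,\dist_Y,\haus^N)$ is noncollapsed $\RCD$, every tangent cone at $q$ is an $N$-dimensional metric cone; $q\notin \mathcal{S}^{N-2}$ forces at least one tangent cone to split an $\setR^{N-1}$ factor, while $q\in \mathcal{S}^{N-1}$ rules out an $\setR^N$ splitting, and the remaining one-dimensional cone factor must therefore be $\setR_+$. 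Hence there is a sequence $r_j\downarrow 0$ along which the rescalings $(Y,r_j^{-1}\dist_Y,q)$ converge in pmGH to $(\setR_+^N,\dist_{\rm eucl},0)$.

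Fix $\eps=1/2$ and let $\delta=\delta(N,\eps)$ be the threshold from \autoref{thm:epsregintro}. Choose one scale $r=r_j$ small enough that $r^2|K|\le \delta(N-1)$, that $r<1/4$, and that the rescaled ball $B_{16}(q)$ in $(Y,r^{-1}\dist_Y)$ is $\delta/2$-GH close to $B_{16}^{\setR_+^N}(0)$. Using the pmGH convergence $X_n\to Y$, pick $q_n\in X_n$ with $q_n\to q$; by stability of GH distance on bounded balls under pmGH convergence, the rescaled ball $B_{16}(q_n)$ in $(X_n,r^{-1}\dist_n)$ is $\delta$-GH close to $B_{16}^{\setR_+^N}(0)$ for all sufficiently large $n$. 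The rescaled space $(X_n,r^{-1}\dist_n,r^{-N}\haus^N)$ is $\RCD(r^2K,N)$, hence $\RCD(-\delta(N-1),N)$ by the choice of $r$, so \autoref{thm:epsregintro} applies and yields $\partial X_n\cap B_r(q_n)\ne\emptyset$ in the original metric. Because $q\in B_1(y)$ and $q_n\to q$, for large $n$ the open ball $B_r(q_n)$ is contained in $B_2(x_n)$; since $\partial X_n=\overline{\mathcal{S}^{N-1}\setminus \mathcal{S}^{N-2}}$ and $B_r(q_n)$ is open, any point of $\partial X_n\cap B_r(q_n)$ is a limit of points of $\mathcal{S}^{N-1}\setminus \mathcal{S}^{N-2}$ lying in $B_r(q_n)\subset B_2(x_n)$, contradicting the no-boundary hypothesis.

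There is no real obstacle once \autoref{thm:epsregintro} is available: the stability statement becomes a formal consequence of blowing up at a hypothetical top-dimensional singular point and transferring the half-space model to the approximating sequence. The only care needed is selecting a single small scale $r$ that simultaneously flattens the Ricci bound to within the $\eps$-regularity threshold and captures the half-space asymptotic in $Y$, after which the same bounds propagate to $X_n$ for $n$ large by pmGH stability; alternatively, the weaker \autoref{thm:stabilityS} is enough to run exactly the same argument.
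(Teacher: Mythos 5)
Your proposal is correct and takes essentially the same approach as the paper: blow up at a hypothetical point of $\mathcal{S}^{N-1}\setminus\mathcal{S}^{N-2}$ in $Y$ to find a scale where a ball is GH-close to a half-space ball, transfer that closeness to $X_n$ for large $n$, and invoke $\eps$-regularity to produce boundary points in $B_2(x_n)$, yielding a contradiction. The only cosmetic difference is that you invoke the full $\eps$-regularity \autoref{thm:epsregintro} where the paper uses the weaker \autoref{thm:stabilityS} directly — but you correctly note at the end that \autoref{thm:stabilityS} already suffices, which is exactly what the paper does.
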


While the above tells that spaces without boundary converge to spaces without boundary under non collapsing pGH convergence, stability of boundary points (i.e whether boundary points converge to boundary points) remains an open question in the general case.
 
  The analysis of the Laplacian of the distance from the boundary performed in \autoref{sec:distance from the boundary} allows us to prove the local Ahlfors regularity of the boundary volume measure, together with stability of boundary points in the case of Ricci limits with boundary. 

 	\begin{theorem}\label{thm:riccilimit}
 		Let $(X,\dist,\haus^N,p)$ be the noncollapsed pGH limit of a sequence of smooth $N$-dimensional Riemannian manifolds $(X_n,\dist_n, p_n)$ with convex boundary and Ricci curvature bounded from below by $K$ in the interior. 
 	    Then:
 		\begin{itemize}
 			\item[(i)] if $B_1(p_n) \cap \partial X_n \neq \emptyset $ for every $n$ then $\partial X \neq \emptyset$. Moreover if points $x_n\in \partial X_n$ converge to $x\in X$, then $x\in\partial X$;
 			
 			\item[(ii)] for any $x\in \partial X$ one has
 			\begin{equation}\label{eq:gap}
 			\haus^{N-1}(B_2(x)\cap\partial X) > C(K) \haus^N(B_1(x))\, ;
 			\end{equation}
 			
 			\item[(iii)] $\haus^{N-1}\res\partial X$ is locally Ahlfors regular and for any $x\in\partial X$ any tangent cone at $x$ has boundary.
 		\end{itemize}
 	\end{theorem}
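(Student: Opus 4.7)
The plan is to combine the Heintze--Karcher inequality on the smooth approximants $X_n$---a consequence of the Laplacian comparison for $\dist(\cdot,\partial X_n)$ on manifolds with convex boundary and $\Ric\ge K$, which is the content of the earlier \autoref{sec:distance from the boundary}---with the boundary measure convergence $\haus^{N-1}\res\partial X_n\to\haus^{N-1}\res\partial X$ established in the same section. Concretely, Jacobian comparison for the normal exponential map $(z,t)\mapsto\exp_z(tn_z)$ yields, on any smooth $(M,g)$ with convex boundary and $\Ric\ge K$, for every $x\in\partial M$ and $r\in(0,1)$,
\[
\haus^N(B_r(x))\,\le\, V_K(r)\,\haus^{N-1}(\partial M\cap B_{2r}(x))\, ,\qquad V_K(r)\le C(K)\,r\, ,
\]
and this is the only input borrowed from the smooth setting.

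I prove (ii) first. For $x\in\partial X$, \autoref{thm:stabilityboundary} applied contrapositively to shrinking neighbourhoods of $x$ produces a sequence $x_n\in\partial X_n$ with $x_n\to x$ under pmGH convergence. Applying the displayed inequality in $X_n$ at $x_n$ with $r=1$ and sending $n\to\infty$, the left-hand side tends to $\haus^N(B_1(x))$ by noncollapsed volume convergence, while on the right one invokes the weak convergence $\haus^{N-1}\res\partial X_n\to\haus^{N-1}\res\partial X$ on slightly inflated balls $B_{2+\eps}(x)$ followed by $\eps\to 0^+$, yielding (ii). For (i) I run the same inequality at arbitrary small scale $r\in(0,1)$ at $x_n\in\partial X_n$: Bishop--Gromov noncollapsing gives $\haus^N(B_r(x_n))\ge c(N,K,v)\,r^N$ along the sequence, hence
\[
\haus^{N-1}(\partial X_n\cap B_{2r}(x_n))\,\ge\,\frac{c(N,K,v)\,r^N}{V_K(r)}\,\ge\, c'(N,K,v)\,r^{N-1}\, ,
\]
and the same weak-convergence argument transferred to (slightly inflated) closed balls forces $\haus^{N-1}(\partial X\cap\overline{B_{2r}(x)})>0$ for every $r>0$. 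Since $\partial X$ is closed, this forces $x\in\partial X$. The first assertion of (i) follows by extracting a convergent subsequence of any chosen $x_n\in B_1(p_n)\cap\partial X_n$.

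For (iii), the upper Ahlfors bound is immediate from \autoref{thm:boundatystructure}(i). The lower Ahlfors bound follows from (ii) applied to the rescaled spaces $(X,r^{-1}\dist,r^{-N}\haus^N)$, which are themselves noncollapsed pmGH limits of the smooth manifolds $(X_n,r^{-1}\dist_n)$ with convex boundary (rescaling preserves convexity) and rescaled Ricci lower bound $Kr^2$; up to replacing $K$ by $\min(K,Kr^2)$ one obtains $\haus^{N-1}(\partial X\cap B_{2r}(x))\ge C(K)\,r^{-1}\haus^N(B_r(x))\ge c(N,K,v)\,r^{N-1}$ via Bishop--Gromov, which combined with the upper bound gives Ahlfors regularity. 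Finally, for any tangent cone $(Y,0)$ at $x\in\partial X$, a diagonal argument yields smooth manifolds $r_k^{-1}X_{n(k)}$ with convex boundary and basepoints $x_{n(k)}\in\partial X_{n(k)}$ (whose existence at each fixed $k$ is the approximation already used in (ii)) converging to $(Y,0)$ in pmGH; applying (i) to this diagonal sequence yields $0\in\partial Y$, so $Y$ has boundary. The main technical obstacle throughout is the clean transfer of boundary area estimates across pmGH convergence with moving basepoints, and this is precisely what the distance-from-boundary analysis in \autoref{sec:distance from the boundary} is designed to provide.
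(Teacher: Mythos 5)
Your proof is correct, but it takes a genuinely different route from the paper's. For part (i) the two arguments coincide: both pass the scale-invariant version of the smooth lower area bound (your Heintze--Karcher/Jacobian comparison, the paper's \eqref{eq:lowerareaboundriema} from \autoref{prop:laplnonriem} via \autoref{thm:consequencesconj}) to the limit using the one-sided stability \autoref{cor:stabwithlowerbound}. The divergence is in part (ii): the paper passes the \emph{Laplacian comparison} itself to the limit (using the uniform and $W^{1,2}$ convergence of $\dist_{\partial X_n}\to\dist_{\partial X}$, which requires having already established part (i)), obtaining the measure-valued bound $\boldsymbol{\Delta}^{\rm ac}\dist_{\partial X}\le -K\dist_{\partial X}$ on $X$ itself and then running the integration-by-parts argument of \autoref{thm:consequencesconj} \emph{on the limit space}. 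You instead keep the entire geometric argument on the smooth approximants $X_n$ and transfer only the resulting \emph{area inequality} to $X$ via the full boundary measure convergence $\haus^{N-1}\res\partial X_n\to\haus^{N-1}\res\partial X$. This is cleaner in that you avoid the delicate convergence of distance functions and measure-valued Laplacians along the sequence, but it buys that simplicity by invoking the Boundary Volume Convergence theorem, which is proved in \autoref{sec:improved neck structure}, \emph{not} in \autoref{sec:distance from the boundary} as you state. Since the proof of that theorem (via \autoref{thm:improvedneckregion}, \autoref{thm:goodpar}, \autoref{cor:bdryvolconvergencereg}) does not rely on \autoref{thm:riccilimit}, there is no circularity, but you should reorder your dependencies accordingly. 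Note also that the paper's route establishes a strictly stronger conclusion --- the measure-valued Laplacian bound and the representation $\boldsymbol{\Delta}\dist_{\partial X}\res\partial X=\haus^{N-1}\res\partial X$ on the limit space (\autoref{thm:riccilimits} (ii)) --- which your approach does not recover directly; if you want all of \autoref{thm:riccilimits} and not just the introduction's \autoref{thm:riccilimit}, you would still need the paper's Laplacian-passage step. Your diagonal argument for tangent cones in (iii) replaces the paper's appeal to \autoref{thm:consequencesconj} (iii) plus \autoref{cor:stabwithlowerbound}; both are valid, and yours is arguably more transparent.
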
	

 We conjecture that the gap estimate \eqref{eq:gap} holds for general noncollapsed $\RCD$ spaces without further assumptions, which would also prove stability of boundary points in full generality.

Our last result is a version of Colding's volume convergence theorem  (cf. \cite{Colding97,CheegerColding97}) for boundary measures:

  \begin{theorem}[Boundary Volume Convergence]\label{intro:stabilityboundary}
	Let $1\le N<\infty$ be a fixed natural number. Assume that $(X_n,\dist_n,\haus^N,p_n)$ are $\RCD(-(N-1),N)$ spaces converging in the pGH topology to $(X,\dist,\haus^N,p)$. 
    Then
	\begin{equation}
	\haus^{N-1}\res \partial X_n \to \haus^{N-1}\res \partial X\quad \text{ weakly}\, .
	\end{equation}
	In particular
	\[
	\lim_{n\to \infty} \haus^{N-1}(\partial X_n\cap B_r(x_n)) = \haus^{N-1}(\partial X\cap B_r(x))
	\]
	whenever $X_n\ni x_n\to x\in X$ and $\haus^{N-1}(\partial X\cap \partial B_r(x))=0$.	
 \end{theorem}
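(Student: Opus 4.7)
The plan is to extract a subsequential weak limit $\mu$ of $\mu_n:=\haus^{N-1}\res \partial X_n$, identify it with $\haus^{N-1}\res \partial X$ by comparing densities at $\haus^{N-1}$-a.e. boundary point, and then promote subsequential convergence to full weak convergence by uniqueness. Realize the pGH convergence inside a common proper ambient metric space $(Z,\dist_Z)$. Combining \autoref{thm:boundatystructure}(i) with an elementary covering of an arbitrary ball by balls centered on $\partial X_n$, and using the uniform volume lower bound at scale $1$ furnished by noncollapsing of the limit, one obtains $\mu_n(B_r(y))\le C(N) r^{N-1}$ for every $y\in Z$ and $r\in(0,1)$, uniformly in $n$. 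Standard compactness of locally uniformly bounded Radon measures then supplies a (non-relabelled) weak subsequential limit $\mu$ on $Z$, which by Portmanteau inherits the density bound $\mu(B_r(x))\le C r^{N-1}$. A standard covering argument then yields $\mu\le C\haus^{N-1}$, so $\mu$ vanishes on $\haus^{N-1}$-null sets.

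Next check that $\supp\mu\subset \partial X$. Decomposing $X\setminus\partial X=(\mathcal{R}\setminus\partial X)\cup(\mathcal{S}^{N-2}\setminus\partial X)$, the second piece has $\haus^{N-1}$-measure zero since $\dim_H\mathcal{S}^{N-2}\le N-2$, and is therefore $\mu$-null by the previous density bound. For any regular $x\in\mathcal{R}\setminus\partial X$, small balls $B_r(x)\subset X$ are GH-close to the Euclidean model; by pGH convergence the corresponding balls $B_r(x_n)\subset X_n$ are also GH-close, and the standard $\eps$-regularity for the regular set in the noncollapsed $\RCD$ framework forces $B_{r/2}(x_n)\cap \partial X_n=\emptyset$ for $n$ large, whence $\mu_n(B_{r/2}(x_n))=0$. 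Lower semicontinuity of $\mu_n$ on open sets then gives $\mu(B_{r/2}(x))=0$, and a Lindel\"of extraction concludes $\mu(\mathcal{R}\setminus\partial X)=0$.

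The heart of the proof is the density identification. By \autoref{thm:epsregintro}(i) and the rectifiability in \autoref{thm:boundatystructure}(i), for $\haus^{N-1}$-a.e. $x\in\partial X$ the tangent is $\setR^N_+$ and $\haus^{N-1}(\partial X\cap B_r(x))/(\omega_{N-1}r^{N-1})\to 1$ as $r\to 0$. For such $x$ and sufficiently small $r$, $B_r(x)$ is $\eps$-close in GH distance to a half-space ball; by the triangle inequality for GH distance, $B_r(x_n)$ is also close to the same half-space ball for $n$ large, and applying the sharp Ahlfors bound of \autoref{thm:epsregintro}(i) directly to $X_n$ forces $\mu_n(B_r(x_n))=(1+o(1))\omega_{N-1}r^{N-1}$ with $o(1)\to 0$ as $n\to\infty$ and $\eps,r\to 0$. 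Sandwiching $\mu_n(B_{r-\eps_n}(x))\le \mu_n(B_r(x_n))\le \mu_n(B_{r+\eps_n}(x))$ with $\eps_n=\dist_Z(x_n,x)\to 0$, and invoking Portmanteau at all but countably many $r$ (those with $\mu(\partial B_r(x))=0$), yields $\mu(B_r(x))/(\omega_{N-1}r^{N-1})\to 1$. Since $\supp\mu\subset\partial X$ and $\mu\ll \haus^{N-1}\res\partial X$ by the density bound, Radon-Nikodym differentiation then gives $\frac{d\mu}{d(\haus^{N-1}\res\partial X)}=1$ $\haus^{N-1}$-a.e., so $\mu=\haus^{N-1}\res\partial X$. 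Uniqueness of the weak limit upgrades subsequential to full weak convergence, and the last statement of the theorem is the standard Portmanteau dichotomy at balls with null $\haus^{N-1}\res\partial X$-boundary.

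The main obstacle is precisely the quantitative transfer in the density step: one needs the sharp Ahlfors asymptotic $\mu_n(B_r(x_n))\sim \omega_{N-1}r^{N-1}$ with rate depending only on the GH-closeness of $B_r(x_n)$ to a half-space ball, uniformly in $n$. This is exactly what the proof of \autoref{thm:epsregintro}(i) provides when applied on the approximating sequence, building on the stability of the improved neck region structure together with the bi-H\"older parametrization developed earlier in the paper; the non-sharp bounds of \autoref{thm:boundatystructure}(i) alone would only control the density up to multiplicative constants and hence would not pin it down to $1$.
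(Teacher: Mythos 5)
Your proof is correct and follows essentially the same route as the paper's: extract a subsequential weak limit, bound it above by $C\,\haus^{N-1}$ via the non-sharp Ahlfors estimate, localize its support to $\partial X$ (the paper does this via lower semicontinuity of the density and $\Theta\le 1/2$ on boundaries, you via the $\eps$-regularity for $(N,\eps)$-symmetric balls plus the codimension-two bound on $\mathcal{S}^{N-2}$, which are interchangeable), and pin the density down to $1$ at $\haus^{N-1}$-a.e.\ boundary point by transferring the sharp Ahlfors estimate of \autoref{thm:epsregintro}(i)/\autoref{cor:bdryvolconvergencereg} to $\delta$-boundary balls in $X_n$ and invoking Kirchheim's differentiation theorem for rectifiable measures. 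You correctly identify the key point that the improved $(1\pm\eps)$ Ahlfors bound, rather than the multiplicative-constant bound, is what makes the density identification possible.
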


\vspace{.1cm}
 
\subsection{The remainder of the paper}
The rest of the paper is divided in eight sections. 

\medskip

The first two aim at presenting preliminary results which will be used throughout the paper. In \autoref{sec:preliminaries} we recall the main definitions and basic results of the theory of $\RCD$ spaces. In \autoref{sec:almostsplitting} we prove a local version of the almost splitting theorem, originally due to Cheeger-Colding (see \cite{CheegerColding96} and \cite{CheegerNaber15} for the present form on Ricci limit spaces) and previously proved on $\RCD$ spaces only in a weaker form (see \cite{BruePasqualettoSemola19,BruePasqualettoSemola20}). Moreover we adapt the proof of the transformation theorem \cite{CheegerJiangNaber18} (see also \cite{CheegerNaber15}) by Cheeger-Jiang-Naber to the $\RCD$ framework.
\medskip

In \autoref{sec:neckregion} we introduce and study \textit{neck regions} tailored for the analysis of boundaries on noncollapsed $\RCD$ spaces. This study is the key ingredient for the all the developments of the paper: rectifiable regularity, topological regularity and stability.\\ 
The role of this tool has been prominent in the recent literature about spaces with lower Ricci curvature bounds and bounded Ricci curvature, see \cite{JiangNaber16,CheegerJiangNaber18}, and also in several other frameworks, see for instance \cite{NaberValtorta17,NaberValtorta19}.\\
The analysis of neck regions is made in two steps. After their introduction in \autoref{def:Neckregion}, we first describe their structure in \autoref{thm:neckstructure}.
In the second step we prove existence of neck regions in \autoref{thm:existenceofneck} under geometric assumptions, guaranteeing in particular the non triviality of the previous structural result.\\
 In the analysis of the structure of neck regions there are several simplifications with respect to the study in \cite{CheegerJiangNaber18,JiangNaber16}. Instead non trivial new ideas are needed to deal with the stability of codimension one singularities and the existence of neck regions.
\medskip

In \autoref{sec:neckdecomposition}, following closely the neck decomposition theorems in \cite{JiangNaber16,CheegerJiangNaber18}, we prove that any noncollapsed $\RCD(K,N)$ space can be decomposed into neck-regions, $(N,\eps)$-symmetric balls and a set of codimension at least $2$, with quantitative summability control over the radii of balls appearing in the covering.
\medskip

In \autoref{sec:Boundary rectifiability and stability} we combine the previously obtained existence and structure of neck regions with the neck decomposition theorem to prove the weak $\eps$-regularity \autoref{thm:stabilityS}.  In particular, we show the stability \autoref{thm:stabilityboundary} for spaces without boundary, and the $(N-1)$-rectifiable structure of the boundary together with local finiteness estimates for the boundary measure (cf. \autoref{thm:boundatystructure} (i)). 
\medskip

We dedicate \autoref{sec:distance from the boundary} to the study of the distance function from the boundary. We show how upper bounds on the absolutely continuous part of its Laplacian imply noncollapsing estimates on the boundary measure, see \autoref{thm:consequencesconj}. We present an open question concerning the case of general noncollapsed $\RCD(K,N)$ spaces that we are able to verify for smooth manifolds with boundary and their noncollapsed pGH limits, as well as Alexandrov spaces with curvature bounded below.  As a consequence we prove \autoref{thm:riccilimit}.
\medskip

In \autoref{sec:improved neck structure} we improve the structure of neck regions by a bootstrap argument based on the stability of the boundary. In \autoref{thm:improvedneckregion} we prove that, on a ball sufficiently close in the GH sense to the model ball of the half-space, balls centered at boundary points are close to the model ball in the half-space and balls centered at interior points are close to the model ball in the Euclidean space at any sufficiently small scale.\\ 
The improved neck structure \autoref{thm:improvedneckregion} has a number of consequences: the topological regularity of the boundary up to sets of ambient codimension two (see \autoref{thm:goodpar}), the improved volume estimate \autoref{cor:bdryvolconvergencereg} and the boundary volume convergence \autoref{intro:stabilityboundary}. In \autoref{sec:topologicalstructureuptotheboundary} we deal with the topological regularity up to the boundary of noncollapsed $\RCD$ spaces proving \autoref{thm:epsregintro} (iv), \autoref{thm:boundatystructure} (iv) and \autoref{intro:top2}.

\subsection*{Acknowledgements}

The first author is supported by the Giorgio and Elena Petronio Fellowship at the Institute for Advanced Study.\\
The second author was partially supported by the National Science Foundation Grant No. DMS-1809011.\\
The third author is supported by the European Research Council (ERC), under the European’s Union Horizon 2020 research and innovation programme, via the ERC Starting Grant “CURVATURE”, grant agreement No. 802689. 

Most of this work was developed while the first and third authors were PhD students at Scuola Normale Superiore. They wish to express their gratitude to this institution for the excellent working conditions and the stimulating atmosphere.

\section{Preliminaries}\label{sec:preliminaries}

A metric measure space will be a triple $(X,\dist,\meas)$ where $(X,\dist)$ is a complete and separable metric space and $\meas$ is a locally finite Borel measure.\\ 
We will denote by $B_r(x)=\{\dist(\cdot,x)<r\}$ and $\bar{B}_r(x)=\{\dist(\cdot,x)\leq r\}$ the open and closed balls respectively.  By $\Lip(X)$ (resp. $\Lipb(X)$) we denote the space of Lipschitz (resp. bounded) functions and for any $f\in\Lip(X)$ we shall denote its slope by
\begin{equation}
\lip f(x) := \limsup_{y\to x}\frac{\abs{f(x)-f(y)}}{\dist(x,y)}\, .
\end{equation}
We will use the standard notation $L^p(X,\meas)$, for the $L^p$ spaces and $\Leb^n,\haus^n$ for the $n$-dimensional Lebesgue measure on $\setR^n$ and the $n$-dimensional Hausdorff measure on a metric space, respectively. The Hausdorff measure is always normalised in such a way that it coincides with the Lebesgue measure on Euclidean spaces. We shall also denote by $\haus^{n}_{\infty}$ the pre-Hausdorff measure in dimension $n$ (obtained with no upper bounds on the radii of the covering sets).
We shall denote by $\omega_n$ the Lebesgue measure of the unit ball in $\setR^n$.

We will also deal with pointed metric measure spaces $(X,\dist,\meas,x)$ in case a reference point $x\in X$ has been fixed. We will say that a pointed metric measure space is normalised whenever 
\begin{equation}
\int_{B_1(x)}\left(1-\dist(x,y)\right)\di\meas(y)=1 \, .
\end{equation}
\medskip

We will deal with the Gromov-Hausdorff (GH), measured Gromov-Hausdorff (mGH) and pointed measured Gromov-Hausdorff (pmGH) convergence of (pointed) metric measure spaces. We refer to \cite{GigliMondinoSavare15} for the relevant background about these notions. The associated distances will be denoted by $\dist_{GH}$, $\dist_{mGH}$ and $\dist_{pmGH}$.

A basic reference about analysis on metric space is the book \cite{BuragoBuragoIvanov01}. Given a proper metric space $(X,\dist)$ and two bounded subsets $F,E\subset X$ we denote by
\[
\dist_H(E,F):=\inf\set{r>0: E\subset B_r(F)\, \text{and}\, F\subset B_r(E)}
\]
their Hausdorff distance in $(X,\dist)$.\footnote{We remark that to obtain a distance one should restrict to bounded and closed sets, but this will cause no troubles for our aims.}

We recall a simple connection between convergence in the Hausdorff distance and behaviour of pre-Hausdorff measures $\haus_{\infty}^{\alpha}$, for any $\alpha\ge 0$. If $\dist_H(A_n,A)\to 0$ and $A$ is compact, then
	\begin{equation}\label{eq:limsuphaus}
	\haus^{\alpha}_{\infty}(A)\ge\limsup_{n\to\infty}\haus^{\alpha}_{\infty}(A_n)\, .
	\end{equation}
When the sets are subsets of metric spaces converging in the pGH topology we will understand the convergence as realized in a common background proper metric space and the Hausdorff convergence of compact sets has to be understood as Hausdorff convergence in the ambient space.

\begin{remark}\label{rm:compvarconv}
We recall that in a proper metric space $(Z,\dist_Z)$ with a sequence of uniformly bounded compact sets $K_n\subset Z$ and $K\subset Z$, the following conditions are equivalent: 
\begin{itemize}
\item[i)] $K_n$ converge to $K$ in the Hausdorff distance;
\item[ii)] $K_n$ converge to $K$ in the Kuratowski sense, i.e. any limit point $x$ of a subsequence $x_n\in K_n$ belongs to $K$ and for any $y\in K$ there exists a sequence $y_n\in K_n$ such that, up to subsequence, $y_n\to y$;
\item[iii)] setting $\dist_C:Z\to[0,\infty)$ to be the distance function from any closed set $C\subset Z$, it holds that $\dist_{K_n}\to\dist_K$ uniformly as $n\to\infty$.
\end{itemize}
We refer to \cite{Beer} for a treatment of these equivalences and we remark that they hold also for subsets of a pGH converging sequence of metric spaces (once the convergence is realized in a common proper metric space).
\end{remark}

\subsection{Calculus tools}
The Cheeger energy $\Ch:L^2(X,\meas)\to[0,+\infty]$ associated to a m.m.s. $(X,\dist,\meas)$ is the convex and lower semicontinuous functional defined through 
\begin{equation}\label{eq:cheeger}
\Ch(f):= \inf\left\lbrace\liminf_{n\to\infty}\int_X|\lip f_n|^2\di\meas:\quad f_n\in\Lipb(X)\cap L^2(X,\meas),\ \norm{f_n-f}_2\to 0 \right\rbrace
\end{equation}
and its finiteness domain will be denoted by $H^{1,2}(X,\dist,\meas)$. Looking at the optimal approximating sequence in \eqref{eq:cheeger}, it is possible to identify a canonical object $\abs{\nabla f}$, called minimal relaxed slope, providing the integral representation
\begin{equation}
\Ch(f) = \int_X\abs{\nabla f}^2\di\meas\qquad\forall f\in H^{1,2}(X,\dist,\meas) \, .
\end{equation}

\begin{definition}
	Any metric measure space such that $\Ch$ is a quadratic form is said to be \textit{infinitesimally Hilbertian}.
\end{definition}

Let us recall from \cite{AmbrosioGigliSavare14,Gigli15} that, under the infinitesimally Hilbertian assumption, the function 
\begin{equation}
\nabla f_1 \cdot \nabla f_2 := \lim_{\eps\to 0}\frac{\abs{\nabla(f_1+\eps f_2)}^2-\abs{\nabla f_1}^2}{2\eps}
\end{equation}
defines a symmetric bilinear form on $H^{1,2}(X,\dist,\meas)\times H^{1,2}(X,\dist,\meas)$ with values into $L^1(X,\meas)$.

It is possible to define a Laplacian operator $\Delta:\mathcal{D}(\Delta)\subset L^{2}(X,\meas)\to L^2(X,\meas)$ in the following way. We let $\mathcal{D}(\Delta)$ be the set of those $f\in H^{1,2}(X,\dist,\meas)$ such that, for some $h\in L^2(X,\meas)$, one has
\begin{equation}\label{eq:amb1}
\int_X \nabla f\cdot\nabla g\di\meas=-\int_X hg\di\meas\qquad\forall g\in H^{1,2}(X,\dist,\meas) 
\end{equation} 
and, in that case, we put $\Delta f=h$. 
It is easy to check that the definition is well-posed and that the Laplacian is linear (because $\Ch$ is a quadratic form).

	\begin{definition}[Perimeter and sets of finite perimeter]\label{def:setoffiniteperimeter}
		Given a Borel set $E\subset X$ and an open set $A\subset X$ the perimeter $\Per(E,A)$ is defined as
		\begin{equation}
		\Per(E,A):=\inf\left\lbrace \liminf_{n\to\infty}\int_A\lip(u_n) \di\meas: u_n\in\Lip_{\loc}(A),\quad u_n\to\chi_E\quad \text{in } L^1_{\loc}(A,\meas)\right\rbrace \, .
		\end{equation}
		We say that $E$ has locally finite perimeter if $\Per(E,K)<\infty$ for any compact set $K$. In that case it can be proved that the set function $A\mapsto\Per(E,A)$ is the restriction to open sets of a locally finite Borel measure $\Per(E,\cdot)$ defined by
		\begin{equation}
		\Per(E,B):=\inf\left\lbrace \Per(E,A): B\subset A,\text{ } A \text{ open}\right\rbrace\, .
		\end{equation}
	\end{definition}

	The following coarea formula is taken from \cite[Proposition 4.2]{MirandaJr}.

\begin{theorem}[Coarea formula]\label{thm:coarea}
	Let $(X,\dist,\meas)$ be a locally compact metric measure space and $v\in\Lip(X)$.
	Then, $\{v>r\}$ has locally finite perimeter for $\Leb^1$-a.e. $r\in\setR$ and, for any Borel function $f:X\to[0,\infty]$, it holds 
	\begin{equation}\label{eq:coarea}
	\int_X f |\nabla v| \di\meas = \int_{-\infty}^{\infty}\left(\int  f\di\Per(\{v>r\},\cdot)\right)\di r \, .
	\end{equation}
\end{theorem}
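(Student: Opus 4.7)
The plan is to split the argument in two stages. First, by the layer--cake representation $f = \int_0^\infty \chi_{\{f>t\}}\, \di t$ combined with Fubini--Tonelli on both sides of \eqref{eq:coarea} and outer regularity of Radon measures, the coarea identity for general nonnegative Borel $f$ reduces to the scalar identity
\[
\int_A |\nabla v|\, \di\meas \;=\; \int_{-\infty}^{\infty} \Per(\{v>r\}, A)\, \di r
\]
for every open $A\subset X$, together with the a.e.\ local finiteness of $\Per(\{v>r\},\cdot)$ in $r$. The bulk of the work is this scalar identity on open sets.

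For the direct inequality $\int \Per(\{v>r\},A)\,\di r \leq \int_A |\nabla v|\, \di\meas$, the plan is to test the relaxation definition of the perimeter with the explicit Lipschitz truncations
\[
\phi_{r,\eps}(x) \;:=\; \min\bigl\{1,\, \eps^{-1}(v(x)-r)_+\bigr\},
\]
which converge to $\chi_{\{v>r\}}$ in $L^1_{\loc}$ as $\eps\downarrow 0$ and whose slopes satisfy $\lip(\phi_{r,\eps}) \leq \eps^{-1}\lip(v)\,\chi_{\{r<v\leq r+\eps\}}$. Integrating the resulting bound in $r$, Fatou's lemma in $r$ followed by Fubini--Tonelli (using $\int_\setR \chi_{\{r<v(x)\leq r+\eps\}}\, \di r=\eps$) yields the desired estimate. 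As a by--product this gives the a.e.\ local finiteness of the perimeter of sublevel sets, and the Borel measurability of $r\mapsto \Per(\{v>r\},A)$ can be obtained by expressing the perimeter as a countable supremum of lower semicontinuous functionals.

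The reverse inequality is the substantive direction. The plan is first to use the direct inequality together with a cut--off argument to show that the set function $\nu(A) := \int_\setR \Per(\{v>r\}, A)\, \di r$ extends to a Radon measure on $X$ absolutely continuous with respect to $\meas$, and then to identify its density as $|\nabla v|$. For the identification I would perform a dual approximation: for a.e.\ $r$ and every $\delta>0$, select $w_r^\delta \in \Lip_{\loc}(A)$ with $w_r^\delta \to \chi_{\{v>r\}}$ in $L^1_{\loc}$ and $\int_A \lip(w_r^\delta)\,\di\meas \leq \Per(\{v>r\}, A) + \delta \,2^{-|r|}$; via Fubini and the layer--cake identity $v = c + \int_{-\infty}^{\infty}[\chi_{\{v>r\}}-\chi_{(-\infty,0)}(r)]\, \di r$ one assembles these into Lipschitz approximations $V^\delta$ of $v$ whose slope integrals are close to $\int_\setR \Per(\{v>r\}, A)\, \di r$, so that the minimality of $|\nabla v|$ in the relaxation \eqref{eq:cheeger} completes the identification.

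The main obstacle I expect lies in this last identification: the measurable selection of the approximants $w_r^\delta$ must be carried out with enough uniformity in $r$ to justify the Fubini exchange and the slope bound, and one must verify via a diagonal argument that the assembled $V^\delta$ converges to $v$ in $L^1_{\loc}$ with slope integrable against $\meas$. Secondary technicalities are the $r$--measurability of $\Per(\{v>r\},A)$ and the absolute continuity $\nu \ll \meas$, both of which should be reducible to the direct inequality already established.
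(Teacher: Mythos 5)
The paper does not supply a proof of this theorem: it is cited verbatim from Miranda Jr.\ (the reference \cite{MirandaJr}, Proposition 4.2), so there is no proof in the paper to compare against. Your two-inequality scaffold (truncation for the upper bound, approximation for the lower bound) is nonetheless the correct skeleton, and your truncation argument for the ``direct'' inequality, with $\phi_{r,\eps}=\min\{1,\eps^{-1}(v-r)_+\}$, is sound modulo one point: it bounds $\int_{\setR}\Per(\{v>r\},A)\,\di r$ by $\int_A \lip v\,\di\meas$, not by $\int_A |\nabla v|\,\di\meas$, and $\lip v$ can be strictly larger than the minimal relaxed slope.

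The substantive gap is in the reverse inequality: \eqref{eq:cheeger} cannot deliver the identification you claim. The Cheeger relaxation \eqref{eq:cheeger} is a statement about $\liminf\int (\lip f_n)^2\,\di\meas$ under $L^2$-convergence $f_n\to f$; you are trying to use it to control $\liminf\int_A \lip V^\delta\,\di\meas$ (first power) under mere $L^1_{\loc}$-convergence $V^\delta\to v$. What your construction actually bounds from below is Miranda's total variation $|Dv|(A)$ (the $L^1_{\loc}$-lower-semicontinuous envelope of $u\mapsto\int_A\lip u\,\di\meas$), and the equality of its density with the $L^2$-minimal relaxed slope $|\nabla v|$ is itself a nontrivial equivalence-of-weak-gradients result that fails on a bare locally compact metric measure space and must be invoked separately (it holds, e.g., in PI spaces and hence in the paper's $\RCD$ applications). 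In addition, the ``select $w_r^\delta$ measurably in $r$ and integrate'' scheme creates integrability and measurability headaches you yourself flag; Miranda's own argument (which is what the paper is citing) sidesteps these by using the staircase approximations $v_n:=c_n+n^{-1}\sum_{k\in\setZ}\chi_{\{v>k/n\}}$, which converge uniformly to $v$, and combining the subadditivity of the total variation, which gives $|Dv_n|(A)\le n^{-1}\sum_k\Per(\{v>k/n\},A)$, with $L^1_{\loc}$-lower semicontinuity of $|D\cdot|(A)$ and convergence of the Riemann sums to $\int_{\setR}\Per(\{v>r\},A)\,\di r$. That route removes the need for any measurable selection and lands on the correct $L^1$-relaxed object on the left-hand side.
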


\subsection{$\RCD$ spaces}

Let us start by recalling the so-called curvature dimension condition $\CD(K,N)$. Its introduction dates back to the seminal and independent works \cite{LottVillani} and \cite{Sturm06a,Sturm06b}, while in this presentation we closely follow \cite{BacherSturm10}. 

Below $\Prob(X)$ denotes the set of probability measure over $X$ while 
\[
\text{Geo}(X):=\set{\gamma:[0,1]\to X:\, \dist(\gamma(t),\gamma(t))=|t-s|\dist(\gamma(1),\gamma(0))\quad s,t\in [0,1]}.
\]

\begin{definition}[Curvature dimension bounds]\label{def:CD}
	Let $K\in\setR$ and $1\le N<\infty$. We say that a m.m.s. $(X,\dist,\meas)$ is a $\CD(K,N)$ space if, for any $\mu_0,\mu_1\in\Prob(X)$ absolutely continuous w.r.t. $\meas$ with bounded support, there exists an optimal geodesic plan $\Pi\in\Prob(\text{Geo}(X))$ such that for any $t\in[0,1]$ and for any $N'\ge N$ we have 
	\begin{align*}
	-\int & \rho_t^{1-\frac{1}{N'}}\di\meas\\
	\le &-\int\left\lbrace\tau_{K,N'}^{(1-t)}(\dist(\gamma(0),\gamma(1)))\rho_0^{-\frac{1}{N'}}(\gamma(0))+\tau_{K,N'}^{(t)}(\dist(\gamma(0),\gamma(1)))\rho_1^{-\frac{1}{N'}}(\gamma(1)) \right\rbrace\di\Pi(\gamma)\, , 
	\end{align*} 
	where $(e_t)_{\sharp}\Pi=\rho_t\meas$, $\mu_0=\rho_0\meas$, $\mu_1=\rho_1\meas$ and the distortion coefficients $\tau_{K,N}^{t}(\cdot)$ are defined as follows. First we define the coefficients $[0,1]\times[0,\infty)\ni(t,\theta)\mapsto\sigma_{K,N}^{(t)}(\theta)$ by
	\begin{equation}
	\sigma_{K,N}^{(t)}(\theta) :=
	\begin{cases*}
	+\infty &\text{if $K\theta^2\ge N\pi^2$,}\\
	\frac{\sin(t\theta\sqrt{K/N})}{\sin(\theta\sqrt{K/N})}&\text{if $0<\theta<N\pi^2$,}\\
	t&\text{if $K\theta^2=0$,}\\
	\frac{\sinh(t\theta\sqrt{K/N})}{\sinh(\theta\sqrt{K/N})}&\text{if $K\theta^2<0$,}
	\end{cases*}
	\end{equation}
	then we set $\tau_{K,N}^{(t)}(\theta) := t^{1/N}\sigma_{K,N-1}^{(t)}(\theta)^{1-1/N}$.
\end{definition}

\begin{definition}\label{defRCDKN}
	We say that a metric measure space $(X,\dist,\meas)$ satisfies the \textit{Riemannian curvature-dimension} condition for some $K\in\setR$ and $1\le N<\infty$ (it is an $\RCD(K,N)$ m.m.s. for short) if it is a $\CD(K,N)$ infinitesimally hilbertian metric measure space.
\end{definition}
Note that, if $(X,\dist,\meas)$ is an $\RCD(K,N)$ m.m.s., then so is $(\supp\meas,\dist,\meas)$, hence in the following we will always tacitly assume $\supp\meas = X$.

\begin{remark}[Compatibility with the smooth case]
The $\RCD(K,N)$ notion is compatible with the smooth case of weighted Riemannian manifolds with (weighted-)Ricci curvature bounded from below \cite{GigliMondinoSavare15,Sturm06a,Sturm06b,LottVillani,Villani09}.	
It means that a Riemannian manifold meets the $\RCD(K,N)$ condition if and only if it has dimension smaller than $N$ and the $N$-dimensional Bakry-Ricci tensor is bounded below by $K$.
\end{remark}

\begin{remark}[Stability]
	\label{remark:stability}
	A fundamental property of $\RCD(K,N)$ spaces, that will be used several times in this paper, is the stability w.r.t. pmGH convergence, meaning that a pmGH limit of a sequence of (pointed) $\RCD(K,N)$ spaces is still an $\RCD(K,N)$ m.m.s.. 
\end{remark}

The basic references for the theory of convergence and stability of Sobolev functions on converging sequences of $\RCD(K,N)$ metric measure spaces are \cite{GigliMondinoSavare15} and \cite{AmbrosioHonda,AmbrosioHonda18}.
\medskip

We recall that any $\RCD(K,N)$ m.m.s. $(X,\dist,\meas)$ satisfies the Bishop-Gromov inequality:
\begin{equation}\label{eq:BishopGromovInequality}
\frac{\meas(B_R(x))}{v_{K,N}(R)}\le\frac{\meas(B_r(x))}{v_{K,N}(r)}\, ,
\end{equation}
for any $0<r<R$ and for any $x\in X$, where $v_{K,N}(r):= N\omega_N\int_0^r \left(s_{K,N}(s)\right)^{N-1}\di s$ and
\begin{equation}\label{DefinitionSkn}
s_{K,N}(r) :=
\begin{cases}
\sqrt{\frac{N-1}{K}}\sin\left(\sqrt{\frac{K}{N-1}}r\right) & \mbox{if} \quad K>0\, , \\
r & \mbox{if} \quad K=0\, , \\
\sqrt{\frac{N-1}{-K}}\sinh\left(\sqrt{\frac{-K}{N-1}}r\right) & \mbox{if} \quad K<0\, . \\
\end{cases}
\end{equation}
In particular $(X,\dist,\meas)$ is locally uniformly doubling, that is to say, for any $R>0$ there exists $C(K,N,R)>0$ such that
\begin{equation}\label{eq:locdoubling}
\meas(B_{2r}(x))\le C(K,N,R)\meas(B_r(x))\quad \text{for any $x\in X$ and for any $0<r<R$\, .}
\end{equation}
Moreover, in \cite{VonRenesse08} has been proven that  $\RCD(K,N)$ spaces verify a local Poincar\'e inequality, therefore they fit in the general framework of PI spaces considered for instance in \cite{Cheeger99}.\\

\subsubsection{Structure theory}
From the point of view of geometric measure theory a notion of $k$-regular point for an $\RCD(K,N)$ metric measure space $(X,\dist,\meas)$ can be introduced in the following terms. 

\begin{definition}[Regular points]
	We say that $x\in\mathcal{R}_k$ whenever
	\[
	\left(X,\frac{\dist}{r},\frac{\meas}{\meas(B_r(x))},x\right) \to 
	(\setR^k,\dist_{eucl},\omega_k^{-1}\haus^k,0^k)
	\quad \text{in the pmGH topology}
	\] 
	as $r\downarrow 0$, where $\omega_k:=\haus^k(B_1(0^k))$. 
\end{definition}

In \cite{BrueSemola18} (see also the very recent \cite{Deng20}), generalizing a previous result obtained for Ricci limits in \cite{ColdingNaber13}, it has been proved that for any $\RCD(K,N)$ metric measure space $(X,\dist,\meas)$ there exists an integer $1\le n\le N$, that we shall call \emph{essential dimension} of $(X,\dist,\meas)$ from now on, such that
\begin{equation}
\meas(X\setminus\mathcal{R}_n)=0 \, .
\end{equation}
In this generality we also know after \cite{MondinoNaber19} that $X$ is $(\meas,n)$-rectifiable as metric space. Moreover, the representation formula $\meas=\theta\haus^n$, for some locally integrable nonnegative density $\theta$, has been obtained in the independent works \cite{KellMondino18,DePhlippisMarcheseRindler17,GigliPasqualetto16a}.

\subsubsection{Calculus on $\RCD$ spaces }
We refer to \cite{AmbrosioGigliSavare14,Gigli15,Gigli18} for the basic background about first and second order differential calculus on $\RCD$ spaces.

Here and in the following we denote by $\Hess u$ the Hessian of a function $u\in H^{2,2}(X,\dist,\meas)$, referring to \cite{Gigli18} for its introduction and the study of its main properties in this framework. Thanks to locality, we will be dealing also with functions that are defined only locally. The space of test functions as introduced in \cite{Gigli18} will be denoted by
\begin{equation}
\Test(X,\dist,\meas)\coloneqq\{f\in D(\Delta)\cap L^{\infty}(X,\meas): \abs{\nabla f}\in L^{\infty}(X)\quad\text{and}\quad\Delta f\in H^{1,2}(X,\dist,\meas) \}\, .
\end{equation}
The existence of many test functions within this framework is one of the outcomes of \cite{Savare14}.\\ 
We will also rely repeatedly on the following existence result for \emph{good} cut-off functions.

\begin{lemma}[Good cut-off functions \cite{AmbrosioMondinoSavare14}]
	\label{lem:good_cut-off}
	Let $(X,\dist,\meas)$ be an $\RCD(K,N)$ space. Let $p\in X$ be fixed. Then there exists
	$\eta\in\Test(X,\dist,\meas)$ such that \(0\leq\eta\leq 1\) on \(X\), the support
	of \(\eta\) is compactly contained in $B_5(p)$, and $\eta=1$ on $B_4(p)$.
\end{lemma}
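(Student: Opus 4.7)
The plan is to obtain $\eta$ by first smoothing a Lipschitz tent cut-off via the heat semigroup, and then composing the result with a one-variable $C^2$ truncation to enforce the exact support and plateau conditions.

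Concretely, I would start with the $1$-Lipschitz function $\chi(x):=\min\{1,\max\{0,\tfrac{9}{2}-\dist(x,p)\}\}$, which equals $1$ on $B_{7/2}(p)$ and vanishes on $X\setminus B_{9/2}(p)$; in particular $\chi\in L^2(X,\meas)$ since $\meas(B_{9/2}(p))<\infty$ by Bishop--Gromov. Set $u_t:=P_t\chi$ for a small parameter $t>0$ to be fixed, where $P_t$ is the heat semigroup of $(X,\dist,\meas)$. Writing $u_t=P_{t/2}(P_{t/2}\chi)$, the analyticity of the semigroup on $L^2$ gives $P_{t/2}\chi\in D(\Delta)$ with $\Delta P_{t/2}\chi\in L^2$, and applying $P_{t/2}$ once more yields $u_t\in D(\Delta)$ with $\Delta u_t=P_{t/2}(\Delta P_{t/2}\chi)\in H^{1,2}(X,\dist,\meas)\cap L^{\infty}$, the last inclusion coming from ultracontractivity of the heat flow on $\RCD(K,N)$ spaces. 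The Bakry--Émery gradient estimate $|\nabla P_t f|\le e^{-Kt}P_t|\nabla f|$ applied with $|\nabla\chi|\le 1$ shows $|\nabla u_t|\le e^{-Kt}\in L^{\infty}$. Thus $u_t$ is already a test function, but does not exactly match the prescribed values on $B_4(p)$ and $X\setminus B_5(p)$.

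To correct this, pick $\delta>0$ small and a monotone $\varphi\in C^{2}(\setR)$ with $\varphi\equiv 0$ on $(-\infty,\delta]$, $\varphi\equiv 1$ on $[1-\delta,\infty)$, and $\varphi'$ Lipschitz. Define $\eta:=\varphi\circ u_t$. The Lipschitz chain rule of $\RCD$-calculus gives $\eta\in H^{1,2}\cap L^{\infty}$ with $|\nabla\eta|=|\varphi'(u_t)|\,|\nabla u_t|\in L^{\infty}$, while the Laplacian chain rule yields
\begin{equation*}
\Delta\eta=\varphi'(u_t)\Delta u_t+\varphi''(u_t)|\nabla u_t|^{2}.
\end{equation*}
Both summands lie in $L^{\infty}$ and, using that multiplication by Lipschitz bounded functions preserves $H^{1,2}$, they belong to $H^{1,2}(X,\dist,\meas)$; hence $\eta\in\Test(X,\dist,\meas)$. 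By construction $0\le\eta\le 1$.

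The main obstacle is the quantitative control $u_t\ge 1-\delta$ on $B_4(p)$ and $u_t\le\delta$ on $X\setminus B_5(p)$, which is what allows the truncation $\varphi$ to restore the exact plateau and to guarantee $\supp\eta\Subset B_5(p)$. This reduces to showing that, for $t$ sufficiently small depending on $\delta$ and the geometry at scale $1$ around $p$, the heat flow spreads the characteristic regions of $\chi$ by less than $\tfrac{1}{2}$ in $L^{\infty}$. This follows from Gaussian-type heat kernel bounds on $\RCD(K,N)$ spaces: the local doubling property \eqref{eq:locdoubling} together with the Poincaré inequality places $(X,\dist,\meas)$ in the class of PI spaces of \cite{Cheeger99}, and standard Grigoryan--Saloff-Coste--Sturm theory then gives two-sided Gaussian estimates for the heat kernel $p_t(x,y)$. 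Integrating these estimates against $\chi$ and $1-\chi$ on the tubular shells around $\partial B_{7/2}(p)$ and $\partial B_{9/2}(p)$ provides the required pointwise control on $u_t$, completing the construction.
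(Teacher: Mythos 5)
Your approach is the standard heat-flow mollification argument from the cited reference \cite{AmbrosioMondinoSavare14}: start from a Lipschitz tent cut-off, regularize by $P_t$, then truncate by a one-variable smooth function. The regularity claims for $u_t=P_t\chi$ (analyticity of the semigroup, Bakry--\'Emery gradient contraction, $\Delta u_t\in L^{\infty}$ via Gaussian bounds) are all correct and are exactly the ingredients used there.

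However, there is a genuine gap in the quantitative step, and it comes from the choice of radii. Your $\chi(x)=\min\{1,\max\{0,\tfrac{9}{2}-\dist(x,p)\}\}$ equals $1$ only on $\bar B_{7/2}(p)$ and decreases linearly on the annulus $B_{9/2}(p)\setminus B_{7/2}(p)$; in particular $\chi(x)=\tfrac12$ when $\dist(x,p)=4$. Since $B_{7/2}(p)\subsetneq B_4(p)$, the claim that $u_t\ge 1-\delta$ on $B_4(p)$ for $t$ small cannot hold: by continuity of $t\mapsto P_t\chi(x)$ at $t=0$ (and since $P_t$ preserves $0\le\cdot\le 1$), at a point $x$ with $\dist(x,p)=4$ one has $u_t(x)\to\chi(x)=\tfrac12$ as $t\to 0^+$, not to $1$. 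Heat flow smooths; it does not raise the value near the edge of the plateau towards $1$. Consequently, after composing with any truncation $\varphi$ that is $1$ only above $1-\delta$, the set $\{\eta=1\}$ will be contained in (a neighborhood of) $B_{7/2}(p)$, which does not contain $B_4(p)$. The fix is simple: the plateau of $\chi$ must strictly contain $\bar B_4(p)$, while $\supp\chi$ must be strictly inside $B_5(p)$. For instance take $\chi(x)=\min\{1,\max\{0,2(\tfrac{19}{4}-\dist(x,p))\}\}$, so that $\chi\equiv 1$ on $\bar B_{17/4}(p)$ and $\supp\chi\subset \bar B_{19/4}(p)$; then for $t$ small enough the Gaussian bounds give $u_t\ge 1-\delta$ on $B_4(p)$ and $u_t\le\delta$ outside $B_5(p)$, and the truncation argument goes through.

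A minor secondary point: to conclude $\Delta\eta\in H^{1,2}$ you need $\nabla\bigl(\varphi''(u_t)\,|\nabla u_t|^2\bigr)\in L^2$, and the term $\nabla(\varphi''(u_t))=\varphi'''(u_t)\nabla u_t$ requires $\varphi''$ to be Lipschitz, i.e.\ $\varphi\in C^{3}$ (or $C^{2,1}$) rather than merely $C^2$ with $\varphi'$ Lipschitz. This is easily fixed by taking $\varphi$ smooth. The term $\varphi''(u_t)\nabla(|\nabla u_t|^2)=2\varphi''(u_t)\Hess u_t(\nabla u_t,\cdot)$ is then handled using the Bochner-type $L^2$ Hessian control available for test functions on $\RCD(K,N)$ spaces, noting that $\varphi''(u_t)$ is supported in the compact set $\{\delta\le u_t\le 1-\delta\}$.
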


In \autoref{sec:almostsplitting} we will rely on some tools from optimal transportation on $\RCD(K,N)$ spaces. Mainly we will be concerned with first and second derivatives of (sufficiently regular) potentials along Wasserstein geodesics. We will denote by $W_2$ the Wasserstein distance induced by optimal transport with cost equal to the distance squared on the space $\Prob_2(X)$ of probabilities with finite second moment. We refer to \cite{Villani09,AmbrosioGigliSavare14} for the basic terminology about this topic and to \cite{GigliRajalaSturm16} for a more detailed account about optimal transportation on $\RCD(K,N)$ spaces.

The next result follows by combining Proposition 5.15 and Corollary 5.7 in \cite{Gigli13}.
\begin{proposition}\label{prop:derivativealonggeodesic1}
	Let $(X,\dist,\meas)$ be an $\RCD(K, N)$ space for some $K\in \setR$ and $1\le N<\infty$. Consider a $W_2$-geodesic $(\eta_s)_{s\in [0,1]}\in \Prob_2(X)$, satisfying $\eta_s\le C\meas$ and $\supp \eta_s \subset B_R(p)$ for any $s\in [0,1]$, for some $C>0$, $R>0$ and $p\in X$. Then, for any $u\in \Lip(X,\dist)$, the function $s\mapsto \int u\di\eta_s$ is $C^1$ and one has
	\begin{equation}
	\frac{\di}{\di s} \int u \di \eta_s=\frac{1}{s}\int \nabla u\cdot \nabla \phi_s \di \eta_s
	\quad \text{for every $s\in (0,1]$}\, ,
	\end{equation}
	where $\phi_s$ is any Lipschitz Kantorovich potential from $\eta_s$ to $\eta_0$.
\end{proposition}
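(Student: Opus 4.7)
The plan is to derive the identity by combining the standard first-order Wasserstein differential calculus on $\RCD(K,N)$ spaces with the identification of the geodesic velocity as the gradient of the Kantorovich potential, and then to upgrade pointwise a.e.\ differentiability to $C^1$ by a stability argument.

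First, I would establish the existence of a Lipschitz Kantorovich potential $\phi_s$ from $\eta_s$ to $\eta_0$ with Lipschitz constant bounded uniformly in $s \in (0,1]$. Since all $\eta_s$ have uniformly bounded support (by geodesic convexity of closed balls up to a controlled enlargement, together with $W_2$-continuity), the $c$-concavity of any Kantorovich potential together with a standard truncation yield Lipschitz constants controlled by $\diam(\supp \eta_s \cup \supp \eta_0)/s$. The factor $1/s$ is precisely why the formula carries a $1/s$: for a $W_2$-geodesic, the Kantorovich potential from $\eta_s$ to $\eta_0$ rescales linearly in time, and $\phi_s/s$ plays the role of the covector associated to the "velocity at time $s$ pointing back to time $0$".

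Second, I would compute the derivative. By the $\RCD(K,N)$ assumption and the bound $\eta_s \le C\meas$, the curve $(\eta_s)$ is absolutely continuous in $(\Prob_2(X), W_2)$, and Gigli's first-order calculus (Proposition 5.15 of \cite{Gigli13}) identifies, for each $s\in (0,1]$, a vector field representing its velocity whose action on $u\in \Lip(X,\dist)$ is given by
\begin{equation}
	\frac{\di}{\di s}\int u\di\eta_s = \frac{1}{s}\int \nabla u \cdot \nabla \phi_s \di \eta_s
\end{equation}
for $\Leb^1$-a.e.\ $s$. The pairing $\nabla u \cdot \nabla \phi_s$ is well-defined $\meas$-a.e., hence $\eta_s$-a.e., by the bound $\eta_s \le C\meas$ and the infinitesimally Hilbertian structure. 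The role of Corollary 5.7 of \cite{Gigli13} is precisely to give the representation of the metric speed and of the "horizontal derivative" along a $W_2$-geodesic via Kantorovich potentials, which is what turns the abstract formula into the explicit expression on the right-hand side above.

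Third, to upgrade from almost-everywhere differentiability to $C^1$ regularity on $(0,1]$, I would prove that $s \mapsto \frac{1}{s}\int \nabla u \cdot \nabla \phi_s \di\eta_s$ is continuous. By the uniform Lipschitz bound and Arzel\`a--Ascoli, any sequence $s_n \to s_\infty \in (0,1]$ admits a subsequence along which $\phi_{s_n} \to \phi_\infty$ uniformly on the (compact) union of supports; standard stability of optimal transport (using $\eta_{s_n} \to \eta_{s_\infty}$ in $W_2$ and uniqueness of optimal plans with bounded density in $\RCD(K,N)$) identifies $\phi_\infty$ as a Kantorovich potential from $\eta_{s_\infty}$ to $\eta_0$, and yields weak $L^2(\eta_{s_n})$ convergence of $\nabla \phi_{s_n}$ to $\nabla \phi_\infty$ via the Mosco-type convergence results for Sobolev spaces in \cite{GigliMondinoSavare15, AmbrosioHonda}. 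Together with $L^2$-stability of $\nabla u$ on a pmGH-constant sequence, this gives continuity of the right-hand side and hence $C^1$ regularity.

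The main obstacle is the third step: the Kantorovich potential $\phi_s$ is only defined up to an additive constant and is not canonically chosen, so continuity must be understood as continuity of the integral $\int \nabla u \cdot \nabla \phi_s \di\eta_s$ \emph{independently of the choice of $\phi_s$}. This reduces to the stability of the gradient of the potential in the strong topology induced by the pairing against $\eta_s$, which is the key technical input provided by the cited results in \cite{Gigli13}.
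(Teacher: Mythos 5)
Your Steps 1--2 correctly identify Proposition 5.15 and Corollary 5.7 of \cite{Gigli13} as the crux, which is exactly what the paper does: its proof is literally the one-line citation ``follows by combining Proposition 5.15 and Corollary 5.7 in \cite{Gigli13}.'' So up to that point you are on the same route.

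The issue is Step 3. First, it is likely redundant: Gigli's Proposition 5.15 already gives the $C^1$ regularity of $s\mapsto\int u\di\eta_s$ pointwise on $(0,1]$, not merely a.e.\ differentiability, so there is nothing left to upgrade. Second, and more importantly, the argument you sketch has a genuine gap. Uniform convergence of the uniformly Lipschitz potentials $\phi_{s_n}\to\phi_\infty$ only gives \emph{weak} $L^2(\meas)$ convergence of $\nabla\phi_{s_n}$ (by Banach--Alaoglu and closedness of the differential); it does \emph{not} give strong $L^2$ convergence (think of $\tfrac{1}{n}\sin(nx)$). Meanwhile, the densities $\rho_{s_n}=\di\eta_{s_n}/\di\meas$ converge only weakly-$*$ in $L^\infty(\meas)$. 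The quantity you need to control,
\begin{equation}
\int \nabla u\cdot\nabla\phi_{s_n}\,\rho_{s_n}\di\meas\, ,
\end{equation}
is therefore a pairing of two weakly convergent sequences, and does not pass to the limit without further input. The missing input is the $c$-concavity / semiconcavity of Kantorovich potentials, which gives strong $L^2$-convergence of gradients from uniform convergence (the metric-measure analogue of the classical fact for convex functions in $\setR^n$); this is the technical heart of what is cited from \cite{Gigli13}. Finally, invoking pmGH-stability and Mosco convergence of Sobolev spaces from \cite{GigliMondinoSavare15,AmbrosioHonda} is misplaced here: the ambient space $(X,\dist,\meas)$ is fixed throughout, only the reference probability measures $\eta_{s_n}$ change, so this is a single-space stability question, not a varying-space one.
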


\subsection{Continuity equation and flow maps}

Let us recall that given any function $u\in\Test(X,\dist,\meas)$ a solution to the continuity equation induced by $\nabla u$ is any absolutely continuous curve $(\rho_t)_{t\in[0,1]}\subset \Prob_2(X)$ such that the following holds: for any $f\in \Lip_c(X,\dist)$ the function $t\mapsto\int f\di \rho_t$ is absolutely continuous and it satisfies 
\begin{equation}
\frac{\di}{\di t}\int f \di \rho_t=\int \nabla u\cdot \nabla f \di \rho_t
\quad \text{for a.e. $t\in [0,1]$ }.
\end{equation} 
We refer to \cite{GigliHan15a} and \cite{AmbrosioTrevisan14} for the treatment of the continuity equation on ($\RCD$) metric measure spaces.

The next result is a particular case of \cite[Proposition 3.11]{GigliHan15a}.

\begin{lemma}\label{lemma:derivativealongflow}
	Let $(X,\dist,\meas)$ be an $\RCD(K,N)$ metric measure space for some $K\in\setR$ and $1\le N<\infty$. Let $u\in\Test(X,\dist,\meas)$ and let $(\rho_t)_{t\in[0,1]}\subset \Prob_2(X)$ be a solution of the continuity equation associated to $\nabla u$.
	If we further assume that $\rho_t\le C\meas$ for any $t\in[0,T]$ for some $C>0$, then for any $\nu\in\Prob_2(X)$ it holds
	\begin{equation}\label{eq:derivativeW_2}
	\frac{\di}{\di t}\frac{1}{2}W_2^2(\rho_t,\nu)=\int \nabla u\cdot\nabla\phi_t\di \rho_t\quad\text{for a.e. $t\in (0,1)$}\, ,
	\end{equation}
	where $\phi_t$ is any optimal Kantorovich potential for the transport problem between $\rho_t$ and $\nu$.
\end{lemma}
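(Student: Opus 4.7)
\medskip

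\noindent\textbf{Proof plan.} The plan is to reduce the computation of $\frac{\di}{\di t}\frac{1}{2}W_2^2(\rho_t,\nu)$ to the behavior of the continuity equation tested against a Kantorovich potential, using the Kantorovich duality as a one-sided linearization of $W_2^2$ at a given time. First I would verify that the map $t\mapsto \frac{1}{2}W_2^2(\rho_t,\nu)$ is absolutely continuous on $[0,1]$: by assumption $(\rho_t)$ is absolutely continuous in $(\Prob_2(X),W_2)$ with metric speed controlled by $\abs{\nabla u}\in L^\infty$ (this is the very meaning of being a solution of the continuity equation driven by the Lipschitz test function $u$), and since $W_2(\cdot,\nu)$ is $1$-Lipschitz, the composition is Lipschitz in $t$, hence differentiable a.e.

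Next, fix a differentiability point $s\in(0,1)$ and let $\phi_s$ be a Lipschitz Kantorovich potential for the pair $(\rho_s,\nu)$ (with $\phi_s^c$ its $c$-conjugate). By the Kantorovich duality one has the identity $\frac{1}{2}W_2^2(\rho_s,\nu)=\int \phi_s\di\rho_s+\int \phi_s^c\di\nu$ together with the inequality $\frac{1}{2}W_2^2(\rho_t,\nu)\ge \int \phi_s\di\rho_t+\int \phi_s^c\di\nu$ for every $t$. Subtracting gives
\begin{equation*}
\tfrac{1}{2}W_2^2(\rho_t,\nu)-\tfrac{1}{2}W_2^2(\rho_s,\nu)\ge \int \phi_s\di\rho_t-\int \phi_s\di\rho_s.
\end{equation*}
To differentiate the right-hand side at $t=s$ via the continuity equation, I must use $\phi_s$ as a test function. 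Since $\phi_s$ is only locally Lipschitz and need not have compact support, I would truncate via the good cut-off functions of Lemma~\ref{lem:good_cut-off}: since $\rho_t\le C\meas$ and $\supp\rho_t$ stays in a fixed bounded region (one can localize using that the continuity equation is driven by $\nabla u$ with $u$ a test function, or assume w.l.o.g. via a cut-off argument on $u$), a cut-off $\chi$ equal to $1$ on this region reduces the argument to $f=\chi\phi_s\in\Lip_c(X)$. Applying the definition of a solution of the continuity equation to $f$ and then letting the cut-off invade $X$, one obtains
\begin{equation*}
\liminf_{t\to s^+}\frac{\int \phi_s\di\rho_t-\int \phi_s\di\rho_s}{t-s}=\int \nabla u\cdot \nabla \phi_s\di \rho_s,
\end{equation*}
and the analogous one-sided limit from the left with the opposite sign.

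Combining the two inequalities, one deduces
\begin{equation*}
\frac{\di^+}{\di t}\tfrac{1}{2}W_2^2(\rho_t,\nu)\Big|_{t=s}\ge \int\nabla u\cdot\nabla\phi_s\di \rho_s\ge \frac{\di^-}{\di t}\tfrac{1}{2}W_2^2(\rho_t,\nu)\Big|_{t=s}.
\end{equation*}
Since $s$ was a point of differentiability, both one-sided derivatives coincide and the chain of inequalities collapses to the desired equality. The main technical obstacle I foresee is not the duality argument itself but the careful justification of the integration by parts along the flow: one must ensure that $\phi_s$ can be legitimately tested against $\rho_t$ via the continuity equation, which requires controlling the support of $\rho_t$ for $t$ near $s$ (using the bound $\rho_t\le C\meas$ together with the uniform Lipschitz bound on the velocity $\nabla u$ to propagate supports boundedly) and then a standard truncation-and-pass-to-the-limit with dominated convergence, using that $\phi_s$ has at worst quadratic growth while $\nu\in\Prob_2(X)$.
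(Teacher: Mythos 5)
The paper does not prove this lemma: it cites it as a particular case of \cite[Proposition 3.11]{GigliHan15a}. Your duality-linearization argument is precisely the approach used in that reference (Lipschitz time regularity of $W_2(\rho_t,\nu)$, one-sided bounds from the Kantorovich dual at a fixed time, collapse at a differentiability point), so the route matches.

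Two points in your sketch need tightening. First, the claim that $\supp\rho_t$ stays in a fixed bounded region is not justified: $u\in\Test(X,\dist,\meas)$ need not have compact support, and $\rho_0\in\Prob_2(X)$ need not have bounded support, so no finite propagation of supports is available. What actually closes the truncation is your fallback observation: $\phi_s$ has at most quadratic growth while $\sup_{r\in[0,1]}\int\dist^2(\cdot,x_0)\di\rho_r<\infty$ (which follows from the uniform Lipschitz bound on $r\mapsto\rho_r$ in $W_2$, itself a consequence of $|\nabla u|\in L^\infty$). With a cutoff $\chi_n$ at scale $n$ as in \autoref{lem:good_cut-off}, the error terms $\int\phi_s\,\nabla\chi_n\cdot\nabla u\,\di\rho_r$ and $\int(1-\chi_n)\,\nabla u\cdot\nabla\phi_s\,\di\rho_r$ are $O(1/n)$ uniformly in $r$, and the truncation passes. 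You should lead with this rather than with the bounded-support claim. Second, the step $\liminf_{t\to s^+}\frac{1}{t-s}\bigl[\int\phi_s\di\rho_t-\int\phi_s\di\rho_s\bigr]=\int\nabla u\cdot\nabla\phi_s\di\rho_s$ needs more than a.e.\ differentiability of $t\mapsto\int\phi_s\di\rho_t$, since $s$ is only chosen as a differentiability point of the Wasserstein distance, not of that auxiliary function. One should invoke the integral form
\begin{equation*}
\int\phi_s\di\rho_t - \int\phi_s\di\rho_s = \int_s^t\int\nabla u\cdot\nabla\phi_s\di\rho_r\di r
\end{equation*}
valid for all $t$ by absolute continuity, and then pass to the limit using the continuity of $r\mapsto\int\nabla u\cdot\nabla\phi_s\di\rho_r$ at $r=s$. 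This continuity is precisely where the hypothesis $\rho_r\le C\meas$ earns its keep: it upgrades the narrow continuity of $r\mapsto\rho_r$ to weak-$*$ convergence of the densities in $L^\infty(\meas)$, which (together with the local boundedness of $\nabla u\cdot\nabla\phi_s$ and the tail bound from the second moments) gives the required convergence.
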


  The theorem below is taken from \cite{GigliTamanini19} where the second order differentiation formula along $W_2$-geodesics has been proved on $\RCD(K,N)$ metric measure spaces.

\begin{theorem}\label{thm:diffforumlaforvectorfields}
	Let $(X,\dist,\meas)$ be an $\RCD(K,N)$ m.m.s. for some $1\le N<\infty$. Let $(\eta_s)_{s\in[0,1]}$ be a $W_2$-geodesic connecting probability measures $\eta_0$ and $\eta_1$ absolutely continuous w.r.t. $\meas$ and with bounded densities and assume that $u\in\Test(X,\dist,\meas)$. Then, the curve
	\begin{equation}
	s\mapsto \int \nabla u\cdot\nabla\phi_s\di\eta_s
	\end{equation}
	is $C^1$ on $[0,1]$, where $\phi_s$ is any function such that for some $r\in[0,1]$ with $s\neq r$ it holds that $-(r-s)\phi_s$ is an optimal Kantorovich potential from $\eta_s$ to $\eta_r$. Moreover
	\begin{equation}
	\frac{\di}{\di s}\int \nabla u\cdot\nabla\phi_s\di\eta_s=\int\Hess u(\nabla\phi_s,\nabla\phi_s)\di\eta_s\, ,
	\quad\text{for any $s\in[0,1]$.}
	\end{equation}	
\end{theorem}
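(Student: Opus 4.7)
The plan is to verify the formula by a formal smooth computation and then justify each step using the non-smooth second-order calculus on $\RCD(K,N)$ spaces. Set $v_s := \nabla \phi_s$. The characterization of $\phi_s$ via $-(r-s)\phi_s$ being a Kantorovich potential from $\eta_s$ to $\eta_r$ encodes that $v_s$ is the Benamou--Brenier velocity of the geodesic: the continuity equation $\frac{d}{ds}\int f\, d\eta_s = \int \nabla f \cdot v_s\, d\eta_s$ holds, consistently with \autoref{prop:derivativealonggeodesic1} and \autoref{lemma:derivativealongflow}, and the Hopf--Lax semigroup produces the Hamilton--Jacobi equation $\partial_s \phi_s + \tfrac12 |v_s|^2 = 0$ along the geodesic. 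Applying these two equations together with the Leibniz rule formally yields
$$ \frac{d}{ds}\int \nabla u \cdot v_s \, d\eta_s = \int \nabla u \cdot \nabla(\partial_s \phi_s) \, d\eta_s + \int \nabla(\nabla u \cdot v_s) \cdot v_s \, d\eta_s = \int \Hess u(v_s, v_s) \, d\eta_s, $$
after the $\Hess \phi_s(\nabla u, v_s)$ contributions from the two terms cancel.

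For the rigorous implementation, I would first use that the bounded density assumption on the endpoints, combined with the $\CD(K,N)$ condition, propagates along the displacement interpolation to a uniform bound $\eta_s \le C \meas$ on $[0,1]$, ensuring that all integrands involved belong to $L^2$. I would then approximate $\phi_s$ by test functions $\phi_s^\tau$ obtained through heat-flow regularization, so that $\nabla \phi_s^\tau \to v_s$ in the appropriate strong sense in the tangent module, and apply the test-function integration-by-parts identity
$$ \int \nabla(\nabla u \cdot \nabla \psi) \cdot \nabla \psi \, d\meas = \int \Hess u(\nabla \psi, \nabla \psi) \, d\meas + \int \Hess \psi(\nabla u, \nabla \psi) \, d\meas $$
available for $u, \psi \in \Test(X,\dist,\meas)$ in Gigli's framework. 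Passing to the limit $\tau \to 0$ and using the Hopf--Lax equation to identify the cross-term $\Hess \phi_s(\nabla u, v_s)$ with the contribution coming from $\nabla(\partial_s \phi_s)$ one arrives at the Hessian formula at every $s$.

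To upgrade the pointwise derivative formula to $C^1$ regularity on $[0,1]$, it suffices to show that $s \mapsto \int \Hess u(v_s, v_s) \, d\eta_s$ is continuous. This reduces to the weak continuity, as bilinear forms paired against $\Hess u$, of the fields $v_s \otimes v_s$ weighted by $\eta_s$, which follows from the $W_2$-continuity of $s \mapsto \eta_s$, the uniform Lipschitz bound on the $\phi_s$ along the geodesic, and the $L^2$-integrability of $\Hess u$ guaranteed by $u \in \Test(X,\dist,\meas)$.

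The main obstacle is making rigorous sense of the Hessian paired with gradients of merely Lipschitz potentials: the Hessian of a test function lives in the $L^2$ symmetric tensor module of Gigli, while $v_s$ is only an $L^\infty$-section without further regularity. One must carefully track that the approximation $\phi_s^\tau \to \phi_s$ also converges at the level of the Hessian pairing, namely $\int \Hess u(\nabla \phi_s^\tau, \nabla \phi_s^\tau)\, d\eta_s \to \int \Hess u(v_s, v_s)\, d\eta_s$. This requires $H^{1,2}$-strong convergence of $\phi_s^\tau \to \phi_s$ together with a uniform $L^\infty$ bound on $|\nabla \phi_s^\tau|$, which is genuinely delicate since the heat flow regularization does not automatically preserve Lipschitz bounds; one needs Bakry--\'Emery contraction estimates, specific to the $\RCD(K,N)$ setting, to control the approximation. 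This is the technical heart of the proof.
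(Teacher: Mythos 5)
The paper does not prove this statement; it is imported verbatim from Gigli--Tamanini \cite{GigliTamanini19}, where it is established by a structurally different method (entropic interpolation via Schr\"odinger bridges), not by direct regularization of Kantorovich potentials.

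Your formal computation is correct: differentiating $s\mapsto\int\nabla u\cdot\nabla\phi_s\,\di\eta_s$ using the continuity equation for $\eta_s$ with velocity $\nabla\phi_s$ together with the Hamilton--Jacobi equation for $\phi_s$ produces two occurrences of $\int\Hess\phi_s(\nabla u,\nabla\phi_s)\,\di\eta_s$ that cancel, leaving $\int\Hess u(\nabla\phi_s,\nabla\phi_s)\,\di\eta_s$. The gap is in the rigorization. The intermediate quantities --- in particular $\int\nabla(\nabla u\cdot\nabla\phi_s)\cdot\nabla\phi_s\,\di\eta_s$ and $\int\nabla u\cdot\nabla(\partial_s\phi_s)\,\di\eta_s$ --- require a second-order calculus for $\phi_s$, whereas a Kantorovich potential on an $\RCD(K,N)$ space is only Lipschitz (and $c$-concave); it has no Hessian in Gigli's module-theoretic sense. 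Your proposed remedy, a heat-flow regularization $\phi_s^\tau$ of the potential at each fixed $s$, cannot close this gap: once regularized, $\phi_s^\tau$ is no longer a Kantorovich potential for the geodesic, so it satisfies neither the Hamilton--Jacobi equation nor the property that $\nabla\phi_s^\tau$ generates $(\eta_s)$. Consequently the cross-term $\int\Hess\phi_s^\tau(\nabla u,\nabla\phi_s^\tau)\,\di\eta_s$ arising from the integration-by-parts identity for test functions has no partner to cancel against, and it need not converge as $\tau\to0$ since $\Hess\phi_s$ does not exist. You flag this as the ``technical heart,'' but you treat it as an issue of tracking $H^{1,2}$ and $L^\infty$ bounds along the approximation, when in fact the scheme of regularizing the potential for fixed $s$ is structurally unable to reproduce the cancellation. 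This is precisely the obstruction that led Gigli and Tamanini to replace the $W_2$-geodesic with entropic interpolations $\mu_s^\eps$: there the Schr\"odinger potentials lie in $\Test(X,\dist,\meas)$, satisfy forward/backward Hamilton--Jacobi--Bellman equations compatible with the regularized transport, and enjoy uniform-in-$\eps$ entropy and Fisher-information estimates (via Bochner/Bakry--\'Emery) that make the limit $\eps\to0$ possible. The regularization must take place at the level of the transport problem, not at the level of the potentials.
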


In the context of $\RCD(K,\infty)$ spaces a general theory of flows for Sobolev vector fields has been developed in \cite{AmbrosioTrevisan14}. Here we only collect some simplified statements relevant for our purposes.
 			
\begin{definition}\label{def:Regularlagrangianflow}
	Let $u\in\Test(X,\dist,\meas)$. We say that a Borel map $\XX:[0,\infty)\times X\rightarrow X$ is a Regular Lagrangian flow (RLF for short) associated to $\nabla u$ if the following conditions hold true:
	\begin{itemize}
		\item [1)] $\XX(0,x)=x$ and $\XX(\cdot,x)\in C([0,\infty);X)$ for every $x\in X$;
		\item [2)] there exists $L\ge0$, called compressibility constant, such that
		\begin{equation}
		\XX(t,\cdot)_{\sharp} \meas\leq L\meas,\qquad\text{for every $t\geq 0$}\, ;
		\end{equation}
		\item [3)] for every $f\in \Test(X,\dist,\meas)$ the map $t\mapsto f(\XX(t,x))$ is locally absolutely continuous
		in $[0,\infty)$ for $\meas$-a.e. $x\in X$ and
		\begin{equation}\label{eq: RLF condition 3}
		\frac{\di}{\di t} f(\XX(t,x))= \nabla u\cdot \nabla f(\XX(t,x)) \quad \quad \text{for a.e.}\ t\in (0,\infty)\, .	
		\end{equation}
	\end{itemize}
\end{definition}

In the next theorem we state some general results concerning Regular Lagrangian flows that will be used in the sequel.
\begin{theorem}\label{th: Lagrangianflows}
	Let $(X,\dist,\meas)$ be an $\RCD(K,\infty)$ space for some $K\in\setR$. Let us fix a function $u\in\Test(X,\dist,\meas)$ with $\Delta u\in L^{\infty}(X,\meas)$. Then
	\begin{itemize}
		\item [(i)] there exists a unique regular Lagrangian flow $\XX:\setR\times X\to X$ associated to $\nabla u$\footnote{To be more precise, there exist unique Regular Lagrangian flows $\XX^+,\XX^-:[0,+\infty)\times X\to X$ associated to $\nabla u$ and $-\nabla u$ respectively and we let $\XX_t=\XX^+_t$ for $t\ge0$ and $\XX_t=\XX_{-t}^-$ for $t\le0$.} (uniqueness is understood in the following sense: if $\XX$ and $\bar\XX$ are Regular Lagrangian flows associated to $\nabla u$, then for $\meas$-a.e. $x\in X$ one has $\XX_t(x)=\bar \XX_t(x)$ for any $t\in \setR$);
		\item [(ii)] $\XX$ satisfies the semigroup property: for any $s\in\setR$ it holds that, for $\meas$-a.e. $x\in X$,
		\begin{equation}\label{eq:semigroup property}
		\XX(t,\XX(s,x))=\XX(t+s,x)
		\qquad
		\forall t\in \setR \, ,
		\end{equation}
		and the following bound is verified:
		\begin{equation}\label{w}
		e^{-t\norm{\Delta u}_{L^{\infty}}}\meas \le \left(\XX_t\right)_{\sharp}\meas \le e^{t\norm{\Delta u}_{L^{\infty}}}\meas\, ;
		\end{equation}
		\item[(iii)] if $\Delta u=0$, $\Hess u=0$ and $|\nabla u|\le 1$ in $B_4(p)$ for some $p\in X$ then, for any $x\in B_1(p)$ and $t\in (-1,1)$ the map $\XX$ admits a  pointwise representative satisfying
		\begin{equation}\label{eq:isometry}
		\dist(\XX_t(x),\XX_t(y))=\dist(x,y)
		\quad \text{for any $x,y\in B_1(p)$, and $t\in (-1,1)$}\, .
		\end{equation}
	\end{itemize}
\end{theorem}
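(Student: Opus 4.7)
My plan is to invoke the theory of Regular Lagrangian flows for Sobolev vector fields on RCD spaces developed by Ambrosio and Trevisan in \cite{AmbrosioTrevisan14}. The vector field $\nabla u$ satisfies $|\nabla u|\in L^\infty$ by the very definition of $\Test(X,\dist,\meas)$ and has divergence $\Delta u\in L^\infty$ by assumption; this is precisely the regularity required by their existence and uniqueness theorem, so applying it to $\nabla u$ and $-\nabla u$ separately yields the one-sided flows which one glues into a single map $\XX\colon\setR\times X\to X$. The semigroup identity \eqref{eq:semigroup property} then follows from uniqueness, since for each fixed $s$ both $t\mapsto\XX(t+s,x)$ and $t\mapsto\XX(t,\XX(s,x))$ satisfy \autoref{def:Regularlagrangianflow} with initial value $\XX(s,x)$. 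For the two-sided bound \eqref{w} I would argue via the continuity equation satisfied by the density $\rho_t$ of $(\XX_t)_\sharp\meas$, which formally solves $\partial_t\rho_t=-\nabla u\cdot\nabla\rho_t-\rho_t\Delta u$; along Lagrangian trajectories $\rho_t\circ\XX_t$ then obeys a scalar ODE whose coefficient is bounded in absolute value by $\|\Delta u\|_{L^\infty}$, and Gr\"onwall closes the loop after a standard regularisation.

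\textbf{Part (iii): the Wasserstein isometry.} The main work is \eqref{eq:isometry}. First, $|\nabla u|\le 1$ on $B_4(p)$ gives the a.e.\ trajectory estimate $\dist(\XX_t(x),x)\le |t|$, so curves starting in $B_1(p)$ stay inside $B_2(p)\subset B_4(p)$ for $|t|<1$, where both $\Delta u=0$ and $\Hess u=0$ are in force. The plan is first to establish the Wasserstein identity
\begin{equation*}
W_2\bigl((\XX_t)_\sharp\mu,(\XX_t)_\sharp\nu\bigr) = W_2(\mu,\nu)
\end{equation*}
for every pair of probability measures $\mu,\nu\ll\meas$ with bounded densities supported in $B_1(p)$ and every $|t|<1$. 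To this end I fix $t_0$, connect $(\XX_{t_0})_\sharp\mu$ to $(\XX_{t_0})_\sharp\nu$ by a $W_2$-geodesic $(\eta_s)_{s\in[0,1]}$ whose support stays in $B_4(p)$, and split $\frac{\di}{\di t}\tfrac12 W_2^2\bigr|_{t_0}$ into the two contributions coming from moving the first and the second marginal; each of these \autoref{lemma:derivativealongflow} expresses as $\int\nabla u\cdot\nabla\phi\,\di\eta_s$ at $s=0$ or $s=1$ with $\phi$ an appropriate Kantorovich potential. By \autoref{thm:diffforumlaforvectorfields} the $s$-derivative of $\int\nabla u\cdot\nabla\phi_s\,\di\eta_s$ equals $\int\Hess u(\nabla\phi_s,\nabla\phi_s)\,\di\eta_s$, which vanishes identically, so the two boundary contributions agree in absolute value and cancel by their opposite signs; hence $\frac{\di}{\di t}W_2^2=0$ on $(-1,1)$.

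\textbf{Part (iii): from Wasserstein to pointwise.} To pass from the Wasserstein identity to \eqref{eq:isometry} I would localise at Lebesgue points: applying the identity to the normalised balls $\mu^r_x:=\meas\res B_r(x)/\meas(B_r(x))$ and $\mu^r_y$ and noting that $(\XX_t\times\XX_t)_\sharp(\mu^r_x\otimes\mu^r_y)$ is an admissible transport plan between $(\XX_t)_\sharp\mu^r_x$ and $(\XX_t)_\sharp\mu^r_y$ yields the competitor inequality $W_2^2\le\int\dist(\XX_t(a),\XX_t(b))^2\,\di\mu^r_x(a)\,\di\mu^r_y(b)$. Combined with the Wasserstein identity and a Lebesgue differentiation argument as $r\downarrow 0$, this gives $\dist(\XX_t(x),\XX_t(y))\ge\dist(x,y)$ for $(\meas\otimes\meas)$-a.e.\ $(x,y)$; running the same argument with the reverse flow $\XX_{-t}$ produces the opposite inequality. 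The main technical obstacle is precisely this final passage, because the flow is defined only almost everywhere: the a.e.\ isometry makes $\XX_t$ uniformly continuous on a set of full measure in $B_1(p)$, hence it extends uniquely to a $1$-Lipschitz map on the whole ball (using completeness of $X$), which provides the pointwise representative satisfying \eqref{eq:isometry}.
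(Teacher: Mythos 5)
Your proposal is correct; the paper proves this theorem purely by citation (parts (i)--(ii) to \cite{AmbrosioBrueSemola19} and part (iii) to \cite{BrueSemola18b}), and your argument faithfully reconstructs the strategy of those references: Ambrosio--Trevisan's well-posedness theory for the flow, and for part (iii) the Wasserstein-isometry-via-second-order-differentiation argument followed by a Lebesgue-point argument and continuous extension to obtain the pointwise representative.
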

\begin{proof}
	We refer to  \cite[Theorem 1.12]{AmbrosioBrueSemola19} for the proof of (i) and (ii), while (iii) follows just localising the argument in \cite[Theorem 2.7]{BrueSemola18b}.
\end{proof}

\begin{remark}[Continuity equations and flow maps]
	\label{remark:Continuity equations and flow maps}
	Solutions to the continuity equations and flow maps are strictly related. Indeed, given a function $u\in\Test(X,\dist,\meas)$ with $\Delta u\in L^{\infty}(X,\meas)$ and $\XX$ a RLF associated to $\nabla u$ one has that
	\[
	\rho_t \meas :=(\XX_t)_{\#} \rho_0 \meas\, ,
	\quad t\in [0,1]\, ,
	\]
	is the unique solution to the continuity equation with initial datum $\rho_0\in L^{\infty}(X)$ and velocity field $\nabla u$.
\end{remark}

\subsection{Noncollapsed spaces}

	In \cite{DePhilippisGigli18} the notion of \emph{noncollapsed} $\RCD(K,N)$ metric measure space has been proposed motivated by the theory of noncollapsed Ricci limit spaces, studied since \cite{CheegerColding97} (see also a similar, though a priori weaker, notion suggested in \cite{Kitabeppu19}). We say that $(X,\dist,\meas)$ is a noncollapsed $\RCD(K,N)$ space if $N$ is an integer and $\meas=\haus^N$.
	
We point out that another relevant class to consider would be that of $\RCD(K,N)$ metric measure spaces for which the essential dimension equals $N$ (called \emph{weakly noncollapsed} in \cite{DePhilippisGigli18}). In the compact case it is known that these spaces are noncollapsed in the above sense \cite{Honda19} and it is conjectured that this should be true also in the general case.

On top of the usual structure of $\RCD(K,N)$ spaces, noncollapsed spaces have additional regularity properties.

Let us begin by pointing out the following powerful result \cite[Theorem 1.2]{DePhilippisGigli18}, generalizing a previous statement due to Cheeger-Colding \cite{CheegerColding97} (see also \cite{Colding97}).

\begin{theorem}[Volume convergence]\label{thm:volumeconvergence}
	Let $(X_n,\dist_n,\haus^N,x_n)$ be pointed noncollapsed $\RCD(K,N)$ metric measure spaces and assume that they converge in the pGH topology to $(X,\dist,x)$. Then, if 
	\begin{equation}
	\limsup_{n\to\infty}\haus^N(B_1(x_n))>0\, ,
	\end{equation}
	they also converge in the pmGH topology to $(X,\dist,\haus^N,x)$.
\end{theorem}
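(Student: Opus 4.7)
The plan is to extract a pmGH subsequential limit $(X,\dist,\mu,x)$ and then show $\mu = \haus^N$; once this is established for every subsequential limit, the full sequence converges in pmGH to $(X,\dist,\haus^N,x)$ by a standard diagonal argument. First, by Bishop-Gromov \eqref{eq:BishopGromovInequality} the measures $\haus^N\measrestr B_R(x_n)$ are uniformly bounded above on each compact scale, and the noncollapsing assumption combined with doubling yields, along a subsequence, the uniform positive lower bound $\haus^N(B_R(x_n))\ge c(K,N,R)>0$. Gromov pre-compactness for uniformly locally doubling pointed m.m.s.\ then produces a subsequential pmGH limit $(X,\dist,\mu,x)$ with $\mu$ locally finite and $\mu(B_1(x))>0$; stability of $\RCD$ (\autoref{remark:stability}) guarantees that $(X,\dist,\mu)$ is $\RCD(K,N)$.

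Next I would establish that for $\mu$-a.e.\ $y\in X$ the density $\theta(y):=\lim_{r\downarrow 0}\mu(B_r(y))/(\omega_N r^N)$ equals $1$. For $y_n\to y$ in a realization of the pmGH convergence, the Bishop-Gromov monotonicity of $r\mapsto \haus^N(B_r(y_n))/v_{K,N}(r)$ passes to the limit (using $\haus^N(B_r(y_n))\to \mu(B_r(y))$ for a.e.\ $r$), so $\theta(y)$ exists and $\mu(B_r(y))\le \theta(y)v_{K,N}(r)$ for every $r>0$. The Mondino-Naber rectifiability together with the Brué-Semola constancy of the essential dimension decomposes $\mu$-a.e.\ of $X$ into the $n$-regular stratum $\mathcal{R}_n$ for some essential dimension $n\le N$. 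The crucial step is to propagate the uniform volume non-degeneracy from the sequence to force $n=N$: at a generic $\mu$-regular point one compares the tangent of the limit (isometric to $(\setR^n,\dist_{eucl},\omega_n^{-1}\haus^n,0)$) with the approximating sequence, and a uniform positive lower bound on $\haus^N$ of unit balls rules out $n<N$ via a Bishop-Gromov rigidity argument. Consequently $\mu$-a.e.\ $y$ is $N$-regular with Euclidean tangent of unit density, so $\theta\equiv 1$ $\mu$-a.e.

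The identification $\mu=\haus^N$ then follows from standard density comparison theorems for Radon measures. The upper bound $\mu(B_r(y))\le \theta(y)v_{K,N}(r)=\omega_N r^N(1+o(1))$ together with $\theta=1$ $\mu$-a.e.\ gives $\mu\le \haus^N$, while on the $\mu$-full regular set the matching Hausdorff-density lower bound yields $\mu\ge \haus^N$ there and hence everywhere. The main obstacle is the middle step, the propagation of the maximal essential dimension $N$ to the limit measure: this is where the heavy $\RCD$ machinery (rectifiability, constancy of the essential dimension, and Bishop-Gromov rigidity applied to approximate Euclidean tangents along the sequence) is really needed. Without the noncollapsing hypothesis the limit measure could have strictly smaller essential dimension and fail to agree with $\haus^N$, which is precisely why the hypothesis $\limsup_n \haus^N(B_1(x_n))>0$ is indispensable.
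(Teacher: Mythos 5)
This theorem is not proved in the paper; it is cited from De Philippis--Gigli \cite{DePhilippisGigli18}, where it generalizes Colding's volume convergence theorem. Your proposal is in the right spirit (extract a pmGH sub-limit $\mu$, use the $\RCD$ structure theory to identify it with $\haus^N$), but there is a genuine gap at exactly the step you yourself flag as crucial, and the gap is not merely that the details are long.

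You attribute the fact that the essential dimension $n$ of $(X,\dist,\mu)$ must equal $N$ to the \emph{lower} bound $\haus^N(B_1(x_n))\ge v>0$. That bound alone does not do the job: combined with Bishop--Gromov on $X_n$ it gives $\mu(B_r(y))\ge c\,r^N$ for small $r$, and combined with finiteness of the upper $n$-density of $\mu$ (from Mondino--Naber rectifiability) this only yields $r^N\lesssim r^n$ as $r\downarrow 0$, i.e.\ $n\le N$, which is already known. What actually forces $n\ge N$ is the \emph{upper} density bound $\Theta_{X_n}(y_n)\le 1$, valid because the $X_n$ carry the reference measure $\haus^N$ (this is the ``density $\le 1$'' fact recalled in Section 2 of the paper, after \cite[Lemma 2.2]{DePhilippisGigli18}). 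From Bishop--Gromov monotonicity it gives $\haus^N(B_r(y_n))\le v_{K,N}(r)$, hence $\mu(B_r(y))\le v_{K,N}(r)\approx\omega_N r^N$; comparing with the lower $n$-density bound $\mu(B_r(y))\gtrsim r^n$ at a $\mu$-regular point then forces $n\ge N$, so $n=N$. The noncollapsing lower bound is still indispensable, but only to guarantee that the normalizations $\haus^N(B_{r}(y_n))$ along the approximating sequence do not degenerate to zero, not to exclude $n<N$ directly.

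A second gap appears after you establish $n=N$. The Bishop--Gromov argument just sketched gives $\theta(y)\le 1$ $\mu$-a.e.; it does not give $\theta(y)\ge 1$, and the two sides are very different in difficulty. The inequality $\theta\ge 1$ is precisely the non-trivial content of Colding-type volume convergence (that volume does not drop in the limit), and it cannot be deduced from the rectifiability/constancy-of-dimension machinery alone; De Philippis--Gigli prove it by a direct argument about convergence of $\haus^N$-measures (and an intrinsic volume-rigidity statement), not by the tangent-cone bootstrap you describe. Finally, even granting $\theta\equiv 1$ $\mu$-a.e., the passage from density identities to $\mu=\haus^N$ as measures needs care: naive density-comparison theorems for Radon measures introduce dimensional constants (e.g.\ a factor $2^N$), and one must instead exploit that $\mu$ is concentrated on the $N$-rectifiable set $\mathcal{R}_N$ and that the lower bound $\mu\gtrsim\haus^N$ forces $\haus^N(X\setminus\mathcal{R}_N)=0$.
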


We refer to \cite[Definition 3.6.16]{BuragoBuragoIvanov01} for the definition of metric cone $(C(Y),\dist_C)$ over a metric space $(Y,\dist_Y)$. Here we just recall that for points $(r_1,x_1)$ and $(r_2,x_2)$ such that $r_1,r_2\ge 0$ and $\dist_Y(x_1,x_2)\le \pi$ the cone distance is given by the law of cosines:
\begin{equation}
\dist_C^2\left((r_1,x_1),(r_2,x_2)\right)=r_1^2+r_2^2-2r_1r_2\cos(\dist_Y(x_1,x_2))\, .
\end{equation}
In \cite{DePhilippisGigli18}, generalizing \cite{CheegerColding97} and relying on \cite{DePhilippisGigli16} (which extends in turn one of the key results in \cite{CheegerColding96}) it has been proven that for a noncollapsed $\RCD(K,N)$ metric measure space $(X,\dist,\haus^N)$ any tangent cone is a metric cone over a noncollapsed $\RCD(N-2,N-1)$ metric measure space $(Y,\dist_Y,\haus^{N-1})$. This amounts to say that any pGH limit of $(X,r_i^{-1}\dist,x)$, for some sequence of radii $r_i\downarrow 0$, is a metric cone in the sense above.

Given any noncollapsed $\RCD(K,N)$ metric measure space $(X,\dist,\haus^N)$ and any $x\in X$
let us denote by 
\[
\Theta_X (x):=\lim_{r\to 0}\frac{\haus^{N}(B_r(x))}{\omega_N r^N}
\]
the density of $\haus^N$ at $x$ (when there is no risk of confusion we will drop the dependence on the ambient space $X$). The existence of the limit above follows from the Bishop-Gromov inequality. Moreover, the lower semicontinuity of the density (cf. \cite[Lemma 2.2 (i)]{DePhilippisGigli18}) together with a standard result about differentiation of measures allow to infer that $\Theta(x)\le 1$ for every $x\in X$ and $\Theta(x)=1$ for $\haus^N$-a.e. $x\in X$.\\
By volume rigidity (see \cite[Theorem 1.6]{DePhilippisGigli18} after \cite{Colding97}) we recognize that $\Theta(x)=1$ if and only if the tangent cone is unique and isometric to $(\setR^N,\dist_{eucl})$.\\ 
More generally, Colding's volume convergence theorem \cite{Colding97,CheegerColding97} (see also \cite[Theorem 1.3]{DePhilippisGigli18}) yields that for any $x\in X$ any cross section $(Y,\dist_Y)$ of a tangent cone $C(Y)$ at $x$ satisfies 
\begin{equation}\label{eq:volcross}
\haus^{N-1}(Y)=N\omega_N\Theta(x)\, .
\end{equation}

\subsection{Cone splitting via content}\label{sec:conesplitting}
 Let us start by restating a quantitative version of the cone splitting lemma \cite[Lemma 4.1]{CheegerNaber13} tailored for $\RCD(K,N)$ spaces (see \cite{AntonelliBrueSemola19} for the present version).

\begin{definition}\label{def:ConicalSet}
	We define the \emph{ $\varepsilon-(t,r)$ conical set} in $B_{\frac{1}{2}}(x_0)$ as
	\begin{equation}
	C^{\varepsilon}_{t,r} := \left\lbrace x\in B_{\frac{1}{2}}(x_0) : \dist_{GH}\left(\bar{B}_{\frac{tr}{2}}(x),\bar{B}_{\frac{tr}{2}}(z)\right) \leq \frac{\varepsilon r }{2}\, \text{for some $\RCD(0,N)$ cone $Z$ with tip $z$}
	\right\rbrace.
	\end{equation}
\end{definition}

\begin{theorem}\label{thm:conesplittingquant}
	For all $K\in \mathbb{R}$, $N\in [2,\infty)$, $0<\gamma<1$, $\delta<\gamma^{-1}$, and for all $\tau,\psi>0$ there exist $0<\varepsilon=\varepsilon(N,K,\gamma,\delta,\tau,\psi)<\psi$ and $0<\theta=\theta(N,K,\gamma,\delta,\tau,\psi)$ such that the following holds. Let $(X,\dist,\meas)$ be an $\RCD(K,N)$ m.m.s., $x\in X$ and $r\le\theta$ be such that there exists an $\varepsilon r$-GH equivalence
	\begin{equation}
	F:B_{\gamma^{-1}r}\left((0,z^*)\right)\to B_{\gamma^{-1}r}(x)
	\end{equation}
	for some cone $\setR^l\times C(Z)$, with $(Z,\dist_Z,\meas_Z)$ an $\RCD(N-l-2,N-l-1)$ m.m.s..
	If there exists 
	\begin{equation}
	x'\in B_{\delta r}(x)\cap C^{\varepsilon}_{\gamma^{-N},\delta r}
	\end{equation}
	with 
	\begin{equation}
	x'\notin B_{\tau r}\left(F\left(\setR^l\times\left\lbrace z^*\right\rbrace \cap B_{\gamma^{-1}r}((0,z^*))\right)\right)\cap B_r(x),
	\end{equation}
	then for some cone $\setR^{l+1}\times C(\tilde{Z})$, where  $(\tilde{Z},\dist_{\tilde{Z}},\meas_{\tilde{Z}})$ is an $\RCD(N-l-3,N-l-2)$ m.m.s.,
	\begin{equation}
	\dist_{GH}\left(B_r(x), B_r((0,\tilde{z}^*))\right)<\psi r\, .
	\end{equation}  
\end{theorem}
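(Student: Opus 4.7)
The plan is a compactness-contradiction argument, using the classical ``two cone tips force an extra splitting'' principle in the limit. Suppose the conclusion fails. Then there exist sequences $\varepsilon_n, \theta_n \to 0$, $\RCD(K,N)$ spaces $(X_n,\dist_n,\meas_n)$, points $x_n\in X_n$, radii $r_n\le\theta_n$, $\varepsilon_n r_n$-GH equivalences $F_n : B_{\gamma^{-1}r_n}((0,z_n^*))\to B_{\gamma^{-1}r_n}(x_n)$ for cones $\setR^l\times C(Z_n)$ with $Z_n\in\RCD(N-l-2,N-l-1)$, and points $x'_n\in B_{\delta r_n}(x_n)\cap C^{\varepsilon_n}_{\gamma^{-N},\delta r_n}$ at distance $\ge \tau r_n$ from the image of the spine, such that $B_{r_n}(x_n)$ is not $\psi r_n$-close in the GH sense to any ball $B_{r_n}((0,\tilde z^*))$ in an $\setR^{l+1}\times C(\tilde Z)$.

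Rescale by $r_n^{-1}$. The rescaled spaces satisfy $\RCD(Kr_n^2,N)$ with $Kr_n^2\to 0$, are uniformly doubling on bounded sets, and by Gromov precompactness together with \autoref{remark:stability} admit a pGH-convergent subsequence to an $\RCD(0,N)$ limit $(X_\infty,\dist_\infty,x_\infty)$. The cross sections $(Z_n,\dist_{Z_n})$ lie in an $\RCD(N-l-2,N-l-1)$ class with diameter $\le\pi$, hence (after a further subsequence) converge in the pmGH sense to some $Z_\infty\in\RCD(N-l-2,N-l-1)$, so that $\setR^l\times C(Z_n)\to\setR^l\times C(Z_\infty)$. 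Passing to the limit in the $\varepsilon_n$-approximations $F_n$ yields an isometry $F_\infty : B_{\gamma^{-1}}((0,z_\infty^*))\to B_{\gamma^{-1}}(x_\infty)$. Up to a further extraction, $x'_n\to x'_\infty$ with $x'_\infty\in \bar B_\delta(x_\infty)$ at distance $\ge\tau$ from $F_\infty(\setR^l\times\{z_\infty^*\})$. Finally, after rescaling the conical set hypothesis becomes closeness at scale $\gamma^{-N}\delta/2$ with tolerance $\varepsilon_n\delta/2\to 0$, so in the limit $B_{\gamma^{-N}\delta/2}(x'_\infty)\subset X_\infty$ is \emph{exactly} isometric to a ball of the same radius in an $\RCD(0,N)$ metric cone with tip $x'_\infty$.

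Now the limit space $X_\infty$ carries two overlapping exact cone structures: one with tip (somewhere on) $\setR^l\times\{z_\infty^*\}$ valid on $B_{\gamma^{-1}}(x_\infty)$, and one with tip $x'_\infty$ valid on $B_{\gamma^{-N}\delta/2}(x'_\infty)$. The assumption $\delta<\gamma^{-1}$, together with the choice of exponent $N$, guarantees that both cone balls contain the opposite tip with considerable room to spare, in particular $\gamma^{-N}\delta/2$ dominates $\dist_\infty(x_\infty,x'_\infty)\le\delta$ and $\gamma^{-1}$. Consequently the geodesic from (the projection of $x_\infty$ to the spine of the first cone through) $x_\infty$ to $x'_\infty$ extends by the first cone structure to a ray through $x'_\infty$, while the cone structure at $x'_\infty$ extends this ray backward past $x_\infty$, producing a full line in $X_\infty$ whose direction is not contained in the $\setR^l$ factor (since $x'_\infty$ is at distance $\ge\tau>0$ from $\setR^l\times\{z_\infty^*\}$). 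The Cheeger--Gromoll splitting theorem for $\RCD(0,N)$ spaces \cite{Gigli13} then gives a further Euclidean factor, and combining with the existing cone structure the limit is isometric to $\setR^{l+1}\times C(\tilde Z_\infty)$ with $\tilde Z_\infty\in\RCD(N-l-3,N-l-2)$. In particular $B_1(x_\infty)$ is isometric to a unit ball in such a splitting cone, and pGH convergence gives $\dist_{GH}(B_1(x_n),B_1((0,\tilde z^*)))<\psi$ for large $n$, contradicting the assumption.

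The main obstacle is the line-construction step in the limit: one must check that the two cone structures are compatible on overlapping scales that are large compared to the distance between the two tips. This is exactly why the scale $\gamma^{-N}$ is chosen so large relative to $\delta<\gamma^{-1}$, and why the GH-equivalence is required on the expanded ball $B_{\gamma^{-1}r}$ rather than merely $B_r$; these two quantitative choices ensure that after taking a limit the geodesic between the two tips can be prolonged into a bi-infinite line. A secondary technical point is arranging the simultaneous pGH convergence of the cross sections $Z_n$ and of the approximating maps $F_n$, but this is standard once uniform $\RCD$ bounds on the cross sections are invoked.
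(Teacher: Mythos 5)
The paper does not prove this statement: it is quoted from \cite{AntonelliBrueSemola19} and the paper's only guidance is the one-line heuristic that this is ``a quantitative version of the following statement: if a metric cone with vertex $z$ is a metric cone also with respect to $z'\neq z$, then it contains a line.'' Your compactness-contradiction scheme aligns with that heuristic, and most of your preparatory steps (rescaling, passing to a pmGH limit, extracting limit cross-sections, promoting the approximate conicity to exact cone structure in the limit) are correct and standard. However, the core step where you actually produce the new splitting has a genuine gap.

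First, a factual error: the claim that ``$\gamma^{-N}\delta/2$ dominates $\dist_\infty(x_\infty,x'_\infty)\le\delta$ and $\gamma^{-1}$'' is simply false under the stated hypotheses. Taking $\gamma$ close to $1$ (say $\gamma=0.9$, $N=2$) gives $\gamma^{-N}\delta/2\approx 0.62\,\delta$, which dominates neither $\delta$ nor $\gamma^{-1}$. The relation you actually need — namely that the second (exact) cone ball around $x'_\infty$, of radius $\rho=\gamma^{-N}\delta/2$, reaches past the nearest spine point of the first cone — is a quantitative statement of the form $\rho>\dist(x'_\infty,\text{spine})$, and this has to be argued, not asserted, from the interplay of $\gamma$, $\delta$, $\tau$.

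Second, and more fundamentally: the line you invoke does not live in $X_\infty$, and Cheeger–Gromoll cannot be applied to $X_\infty$. After the blow-up you only control $X_\infty$ on two finite balls: $B_{\gamma^{-1}}(x_\infty)$ via the first cone structure, and $B_\rho(x'_\infty)$ via the second. Concatenating the two cone structures at best yields a geodesic segment of \emph{bounded} length through $x'_\infty$; you have no information about $X_\infty$ outside these balls, so no line in $X_\infty$ is produced, and the hypothesis of the splitting theorem for $X_\infty$ is never met. The correct device is to work entirely inside the \emph{complete cone} $Y=\setR^l\times C(Z_\infty)$ that the first structure exhibits. There the ray $\sigma$ from the nearest spine point $p$ through $y=x'_\infty$ is a genuine infinite ray; if the second cone structure around $y$ has enough room to reach past $p$ (this is the quantitative overlap condition alluded to above), it forces $\sigma$ to extend past $p$ within $Y$, which in turn forces the cross-section $Z_\infty$ to contain an antipodal pair at distance $\pi$. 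One then applies the maximal-diameter rigidity of \cite{Ketterer15} to $Z_\infty$ — or, equivalently, the splitting theorem of \cite{Gigli13} to the complete cone $Y$, which now contains a line — to conclude $Y\cong\setR^{l+1}\times C(\tilde Z)$. Only then does the ball $B_1(x_\infty)$, which is isometric to $B_1$ of the tip of $Y$, acquire the required structure, giving the contradiction. In short: the space on which you split is the cone $Y$, not the ambient limit $X_\infty$; and the scale comparison you invoked to justify the overlap is wrong, so the crucial ``extend past the spine'' step is currently unsupported.
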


\autoref{thm:conesplittingquant} is a quantitative version of the following statement: if a metric cone with vertex $z$ is a metric cone also with respect to $z'\neq z$, then it contains a line.

Let us now present a quantitative version of the cone splitting theorem via content, taken from \cite[Theorem 4.9]{CheegerJiangNaber18}.
We begin by defining the notion of the pinching set.
\begin{definition}\label{def:smallpinching}
	Let $(X,\dist,\haus^N)$ be an $\RCD(-\xi(N-1),N)$ and $p\in X$, we set
	\begin{equation}
	\bar{V}:=\inf_{y\in B_4(p)}\mathcal{V}_{\xi^{-1}}(y)
	\end{equation}
	where $\mathcal{V}_{r}(x):= \frac{\haus^N(B_r(x))}{v_{K,N}(r)}$ is the volume ratio appearing in \eqref{eq:BishopGromovInequality}.
	
	We define the set with small volume pinching accordingly to be
	\begin{equation}\label{eq:setsmallvolpinch}
	\mathcal{P}_{r,\xi}(x):=\left\lbrace y\in B_{4r}(x): \mathcal{V}_{\xi r}(y)\le \bar{V}+\xi \right\rbrace. 
	\end{equation}
\end{definition}

\begin{theorem}\label{thm:almostconesplittingcontent}
	Let $(X,\dist,\haus^N)$ be an $\RCD(-\xi(N-1),N)$ space with $\haus^N(B_1(p))>v>0$.
	If for some $r\in (0,1)$ it holds that $\mathcal{H}^{N}(B_{\gamma r}(\mathcal{P}_{r,\xi}))\ge \eps\gamma r^{N}$, with $0<\delta$, $\eps<\delta(N,v)$, $\gamma\le\gamma(N,v,\eps)$ and $\xi\le\xi(\delta,\eps,\gamma,N,v)$, then there exists $q\in B_{4r}(p)$ such that $B_{\delta^{-1}r}(q)$ is $(N-1,\delta^2)$-symmetric.
\end{theorem}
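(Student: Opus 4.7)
The strategy is contradiction and compactness, reducing to a limit $\RCD(0,N)$ space in which the pinching set converges to a set of genuine cone tips, and then iterating the quantitative cone splitting \autoref{thm:conesplittingquant} exactly $N-1$ times, with the measure lower bound on $B_{\gamma r}(\mathcal{P}_{r,\xi})$ providing at each step a cone tip not contained in the splitting axis produced so far. By rescaling we reduce to $r=1$. Fix $\delta>0$ and suppose the conclusion fails: then there exist sequences $\xi_n\downarrow 0$ and pointed spaces $(X_n,\dist_n,\haus^N,p_n)$ satisfying $\haus^N(B_1(p_n))>v$ together with the content bound $\haus^N(B_\gamma(\mathcal{P}_{1,\xi_n}))\ge \eps\gamma$, but such that no ball $B_{\delta^{-1}}(q)$ with $q\in B_4(p_n)$ is $(N-1,\delta^2)$-symmetric.

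\emph{Step 1 (limit and cone tips).} By volume noncollapsing and \autoref{thm:volumeconvergence}, pass to a pmGH limit $(X_\infty,\dist_\infty,\haus^N,p_\infty)$, which is $\RCD(0,N)$. Let $\bar{V}_n:=\inf_{B_4(p_n)}\mathcal{V}_{\xi_n^{-1}}$; by Bishop-Gromov these numbers are uniformly bounded and, up to subsequence, converge to some $\bar{V}_\infty$. For any $y_n\in\mathcal{P}_{1,\xi_n}$ converging to $y_\infty\in X_\infty$, the pinching condition $\mathcal{V}_{\xi_n}(y_n)\le\bar{V}_n+\xi_n$ at scale $\xi_n\to 0$, combined with $\mathcal{V}_{\xi_n^{-1}}(y_n)\ge \bar{V}_n$ at scale $\xi_n^{-1}\to\infty$ and monotonicity in \eqref{eq:BishopGromovInequality}, forces $\mathcal{V}_s(y_\infty)=\bar V_\infty$ for every $s>0$. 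The Bishop--Gromov rigidity / volume cone theorem of \cite{DePhilippisGigli16,DePhilippisGigli18} then implies that $X_\infty$ is a metric cone with tip $y_\infty$. Let $\mathcal{P}_\infty\subset\overline{B_4(p_\infty)}$ be the set of all such limit tips; it is closed and it is the Kuratowski limit of $\mathcal{P}_{1,\xi_n}$ (cf.\ \autoref{rm:compvarconv}).

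\emph{Step 2 (content bound in the limit).} A standard covering argument using the uniform doubling in \eqref{eq:locdoubling} shows that if a set $A$ is covered by $N_\gamma$ balls of radius $\gamma$ then $\haus^N(B_\gamma(A))\le C(N,v) N_\gamma \gamma^N$. Hence the hypothesis $\haus^N(B_\gamma(\mathcal{P}_{1,\xi_n}))\ge\eps\gamma$ together with Hausdorff convergence $\mathcal{P}_{1,\xi_n}\to\mathcal{P}_\infty$ yields a $\gamma$-covering lower bound $N_\gamma\ge c(N,v)\eps\gamma^{-(N-1)}$ for $\mathcal{P}_\infty$. Equivalently $\haus^{N-1}_\infty(\mathcal{P}_\infty)\ge c(N,v)\eps$, i.e.\ $\mathcal{P}_\infty$ cannot be contained in a $\gamma$-tube around any set of Hausdorff $(N-2)$-content less than $c(N,v)\eps$, provided $\gamma$ is small enough.

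\emph{Step 3 (iterative cone splitting).} We claim that for $\gamma\le\gamma(N,v,\eps)$ there is $q\in B_4(p_\infty)$ such that $B_{\delta^{-1}}(q)$ in $X_\infty$ is $(N-1,\delta^2/2)$-symmetric; by pmGH stability this transfers to some $B_{\delta^{-1}}(q_n)\subset X_n$ for large $n$, contradicting the standing assumption. We produce such $q$ by induction on the number $\ell$ of Euclidean splittings. The base case $\ell=0$ uses any point of $\mathcal{P}_\infty$, which is a cone tip, so some large ball is $(0,\delta^2)$-symmetric. Assume we have located $q_\ell\in B_4(p_\infty)$ and a large ball around $q_\ell$ which is $\delta^2$-close to a product $\setR^\ell\times C(Z)$ with tip on the axis. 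If $\ell\le N-2$ and no $(N-1,\delta^2)$-symmetric ball exists, apply \autoref{thm:conesplittingquant}: its contrapositive forces every element of $\mathcal{P}_\infty$ lying in the appropriate scale to be within a thin $\tau$-neighborhood of the $\ell$-dimensional axis through $q_\ell$. Taking $\tau$ determined by the theorem and choosing $\gamma$ smaller than $\tau$ contradicts Step 2 once $\ell<N-1$: the content estimate forces a point of $\mathcal{P}_\infty$ outside the axis, which by \autoref{thm:conesplittingquant} upgrades $\ell\to\ell+1$. Iterating, we reach $\ell=N-1$, producing the desired $(N-1,\delta^2)$-symmetric ball.

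\emph{Main obstacle.} The delicate point is the quantitative bookkeeping in Step~3: the parameters in \autoref{thm:conesplittingquant} must be chosen in the correct order $\delta\gg\eps\gg\gamma\gg\xi$ so that (i) the cone splitting applied in the limit produces a cone with an additional $\setR$-factor at each step with error much smaller than $\delta^2$, and (ii) the content bound from Step~2 is strong enough to furnish an off-axis cone tip at every intermediate step. Propagating these quantitative choices back through the pmGH limit and the stability of $(N-1,\delta^2)$-symmetry under small perturbations (which has some loss) is what fixes the threshold $\xi\le\xi(\delta,\eps,\gamma,N,v)$.
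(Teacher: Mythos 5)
Your contradiction–compactness strategy — rescaling to $r=1$, passing to a pmGH limit $\RCD(0,N)$ space where the Bishop--Gromov squeeze $\bar V_n \le \mathcal{V}_s(y_n) \le \bar V_n + \xi_n$ for $\xi_n \le s \le \xi_n^{-1}$ forces every point of $\mathcal{P}_\infty$ to be a genuine cone tip, and then iterating cone splitting until the limit cone splits off $\setR^{N-1}$ — is exactly the argument the paper defers to via \cite[Theorem 7.6]{JiangNaber16} (going back to \cite[Section 4]{CheegerNaber13}). Step 1 in particular is correct.

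That said, two quantitative assertions in Steps 2–3 are false as literally written. The claim that the covering lower bound at the single scale $\gamma$ is ``equivalent'' to $\haus^{N-1}_\infty(\mathcal{P}_\infty)\ge c(N,v)\eps$ is not right: $\haus^{N-1}_\infty$ requires control at arbitrarily fine gauges, which a one-scale covering number does not provide (a singleton has $N_\gamma=1$ for all $\gamma$ yet vanishing $\haus^{N-1}_\infty$). Worse, the restatement ``cannot be contained in a $\gamma$-tube around a set of $(N-2)$-content $<c\eps$'' does not apply to what you need to rule out in Step 3: the $\ell$-dimensional axis ($\ell\le N-2$) intersected with $B_4$ has $(N-2)$-content of order one, not small. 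Similarly ``choose $\gamma$ smaller than $\tau$'' does not by itself yield a contradiction: if $\mathcal{P}_\infty\subset B_\tau(\ell\text{-axis})$ with $\gamma<\tau$, then $\haus^N(B_\gamma(\mathcal{P}_\infty))\lesssim\tau^{N-\ell}$, which is perfectly compatible with $\ge\eps\gamma$ whenever $\gamma\lesssim\tau^{N-\ell}/\eps$. The repair is to keep the measure bound in its raw form: $\haus^N(B_\gamma(\mathcal{P}_\infty))\ge c\eps\gamma$ passes to the limit by upper semicontinuity on closed sets (cf.\ \autoref{rm:compvarconv}) and, in the limit cone, splitting is rigid so one can take $\tau=0$ — any cone tip off the $\ell$-axis gives an exact extra $\setR$-factor. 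Then $\mathcal{P}_\infty$ is contained in the axis, so $\eps\gamma\lesssim\gamma^{N-\ell}$, i.e.\ $\gamma\gtrsim\eps^{1/(N-\ell-1)}\ge\eps$ once $\ell\le N-2$, contradicting $\gamma\le\gamma(N,v,\eps)\ll\eps$. With that correction (and noting the free $\delta$ need not dominate $\eps$ — it enters only through the $\xi$-threshold, since the contradiction runs in the limit where the $\delta^{-1}$-ball is $(N-1,0)$-symmetric and stability gives $\delta^2$ for $n$ large) the proof goes through and is the one the paper intends.
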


The proof of \autoref{thm:almostconesplittingcontent} easily follows from \autoref{thm:conesplittingquant}, see for instance \cite[Theorem 7.6]{JiangNaber16}.

\section{Splitting maps on $\RCD$ spaces}\label{sec:almostsplitting}

In the development of the structure theory of Ricci limit spaces a prominent role has been played by the $\delta$-splitting maps \cite{CheegerColding96,CheegerColding97,CheegerNaber15,CheegerJiangNaber18}. After the construction of a second order differential calculus on $\RCD$ spaces in \cite{Gigli18} this tool, which provides a way to turn analytic information into geometric information, has also begun to play a role in the synthetic framework \cite{BruePasqualettoSemola19}.

All the works mentioned above rely on the equivalence between the existence of an $\setR^k$-valued $\delta$-splitting map and the (pointed measured)GH-closeness to a product with factor $\setR^k$. Below we state the definition of a $\delta$-splitting map relevant for the sake of this paper.

\begin{definition}\label{def:deltasplitting}
	Let $(X,\dist,\meas)$ be an $\RCD(-(N-1),N)$ m.m.s., $p\in X$ and $\delta>0$ be fixed.
	We say that $u:=(u_1,\ldots,u_k):B_r(p)\to \setR^k$ is a $\delta$-splitting map provided it is harmonic and it satisfies:
	\begin{itemize}
		\item[(i)] $|\nabla u_a|< C(N)$;
		\item[(ii)] $r^2\fint_{B_r(p)} |\Hess u_a|^2\di \meas <\delta$;
		\item[(iii)] $\fint_{B_r(p)} |\nabla u_a\cdot \nabla u_b-\delta_{a,b}|\di \meas <\delta$;
	\end{itemize}
	for any $a,b=1,\ldots,k$.
\end{definition}

\begin{remark}[About the scale invariant smallness of the Hessian]\label{rm:hessiansuper}
	If we make the stronger assumption that the ambient space is $\RCD(-\delta(N-1),N)$, then condition (ii) is unnecessary once we strengthen the harmonicity assumption to harmonicity on $B_{2r}(p)$, since it follows from conditions (i) and (iii) integrating the Bochner inequality against a good cut-off function, see for instance \cite{BruePasqualettoSemola20}.
\end{remark}

\begin{remark}[Sharper gradient bounds]\label{rm:sharpgradientbound}
 If we assume that $(X,\dist,\meas)$ is an $\RCD(-\delta(N-1),N)$ metric measure space then the gradient bound in (i) can be sharpened to the conclusion
 \begin{equation}\label{eq:sharpgradbound}
 \sup_{B_{r/2}(p)}\abs{\nabla u_a}\le 1+C(N)\delta^{1/2} \quad \text{for any $a=1,\ldots,k$}\, .
 \end{equation}
 In particular, if $u:B_r(x)\to\setR^{k}$ is a $\delta$-splitting map   according to \autoref{def:deltasplitting} and the ambient space is $\RCD(-\delta(N-1),N)$, then $u:B_{r/2}(x)\to\setR^{k}$ is a $C(N)\delta$-splitting map and we can replace condition (i) in the definition with the sharper gradient bound \eqref{eq:sharpgradbound}.
 
 Moreover the following Lipschitz estimate holds
 \begin{equation}\label{eq:lipschitzdeltasplitt}
 	|u(x) - u(y)| \le (1 + C(N)\delta^{1/2}) \dist(x,y)\quad \text{for any $x,y\in B_{r/4}(p)$}\, .
 \end{equation}
 The validity of \eqref{eq:sharpgradbound} has been pointed out for the first time in the framework of smooth Riemannian manifolds in \cite[equations (3.42)--(3.46)]{CheegerNaber15}, we report here a slightly modified argument tailored for the $\RCD$ framework.

 Let us fix $a\in\{1,\dots,k\}$ and drop the dependence on the chosen component, writing just $u$.\\
 Observe that, by volume doubling and (iii), for any $y\in B_{r/2}(p)$ it holds that 
 \begin{equation}\label{eq:changecentre}
 \fint_{B_{r/2}(y)}\abs{\abs{\nabla u}^2-1}\di\meas\le C(N)\delta\, .
 \end{equation}  

Now we consider a regular cut-off function $\phi:X\to[0,1]$ such that $\phi=1$ on $B_{3/4r}(x)$ and $\phi=0$ outside of $B_r(x)$, $r^2\abs{\Delta\phi}\le C(N)$ and $r\abs{\nabla\phi}\le C(N)$ (see \autoref{lem:good_cut-off}) and the one parameter family
\begin{equation}
f_t(y):=\int(\abs{\nabla u}^2(z)-1)\phi(z)p_t(y,z)\di\meas(z)\, .
\end{equation}
Differentiating with respect to time, taking into account the heat kernel estimate
\begin{equation}\label{eq:doubheat}
p_t(y,z)\le \frac{C(N) e^{-c(N)\frac{r^2}{t} }}{\meas(B_{\sqrt{t}}(x))}\le \frac{C(N)}{\meas(B_r(x))},\;\;\text{$\forall \, y\in B_{\frac12 r}(x)$, $z\in B_r(x)\setminus B_{\frac34r}(x)$ and $t\in[0,r^2]$,}
\end{equation} 
which follows from \cite{JangLiZhang} and volume doubling, for any $y\in B_{r/2}(x)$ and $t\in[0,r^2]$ we can estimate
\begin{align}
\nonumber\frac{\di}{\di t}f_t(y)=&\int\left(\Delta\abs{\nabla u}^2\phi
+2\nabla\abs{\nabla u}^2\cdot\nabla\phi
+(\abs{\nabla u}^2-1)\Delta\phi\right)(z)p_t(y,z)\di\meas(z)\\
\nonumber\ge&-\delta\int\abs{\nabla u}^2(z)p_t(y,z)\di\meas(z)-\frac{C(N)}{r}\int_{B_r(x)\setminus B_{\frac34r}(x)}\abs{\Hess u}(z)p_t(y,z)\di\meas(z)\\
\nonumber&-\frac{C(N)}{r^2}\int_{B_r(x)\setminus B_{\frac34r}(x)}\abs{\abs{\nabla u}^2-1}(z)p_t(y,z)\di\meas(z)\\
\nonumber\overset{\eqref{eq:doubheat}}{\ge}&-C(N)\delta-C(N)\frac{\delta^{1/2}}{r^2}-C(N)\frac{\delta}{r^2}\\
\ge&-C(N)\frac{\delta^{1/2}}{r^2}\, .\label{eq:boundsharp}
\end{align}
Above the first inequality follows from the bounds for the cut-off function and from Bochner's inequality.

Given \eqref{eq:boundsharp}, observing that, for $\meas$-a.e. $y\in B_{r/2}(x)$ it holds
\begin{equation}
f_t(y)\to\abs{\abs{\nabla u}^2-1}(y),\;\;\;\text{as $t\downarrow 0$}\, ,
\end{equation}
we can integrate between $0$ and $r^2$ to obtain
\begin{align*}
\abs{\nabla u}^2(y)-1\le& C(N)\delta^{1/2}+\int\abs{\abs{\nabla u}^2(z)-1}\phi(z)p_t(y,z)\di\meas(z)\\
\le &C(N)\delta^{1/2}+C(N)\fint_{B_r(x)}\abs{\abs{\nabla u}^2-1}\di\meas\\
\le &C(N)\delta^{1/2}\, .
\end{align*}
From this we easily infer that 
\begin{equation}
\sup_{B_{r/2(x)}}\abs{\nabla u}\le 1+C(N)\delta^{1/2}\, .
\end{equation}
\medskip 

In order to show \eqref{eq:lipschitzdeltasplitt} it is enough to check that
for any $v\in \setR^k$ with $|v|=1$ it holds
\begin{equation}\label{z101}
	\sup_{B_{r/2(x)}}\abs{\nabla (v\cdot u)}\le 1+C(N)\delta^{1/2}\, ,
\end{equation}
Indeed \eqref{z101} yields
\begin{equation}\label{z100}
	|v\cdot (u(x) - u(y))| \le (1 + \delta^{1/2})\dist(x,y) \quad \text{ for any $x,y\in B_{r/4}(p)$}\, ,
\end{equation}
which implies \eqref{eq:lipschitzdeltasplitt} by taking the supremum w.r.t. $v\in \mathbb{S}^{k-1}$.

Now the key observation to prove \eqref{z101} is that $v\cdot u$ verifies (up to a constant) the same bounds of the components of the original $\delta$-splitting map. In particular it is harmonic and it satisfies 
\begin{equation}
\dashint_{B_r(p)}||\nabla (v\cdot u)|^2-1|\di \meas \le C(N) \delta\, ,
\end{equation}
therefore applying the argument already described for $u_a$ we get \eqref{z100}.

\end{remark}

 The first main result of this section will be \autoref{splitting vs isometry} below, where we prove the equivalence between the existence of an $\setR^k$-valued $\delta$-splitting map on a ball and the measured GH closeness of the ball with same center and comparable radius to the ball of a product with $\setR^k$. This statement will be proved arguing by compactness, starting from its rigid version \autoref{thm:functionalsplittinglocal}.

The second key result is \autoref{prop:transformation}, a version of the transformation theorem \cite[Proposition 7.7]{CheegerJiangNaber18} (see also \cite{CheegerNaber15} for a previous version with different assumptions) tailored for our purposes.

\subsection{Functional splitting theorem, local version}

In the rest of the note we will rely on the following functional version of the (iterated) splitting theorem in local form. With respect to the present literature the main novelty is the locality of the statement, which requires some cut-off arguments and the use of \autoref{thm:diffforumlaforvectorfields}, which relies in turn on \cite{GigliTamanini19}. The proof combines techniques from \cite{CheegerColding96} and \cite{Gigli13}.

\begin{theorem}\label{thm:functionalsplittinglocal}
	Let $(X,\dist,\meas)$ be an $\RCD(0,N)$ m.m.s. for some $N\ge 1$  and let  $p\in X$ be fixed. Assume that for some positive $k\in\setN$ there exists $u=(u_1,\ldots, u_k):B_6(p)\to \setR^k$  satisfying
	\begin{itemize}
	\item[(i)] $u(p)=0$;
	\item[(ii)] $|\nabla u_a|=1$ and $\Delta u_a=0$, $\meas$-a.e. in $B_5(p)$ for any $a=1,\ldots, k$;
	\item[(iii)] $\nabla u_a\cdot \nabla u_b=0$, $\meas$-a.e. in $B_5(p)$, for any $a\neq b$.
	\end{itemize}
    Then there exist a m.m.s. $(Z,\dist_Z,\meas_Z)$ and a function $f:B_1(p)\to Z$ such that
    \begin{equation}
    	(u,f): B_{1/k}(p)\to \setR^k\times Z
    \end{equation}
    is an isomorphism of metric measure spaces with its image.
    \end{theorem}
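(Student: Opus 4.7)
The plan is to realize the splitting via the Regular Lagrangian flows of the $\nabla u_a$'s, showing that they generate a commuting family of local isometries on a neighbourhood of $p$, and using this $\mathbb{R}^k$-action to build the isometric identification with a product.

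First I would upgrade the hypotheses to second-order information. Pick a good cut-off $\eta$ from \autoref{lem:good_cut-off} supported in $B_5(p)$ with $\eta \equiv 1$ on $B_4(p)$ and integrate the Bochner inequality against $\eta$: using $\Delta u_a = 0$ and $|\nabla u_a|^2 \equiv 1$ $\meas$-a.e.\ on $B_5(p)$ (so that $\Delta |\nabla u_a|^2 = 0$ in the interior), one obtains $\Hess u_a = 0$ $\meas$-a.e.\ on $B_4(p)$. A parallel argument applied to the orthogonality relation gives that any linear combination $v \cdot u := \sum_a v_a u_a$ with $|v| = 1$ also satisfies $|\nabla (v\cdot u)| = 1$, $\Delta (v\cdot u) = 0$ and $\Hess (v \cdot u) = 0$ on $B_4(p)$. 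Theorem~\ref{th: Lagrangianflows}(iii), applied to $\eta \cdot u_a$ (whose gradient agrees with $\nabla u_a$ on $B_4(p)$), then produces a pointwise representative of the RLF $\XX^a$ associated with $\nabla u_a$ that is an isometry on $B_1(p)$ for $|t| < 1$; the analogous construction for $v \cdot u$ yields an RLF $\YY^{v}$ of $\nabla(v \cdot u) = \sum v_a \nabla u_a$ which is also a local isometry. Differentiating along the flow and using orthogonality gives $u_a(\XX^a_t x) = u_a(x) + t$ and $u_b(\XX^a_t x) = u_b(x)$ for $b \neq a$.

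The main technical step is commutativity of the flows, $\XX^a_t \circ \XX^b_s = \XX^b_s \circ \XX^a_t$, on a small ball, for $s,t$ small. I would deduce this from the uniqueness of the RLF, Theorem~\ref{th: Lagrangianflows}(i), in the following way. The curve $r \mapsto \Psi_r := \XX^a_{rs} \circ \XX^b_{rt}$ is a measure-preserving curve (by \eqref{w} applied to each factor with $\Delta u_a = \Delta u_b = 0$) starting at the identity. To check that it satisfies condition (3) in \autoref{def:Regularlagrangianflow} with generator $s \nabla u_a + t \nabla u_b$, pair it with test functions $f$ and rely on \autoref{thm:diffforumlaforvectorfields}: along Wasserstein geodesics between suitable regularisations, the vanishing of $\Hess u_a$ and $\Hess u_b$ on $B_4(p)$ forces the map $r \mapsto \nabla u_a \cdot \nabla f \circ \Psi_r$ to obey the correct first-order identity. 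By the uniqueness statement in Theorem~\ref{th: Lagrangianflows}(i), $\Psi_r$ agrees with $\YY^{(s,t)}_r$ (the RLF of $s \nabla u_a + t \nabla u_b$, up to rescaling), and the same identification holds for $\XX^b_s \circ \XX^a_t$. This forces equality of the two compositions $\meas$-a.e., and hence pointwise on $B_1(p)$ by the pointwise isometric representation and continuity. This is the step I expect to absorb most of the technical work, because the naive Lie-bracket argument is unavailable in the synthetic setting and one must pass through the Wasserstein picture.

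With commutativity in hand, define $Z := \{u = 0\} \cap B_1(p)$, endowed with the restricted distance and with the measure $\meas_Z$ obtained by disintegrating $\meas\res B_1(p)$ along the level sets of $u$ (the coarea formula of \autoref{thm:coarea} combined with $|\nabla u_a| = 1$, $\nabla u_a \cdot \nabla u_b = \delta_{ab}$ gives the product representation). Define
\[
\Phi: \{t \in \mathbb{R}^k : |t| < 1/k\} \times (Z \cap B_{1/k}(p)) \longrightarrow X, \qquad \Phi(t, z) := \XX^1_{t_1} \circ \cdots \circ \XX^k_{t_k}(z).
\]
By commutativity the definition is independent of the order, and by the flow identities in step two we have $u(\Phi(t,z)) = t$. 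The radius threshold $1/k$ is chosen so that each of the $k$ intermediate flows remains inside $B_1(p)$, where the $\XX^a$ are isometries, so that $\Phi$ is a local isometry. Pythagoras on $\mathbb{R}^k \times Z$ is matched by the orthogonality of the splittings $\XX^a$ and the isometric character of each flow, yielding the isometry of $\Phi$ onto its image for the product metric. Surjectivity onto $B_{1/k}(p)$ is obtained by setting, for any $x \in B_{1/k}(p)$, $t_a := u_a(x)$ (which satisfies $|t_a| \le \dist(x,p) < 1/k$) and $z := \XX^k_{-t_k} \circ \cdots \circ \XX^1_{-t_1}(x)$, which lies in $Z$ by the flow identities. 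Finally, \eqref{w} shows that the flows are measure preserving, and the disintegration definition of $\meas_Z$ ensures that $\Phi$ pushes $\Leb^k \otimes \meas_Z$ forward to $\meas$ on the image, giving the desired isomorphism $(u,f): B_{1/k}(p) \to \mathbb{R}^k \times Z$ with $f := \pi_Z \circ \Phi^{-1}$.
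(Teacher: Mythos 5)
Your plan diverges from the paper's at the crucial quantitative step, and the divergence leaves a gap. The paper never proves commutativity of the flows $X^a_t$: it fixes the order of composition in $\Phi(t,x)=X^1_{t_1}\circ\cdots\circ X^k_{t_k}(x)$ and computes distances directly. The engine is the first-variation identity
\begin{equation*}
\tfrac12\dist^2(X^a_t(x),y)-\tfrac12\dist^2(x,y)=\int_0^t\bigl(u_a(X^a_s(x))-u_a(y)\bigr)\di s\, ,
\end{equation*}
proved in the paper's Step~2 by combining the continuity equation (\autoref{lemma:derivativealongflow}), the derivative of Kantorovich potentials along $W_2$-geodesics (\autoref{prop:derivativealonggeodesic1}) and the second-order differentiation formula (\autoref{thm:diffforumlaforvectorfields}); the role of $\Hess u_a=0$ is to make $s\mapsto\int u_a\di\eta_s$ affine along $W_2$-geodesics so the right-hand side can be evaluated. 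Peeling off coordinates with this identity, and using $u_a(\bar x)=u_a(\bar y)=0$ (which follows from the orthogonality relations), yields the Pythagorean identity $\dist^2(\Phi(t,x),\Phi(s,y))=\dist^2(x,y)+|t-s|^2$.

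In your proposal this identity is precisely what is asserted without proof. You write that ``Pythagoras on $\setR^k\times Z$ is matched by the orthogonality of the splittings $\XX^a$ and the isometric character of each flow,'' but knowing each $X^a_t$ is a local isometry of $X$ together with $u_a(X^a_t(x))=u_a(x)+t$ only gives $\dist(X^a_t(z),z)=|t|$; it does not give metric perpendicularity of the displacement to $Z$, i.e.\ $\dist^2(X^a_t(z),w)=\dist^2(z,w)+t^2$. The infinitesimal orthogonality $\nabla u_a\cdot\nabla u_b=0$ must be integrated into a statement about $\dist^2$, and that integration is exactly the Wasserstein first-variation computation your proposal skips --- this is the technical heart of the theorem (as in any proof of the Cheeger--Gromoll splitting) and cannot be replaced by soft statements about commuting isometric flows. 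As a secondary remark, the commutativity step itself is also thin as sketched --- identifying $r\mapsto X^a_{rs}\circ X^b_{rt}$ as the RLF of $s\nabla u_a+t\nabla u_b$ requires showing that $X^a$ preserves $\nabla u_b$ in a suitable weak sense, which pairing against test functions does not give for free --- but it is in any case dispensable once one adopts the paper's fixed-order composition and uses the first-variation identity.
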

\begin{proof}
Let $\eta\in \Test(X,\dist,\meas)$ be a good cut-off function (see \autoref{lem:good_cut-off}) satisfying $\eta=1$ on $B_4(p)$ and $\eta=0$ on $X\setminus B_5(p)$. Let us define the vector fields $b_{a}:=\nabla (\eta u_a)$ and denote by $X^{a}$ their Regular Lagrangian flows, for $a=1,\ldots,k$. 
Notice that by the improved Bochner inequality with Hessian term \cite[Theorem 3.3.8]{Gigli18}, $\Hess (\eta u_a)=0$ in $B_4(p)$. Therefore thanks to \autoref{th: Lagrangianflows} (iii) we have a pointwise defined representative of $X^a_t(x)$ for $t\in (-1,1)$ and $x\in B_1(p)$ satisfying \eqref{eq:isometry}. This easily implies
\begin{equation}\label{z4}
	u_a(X_t^a(x))=t
	\quad\text{for any $x\in B_1(p)$ and $t\in (-1,1)$}\, .
\end{equation}

Let us now set $Z:=\set{u=0}$, $\dist_Z(x,y):=\dist(x,y)$ for $x,y\in Z$, and 
\begin{equation}
	\Phi: \setR^k\times Z\to X
	\quad \text{s.t.}\quad \Phi(t_1,\ldots,t_k,x):=X^1_{t_1}\circ X^2_{t_2}\circ \ldots \circ X^k_{t_k}(x)\, .
\end{equation} 
In order to conclude the proof it is enough to show that there exists a pointwise representative 
\begin{equation}\label{eq:claim1}
	\Phi: (-1/k,1/k)^k\times (B_1(p)\cap Z)\to X
	\quad \text{which is an isometry with its image.}
\end{equation}
Observe indeed that $B_{1/k}(p)\subset \Phi((-1/k,1/k)^k\times (B_1(p)\cap Z))$, since for any $y\in B_{1/k}(p)$ we can set 
\[
\pi(y):=\Phi(-u_1(y),\ldots,-u_k(y),y)\in Z\cap B_1(p)
\]	
and $t_a:=u_a(\pi(y))\in (-1/k,1/k)$ and check, by means of \eqref{z4}, that $\Phi(t_1,\ldots,t_k, \pi(y))=y$. Finally we notice that $\Phi^{-1}: B_{1/k}(p)\to \setR^k\times Z$ is the sought map, since it is an isometry and can be written as $\Phi^{-1}=(u,f)$ for some $f:B_{1/k}(p)\to Z$, thanks to \eqref{z4}. Moreover setting $\meas_Z:=\pi_{\#}( \meas\res B_{1/k}(p))$, one can easily check that
\[
\Phi_{\#} (\Leb^k \times \meas_Z) = \meas\quad \text{on $B_{1/k}(p)$}\, .
\]

The proof of \eqref{eq:claim1} is divided in three steps.
\\
\textbf{Step 1.} There exists a pointwise representative of $\Phi$ on $(-1/k,1/k)^{k}\times (B_1(p)\cap Z)$ such that, for any $x,y\in B_{1}(p)\cap Z$ and $(t_1,\ldots,t_k)\in (-1/k,1/k)^k$ it holds
\begin{equation}\label{z5}
	\dist(\Phi(t_1,\ldots,t_k,x),\Phi(t_1,\ldots,t_k,y))=\dist(x,y)
	\quad\text{and}\quad 
	\dist(X^a_t(x),x)=t\, ,
\end{equation}
for $a=1,\ldots,k$.

As we have already remarked, there exists a pointwise defined representative of $X^a_t(x)$ for $t\in (-1,1)$ and $x\in B_1(p)$  satisfying \eqref{eq:isometry}, therefore the first identity in \eqref{z5} follows.\\ 
Concerning the second equality, observe that, since $|\nabla(\eta u_a)|=1$ in $B_4(p)$ we have that $\dist(X^a_t(x),x)\le t$ for $t\in (-1,1)$ and $x\in B_1(p)$. Moreover \eqref{z4} and the fact that $u_a$ is $1$-Lipschitz in $B_1(p)$ give
\begin{equation}
	t=|u_a(X^a_t(x))-u_a(x)|\le \dist(X^a_t(x),x)\, ,
	\quad \text{for $x\in Z\cap B_1(p)$ and $t\in (-1,1)$}\, .
\end{equation}
Combining the two inequalities also the second equality in \eqref{z5} follows.

\textbf{Step 2.} In this step we are going to prove that for any $a\in \set{1,\ldots,k}$, any $x,y\in B_1(p)$ and any $t\in (0,1)$ it holds
\begin{equation}\label{z6}
	\frac{1}{2}\dist^2(X^a_t(x),y)-\frac{1}{2}\dist^2(x,y)=\int_0^t \left( u_a(X_s^a(x))-u_a(y) \right) \di s\, .
\end{equation}
To this aim let us fix $a\in \set{1,\ldots,k}$, $x,y\in B_1(p)$ and $r>0$ with the property that $B_r(x)\cup B_r(y)\subset B_1(p)$. Then let us define
\begin{equation}
	\mu^r:=\frac{1}{\meas(B_r(x))} \meas\res B_r(x)
	\quad \text{and}\quad
	\nu^r:=\frac{1}{\meas(B_r(y))} \meas \res B_r(y)\, .
\end{equation}

Let us set $\rho_t^r:=\left({X^a_t}\right)_{\#}\mu^r$ and observe that for any function $f \in \Lip(X,\dist)$ we have
\begin{equation}
	\frac{\di}{\di t} \int f\di \rho_t^r=\frac{\di}{\di t} \int f(X_t^a)\di \mu_r=\int \nabla f\cdot \nabla u \di \rho_t^r\, ,
\end{equation}
namely $\rho_t^r=\left({X^a_t}\right)_{\#}\mu^r$ solves the continuity equation associated to $\nabla u$ (cf \autoref{remark:Continuity equations and flow maps}).
Therefore \autoref{lemma:derivativealongflow} guarantees that $t\mapsto  \frac{1}{2}W_2^2(\rho_t^r,\nu_r)$ is absolutely continuous and 
\begin{equation}\label{z9}
\frac{\di }{\di t}\frac{1}{2}W_2^2(\rho_t^r,\nu^r)
=\int \nabla u\cdot \nabla \phi_t \di \rho_t^r
\quad\text{for a.e. $t\in (0,1)$}\, ,
\end{equation}
where $\phi_t$ is any optimal Kantorovich potential from $\rho_t^r$ to $\nu^r$.\\
Let us now fix $t\in (0,1)$ such that \eqref{z9} holds true. Denote by $(\eta^{r,t}_s)_{s\in[0,t]}$ the $W_2$-geodesic connecting $\nu_r$ to $\rho_t^r$.
From \autoref{prop:derivativealonggeodesic1} 
we get
\begin{equation}\label{z10}
	\frac{\di }{\di s}\restr_{s=1} \int u \di \eta^{r,t}_s= \frac{1}{t}\int \nabla u\cdot \nabla \phi_t \di \rho_t^r\, ,
\end{equation}
where $\phi_t$ is any optimal Kantorovich potential from $\rho_t^r$ to $\nu^r$.\\
Combining \eqref{z9} and \eqref{z10} we deduce
\begin{equation}\label{z11}
	\frac{\di }{\di t}\frac{1}{2}W_2^2(\left({X^a_t}\right)_{\#}\mu^r,\nu^r)
	=t\frac{\di }{\di s}\restr_{s=1} \int u \di \eta^{r,t}_s
	\quad \text{for a.e. $t\in (0,1)$}\, .
\end{equation}
Moreover, since $\Hess u=0$ in $\supp \eta_s^{r,t}$ for any $0\le s\le t\le 1$, \autoref{thm:diffforumlaforvectorfields} implies that $s\mapsto \int u \di \eta^{r,t}_s$ is affine. Therefore
\begin{equation}
	t\int u \di \eta^{r,t}_s=(t-s) \int u\di \nu^r+ s\int u\circ X^a_t \di \mu^r\, ,
	\quad \text{for any $0\le s\le t\le 1$,}
\end{equation}
that, along with \eqref{z11}, yields
\begin{equation}\label{z13}
	\frac{1}{2}W_2^2(\left({X^a_t}\right)_{\#}\mu^r,\nu^r)-\frac{1}{2}W_2^2(\mu^r,\nu^r)
    =\int_0^t \left( \int u\circ X^a_s\di \mu^r-\int u\di \nu^r\right)\di s\, .
\end{equation}
Finally, \eqref{z6} follows from \eqref{z13} by continuity letting $r\to 0$.

\textbf{Step 3.} We conclude the proof of \eqref{eq:claim1} by showing that
\begin{equation}\label{z14}
	\dist^2(\Phi(t_1,..,t_k,x),\Phi(s_1,\ldots,s_k,y))
	=\dist^2(x,y)+|t_1-s_1|^2+\ldots+|t_k-s_k|^2\, ,
\end{equation}
for any $x,y\in Z\cap B_1(p)$ and any $s_a,t_a\in (-1/k,1/k)$, for $a=1,\dots,k$.\\
In order to do so let us assume without loss of generality that $t_1\ge s_1$ and set $\bar x:=X^2_{t_2}\circ\ldots\circ X^k_{t_k}(x)$ and $\bar y:=X^2_{s_2}\circ \ldots\circ X^k_{s_k}$ if $k\ge 2$ and $\bar x:=x$, $\bar y:=y$ otherwise.\\ 
By exploiting the semigroup property (ii) in \autoref{th: Lagrangianflows}, \textbf{Step 1} and \textbf{Step 2}, we get
\begin{align*}
    	\dist^2(\Phi(t_1,..,t_k,x)&,\Phi(s_1,\ldots,s_k,y))-\dist^2(\bar x,\bar y)
    	=\dist^2(X_{t_1}(\bar x),X_{s_1}(\bar y))-\dist^2(\bar x,\bar y)\\
    	&=\dist^2(X_{t_1-s_1}(\bar z),\bar y)-\dist^2(\bar x,\bar y)\\
    	&=2\int_0^{t_1-s_1}\left( u_1(X^1_s(\bar x))-u_1(\bar y)\right) \di s\\
    	&\overset{\eqref{z4}}{=} |t_1-s_1|^2+(t_1-s_1)(u_1(\bar x)-u_1(\bar y))\, .
\end{align*}
Observe now that $u_1(\bar x)=u_1(\bar y)=0$ since the function $t\mapsto u_1(X^a_t(z))$ is constant for $t\in (-1,1)$, $z\in B_2(p)$ and $a\neq 1$. Indeed, taking the derivative w.r.t. $t\in (-1,1)$ and using (iii) in our assumptions we have 
\begin{equation}
	\frac{\di}{\di t} u_1(X^a_t(z))=\nabla u_1\cdot \nabla u_a (X_t^a(z))=0\, ,
\end{equation}
for a.e. $t\in(-1,1)$ and for a.e. $z\in B_2(p)$. The statement can then be proved for any time and starting point by a continuity argument.
It follows that
\begin{equation}
	\dist^2(\Phi(t_1,..,t_k,x),\Phi(s_1,\ldots,s_k,y))=\dist^2(\bar x,\bar y)+|t_1-s_1|^2
\end{equation}
and a simple induction argument gives \eqref{z14}. 
\end{proof}

Below we specialize \autoref{thm:functionalsplittinglocal} to the case in which $(X,\dist,\haus^N)$ is a noncollapsed $\RCD(K,N)$ space and the splitting map has $N-1$ components. In this case we are going to prove that, as expected, the factor $Z$ is one dimensional.

\begin{theorem}\label{thm:functionalsplittingcodimension1}
Let $(X,\dist,\haus^N)$ be an $\RCD(0,N)$ m.m.s. for some natural $2\le N<\infty$ and let $p\in X$ be fixed. Assume that there exists a $0$-splitting map 
\[
u=(u_1,\ldots, u_{N-1}):B_6(p)\to \setR^{N-1}\, .
\] 
Then there exist a m.m.s. $(Z,\dist_Z,\haus^1)$, with $(Z,\dist_Z)$ isometric to the ball of a one dimensional Riemannian manifold (possibly with boundary), and a map $f:B_{1/(N-1)}(p)\to Z$ such that 
\[
(u,f):B_{1/(N-1)}(p)\to B_{1/(N-1)}^{\setR^{N-1}\times Z}((0,z_0))
\] 
is an isomorphism of metric measure spaces.

Moreover, up to an additive constant, $f$ coincides with the signed distance function from the level set $\{f=z_0\}$.
\end{theorem}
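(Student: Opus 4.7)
The plan is to deduce \autoref{thm:functionalsplittingcodimension1} from \autoref{thm:functionalsplittinglocal} by identifying the abstract factor $Z$ with a $1$-dimensional Riemannian interval, then reading off that $f$ is a signed distance function from the product structure. First I would verify that the hypotheses of \autoref{thm:functionalsplittinglocal} hold with $k = N-1$: a $0$-splitting map (in the sense of \autoref{def:deltasplitting}) satisfies $\Delta u_a = 0$ by harmonicity, $\lvert \nabla u_a \rvert = 1$ a.e.\ by the sharpening described in \autoref{rm:sharpgradientbound}, and $\nabla u_a \cdot \nabla u_b = \delta_{ab}$ a.e.\ by the $\delta = 0$ case of condition (iii). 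Applying \autoref{thm:functionalsplittinglocal} then produces $(Z, \dist_Z, \meas_Z)$ and $f: B_1(p) \to Z$ such that $(u, f): B_{1/(N-1)}(p) \to \setR^{N-1} \times Z$ is an isomorphism of m.m.s.\ onto its image. What remains is to show: (a) $\meas_Z = \haus^1$; (b) $(Z, \dist_Z)$ is isometric to a ball of a $1$-dimensional Riemannian manifold (possibly with boundary); (c) $f$ agrees, up to an additive constant, with the signed distance from a level set.

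For (a) and (b) I would exploit the transfer of the noncollapsed $\RCD(0, N)$ structure through the isomorphism. On the target side the measure equals $\Leb^{N-1} \times \meas_Z$ and must coincide with $\haus^N$ on the corresponding ball, which by the Fubini decomposition and noncollapsing forces $Z$ to be $1$-Ahlfors regular of Hausdorff dimension $1$ with $\meas_Z = \haus^1$. For the metric classification, de-tensorizing the $\RCD(0, N)$ condition along the Euclidean factor — i.e.\ using that the curvature-dimension convexity inequality on $\setR^{N-1} \times Z$ splits dimensionally into the exact Euclidean inequality on $\setR^{N-1}$ and a residual $(0, 1)$-inequality on $Z$ — gives that $(Z, \dist_Z, \haus^1)$ is locally $\RCD(0, 1)$. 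It is well known that any $1$-dimensional $\RCD$ space is isometric, as a metric space, to an interval of $\setR$ with the standard arc-length metric (possibly including one or two endpoints), which is equivalent to saying that it is a ball of a $1$-dimensional Riemannian manifold, possibly with boundary.

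Identifying $Z$ isometrically with an interval $I \subset \setR$, the map $f$ becomes real-valued and the isomorphism rewrites as
\begin{equation}
\dist(x,y)^2 = \lvert u(x) - u(y) \rvert^2 + \lvert f(x) - f(y) \rvert^2
\quad \text{for all } x, y \in B_{1/(N-1)}(p)\, .
\end{equation}
Fixing $z_0 \in I$ in the image of $f$ and setting $L := \{f = z_0\}$, for any point $y$ the infimum of $\dist(y, \cdot)$ over $L$ is attained along the fiber $\{u = u(y)\}$ and equals $\lvert f(y) - z_0 \rvert$. Thus $f - z_0$ coincides with the signed distance from $L$, the sign being determined by which side of $L$ contains $y$.

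The main obstacle in this program is the de-tensorization step: extracting an $\RCD(0, 1)$ bound on $Z$ from the $\RCD(0, N)$ bound on the product $\setR^{N-1} \times Z$. Once this is in hand, the classification of $1$-dimensional $\RCD$ spaces as intervals and the signed-distance characterization of $f$ follow immediately from the isometric product structure already granted by \autoref{thm:functionalsplittinglocal}.
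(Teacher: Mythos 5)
Your overall strategy matches the paper's: apply \autoref{thm:functionalsplittinglocal} with $k=N-1$, upgrade $Z$ to a one-dimensional Riemannian manifold, then read off the signed-distance description of $f$ from the product formula. The steps you carry out in full — verifying the hypotheses of \autoref{thm:functionalsplittinglocal} via \autoref{rm:sharpgradientbound}, and the final computation that $f$ agrees with the signed distance once $Z$ is identified with an interval — are correct.

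The genuine gap is exactly where you flag it: the ``de-tensorization'' step. You cannot extract a global $\RCD(0,1)$ condition on $Z$ from the $\RCD(0,N)$ condition on $X$, because the isomorphism produced by \autoref{thm:functionalsplittinglocal} identifies only the small ball $B_{1/(N-1)}(p)$ with $B_{1/(N-1)}^{\setR^{N-1}\times Z}((0,z_0))$ — not all of $X$ with all of $\setR^{N-1}\times Z$. A metric ball inside an $\RCD$ space is not itself an $\RCD$ space (Wasserstein geodesics between measures supported in the ball may exit it), so the best one can import to $Z$ is a \emph{local} version of the curvature-dimension inequality, valid only for pairs of measures whose supports are sufficiently close. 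This is precisely what the paper does: it invokes a modification of \cite[Corollary 5.30]{Gigli13} to get a weak, restricted-support form of $\CD(0,N)$ on $Z$, which is \emph{not} enough to feed directly into the Kitabeppu--Lakzian classification of $\RCD(K,1)$ spaces (a global statement about complete spaces). Instead, the paper works with tangent cones of $(Z,\dist_Z,\meas_Z)$ — which are fully determined by the local information, since tangents are blow-ups — and uses the noncollapsing hypothesis to show each tangent is a line or a half-line, with uniqueness wherever a line appears. A local classification argument in the spirit of \cite{KitabeppuLakzian16,Schultz19} then gives that every point of $Z$ has a neighbourhood isometric to $(-\eps,\eps)$ or $[0,\eps)$, from which the Riemannian-interval structure follows. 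So your outline is sound, but the hinge you identify as an ``obstacle'' is not a technicality to be filled in by a clean tensor-splitting argument; it requires the more delicate tangent-cone analysis that the paper carries out.
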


\begin{remark}
In particular, $Z$ is isometric, in the sense of Riemannian manifolds, to either a circle or a connected closed interval $I\subseteq \setR$, possibly with infinite length.
\end{remark}

\begin{proof}

Let us start by applying the local functional splitting \autoref{thm:functionalsplittinglocal} to get the metric measure space $(Z,\dist_Z,\meas_Z)$ and the map $f:B_{1/(N-1)}(p)\to Z$.
 
By a slight modification of the proof of \cite[Corollary 5.30]{Gigli13}, we can prove a weaker version of the $\CD(0,N)$ condition for the space $(Z,\dist_Z,\meas_Z)$. More precisely we can check that, for any $\mu_0\in \Prob_2(Z)$ satisfying $\mu_0\ll \meas_Z$ and $\supp \mu_0 \subset B_{1/(N-1)}(z_0)$ there exists $r>0$ such that for any $\mu_1\in \Prob_2(X)$ absolutely continuous w.r.t. $\meas_Z$ and supported on $B_r(\supp \mu_0)$ one has a unique $W_2$-geodesic connecting $\mu_0$ and $\mu_1$ which satisfies the defining inequality for the $\CD(0,N)$ condition.

Next we observe that, as a consequence of the discussion above, of the isometry between $B_{1 /(N-1)}(p)$ and the split ball and of the noncollapsing assumption, all the metric measured tangents to $(Z,\dist_Z,\meas_Z)$ are either lines or half lines as metric spaces. By the structure theory of $\RCD$ spaces, the tangent is unique and a line for $\meas_Z$-a.e. $z\in B_1^Z(z_0)$. Moreover, by the noncollapsing assumption, at points where there is a line in the tangent the tangent is unique, since they correspond to points on the starting space where the tangent is $\setR^N$.

Adapting the arguments of \cite{KitabeppuLakzian16} (see also \cite{Schultz19} for a recent generalization with simplified arguments relying on optimal transport tools), it is possible to prove that at points of $Z$ where there is a line in the tangent there is a small ball isometric to the Euclidean one. Moreover, at the other points the tangent is still unique and isometric to a half line pointed at the extreme (otherwise there would be a full line in the tangent and we would be in the previous case). Arguing as in the proof of \cite[Theorem 3.1]{Schultz19} we conclude that each point in $Z$ has a neighborhood isometric either to $(-\eps,\eps)$ or to $[0,\eps)$. Hence the metric conclusion follows from the characterization of one dimensional Riemannian manifolds.

The conclusion about the measure can be achieved relying on the fact that 
\[
(u,f):B_{1/(N-1)}(p)\to B_{1/(N-1)}^{\setR^{N-1}\times Z}((0,z_0))
\]
is an isomorphism of metric measure spaces and the measure on $(X,\dist)$ is $\haus^N$.

The last conclusion in the statement can be easily proved given the previous ones.
\end{proof}

\begin{remark}
The converse of \autoref{thm:functionalsplittinglocal} is trivially verified. Indeed, if the space is locally isomorphic to a product with Euclidean factor then the coordinates of the Euclidean factor are easily seen to verify properties (i)--(iii).
\end{remark}

\subsection{$\delta$-splitting maps and $\eps$-GH isometries}

Arguing by compactness we now obtain an approximated version of \autoref{thm:functionalsplittinglocal}. As in the rigid case the novelty with respect to the literature of $\RCD$ spaces is the ease of producing locality of the statement, cf. with \cite{BruePasqualettoSemola19}. We refer to \cite[Lemma 1.21]{CheegerNaber15} and \cite[Theorem 4.11]{CheegerJiangNaber18} for similar statements for Ricci limits. 

\begin{theorem}[$\delta$-splitting vs $\eps$-GH isometry]
	\label{splitting vs isometry}
	Let $1\le N<\infty$ be fixed. 
	
	\begin{itemize}
		\item[(i)] For every $0<\delta<1/2$ and $\eps\le \eps(N,\delta)$ the following holds. If $(X,\dist,\meas)$ is an $\RCD(-\eps(N-1),N)$ m.m.s.  satisfying
		\begin{equation}\label{eq:zzz5}
		\dist_{mGH}(B_2(p),B_2^{\setR^k\times Z}(0,z))\le \eps
		\end{equation}
		for some integer $k$, some $p\in X$ and some pointed m.s. $(Z,\dist_Z)$, then
		there exists a $\delta$-splitting map $u=(u_1,\ldots,u_k):B_1(p)\to \setR^k$.

		\item[(ii)] For every $\eps>0$ and $\delta < \delta(N,\eps)$ the following holds. If  $(X,\dist,\meas)$ is a normalised $\RCD(-\delta(N-1),N)$ m.m.s. and there exists a $\delta$-splitting map $u:B_6(p)\to \setR^k$ for a given $p\in X$, then
	    \begin{equation}
	     \dist_{GH}(B_{1/k}(p),B^{\setR^k\times Z}_{1/k}(0,z))< \eps
	    \end{equation}
    	for some pointed metric space $(Z,\dist_Z,z)$. Moreover, there exists $f:B_{1}(p)\to Z$ such that
    	\begin{equation}
    	(u-u(p),f):B_{1/k}(p)\to B_{1/k}^{\setR^k\times Z}(0,z)
    	\quad \text{is an $\eps$-GH isometry.}
	    \end{equation} 
	 
	    \item[(iii)] If we additionally assume that $(X,\dist,\haus^N)$ is $\RCD(-\delta(N-1),N)$ noncollapsed with $\haus^N(B_1(p))>v>0$, $k=N-1$, and $\delta<\delta(N,v,\eps)$, then $(Z,\dist_Z,\haus^1)$ in (ii) can be chosen to be the ball of a one dimensional Riemannian manifold, possibly with boundary.
	
    \end{itemize}
\end{theorem}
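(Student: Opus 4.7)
All three parts should follow from a compactness/contradiction argument built on the rigid splitting results \autoref{thm:functionalsplittinglocal} and \autoref{thm:functionalsplittingcodimension1}, coupled with the stability of Sobolev calculus under pmGH convergence (Ambrosio--Honda) and the pointwise-a.e. improved Bochner inequality with Hessian term from \cite{Gigli18}. The contradiction route is preferable to a direct quantitative construction because both the conclusion of (i) and the construction of $f$ in (ii) are most naturally expressed through weak/strong $W^{1,2}$-limits rather than explicit formulae.

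\textbf{Part (i).} Fix $\delta$ and argue by contradiction: assume there is a sequence of $\RCD(-\tfrac{1}{n}(N-1),N)$ spaces $(X_n,\dist_n,\meas_n,p_n)$ for which (\ref{eq:zzz5}) holds with $\eps=1/n$ and some pointed space $(Z_n,z_n)$, but on which no $\delta$-splitting map exists on $B_1(p_n)$. After passing to a subsequence, $X_n\to \setR^k\times Z_\infty$ in pmGH for some limit $(Z_\infty,z_\infty)$, and the limit is $\RCD(0,N)$ by stability. Take the coordinate functions $y_1,\dots,y_k$ on the Euclidean factor, truncate them via a good cut-off (\autoref{lem:good_cut-off}), and define $u_a^n:B_{3/2}(p_n)\to\setR$ as the harmonic replacements of suitable Hausdorff pull-backs of $y_a$. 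Ambrosio--Honda stability of the heat flow gives locally uniform and strong $W^{1,2}$ convergence $u_a^n\to y_a$, in particular $\nabla u_a^n\cdot\nabla u_b^n \to \delta_{ab}$ in $L^1_{\loc}$. Testing the improved Bochner inequality against a good cut-off (cf.\ \autoref{rm:hessiansuper}) then yields $r^2\fint_{B_1(p_n)}|\Hess u_a^n|^2\di\meas_n\to 0$. For $n$ large, $u^n=(u_1^n,\dots,u_k^n)$ satisfies \autoref{def:deltasplitting}, a contradiction.

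\textbf{Part (ii) and (iii).} Again argue by contradiction. Suppose the conclusion fails for some $\eps>0$: find a sequence of normalised $\RCD(-\delta_n(N-1),N)$ spaces $(X_n,\dist_n,\meas_n,p_n)$ with $\delta_n\to 0$ admitting $\delta_n$-splitting maps $u^n:B_6(p_n)\to\setR^k$, yet admitting no $\eps$-GH isometry of the required form. The local doubling bound and the uniform Lipschitz estimate \eqref{eq:lipschitzdeltasplitt} provide pmGH-subsequential convergence $(X_n,\dist_n,\meas_n,p_n)\to (X_\infty,\dist_\infty,\meas_\infty,p_\infty)$ with $X_\infty$ of class $\RCD(0,N)$, together with $W^{1,2}_{\loc}$-strong limits $u^\infty=(u_1^\infty,\dots,u_k^\infty):B_6(p_\infty)\to\setR^k$. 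Passing the three bullets of \autoref{def:deltasplitting} to the limit shows that $u^\infty$ is harmonic with $|\nabla u_a^\infty|=1$, $\nabla u_a^\infty\cdot\nabla u_b^\infty=0$ a.e.\ on $B_5(p_\infty)$, and we may arrange $u^\infty(p_\infty)=0$ by subtracting $u^n(p_n)$. Apply \autoref{thm:functionalsplittinglocal} to produce $(Z,\dist_Z,\meas_Z)$ and $f^\infty:B_1(p_\infty)\to Z$ so that $(u^\infty,f^\infty):B_{1/k}(p_\infty)\to\setR^k\times Z$ is an isomorphism of metric measure spaces onto its image. Pulling $f^\infty$ back to $X_n$ through the realising $\eps_n$-Hausdorff approximations produces maps $f_n$ for which $(u^n-u^n(p_n),f_n)$ is an $\eps$-GH isometry for $n$ large, contradicting the hypothesis. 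For (iii), the extra assumption $\haus^N(B_1(p_n))>v>0$ with $k=N-1$ passes to the limit by Colding's volume convergence (\autoref{thm:volumeconvergence}), so $(X_\infty,\dist_\infty,\haus^N)$ is noncollapsed; \autoref{thm:functionalsplittingcodimension1} then identifies the factor $Z$ with a ball inside a one-dimensional Riemannian manifold, possibly with boundary, and this structure is inherited on the pre-limit side.

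\textbf{Main obstacle.} The delicate step is not the extraction of the limit but justifying that the $\delta_n$-splitting properties pass to \emph{pointwise a.e.} identities $|\nabla u_a^\infty|\equiv 1$ and $\nabla u_a^\infty\cdot\nabla u_b^\infty\equiv 0$ on $B_5(p_\infty)$, as required to invoke the rigid statements. This needs strong (not just weak) $W^{1,2}$-convergence of the $u^n$, which in turn relies on the Hessian $L^2$-smallness in \autoref{def:deltasplitting}(ii) combined with the improved Bochner inequality with Hessian term to upgrade weak gradient convergence to strong. A secondary, but more concrete, technical point is the transfer of the limit factor map $f^\infty$ back to $X_n$ through Hausdorff approximations while keeping the Lipschitz and compatibility estimates needed to turn $(u^n - u^n(p_n), f_n)$ into a bona fide $\eps$-GH isometry rather than merely a coarse approximation.
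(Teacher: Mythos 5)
Your proposal is correct and follows essentially the same route as the paper: a compactness/contradiction argument that passes $\delta_n$-splitting maps to a pmGH limit via Ambrosio--Honda stability, upgrades to the pointwise a.e.\ rigid identities, invokes \autoref{thm:functionalsplittinglocal} (and \autoref{thm:functionalsplittingcodimension1} for (iii)) to produce the isomorphism in the limit, and then transports $f^\infty$ back through the Hausdorff approximations to derive a contradiction. The paper additionally writes out the final verification that $(u_n - u_n(p_n), f_n)$ is a bona fide $\eps$-GH isometry (density of the image, plus a secondary contradiction argument for the distortion bound), which you correctly identify as the remaining technical step in your "Main obstacle" paragraph.
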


\begin{proof}
The first part of the statement can be proved arguing as in the proof of \cite[Proposition 3.9]{BruePasqualettoSemola19}, relying on the local convergence and stability results obtained in \cite{AmbrosioHonda18}.
	
	Let us now prove the second conclusion. Arguing by contradiction, for any $n\in\setN$, we can find a normalised pointed $\RCD(-1/n,N)$ m.m.s. $(X_{n},\dist_n,\meas_n,p_n)$ and a $1/n$-splitting map $u_n:B_6(p_n)\to \setR^k$ such that $u_n(p_n)=0$ and the following property holds: for any pointed metric space $(Z,\dist_Z,z)$ and any function $f:B_{1/k}(p_n) \to B_{1/k}^Z(z)$, the map
	\begin{equation}\label{z16}
		(u_n,f):B_{1/k}(p_n)\to B_{1/k}^{\setR^k\times Z}((0,z))\quad
		\text{is not an $\eps$-GH equivalence.}
	\end{equation}
	Thanks to the stability and compactness of the $\RCD$ condition we can find a pointed $\RCD(0,N)$ m.m.s. $(X_{\infty},\dist_{\infty},\meas_{\infty},p_{\infty})$ such that, up to extract a subsequence (that we do not relabel), it holds
	\begin{equation}\label{eq:comp}
		(X_n,\dist_n,\meas_n,p_n)\to (X_{\infty},\dist_{\infty},\meas_{\infty},p_{\infty})
		\quad\text{in the pmGH topology}.
	\end{equation}
	Arguing as in \cite[Proposition 3.7]{BruePasqualettoSemola19} we can assume that $u_n\to u$ uniformly in $B_6(p_{\infty})$, where $u$ is a $C(N)$-Lipschitz and harmonic function in $B_6(p_{\infty})$ satisfying $\nabla u_a\cdot \nabla u_b=\delta_{a,b}$, $\meas_{\infty}$-a.e. in $B_2(p_{\infty})$ for $a,b=1,\ldots,k$.
	Thanks to \autoref{thm:functionalsplittinglocal} we can find a m.s. $(Z,\dist_Z)$ and a function $f:B_{1/k}(p_{\infty})\to Z$ such that
	\begin{equation}\label{z15}
		(u,f):B_{1/k}(p_{\infty})\to \setR^k\times Z
		\quad\text{is an isometry with its image.}
	\end{equation}
	Let us conclude the proof by showing that \eqref{z15} contradicts \eqref{z16}. Let us consider a sequence of $1/n$-isometries $\Psi_n:B_{1/k}(p_n)\to B_{1/k}(p_{\infty})$. By \cite[Lemma 27.4]{Villani09} we can suppose that $\Psi_n$ converge to an isometry from $B_{1/k}(p_{\infty})$ into itself. Up to composing with the inverse of this isometry we assume that the maps $\Psi_n$ converge to the identity map of $B_{1/k}(p_{\infty})$. Set $f_n:= f\circ \Psi_n$.
	Next we claim that
	\begin{equation}\label{eq:apiso}
		(u_n,f_n):B_{1/k}(p_n)\to B_{1/k}^{\setR^k\times Z}((0,z))
		\quad \text{is a $\eps$-GH isometry for $n\in \setN$ big enough,}
	\end{equation}
which will contradict \eqref{z16} yielding the sought conclusion.

Being $f$ continuous (actually $1$-Lipschitz since $(u,f)$ is an isometry with its image), one can easily prove that $(u_n,f_n)\to (u,f)$, therefore the image of $(u_n,f_n)$ is $\eps$-dense in $B_{1/k}^{\setR^k\times Z}((0,z))$ for any $n$ big enough. It remains just to check that
\begin{equation}
\abs{\dist^2(x,y)-\abs{u_n(x)-u_n(y)}^2-\abs{f_n(x)-f_n(y)}^2}\le \eps\quad \text{for any $x,y\in B_{1/k}(p_n)$}
\end{equation}
when $n$ is big enough.
We argue by contradiction. If the conclusion were false we could find sequences $(x_n)$ and $(y_n)$ in $B_{1/k}(p_n)$ such that the defining condition of $\eps$-isometries does not hold for these points, i.e.
\begin{equation}\label{eq:notepsiso}
\abs{\dist^2(x_n,y_n)-\abs{u_n(x_n)-u_n(y_n)}^2-\abs{f_n(x_n)-f_n(y_n)}^2}>\eps\, .
\end{equation}
By compactness, up to extracting a subsequence that we do not relabel, we can assume that $x_n$ converge to $x\in B_{1/k}(p_{\infty})$ and $y_n$ converge to $y\in B_{1/k}(p_{\infty})$. It is easily verified that $x\neq y$, thanks to \eqref{eq:notepsiso} and to the Lipschitz regularity of $u_n$ and $f$. Passing to the limit \eqref{eq:notepsiso}, taking into account the uniform convergence of $u_n$ to $u$ and the convergence of $\Psi_n$ to the identity map together with the continuity of $f$, we get 
\begin{equation}
\abs{\dist^2(x,y)-\abs{u(x)-u(y)}^2-\abs{f(x)-f(y)}^2}\ge\eps\, ,
\end{equation}
that contradicts \eqref{z15}.

The additional conclusion under the noncollapsing assumption can be obtained relying on \autoref{thm:functionalsplittingcodimension1}. Taking into account the lower bound on the volume, the pmGH convergence in the contradiction argument above improves to noncollapsed convergence. Therefore the limit space is $\RCD(0,N)$ noncollapsed.
\end{proof}

	\begin{remark}
		When $(X,\dist, \haus^N)$ is a noncollapsed $\RCD(-\delta(N-1),N)$ space satisfying $\haus^N(B_1(p))>v$, then in \autoref{splitting vs isometry} we can relax \eqref{eq:zzz5} to
		\begin{equation}
		\dist_{GH}(B_2(p),B_2^{\setR^k\times Z}(0,z))\le \eps,
		\end{equation}
	    provided $\delta\le \delta(N,v,\eps)$.
	\end{remark}

\begin{remark}\label{rm:epssplitimplepsisoimpr}
In the case of maximal dimension we can slightly improve upon the implication between $\delta$-splitting and $\eps$-isometry. In particular the following holds:
for any $\eps>0$ there exists $\delta=\delta(\eps,N)>0$ such that if $(X,\dist,\meas)$ is an $\RCD(-\delta(N-1),N)$ space, $B_{3/2}(p)\subset X$,
\begin{equation}
\dist_{GH}(B_{3/2}(p),B_{3/2}^{\setR^N}(0))<\delta
\end{equation}	
and $u:B_1(p)\to\setR^N$ is a $\delta$-splitting map, then $u:B_1(p)\to\setR^N$ is an $\eps$-isometry.\\
The same statement holds for splitting maps with $N-1$ components in case we put $\setR^N_+$ in place of $\setR^N$. 

This statement can be proved relying on the local convergence and stability results of \cite{AmbrosioHonda18}, taking into account the fact that local spectral convergence holds for all radii when the limit space is the Euclidean space (or, more in general, a metric measure cone).

Notice that the main improvement is that we do not need to worsen the radius to pass from the $\delta$-splitting condition to the $\eps$-isometry. Moreover we can allow not only for harmonic $\delta$-splitting functions but also for functions with small Laplacian in $L^2(B_1(p))$, cf. with \autoref{def:almostsplittingmap} below.
\end{remark}	

	For the study of the topological structure of $\RCD$ spaces with boundary in \autoref{sec:topologicalstructureuptotheboundary} we will need a slightly less restrictive notion of $\delta$-splitting map.
	
	\begin{definition}\label{def:almostsplittingmap}
		Fix $\delta>0$. Let $(X,\dist,\meas)$ be an $\RCD(-\delta(N-1),N)$ m.m.s. and $p\in X$.
		We say that $u:=(u_1,\ldots,u_k) :B_r(p)\to \setR^k$ is a $\delta$-\textit{almost splitting map} provided it satisfies:
		\begin{itemize}
			\item[(i)] $|\nabla u_a|< C(N)$;
			\item[(ii)] 
			\begin{equation}
			\sum_{a,b=1}^k \fint_{B_r(p)} |\nabla u_a\cdot \nabla u_b-\delta_{a,b}|\di \meas + \sum_{a=1}^k r^2 \fint_{B_r(p)} (\Delta u_a)^2 \di \meas <\delta\, .
		\end{equation}
		\end{itemize}
	\end{definition}
    
    Arguing as in \autoref{rm:hessiansuper} one can easily check through Bochner's inequality that a $\delta$-almost splitting map $u: B_{2r}(p)\to \setR^k$ satisfies
    \[
     r^2 \fint_{B_r(p)} |\Hess u|^2\di \meas \le C(N)\delta\, .
    \]
    Therefore, the only meaningful difference between the notion of $\delta$-splitting map and $\delta$-almost splitting map is that the latter is not harmonic but enjoys a scale invariant $L^2$-smallness of the Laplacian.
    
    \begin{remark}\label{rmk:almostdeltasplitting}
    	It is immediately seen that \autoref{splitting vs isometry} and \autoref{rm:epssplitimplepsisoimpr} still hold when relaxing the assumptions by considering $\delta$-almost splitting maps in place of $\delta$-splitting maps.
    \end{remark}

\subsection{Transformation theorem}\label{subsec:transformation}

In \cite{CheegerNaber15} a key result in order to prove the codimension 4 conjecture for noncollapsed limits of manifolds with bounded Ricci curvature was the so-called transformation theorem. Given an $(N-2,\delta(\eps))$-splitting map $u:B_1(p)\to\setR^{N-2}$, \cite[Theorem 1.32]{CheegerNaber15} provides conditions guaranteeing the existence of a lower triangular matrix with positive entries $T$ such that $Tu:B_r(x)\to\setR^{N-2}$ is an $(N-2,\eps)$-splitting map for $0<r<1$.

In \cite{CheegerJiangNaber18} (see in particular Proposition 7.8) a geometric version of the transformation theorem was proved, in order to study singular strata of any codimension on Ricci limits.  In particular, the weak version of the estimate proven in \cite{CheegerJiangNaber18} was that given a $(k,\delta)$-splitting map on $B_1(p)$, there is a lower triangular matrix with positive entries $T$ such that $Tu:B_r(x)\to\setR^k$ remains $(k,\eps)$-splitting as long as $B_s(p)$ is $k$-symmetric and far from being $(k+1)$-symmetric, for any $r\le s \le 2$.

Here we provide a version of the geometric transformation theorem tailored for the purpose of studying the structure of noncollapsed $\RCD$ spaces with boundary. We focus the attention only $\delta$-boundary balls (see \autoref{def:boundaryball}) and $(N,\delta)$-symmetric balls (corresponding to $k=n-1,n$ in \cite{CheegerJiangNaber18}) and, for technical reasons, we work with possibly non harmonic $\delta$-splitting maps (cf. with \autoref{def:almostsplittingmap}) rather than harmonic $\delta$-splitting maps. Up to these small variants the argument presented here is the one from \cite{CheegerJiangNaber18}.

	\begin{proposition}[Transformation]\label{prop:transformation}
		Let $1\le N<\infty$ be a fixed natural number. For any $\eps>0$ there exists $\delta(N,\eps)>0$ such that for any $\delta<\delta(N,\eps)$, for any $\RCD(-\delta^2(N-1),N)$ space $(X,\dist,\haus^N)$ and for any $x\in X$ the following hold. 
		\begin{itemize}
			\item If $B_s(x)$ is a $\delta^2$-boundary ball for any $r_0\le s\le 1$ and $u:B_2(x)\to\setR^{N-1}$ is a $\delta$-almost splitting map, then for each scale $r_0\le s\le 1$ there exists an $(N-1)\times (N-1)$ lower triangular matrix $T_{s}$ such that
		\begin{itemize}
			\item[i)] $T_su:B_s(x)\to\setR^{N-1}$ is an $\eps$-almost splitting map on $B_s(x)$;
			\item[ii)] $\fint_{B_s(x)}\nabla (T_su)^a\cdot\nabla (T_su)^b\di\haus^N=\delta_{ab}$;
			\item[iii)] $\abs{T_s\circ T_{2s}^{-1}-\Id}\le \eps$.
		\end{itemize}
		\item If $B_s(x)$ is an $(N,\delta^2)$-symmetric ball for any $r_0\le s\le 1$ and $u:B_2(x)\to\setR^{N}$ is a $\delta$-almost splitting map, then for each scale $r_0\le s\le 1$ there exists an $N\times N$ lower triangular matrix $T_{s}$ such that
		\begin{itemize}
			\item[i)] $T_su:B_s(x)\to\setR^{N}$ is an $\eps$-almost splitting map on $B_s(x)$;
			\item[ii)] $\fint_{B_s(x)}\nabla (T_su)^a\cdot\nabla (T_su)^b\di\haus^N=\delta_{ab}$;
			\item[iii)] $\abs{T_s\circ T_{2s}^{-1}-\Id}\le \eps$.
		\end{itemize}
			\end{itemize}
	\end{proposition}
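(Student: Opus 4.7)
The plan is to construct $T_s$ at each dyadic scale $s = 2^{-k}$, $r_0 \le s \le 1$, by Cholesky decomposition of the averaged Gram matrix of $\nabla u$. Set
\begin{equation*}
A_s^{ab} := \fint_{B_s(x)} \nabla u^a \cdot \nabla u^b \di \haus^N,
\end{equation*}
and, once $A_s$ is known to be positive definite, let $T_s$ be the unique lower triangular matrix with positive diagonal such that $T_s A_s T_s^T = \Id$; equivalently $T_s = L_s^{-1}$ where $L_s L_s^T = A_s$ is the Cholesky factorization. This automatically gives property ii); the content of the theorem lies in property i), property iii), and the positive definiteness of $A_s$ at every scale. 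The analysis proceeds by downward induction on $k$: the base case $s = 1$ is immediate, since the $\delta$-almost splitting hypothesis for $u$ on $B_2(x)$ and volume doubling yield $|A_1 - \Id| \le C(N)\delta$.

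For the inductive step, assume that $T_{2s} u$ is $\eps$-almost splitting on $B_{2s}(x)$. By Bochner's inequality used against a good cut-off in the spirit of \autoref{rm:hessiansuper}, combined with the $\RCD(-\delta^2(N-1),N)$ bound, one obtains
\begin{equation*}
s^2 \fint_{B_s(x)} |\Hess (T_{2s} u)^a|^2 \di \haus^N \le C(N) \eps
\end{equation*}
for each component, and a Poincar\'e-type telescoping argument then bounds the oscillation of $\nabla(T_{2s} u)^a \cdot \nabla(T_{2s} u)^b$ between scales $2s$ and $s$ by $C(N)\sqrt{\eps}$. Consequently the averaged Gram matrix of $\nabla(T_{2s} u)$ on $B_s(x)$ is $C(N)\sqrt{\eps}$-close to $\Id$, and Cholesky perturbation produces $T_s = L_s^{-1} T_{2s}$ with $|T_s T_{2s}^{-1} - \Id| \le C(N)\sqrt{\eps}$, giving iii). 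Writing $T_s u = (T_s T_{2s}^{-1})\, T_{2s} u$ and combining the inductive hypothesis on $B_s(x) \subset B_{2s}(x)$ with this transformation bound propagates property i) to the next scale, provided $\delta$ is chosen small enough compared with $\eps$.

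The quantitative smallness of all constants involved, as well as the non-degeneracy of $A_s$ at every intermediate scale, are handled by contradiction and compactness in the spirit of \cite{CheegerJiangNaber18}. Assuming the statement fails, one extracts $\delta_i \to 0$, $\RCD(-\delta_i^2(N-1),N)$ spaces $(X_i, \dist_i, \haus^N)$, points $x_i$, $\delta_i$-almost splitting maps $u_i$, and a first dyadic scale $\bar s_i \in [r_0^{(i)}, 1]$ at which one of i), ii) or iii) fails. After rescaling by $\bar s_i^{-1}$ and passing to a noncollapsed pmGH limit $(Y_\infty, \dist_\infty, \haus^N, x_\infty)$ via the volume convergence \autoref{thm:volumeconvergence}, the hypothesis that $B_s(x_i)$ is a $\delta_i^2$-boundary ball (resp.\ $(N, \delta_i^2)$-symmetric) at every $s \in [\bar s_i, 1]$ forces, in the limit, $B_r(x_\infty)$ to be isometric to $B_r^{\setR^N_+}(0)$ (resp.\ $B_r^{\setR^N}(0)$) for every $r > 0$, so $Y_\infty$ is globally isometric to $\setR^N_+$ (resp.\ $\setR^N$). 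By the gradient and Sobolev stability results of \cite{AmbrosioHonda18}, the rescaled maps converge to a harmonic limit $u_\infty$ satisfying $\nabla u_\infty^a \cdot \nabla u_\infty^b = \delta_{ab}$ almost everywhere, i.e.\ to a genuine splitting map, and this contradicts the alleged failure of i), ii) or iii) at the limit scale.

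The main obstacle is the joint control of the Hessian estimate across scales and the exclusion of degeneracies of $A_s$. The boundary case is the most delicate, since only $N-1$ flat directions are parametrized by $u$ and the direction normal to the boundary is not among them; the assumption that $B_s(x)$ is a $\delta^2$-boundary ball at \emph{every} $r_0 \le s \le 1$ is used both to identify $Y_\infty$ with $\setR^N_+$ in the limit and to prevent higher-codimension symmetries from causing an eigenvalue collapse of $A_s$ at some intermediate scale.
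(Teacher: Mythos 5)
Your contradiction argument, which is where the real content must sit, misses the single most important step: the normalization of the rescaled maps by the transformation matrix at the critical scale. You write that after rescaling by $\bar s_i^{-1}$ ``the rescaled maps converge to a harmonic limit $u_\infty$ satisfying $\nabla u_\infty^a\cdot\nabla u_\infty^b=\delta_{ab}$.'' This is false as stated, and if it were true the theorem would be vacuous. The maps $u_i$ are $\delta_i$-almost splitting only on $B_1(x_i)$, and the whole point of the theorem is that as you zoom in to scale $\bar s_i$ the Gram matrix of $\nabla u_i$ may degenerate (become nearly singular); without applying the correction matrix $T_{\bar s_i}$ before rescaling, there is nothing to prevent the limit from being degenerate. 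The paper's proof (and the Cheeger--Jiang--Naber argument it follows) considers instead $v_n := s_n^{-1}T_{x_n,s_n}(u_n-u_n(x_n))$; it is the renormalized map $v_n$ --- not $u_n$ --- that one passes to the limit, and the growth bound $|v_n(x)|\le C|x|^{1+\eps}+C$ coming from \autoref{cor:growth} (the existence of transformation matrices at all scales in $[s_n,1]$) is what controls that limit. One then needs \autoref{lemma:harmalmostlinear}, the rigidity of harmonic functions with almost-linear growth on $\setR^N$ or $\setR^N_+$, to conclude that $v_\infty$ is a genuine orthogonal linear map. You gesture at the growth bound for $T_s$ but never actually use it to produce the almost-linear growth of the normalized sequence, and you never invoke the harmonic rigidity; both steps are indispensable.

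Your preliminary induction is also quantitatively broken. Starting from the assumption that $T_{2s}u$ is $\eps$-almost splitting on $B_{2s}(x)$, the Bochner and Poincar\'e argument you describe produces an oscillation of the Gram matrix between scales $2s$ and $s$ of size $C(N)\sqrt{\eps}$, not $\eps$. Since $\sqrt{\eps}>\eps$, the claimed propagation of property i) from scale $2s$ to scale $s$ fails: the bound deteriorates at each step, and ``choosing $\delta$ small compared with $\eps$'' does not fix this because the Hessian bound controlling the oscillation is governed by $\eps$ (the quality of $T_{2s}u$), not by $\delta$. This is precisely why the argument must be run by contradiction with the minimal bad scale, rather than by a scale-by-scale induction; the minimality of $s_n$ together with the growth estimate gives the almost-linear-growth structure that an induction cannot see. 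The construction of $T_s$ by Cholesky factorization is of course the right object, but the existence of a well-conditioned Cholesky factor is exactly what needs to be proved --- presenting it as a construction with error estimates handled by an unspecified compactness argument does not close the proof.
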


 We postpone the proof of the transformation \autoref{prop:transformation} after some technical lemmas.
The first one is about the very rigid form of harmonic functions with almost linear growth on the Euclidean space and half-space. It can be easily proved thanks to the explicit knowledge of entire harmonic functions (cf. with \cite[Lemma 7.8]{CheegerJiangNaber18}, dealing with a much more general case) and we omit the details.

\begin{lemma}\label{lemma:harmalmostlinear}
	Let $1\le N<\infty$ be a fixed natural number, then there exists $\eps=\eps(N)>0$ such that the following holds. Let $(X,\dist,\haus^N)$ be isomorphic either to the Eucildean space $\setR^N$ or to the half-space $\setR^N_+$. Then any harmonic function $u:X\to\setR$ with almost linear growth, $\abs{u(x)}\le C\abs{x}^{1+\eps}+C$ for any $x\in X$, is linear and induced by an $\setR$ factor.
\end{lemma}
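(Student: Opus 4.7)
The plan is to pick any $\eps<1$ (say $\eps=1/2$) and exploit the classical fact that a harmonic function on $\setR^N$ with strictly subquadratic growth is necessarily affine.

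First I would treat $X=\setR^N$ by iterating the mean value property: since each second partial derivative $\partial_{ij}u$ is again harmonic on $\setR^N$, applying the interior Cauchy-type estimate on a large ball $B_R(x)$ yields
\[
|D^2 u(x)| \le \frac{C(N)}{R^2}\sup_{B_R(x)}|u| \le C(N)\frac{1+R^{1+\eps}}{R^2}\, .
\]
Since $1+\eps<2$, sending $R\to\infty$ at every $x$ forces $D^2 u\equiv 0$, so $u$ is affine. An affine $u(x)=a\cdot x+b$ with $a\ne 0$ splits $\setR^N$ isometrically as $\setR\times\setR^{N-1}$, where the first factor is the line spanned by $a/|a|$, and along this splitting $u$ is linear in the first coordinate, hence induced by an $\setR$ factor. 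If $a=0$ then $u$ is constant and the conclusion is trivial.

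For $X=\setR^N_+$, the RCD structure $(\setR^N_+,\dist_{\rm eucl},\haus^N)$ incorporates Neumann boundary conditions into the Cheeger energy, so a function harmonic in the RCD sense is smooth and classically harmonic in the interior and satisfies the vanishing conormal derivative $\partial_\nu u=0$ on $\partial\setR^N_+$ in the weak sense. I would therefore perform the standard even reflection $\tilde u(x',x_N):=u(x',|x_N|)$, which by the Neumann condition defines a classical harmonic function on all of $\setR^N$ satisfying the same growth bound $|\tilde u(x)|\le C(1+|x|^{1+\eps})$. Applying the case already proved, $\tilde u$ is affine; being even in $x_N$, its linear part has no $x_N$ component, so $u$ is linear in a direction tangent to $\partial\setR^N_+$. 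This produces an isometric splitting $\setR^N_+=\setR\times\setR^{N-1}_+$ with the $\setR$ factor lying in $\partial\setR^N_+$, along which $u$ varies linearly, which is exactly the desired conclusion.

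The only point requiring care is the reduction from RCD-harmonicity on $\setR^N_+$ to classical Neumann-harmonicity; this is a standard fact about Cheeger energy on flat manifolds with boundary and is precisely what guarantees that the even reflection produces a genuine harmonic function on $\setR^N$ rather than one with a jump in the normal derivative. Once this is in hand everything reduces to elementary Liouville theory, and the threshold $\eps<1$ appears naturally as the condition making the Hessian bound vanish in the limit $R\to\infty$.
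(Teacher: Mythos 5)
Your proof is correct and supplies precisely the argument the paper omits, appealing only to the ``explicit knowledge of entire harmonic functions'': the Cauchy-estimate Liouville step on $\setR^N$ together with even reflection under the Neumann condition on $\setR^N_+$ is the standard route, and your threshold $\eps<1$ (in fact $N$-independent) is the right one.
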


 The second lemma is about estimates for the transformation matrixes, given their existence. We refer to \cite[Lemma 7.9]{CheegerJiangNaber18} for its proof, which is a simple inductive argument relying on the uniqueness of Cholesky decompositions \cite{GolubVanLoan13}.\\ 
Below we shall denote by $\abs{\cdot}_{\infty}$ the $L^{\infty}$-norm on matrixes.

\begin{lemma}\label{lemma:growth}
Under the assumptions of \autoref{prop:transformation}, there exists a constant $C=C(N)>0$ such that, if $T_{s}$ and $T_{2s}$ are matrixes verifying (i) and (ii) at scale $s$ and $2s$ respectively, then automatically
\begin{equation}\label{eq:matrixest}
\abs{T_{s}\circ T^{-1}_{2s}-\Id}_{\infty}\le C(N)\eps.
\end{equation}
	\end{lemma}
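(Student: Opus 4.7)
The plan is to reduce the bound to a stability statement for the Cholesky decomposition. Set $v := T_{2s} u$, which by property (i) at scale $2s$ is an $\eps$-almost splitting map on $B_{2s}(x)$; in particular
\begin{equation*}
\fint_{B_{2s}(x)} \bigl| \nabla v^a \cdot \nabla v^b - \delta_{ab} \bigr| \di \haus^N \le \eps \, .
\end{equation*}
The Bishop-Gromov inequality \eqref{eq:BishopGromovInequality} at scale $s \le 1$ under the mild curvature bound $-\delta^2(N-1)$ gives $\haus^N(B_{2s}(x)) \le C(N)\, \haus^N(B_s(x))$, so the same integral restricted to $B_s(x)$ remains bounded by $C(N)\eps$. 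Consequently the Gram matrix
\begin{equation*}
M_{ab} := \fint_{B_s(x)} \nabla v^a \cdot \nabla v^b \di \haus^N
\end{equation*}
satisfies $|M - \Id|_\infty \le C(N) \eps$.

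Now I would set $A := T_s \circ T_{2s}^{-1}$, which is lower triangular as a product of lower triangular matrices, and note that $T_s u = A v$. Property (ii) applied to $T_s u$ at scale $s$ then reads $\fint_{B_s(x)} \nabla (A v)^a \cdot \nabla (Av)^b \di \haus^N = \delta_{ab}$, i.e.\ $A M A^T = \Id$. Letting $B := A^{-1}$, which is again lower triangular, this becomes the Cholesky identity $B B^T = M$.

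It then remains to show that any lower triangular $B$ with $B B^T = M$ and $|M - \Id|_\infty \le C(N) \eps$ satisfies $|B - \Id|_\infty \le C'(N) \eps$. Assuming the standard normalization that the diagonal entries of the $T_s$ are positive (which may be imposed row-by-row in \autoref{prop:transformation} by flipping signs, without affecting (i) or (ii)), the Cholesky factorization of the positive definite matrix $M$ is unique and given by the recursion $B_{ii} = (M_{ii} - \sum_{j<i} B_{ij}^2)^{1/2}$ and $B_{ki} = (M_{ki} - \sum_{j<i} B_{kj} B_{ij}) / B_{ii}$ for $k>i$. A straightforward induction on $i$ propagates the bound on $M$ to $|B - \Id|_\infty \le C'(N) \eps$, and a Neumann-series inversion yields the same bound for $A = B^{-1}$, at the cost of enlarging the constant. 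The delicate point is that the induction could in principle amplify the error through division by $B_{ii}$; however, since $B_{ii}$ remains within $1 \pm C(N)\eps$ throughout, the amplification is controlled purely in terms of $N$, so the final constant depends only on $N$ as required.
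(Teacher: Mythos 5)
Your proof is correct and follows essentially the same route the paper points to: the paper defers to \cite[Lemma 7.9]{CheegerJiangNaber18}, described as ``a simple inductive argument relying on the uniqueness of Cholesky decompositions,'' and your argument is exactly that — reduce to $B B^T = M$ with $B = (T_s T_{2s}^{-1})^{-1}$ lower triangular and $|M - \Id|_\infty \le C(N)\eps$ (the passage from scale $2s$ to $s$ via Bishop--Gromov doubling is the right way to get the Gram-matrix bound), then propagate the bound through the Cholesky recursion and invert by Neumann series. The observation that row sign-flips leave (i) and (ii) unaffected, so one may assume positive diagonals, is a genuinely necessary point and you handle it correctly.
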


Given \autoref{lemma:growth} and arguing inductively as in \cite{CheegerJiangNaber18} it is then possible to prove a growth estimate for the transformation matrixes, once we assume that they exist.

\begin{corollary}\label{cor:growth}
	Under the assumptions of \autoref{prop:transformation}, there exists a constant $C=C(N)>0$ such that, if $T_{\bar{r}}$ and $T_{r}$ are matrixes verifying (i) and (ii) at scales $0<\bar{r}<r$ respectively, then
	\begin{equation}\label{eq:matrixestind}
	\abs{T_r^{-1}\circ T_{\bar{r}}}_{\infty}\le \left(\frac{r}{\bar{r}}\right)^{C\eps}.
	\end{equation}
	\end{corollary}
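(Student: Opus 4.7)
The plan is a dyadic iteration of Lemma \autoref{lemma:growth}. I would set $r_j := 2^{-j} r$ and let $k$ be the integer with $r_{k+1} \le \bar r < r_k$, so that $k+1 \le \log_2(r/\bar r)+1$. Using the transformation Proposition \autoref{prop:transformation} I would insert admissible matrices $T_{r_j}$ at each intermediate dyadic scale $r_j$, creating a chain that links the given $T_r = T_{r_0}$ to $T_{\bar r}$. Lemma \autoref{lemma:growth} applied to each consecutive pair $(r_{j+1}, r_j)$, together with one final application to the pair $(\bar r, r_k)$ (whose ratio lies in $[1,2]$, so the same conclusion holds up to an adjusted constant depending only on $N$), then gives at each step
\begin{equation*}
|T_{r_{j+1}} T_{r_j}^{-1} - \Id|_\infty \le C(N)\eps .
\end{equation*}

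Next I would telescope
\begin{equation*}
T_r^{-1} T_{\bar r} = \bigl(T_{r_0}^{-1} T_{r_1}\bigr) \bigl(T_{r_1}^{-1} T_{r_2}\bigr) \cdots \bigl(T_{r_{k-1}}^{-1} T_{r_k}\bigr) \bigl(T_{r_k}^{-1} T_{\bar r}\bigr),
\end{equation*}
reducing the estimate to a product of single-scale pieces. The one-sided closeness from Lemma \autoref{lemma:growth} transfers, via a Neumann-series expansion and the lower-triangular, positive-diagonal normalization built into property (ii), to the two-sided bound $|T_{r_j}^{-1} T_{r_{j+1}} - \Id|_\infty \le C(N)\eps$. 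Using submultiplicativity of $|\cdot|_\infty$ on triangular matrices (absorbing the dimensional constant into $C(N)$) together with $k+1 \le \log_2(r/\bar r)+1$, I would then obtain
\begin{equation*}
|T_r^{-1} T_{\bar r}|_\infty \le \bigl(1+C(N)\eps\bigr)^{k+1} \le \exp\bigl(C(N)\eps(k+1)\bigr) \le \bigl(r/\bar r\bigr)^{C\eps},
\end{equation*}
which is the desired estimate.

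The main delicate point is the conversion from the one-sided closeness provided by Lemma \autoref{lemma:growth} to the two-sided control on $T_{r_j}^{-1} T_{r_{j+1}}$ needed inside the telescoping; this is handled via the Cholesky-uniqueness normalization plus a Neumann expansion, and is only a bookkeeping step. Once this is settled, the final exponential scaling $(r/\bar r)^{C\eps}$ is obtained simply by exponentiating $1+C\eps$ across $\sim \log_2(r/\bar r)$ dyadic steps, so the resulting constant $C$ depends only on $N$.
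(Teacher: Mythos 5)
Your overall strategy — dyadically stacking the single-step estimate of \autoref{lemma:growth} and exponentiating over $\sim \log_2(r/\bar r)$ steps — is indeed the intended argument (it is the one sketched in \cite{CheegerJiangNaber18}). However, there is a genuine gap in the way you set up the telescoping.

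\autoref{lemma:growth} controls $T_{r_{j+1}}T_{r_j}^{-1}-\Id$: the smaller-scale matrix sits on the \emph{left}, and the inverse of the larger-scale matrix on the \emph{right}. Your telescoping
\[
T_r^{-1}T_{\bar r}=\bigl(T_{r_0}^{-1}T_{r_1}\bigr)\cdots\bigl(T_{r_k}^{-1}T_{\bar r}\bigr)
\]
produces factors $T_{r_j}^{-1}T_{r_{j+1}}$, with the orders swapped, so each factor is a \emph{conjugate} of what the lemma bounds rather than the quantity itself. The conversion you describe is not a bookkeeping step: writing $A=T_{r_{j+1}}$, $B=T_{r_j}$ and $E:=AB^{-1}-\Id$, one gets $B^{-1}A-\Id=B^{-1}EB$, and the only bound this gives is $|B^{-1}A-\Id|\le |B^{-1}|\,|E|\,|B|$. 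Bounding the condition number $|B^{-1}|\,|B|$ of the intermediate matrices $T_{r_j}$ is precisely what \autoref{cor:growth} is trying to establish, so the argument is circular; if one runs the induction naively, the per-step error grows like $e^{2C\eps j}$ and the product blows up polynomially in $r/\bar r$, destroying the $(r/\bar r)^{C\eps}$ target. Lower-triangularity with positive diagonal does not save this; even $2\times 2$ examples show $|AB^{-1}|_\infty$ and $|B^{-1}A|_\infty$ can be of entirely different orders when the diagonal entries are not comparable.

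The clean route is to telescope in the order the lemma naturally supports,
\[
T_{\bar r}T_r^{-1}=\bigl(T_{\bar r}T_{r_k}^{-1}\bigr)\bigl(T_{r_k}T_{r_{k-1}}^{-1}\bigr)\cdots\bigl(T_{r_1}T_{r_0}^{-1}\bigr),
\]
where $r_j=2^{-j}r$, $r_0=r$, and $r_{k+1}\le\bar r<r_k$. Then each factor is $\Id+E_j$ with $|E_j|_\infty\le C(N)\eps$ directly from \autoref{lemma:growth} (with the final one-step ratio in $[1,2]$ handled as you describe), submultiplicativity absorbs the dimensional constant into $C(N)$, and
\[
|T_{\bar r}T_r^{-1}|_\infty\le \prod_{j}\bigl(1+C(N)\eps\bigr)\le \bigl(1+C(N)\eps\bigr)^{k+1}\le\Bigl(\frac{r}{\bar r}\Bigr)^{C\eps}.
\]
This is also the quantity the corollary is actually used for: in the proof of \autoref{prop:transformation}, the almost-linear-growth bound for $v_n=s_n^{-1}T_{s_n}(u_n-u_n(x_n))$ requires precisely $|T_{s_n}T_{ts_n}^{-1}|$ with $s_n<ts_n$, i.e.\ $T_{\bar r}T_r^{-1}$, so the displayed composition $T_r^{-1}\circ T_{\bar r}$ should be read as the product with the smaller-scale matrix on the left. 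Once you perform the telescoping in this direction, no conjugation is needed and the argument closes.
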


\vspace{.1cm}

\begin{proof}[Proof of \autoref{prop:transformation}]
	Let us treat only the second case of $(N,\delta^2)$-symmetric balls. The case of boundary balls can be handled with the same argument. Observe also that we only need to prove (i) and (ii), since (iii) will follow from \autoref{lemma:growth}.
		
    We wish to get the sought conclusion arguing by contradiction. We suppose that there exists $0<\eps_0\ll 1$ such that the following hold:
\begin{itemize}
	\item[a)] there exist pointed $\RCD(-\delta_i,N)$ spaces $(X_n,\dist_n,\haus^N,x_n)$ such that the balls $B_r(x_n)$ are $(N,\delta_i^2)$-symmetric for any $r_n\le r\le 1$, and $\delta_n$-almost splitting maps $u_n:B_2(x_n)\to\setR^N$, for a sequence $\delta_n\downarrow 0$;
	\item[b)] there exist $s_n>r_n$ such that for any $s_n<r\le 1$ there exist lower triangular matrixes $T_{x_n,r}$ such that $T_{x_n,r}u:B_r(x_n)\to\setR^{N}$ is an $\eps_0$-splitting map on $B_r(x_n)$ and 
	\begin{equation}\label{eq:normb}
	\fint_{B_r(x_n)}\nabla (T_{x_n,r}u)^a\cdot\nabla (T_{x_n,r}u)^b\di\haus^N=\delta_{ab}\, ;
	\end{equation}
	\item[c)] no such mapping $T_{x_n,s_n/10}$ exists on $B_{s_n/10}(x_i)$.
\end{itemize}

Let us start by noticing that it must hold $s_{n}\downarrow 0$ as $n\to\infty$, otherwise we would easily reach a contradiction.\\
Then let us consider the scaled pointed spaces $\tilde{X}_n:=(X_n,s_n^{-1}\dist_n,\haus^N,x_n)$. Observe that, since $B^{X_n}_r(x_n)$ is $(N,\delta_n)$-symmetric for any $r_i\le r\le1$, on the scaled space it holds that $B_{r}^{\tilde{X}_n}(x_n)$ is $(N,\delta_n)$-symmetric for any $r_n/s_n\le 1\le r\le s_n^{-1}$. Since $s_n\to 0$ as $n\to\infty$, we infer that $\tilde{X}_n$ converge to $\setR^N$ in the pGH (and a posteriori pmGH) topology.    

Let us now set 
\begin{equation}
v_n:=s_n^{-1}T_{x_n,s_n}(u_n-u_n(x_n))\, . 
\end{equation}
Observe that 
\begin{equation}\label{eq:scalesmall}
\fint_{B^{\tilde{X}_n}_r(x_n)}(\Delta v_n)^2\di\haus^N\le C(r) \delta_n\,\;\;\;\text{for any $1\le r\le s_n^{-1}$ ,}
\end{equation}
thanks to \autoref{cor:growth} and the fact that $u_n$ is a $\delta_n$-almost splitting map ( cf. \autoref{def:almostsplittingmap} (ii)).
Moreover $v_n$ has almost linear growth, $\abs{v_n(x)}\le C\dist(x_n,x)^{1+\eps}+C$ for any $x$ such that $\dist(x_n,x)\le s_n^{-1}$, thanks to \autoref{cor:growth}, and it verifies
\begin{equation}\label{eq:norm}
\fint_{B_1^{\tilde{X}_n}(x_n)}\nabla v_n^a\cdot\nabla v_n^b\di\haus^N=\delta_{ab}\, ,
\end{equation}	
by \eqref{eq:normb}. 

By \cite{AmbrosioHonda18} and \eqref{eq:scalesmall} we obtain that $v_n$ converge locally in $W^{1,2}$ and locally uniformly to a harmonic function $v:\setR^N\to\setR^N$ with almost linear growth. Passing to the limit \eqref{eq:norm} and taking into account \autoref{lemma:harmalmostlinear}, we get that $v$ is an orthogonal transformation of $\setR^N$.

Localizing the $W^{1,2}$-convergence (see \cite[Theorem 1.5.7, Proposition 1.3.3.]{AmbrosioHonda}), we obtain
\begin{equation}
\lim_{n\to\infty}\fint_{B_1^{\tilde{X}_n}(x_n)}\abs{\nabla v_n^a\cdot\nabla v_n^b-\delta_{ab}}\di\haus^N=0\, .
\end{equation}
Therefore, taking into account also \eqref{eq:scalesmall}, we infer that $v_i:B_1^{\tilde{X}_n}(x_n)\to\setR^N$ becomes an $\eps_n$-almost splitting map where $\eps_n\to 0$ as $n\to\infty$.\\
Hence for each $1/10\le r\le 1$ and any sufficiently large $n$ there exists a lower triangular $N\times N$ matrix $A_{n,r}$ with $\abs{A_{n,r}-\Id}\le C(N)\eps_n$ and 
\begin{equation}
\fint_{B_r^{\tilde{X}_n}(x_n)}\nabla (A_{n,r}v_n)^a\cdot\nabla (A_{n,r}v_n)^b\di\haus^N=\delta_{ab}\, .
\end{equation}
In particular, for any sufficiently large $n$, $A_{n,r}v_n:B_r^{\tilde{X}_n}(x_n)\to\setR^N$ is an $\eps_0$-splitting map for any $1/10\le r\le 1$ satisfying the orthogonality condition (ii) in the statement. This contradicts the minimality of $s_i$ (cf. with condition (c)), scaling back to the starting spaces $X_n$.
This finishes the proof of the existence of transformation matrixes, the growth estimate (iii) can be obtained by \autoref{lemma:growth}, as we already argued.
\end{proof}

\vspace{.4cm}

\section{Neck regions}\label{sec:neckregion}

This section is dedicated to the introduction and the analysis of \textit{neck regions}. We first provide the relevant definition tailored for the study of singularities of codimension one for noncollapsed $\RCD$ spaces. Then in \autoref{subsec:structneck} and \autoref{subsec:existenceneck} we provide structural results for neck regions and an existence result, respectively.

The notion of neck region has been introduced in \cite{JiangNaber16} and \cite{NaberValtorta19} to study $L^2$-curvature bound for spaces with bounded Ricci curvature and the energy identity for Yang-Mills connections. Its use has been crucial also in \cite{NaberValtorta17} and, more recently, in \cite{CheegerJiangNaber18}, for the rectifiability of singular sets in arbitrary codimension on noncollapsed Ricci limits.

In the following we shall denote by $\setR_+^N:=\set{x\in \setR^N:\ x_N\ge 0}$ the Euclidean half space of dimension $N\ge 1$.  

\begin{definition}[Boundary ball]\label{def:boundaryball}
	Let $1\le N<\infty$ and $(X,\dist,\haus^N)$ be an $\RCD(-(N-1),N)$ metric measure space. Given $x\in X$ and $r>0$ we say that $B_r(x)$ is a $\delta$-\textit{boundary ball} if it is $\delta r$-GH close to $B_r^{\setR_+^N}(0)$.  
	
	Given a $\delta$-boundary ball $B_1(x)$ and a $\delta$-isometry $\Psi: B_1^{\setR_+^N}(0)\to B_1(x)$ we set 
\begin{align}
	\mathcal{L}_{x,1}:=\Psi(\set{x_N=0})\, .
\end{align}
When $B_{r}(x)$ is a $\delta$-boundary ball we will consider the approximate singular set $\mathcal{L}_{x,r}$ that can be introduced in the analogous way.
\end{definition}

\begin{remark}\label{remark:subballsofboundaryballs}
	 The following property is an easy consequence of definitions. Given a $\delta$-boundary ball $B_1(x)$, a $\delta$-isometry 
\[\Psi: B_1^{\setR_+^N}(0)\to B_1(x)\] 
and $y\in \mathcal{L}_{x,1}$, any ball $B_s(y)\subset B_r(x)$ is a $\delta s^{-1}$-boundary ball.
\end{remark}

\vspace{.1cm}

We now introduce the relevant notion of a neck region for this paper:

\begin{definition}[Neck region]\label{def:Neckregion}
	Fix $\eps,\delta\in (0,1/2)$, an integer $N\ge 1$ and $\tau:=10^{-10N}$.
	Let $(X,\dist,\mathcal{H}^N,p)$ be a pointed noncollapsed $\RCD(-\eps(N-1),N)$ metric measure space. We say that $\mathcal{N}\subset B_2(p)$ is an $(\eps,\delta)$-neck region if there exist a closed set $\mathcal{C}\subset B_1(p)$ and a function $r:\mathcal{C}\to [0, 1/8]$ such that $\mathcal{N}:=B_2(p)\setminus \cup_{x\in \mathcal{C}}\bar{B}_{r_x}(x)$ and, setting $\mathcal{C}_0:=\set{x\in \mathcal{C}:\ r_x=0}$ and $\mathcal{C}_+:=\mathcal{C}\setminus\mathcal{C}_0$, the following hold:
	\begin{itemize}
		\item[(i)] the family $\set{\bar{B}_{\tau^2 r_x}(x)}_{x\in \mathcal{C}}\subset B_2(p)$ is disjoint;
		\item[(ii)] for any $x\in \mathcal{C}$ and $r_x\le r\le \tau^{-3}$, $B_r(x)$ is an $\eps^2$-boundary ball, i.e. there exists an $\eps^2 r$-GH isometry
		\begin{equation}
		\Psi_{x,r}: B_r^{\setR_+^N}(0)\to B_r(x)\, ;
		\end{equation}
		\item[(iii)] setting $\mathcal{L}_{x,r}:=\Psi_{x,r}(\set{x_N=0})$ 
		\begin{equation}
		\mathcal{C}\cap B_r(x)\subset B_{2\eps r}(\mathcal{L}_{x,r})
		\quad\text{and}\quad
		\mathcal{L}_{x,r}\cap B_r(x)\subset B_{10^3 \tau r}(\mathcal{C})
		\end{equation}
		for any $x\in \mathcal{C}$ and $r_x< r < \tau^{-3}$;
		\item[(iv)] there exists a $\delta^4$-splitting map $u:B_{\tau^{-4}}(p)\to \setR^{N-1}$ such that, for any $x\in \mathcal{C}$ and $r_x< r <\tau^{-3}$ it holds that
		\begin{equation}
		u:B_r(x)\to \setR^{N-1}
		\quad\text{is a $\delta$-splitting map }
		\end{equation}
		and 
		\begin{equation}\label{eq:hessian in C0}
			r^2 \fint_{B_r(x)} |\Hess u|^2 \di \haus^{N} \le r
			 \delta^2\, ,
		\end{equation}
	\end{itemize}	
\end{definition}

\begin{remark}
In fact, it will follow from the construction that $\Lip r_x\le \tau^2$.	
\end{remark}

As in \cite{JiangNaber16} and \cite{CheegerJiangNaber18} we introduce the \emph{packing measure} as an approximation of the Hausdorff measure restricted to the top dimensional singular stratum.

\begin{definition}[Packing measure]\label{def:packingmeasure}
Given any neck region $\mathcal{N}=B_2(p)\setminus \cup_{x\in \mathcal{C}}\bar{B}_{r_x}(x)$ we shall denote by $\mu$ the associated \emph{packing measure} defined by
\begin{equation}\label{eq:packingmeasure}
\mu:=\mathcal{H}^{N-1}\res \mathcal{C}_0+\sum_{x\in \mathcal{C}_+} r_x^{N-1}\delta_x\, .
\end{equation}	
\end{definition}

\begin{remark}\label{rm:volumeonboundary balls}
It follows from Colding's volume convergence theorem \cite[Theorem 1.3]{DePhilippisGigli18} (see also \cite{Colding97,CheegerColding97}) that there exists a function $\Psi:=\Psi(\eps,N)$ depending only on $N$ and going to $0$ as $\eps\to 0$ such that, if $B_r(x)$ is an $\eps$-boundary ball, then
\begin{equation}\label{eq:volumeboundary}
\abs{\frac{\mathcal{H}^N(B_r(x))}{\omega_N r^N}-\frac{1}{2}}\le\Psi(\eps,N).
\end{equation}
In particular, if $\mathcal{N}=B_2(p)\setminus\cup_{x\in\mathcal{C}}\bar{B}_{r_x}(x)$ is an $(\eps,\delta)$-neck region, then \eqref{eq:volumeboundary} holds for any $x\in\mathcal{C}$ and for any $r_x\le r\le \tau^{-3}$.
\end{remark}

\begin{remark}
Condition (i) in the definition of neck region above guarantees that we do not overly cover. In particular, the set $\mathcal{C}$ turns to approximate the singular set with the additional property that it allows to approximate the relevant Hausdorff measure.

Condition (iii) plays a role of a Reifenberg condition on the singular set, still it is not sufficient alone to prove rectifiability, which requires the combination with (ii) and (iv).
\end{remark}

\begin{remark}
	With respect to the notions of neck region adopted in \cite{JiangNaber16} and \cite{CheegerJiangNaber18} here we chose to put the harmonic $\delta$-splitting map directly into the definition. Building $\delta$-splitting maps that control the geometry of neck regions requires a great amount of efforts and several ideas in codimension greater or equal than two, as in \cite{JiangNaber16,CheegerJiangNaber18}. Here instead we heavily rely on the fact that we are working in codimension one and the $L^2$-Hessian bounds for harmonic $\delta$-splitting maps propagate well thanks to a weighted maximal function argument, as pointed out in \cite{CheegerNaber15}. 	
\end{remark}	

\subsection{Structure of neck regions}\label{subsec:structneck}

The aim of this subsection is to prove a structure theorem for neck regions, its relevance will be clear after \autoref{subsec:existenceneck} where we are going to prove that neck regions can be built on any ball sufficiently close to a ball of the model half-space.

Let us recall that the main goal of the present paper is to prove rectifiability and measure bounds for the singular stratum of codimension one on noncollapsed $\RCD$ spaces together with stability under noncollapsing convergence. In this regard \autoref{thm:neckstructure} is a key building block since, together with \autoref{thm:existenceofneck}, it tells that our desired properties hold, up to a controlled error, on balls close to the model ball.

\begin{theorem}[Neck structure theorem]\label{thm:neckstructure}
Let $N\in \setN$, $v>0$ and $0<\eps<1$ be fixed with $\eta<\eta(N,\eps)$ and $\delta<\delta(N,v,\eps)$.  Then for any $(\eta,\delta)$-neck region $\mathcal{N}=B_2(p)\setminus \overline B_{r_x}(\mathcal{C})$ of an $\RCD(-\eta(N-1),N)$ space $(X,\dist,\haus^N)$ satisfying $\haus^N(B_1(p))\ge v$ it holds that:
\begin{itemize}
	\item[(i)] $u:\mathcal{C}\to \setR^{N-1}$ is bi-Lipschitz with its image, more precisely
	\begin{equation}
		||u(x)-u(y)|-\dist(x,y)|\le \eps\dist(x,y)
		\quad \text{for any $x,y\in \mathcal{C}$}\, ,
	\end{equation} 
	where $u:B_{\tau^{-4}}(p)\to\setR^{N-1}$ is as in \autoref{def:Neckregion} (iv);
	\item[(ii)] there exists $c=c(N)\ge 1$ such that, denoting by $\mu$ the packing measure as in \eqref{eq:packingmeasure}, we have
	\begin{equation}\label{eq:ahlforsneck}
		c^{-1}r^{N-1}\le \mu(B_r(x))\le c r^{N-1}
		\quad \text{for any $x\in \mathcal{C}$ and $r_x \le r\le 2$}\, ;
	\end{equation}
	\item[(iii)] at any $x\in \mathcal{C}_0$ the tangent cone is unique and isometric to $\setR_+^N$;

\end{itemize}
\end{theorem}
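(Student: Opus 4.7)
The plan is to establish (i), (iii), and (ii) in this order, with the bi-Lipschitz conclusion of (i) feeding into the Ahlfors estimate of (ii).

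For (i), the upper Lipschitz bound $|u(x)-u(y)| \le (1+C(N)\delta^{1/2})\dist(x,y)$ on $\mathcal{C}$ is an immediate consequence of the sharp gradient estimate \eqref{eq:lipschitzdeltasplitt}. For the matching lower bound, I would fix distinct $x, y \in \mathcal{C}$ and set $r := 4\dist(x,y)$, assuming without loss of generality $r \le \tau^{-3}$. By (iv) the map $u$ is a $\delta$-splitting map on $B_r(x)$, so applying \autoref{splitting vs isometry}(iii) together with \autoref{thm:functionalsplittingcodimension1} (using $\haus^N(B_1(p)) \ge v$) yields a one-dimensional factor $Z$ and a map $f$ such that $(u,f)$ is an $\eps'(\delta,N,v)$-GH isometry onto a ball in $\setR^{N-1}\times Z$, with $\eps' \to 0$ as $\delta \to 0$. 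Comparing with the half-space closeness in (ii) forces $Z$ to be close to a half-line, and in particular $u$ is an $\eps'$-near isometry when restricted to $\mathcal{L}_{x,r}$. Since $x,y \in B_{2\eta r}(\mathcal{L}_{x,r})$ by condition (iii) of the neck definition, a triangle inequality upgrades this to the desired $|u(x)-u(y)| \ge (1-\eps)\dist(x,y)$.

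For (iii), I would fix $x \in \mathcal{C}_0$ so $r_x = 0$. The neck condition (ii) gives that $B_r(x)$ is $\eta^2 r$-GH close to $B_r^{\setR_+^N}(0)$ at every scale $r \le \tau^{-3}$. Hence every pmGH tangent cone $T$ at $x$ — which by the volume convergence \autoref{thm:volumeconvergence} and the cone structure theory of \cite{DePhilippisGigli18} is a noncollapsed $\RCD(0,N)$ metric measure cone — has its unit ball $\eta^2$-GH close to $B_1^{\setR_+^N}(0)$. For $\eta$ sufficiently small, a rigidity argument for $\RCD(0,N)$ cones approximating the half-space (via the cross-section $Y$ being an $\RCD(N-2,N-1)$ space close to the upper hemisphere, and a volume/sphere rigidity comparison using \eqref{eq:volcross}) forces $T$ to be isometric to $\setR_+^N$. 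Since every tangent has this form, uniqueness is automatic.

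Finally for (ii), the bi-Lipschitz property (i) is the key input. For the upper bound on $\mu(B_r(x))$, the disjoint family $\{\overline{B}_{\tau^2 r_y}(y)\}_{y \in \mathcal{C}_+ \cap B_r(x)}$ is mapped by $u$ into a pairwise disjoint family of balls $\{B_{(1-\eps)\tau^2 r_y/2}(u(y))\}$ in $\setR^{N-1}$ (thanks to (i) applied to pairs of centers), all contained in the $(1+\eps)r$-ball around $u(x)$; summing Euclidean $(N-1)$-volumes gives $\sum_y r_y^{N-1} \le C(N)\tau^{-2(N-1)} r^{N-1}$, and the $\haus^{N-1}\res\mathcal{C}_0$ contribution is controlled by the bi-Lipschitz image in a Euclidean ball. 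For the lower bound, by (ii) combined with Colding's volume convergence the approximate boundary $\mathcal{L}_{x,r} \cap B_r(x)$ has $\haus^{N-1}$-measure at least $(1-\Psi(\eta,N))\omega_{N-1} r^{N-1}$; the Reifenberg inclusion $\mathcal{L}_{x,r} \cap B_r(x) \subset B_{10^3\tau r}(\mathcal{C})$ then allows a one-scale Vitali covering argument, combined with the bi-Lipschitz $u$, to transfer this mass to $\mu$, yielding $\mu(B_r(x)) \ge c(N) r^{N-1}$. The main technical obstacle I anticipate is this final lower bound: while the upper bound and (iii) are essentially formal consequences of splitting and rigidity, producing the matching lower bound requires carefully pairing the Reifenberg control on $\mathcal{C}$ with the $(N-1)$-dimensional measure estimate transported via $u$.
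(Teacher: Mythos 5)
Your plan for (i) is essentially the paper's argument: fix a pair of points, promote the $\delta$-splitting map to an $\eps'$-GH isometry onto a ball of $\setR^{N-1}\times Z$ via \autoref{splitting vs isometry}, use the $\eta$-boundary-ball condition together with the one-dimensional rigidity \autoref{lemma:onedimensionalrigidity} to pin down $Z$ as a half-line, and then exploit condition (iii) of \autoref{def:Neckregion} to show that both $x,y$ land near the image of $\{x_N=0\}$, so the last coordinate $f$ contributes negligibly to $|F(x)-F(y)|$. The paper makes this precise as the quantitative claim $|f(z)|\le 30(\eps'+2\tau^{-2}\eta)\dist(x,y)$ on $\mathcal{C}\cap B_{\frac32\dist(x,y)}(x)$, which requires the first inclusion of (iii); your sketch is vaguer at that step but the mechanism is the same.

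Part (iii) has a genuine gap. You try to conclude that the tangent cone $T$ at $x\in\mathcal{C}_0$ is $\setR_+^N$ from condition (ii) alone—i.e., from the fact that $B_r(x)$ is an $\eta^2$-boundary ball at every scale—via a "rigidity argument for $\RCD(0,N)$ cones approximating the half-space." No such $\eps$-regularity is available at this point in the paper (it is exactly the kind of statement proved later, e.g.\ the boundary volume rigidity \autoref{cor:volrigidityb}, which itself depends on \autoref{thm:neckstructure} through \autoref{thm:stabilityS}), and as stated it is false: the cone over a spherical cap slightly smaller than $\mathbb{S}^{N-1}_+$ is an $\RCD(0,N)$ cone whose unit ball can be made $\eta^2$-GH close to $B_1^{\setR_+^N}(0)$ without being $\setR_+^N$. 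The volume identity \eqref{eq:volcross} only gives $\Theta_X(x)\approx 1/2$, not $\Theta_X(x)=1/2$, so the rigidity case of \autoref{lemma:conerigidity} does not apply. The input you are missing is condition (iv), in particular the strictly sub-scale-invariant Hessian bound \eqref{eq:hessian in C0}: $r^2\fint_{B_r(x)}|\Hess u|^2\le r\delta^2$. This decays as $r\downarrow 0$, so after blow-up the splitting map converges to a $0$-splitting map $u_\infty$ on $T$, forcing $T$ to split $\setR^{N-1}$ by the functional splitting theorem; since $T$ is a metric cone not isometric to $\setR^N$, it must be $\setR_+^N$. This is the paper's argument and cannot be replaced by GH-closeness alone.

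For (ii), the upper bound via the disjoint family and the bi-Lipschitz map $u$ is essentially the paper's argument. The lower bound is where there is a second gap: you propose to lower-bound $\haus^{N-1}(\mathcal{L}_{x,r}\cap B_r(x))$ by $(1-\Psi)\omega_{N-1}r^{N-1}$ "by (ii) combined with Colding's volume convergence." But $\mathcal{L}_{x,r}$ is merely the image of $\{x_N=0\}$ under an $\eta^2 r$-GH isometry; volume convergence controls $\haus^N$ of balls, not $\haus^{N-1}$ of an $\eps$-dense approximation of an $(N-1)$-plane. A GH isometry need not carry Hausdorff measure; at best one can control $\haus^{N-1}_\infty$, which does not give a lower bound for $\haus^{N-1}$. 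The paper avoids this completely: it establishes the covering
\[
B_{r/8}(u(z))\subset u(\mathcal{C}_0\cap \bar B_r(z))\cup\bigcup_{x\in\mathcal{C}_+\cap\bar B_r(z)}\bar B_{r_x}(u(x))
\]
by a minimal-scale contradiction—assume some $w\in B_{r/8}(u(z))$ is uncovered, take the infimal $s$ for which $w\in B_{s_x}(u(x))$, complete $u$ to a half-space $\eps'$-isometry $(u,f)$ at scale $s$, and use the second inclusion in condition (iii) of \autoref{def:Neckregion} to produce a center $y'\in\mathcal{C}$ with $|u(y')-w|<s$, contradicting minimality. The lower bound on $\mu$ is then read off from the Euclidean volume of $B_{r/8}(u(z))$. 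This uses (i) but no Hausdorff measure estimate on $\mathcal{L}_{x,r}$.
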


\begin{remark}
Let us comment on the different parts of the statement of \autoref{thm:neckstructure}.

The combination of points (i) and (ii) is the analogue of \cite[Theorem 3.10]{JiangNaber16} and \cite[Theorem 2.9]{CheegerJiangNaber18}. Together with the existence of neck regions and the neck decomposition theorem it can be summed up to obtain rectifiability of the top dimensional singular stratum and measure estimates.

Point (iii) has an analogue in the context of lower Ricci bounds for the codimension two stratum \cite{CheegerJiangNaber18} and in the context of two sided Ricci bounds for the codimension four stratum \cite{JiangNaber16}, where the tangent cones are also uniquely determined by the neck structure.  There is no analogue in case of general stratum under a lower Ricci bound \cite{CheegerJiangNaber18} however, where uniqueness of symmetries holds in the neck region but not of the whole tangent cone.

\end{remark}

\begin{proof}[Proof of \autoref{thm:neckstructure}]
(i). Let us fix $x,y\in \mathcal{C}$ and set $r:= (2\tau^2)^{-1}\dist(x,y)$. Assuming without loss of generality that $r_y\ge r_x$, we have $r_x<r<\tau^{-3}$ as a consequence of (i) in \autoref{def:Neckregion}. Therefore, by (iv) in \autoref{def:Neckregion}
	\begin{equation}
		u:B_r(x)\to \setR^{N-1}
		\quad\text{is a $\delta$-splitting map.}
	\end{equation}
	Let $\eps'<\eps$ to be fixed later.
	Assuming $\delta <\delta(\eps',N)$, \autoref{splitting vs isometry} yields the existence of a one dimensional manifold $(Z,\dist_Z,z)$ and a function $f:B_{2\dist(x,y)}(x)\to Z$ such that
	\begin{equation}\label{eq:completeepsiso}
		F:=(u-u(x),f):B_{2\dist(x,y)}(x)\to B_{2\dist(x,y)}^{\setR^{N-1}\times Z}((0,z))
		\quad\text{is a $2\dist(x,y) \eps'$-GH isometry}\, .
	\end{equation}
	Since $2\dist(x,y)\ge \tau^2r_x$, taking into account \autoref{def:Neckregion} (ii) and \autoref{remark:subballsofboundaryballs} we know that $B_{2\dist(x,y)}(x)$ is a $\tau^{-2}\eta$-boundary ball. Therefore the triangle inequality gives
	\begin{equation}
		\dist_{GH}(B_{2\dist(x,y)}^{\setR_+^1}(0), B_{2\dist(x,y)}^Z(z))\le 2\dist(x,y)(\tau^{-2}\eta+\eps')\, .
	\end{equation}
	Hence, choosing $\eta,\eps' \le \eps(N)$, we can apply \autoref{lemma:onedimensionalrigidity} below concluding that
	\begin{equation}
		F=(u-u(x),f):B_{\frac{3}{2}\dist(x,y)}(x)\to B^{\setR_+^N}_{\frac{3}{2}\dist(x,y)}(0)
		\quad \text{is a $2\dist(x,y)\eps'$-GH isometry}\, .
	\end{equation}
	In order to get (i) it suffices to check that
	\begin{equation}\label{eq:claim f small}
		|f(z)|\le 30(\eps' + 2\tau^{-2}\eta) \dist(x,y),
		\quad \text{for any $z\in \mathcal{C}\cap B_{\frac{3}{2}\dist(x,y)}(x)$}\, .
	\end{equation}
	Indeed \eqref{eq:claim f small}, when plugged in the defining condition of GH-isometries
	\begin{equation}\label{zz6}
		||F(x)-F(y)|-\dist(x,y)|\le 2 \dist(x,y)\eps'\, ,
	\end{equation}
	gives the sought conclusion provided $\eps', \eta\le C(\eps,N)$. 
	
	To check \eqref{eq:claim f small} we rely on (iii) in \autoref{def:Neckregion}. Set $s:=\frac{3}{2}\dist(x,y)$ to ease notation and recall that $2\tau^{-2}s\ge r_x$.
	Observe that 
	\[
	F\circ \Psi_{x,2\tau^{-2}s}:B_s^{\setR_+^N}(0)\to B_s^{\setR_+^N}(0)\quad \text{is a $2(\eps' + 2\tau^{-2}\eta) s$-GH isometry}\, , 
	\]
	therefore \autoref{lemma:onedimensionalrigidity} gives
	\begin{equation}
	F\circ \Psi_{x,2\tau^{-2}s}(\set{x_N=0})\subset B_{10 (\eps' + 2\tau^{-2}\eta)s}(\set{x_N=0})=\set{x_N< 10(\eps' + 2\tau^{-2}\eta) s}\, .
	\end{equation}
	Since by (iii) in \autoref{def:Neckregion} one has $\mathcal{C}\cap B_{2\tau^{-2}s}(x) \subset B_{10 \eta\tau^{-2} s}(\Psi_{x,2\tau^{-2}s}(\set{x_N=0}))$, we conclude that
	\begin{align*}
	F(\mathcal{C}\cap B_{s}(x))\subset& F(B_{10 \eta\tau^{-2} s}(\Psi_{x,2\tau^{-2}s}(\set{x_N=0}))) \\
	&\subset B_{20(\eps' + 2\tau^{-2}\eta) s}(F\circ\Psi_{x,2\tau^{-2}s}(\set{x_N=0}))
	\subset \set{x_N<30(\eps' + 2\tau^{-2}\eta) s}\, ,
	\end{align*}
	yielding \eqref{eq:claim f small}.

	\medskip
	(ii).  We begin by showing that
	\begin{equation}
		\mu(B_r(z))\le c r^{N-1}
		\quad \text{for any $z\in \mathcal{C}$ and $r_z \le r\le 2$}\, .
	\end{equation}
	Fix $z\in \mathcal{C}$ and $r_z<r<2$. Recall that by i) in \autoref{def:Neckregion}, 
	\begin{equation}\label{eq:disj}
	\set{B_{\frac{\tau^2}{2}r_x}(x)}_{\{x\in \mathcal{C}_+\}}\, \text{is a disjoint family}\, .
	\end{equation}
	In view of (i) we know that $u:B_r(z)\cap \mathcal{C}\to B_{3r}(u(z))$ is a $(1+\eps)$-Lipschitz map, therefore	
	\begin{equation}\label{eq:dist}
	\dist(u(x), u(\mathcal{C}_0))\ge (1-\eps)\tau^2 r_x\,  \text{for any $x\in \mathcal{C}_+\cap B_r(z)$}\, ,
	\end{equation}
	 and
	\begin{equation}\label{eq:dist2}
	\left(u(\mathcal{C}_0\cap B_r(z))\cup\bigcup_{x\neq z,\,x\in B_r(z)\cap \mathcal{C}_+}B_{\frac{\tau^2}{2}r_x}(u(x))\right)\subset B_{3r}(u(z))\, .
	\end{equation}	
	Then we can estimate
	\begin{align*}
		\mu(B_{r}(z)) = & \mathcal{H}^{N-1}(B_r(z)\cap \mathcal{C})+\sum_{x\in B_r(z)\cap \mathcal{C}_+} r_x^{N-1}\\
		\le & c'(N)  \haus^{N-1}(u(\mathcal{C}_0\cap B_r(
		z)))+ c'(N) \sum_{x\neq z,\,x\in B_r(z)\cap \mathcal{C}_+} \haus^{N-1}(B_{\frac{\tau^2}{2}r_x}(u(x)))\\   &+c'(N) \haus^{N-1}(B_{\frac{\tau^2}{2}r_z}(u(z)))\\
		\le & c'(N) \left(\haus^{N-1}(B_{3r}(u(z)))+\haus^{N-1}(B_{\frac{\tau^2}{2}r_z}(u(z)))\right)\\
		\le & c \left(r^{N-1}+r_z^{N-1}\right)\le cr^{N-1}\, .
	\end{align*}
\medskip
	
	Let us now show the opposite inequality:
	\begin{equation}\label{eq:doubling}
	\mu(B_r(z))\ge  c^{-1} r^{N-1}
	\quad \text{for any $z\in \mathcal{C}$ and $0 \le r\le 2$}\, .
	\end{equation}
Observe that, by the very definition of packing measure \eqref{eq:packingmeasure}, it is sufficient to verify \eqref{eq:doubling} for radii $r$ such that $r_z<r<2$.

Let us fix $z\in \mathcal{C}$ and $r_z<r<2$ as above. It suffices to prove that 
	\begin{equation}\label{eq:claimcovering}
		B_{r/8}(u(z))\subset u(\mathcal{C}_0 \cap \bar B_r(z)) \cup \bigcup_{x\in \mathcal{C}_+\cap \bar B_r(z)} \bar  B_{r_x}(u(x))\, .
	\end{equation}
	

	Indeed \eqref{eq:claimcovering} gives
	\begin{align*}
	\frac{\omega_{N-1}}{8^{N-1}} r^{N-1}  & \le  \haus^{N-1}(u(\mathcal{C}_0 \cap \bar B_r(z))) + \sum_{x\in \mathcal{C}_+\cap \bar B_r(z)} \haus^{N-1}( B_{r_x}(u(x)))
	\\& \le (1+\eps)^N \haus^{N-1}(\mathcal{C}_0\cap B_r(z)) + \omega_{N-1} \sum_{x\in \mathcal{C}_+\cap \bar B_r(z)} r_x^{N-1}
	\\&\le C(N) \mu(\bar B_r(z))\, .
	\end{align*}
	Let us check \eqref{eq:claimcovering} arguing by contradiction.
	Set for simplicity $u(z)=0$.
	If the conclusion is false we can find $w\in B_{r/8}(0)$ such that $w\notin \bar  B_{r_x}(u(x))$ for any $x\in\mathcal{C}\cap \bar B_r(z)$. Then we can set
	\begin{equation}
		s:=\inf \set{s_x:\  x\in \mathcal{C}\cap \bar B_r(z)\, \text{and}\ w\in B_{s_x}(u(x))}\, .
	\end{equation}
	Observe that, by the very definition of $s$, it holds that $r/8>s=s_x>r_x$ for some $x\in \mathcal{C}\cap \bar B_r(z)$. Therefore the ball $B_s(x)$ is an $\eta$-boundary ball and 
	$u:B_{s}(x)\to \setR^{N-1}$ is $\delta$-splitting. Hence, arguing as in the first part of the proof, we can complete $u$ to an $s\eps'$-GH isometry
	\begin{equation}
		F:=(u,f):B_s(x)\to B_s^{\setR_+^{N}}((u(x),0))\, ,	
	\end{equation}
	provided $\delta< \delta(N,\eps')$, for some $\eps'<1/8$.
	Since $(w,0)\in \bar B_s^{\setR_+^N}((u(x),0))$ we can find $y\in \mathcal{L}_{x,s}$ such that $|u(y)-w|\le 2s \eps'$.\\ 
	Moreover, thanks to the second inclusion in (iii) of \autoref{def:Neckregion} there exists $y'\in \mathcal{C}\cap B_{2s}(x)$ such that $\dist(y,y')\le 10^3\tau s$.
	This implies that
	\begin{equation}\label{zz8}
		|u(y')-w|\le |u(y)-u(y')|+|u(y)-w|\le \Lip u\, \dist(y,y')+2s \eps' 
		\le (10^3 \tau \, \Lip u +2\eps') s<s\, ,
	\end{equation}
	since $u$ is $(1+C(N)\sqrt{\delta} )$-Lipschitz and $\tau < 10^{-4}$, cf. with \autoref{rm:sharpgradientbound}.
	
	We claim that $B_{2s}(x)\subset B_r(z)$. In order to prove this claim let us first point out that 
   \begin{equation}
	\abs{u(x)-u(z)}\le \abs{u(x)-w}+\abs{u(z)-w}<r/8+r/8=r/4\, .
   \end{equation}	
   Hence, since by the result of the previous step,
    \begin{equation}
	\abs{\abs{u(x)-u(z)}-\dist(x,z)}\le \eps\dist(x,z)\, , 
	\end{equation}
   we can infer that, if $\eps<\eps(N)$, then $\dist(x,z)<2r/4=r/2$. In particular, since we already pointed out that $s<r/8$, we obtain that $B_s(x)\subset B_r(z)$, as we claimed.
	
	This gives \eqref{eq:claimcovering}, since \eqref{zz8} and the inclusion $B_{2s}(x)\subset B_r(z)$ contradicts the minimality of $s$.
\medskip


    (iii). Let $(Y,\varrho,\haus^N,y)$ be a tangent cone at $x\in \mathcal{C}_0$, originating from a sequence $r_n\downarrow 0$. From (iv) in \autoref{def:Neckregion} we know that there exists $u_{\infty}: Y \to \setR^{N-1}$ satisfying
    \begin{itemize}
    	\item[(a)] $\Hess u_{\infty}=0$,
    	\item[(b)] $\int_{B_1(y)}|\nabla (u_{\infty})_{a} \cdot \nabla (u_{\infty})_{b}- \delta_{a,b}| \di \haus^N \le C(N)\delta$, for $a,b=1,\ldots,N-1$,
    \end{itemize}
    as a limit of the sequence 
    \[
    u_{r_n}:=u: (X, \frac{\dist}{r_n},\frac{\haus^N}{{r_n}^N}, x)\to \setR^{N-1}\quad \text{as $n\to\infty$}\, .
    \]
    This can be easily checked with a by now standard argument, relying on the convergence and stability results of \cite{AmbrosioHonda,AmbrosioHonda18}.\\    
    By the functional splitting theorem (cf. \cite[Lemma 1.20]{AntonelliBrueSemola19}), $Y$ splits off a factor $\setR^{N-1}$. Moreover, it is a metric cone and it is not isometric to $\setR^N$, thanks to \autoref{def:Neckregion} (ii). Therefore it is isometric to $\setR^N_{+}$.
\end{proof}

\subsection{Existence of neck regions}\label{subsec:existenceneck}

The aim of this subsection is to prove that on any ball of a noncollapsed $\RCD(-(N-1),N)$ space, which is sufficiently GH-close to the model ball on the half space 
\begin{equation}
B_1^{\setR^N_+}(0)\subset \setR^N_+
\end{equation} 
it is possible to build a neck region.

It is worth noting that, for the sake of proving stability results, it is key to construct neck regions quite carefully. 
If we were to build neck regions that are much smaller than they should be, then the construction would not force the presence of boundary points in $\mathcal{S}^{N-1}\setminus\mathcal{S}^{N-2}$ on regions which look like half spaces.\\
For this reason below we are going to prove that neck regions verifying an additional \emph{maximality} condition exist, once we assume closeness to the model boundary ball.

\begin{theorem}[Existence of maximal neck regions]\label{thm:existenceofneck}
Let $\tau:=10^{-10N}$ with $0<\eps\le \eps(N)$, $\delta<\delta(\eps,N)$ and $\eta<\eta(\eps,\delta,N)$.  Then if $(X,\dist,\mathcal{H}^N)$ is an $\RCD(-\eta(N-1),N)$ m.m.s., $p\in X$ and $B_{4\tau^{-4}}(p)$ is an $\eta$-boundary ball, then there exists an $(\eps,\delta)$-neck region $\mathcal{N}=B_2(p)\setminus \cup_{x\in \mathcal{C}}\bar{B}_{r_x}(x)$ on $B_2(p)$ which additionally verifies the following maximality condition:
\begin{equation}\label{eq:maxneck}
\mu(\mathcal{C}_+)=\sum_{x\in \mathcal{C}_+}r_x^{N-1}\le \eps\, .
\end{equation}
\end{theorem}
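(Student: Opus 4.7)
The plan is to build the neck region by a hierarchical stopping-time construction at scales $r_k := \tau^{2k}$, using as the master splitting map the $\delta^4$-splitting map $u : B_{\tau^{-4}}(p) \to \setR^{N-1}$ supplied by \autoref{splitting vs isometry}(i) (valid provided $\eta \le \eta(N,\delta)$, since $B_{4\tau^{-4}}(p)$ is an $\eta$-boundary ball). The approximate boundary $\mathcal{L}_{p,1} = \Psi(\{x_N=0\})$ furnished by the $\eta$-GH isometry $\Psi : B^{\setR^N_+}_{4\tau^{-4}}(0) \to B_{4\tau^{-4}}(p)$ seeds the entire construction, and the one dimensional rigidity available in codimension one through \autoref{splitting vs isometry}(iii) lets us work in a single fixed ambient chart throughout.

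The inductive step maintains at scale $r_k$ an \emph{alive} set $\mathcal{A}_k$ whose $\tau^2 r_k$-balls are disjoint, lie at distance $\ge \tau^2 r_k$ from every frozen center, and on each of whose parent balls the conditions (ii)--(iv) of \autoref{def:Neckregion} have been verified at all scales between $r_k$ and $\tau^{-3}$. To pass to $\mathcal{A}_{k+1}$, for each $y \in \mathcal{A}_k$ I would select a maximal $\tau^2 r_{k+1}$-separated subset of $\mathcal{L}_{y, r_k} \cap B_{r_k}(y)$, and for each candidate $z$ test the two stopping conditions
\begin{itemize}
\item[(G)] $B_{r_k}(z)$ is an $\eps^2$-boundary ball, witnessed by the $\eps^2 r_k$-isometry $\Psi_{z,r_k}$ extracted from $u$ via \autoref{splitting vs isometry}(iii);
\item[(A)] $u : B_{r_k}(z) \to \setR^{N-1}$ is $\delta$-splitting and $r_k^2 \fint_{B_{r_k}(z)} |\Hess u|^2 \di \haus^N \le r_k \delta^2$.
\end{itemize}
If both hold $z$ joins $\mathcal{A}_{k+1}$; otherwise $z$ is frozen into $\mathcal{C}_+$ with $r_z := r_k$. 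Finally $\mathcal{C}_0$ is the closed Kuratowski limit of the alive sets and $\mathcal{C} := \mathcal{C}_0 \cup \mathcal{C}_+$. Verification of (i)--(iv) of \autoref{def:Neckregion} is then bookkeeping: (i) is the $\tau^2$-packing property maintained by construction; (ii) holds for frozen and alive points alike since we only advance through scales at which the boundary-ball property survives; (iii) follows from the maximality of the packing and the two-sided compatibility between $\mathcal{L}_{x,r}$ and $\mathcal{L}_{y,s}$ (the latter coming from the relative closeness of the $\Psi_{\cdot,\cdot}$ to the same model half-space, cf.\ \autoref{remark:subballsofboundaryballs}); (iv) is built in.

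The main obstacle is the maximality estimate $\sum_{x \in \mathcal{C}_+} r_x^{N-1} \le \eps$, which I would prove by splitting $\mathcal{C}_+ = \mathcal{C}_+^{A} \cup \mathcal{C}_+^{G}$ according to whether the failure at scale $r_x$ is analytic (A) or purely geometric (G). For $\mathcal{C}_+^{A}$, each frozen $x$ satisfies, after using noncollapsing,
\begin{equation*}
\int_{B_{r_x}(x)} |\Hess u|^2 \di \haus^N \ge c(N) \delta^2 r_x^{N-1}.
\end{equation*}
Summing, using that the balls $\{B_{r_x}(x)\}$ have bounded overlap (a consequence of the disjointness of $\{B_{\tau^2 r_x}(x)\}$ and the Lipschitz-type control $\Lip r_x \le \tau^2$), and using the global Hessian bound $\int_{B_{\tau^{-4}}(p)} |\Hess u|^2 \di \haus^N \lesssim \delta^4$ obtained by integrating Bochner's inequality for $u$ against a good cut-off (\autoref{lem:good_cut-off}), one gets $\sum_{\mathcal{C}_+^A} r_x^{N-1} \lesssim C(N)\delta^2$. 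For $\mathcal{C}_+^{G}$ the situation is more delicate: here $u$ remains $\delta$-splitting on $B_{r_x}(x)$, so by \autoref{splitting vs isometry}(iii) the ball is $\eps$-close to $\setR^{N-1} \times Z$ with $Z$ a one dimensional Riemannian manifold \emph{other} than a half-line of the required length, which by volume monotonicity (\autoref{thm:volumeconvergence} and Bishop-Gromov) forces a definite drop of a uniform amount $c(N,\eps) > 0$ in the volume-ratio pinching $\mathcal{V}_{r_x}$ between scale $\tau^{-3} r_x$ and scale $r_x$. An application of the cone-splitting-via-content \autoref{thm:almostconesplittingcontent}, adapted so that concentrated pinching on $\mathcal{L}$ is incompatible with a $\delta^2$-boundary structure at the parent scale, yields $\sum_{\mathcal{C}_+^G} r_x^{N-1} \lesssim C(N, \eps)\eta$. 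Choosing $\delta < \delta(\eps, N)$ and then $\eta < \eta(\eps, \delta, N)$ small enough makes the sum $\le \eps$, concluding the proof.
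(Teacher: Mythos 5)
Your overall architecture — an iterative stopping‑time construction at dyadic scales driven by a single $\delta$‑splitting map $u$, with the bad‑ball content controlled by a weighted maximal function argument against the global $L^2$‑Hessian bound — matches the paper's. Two genuine gaps remain.

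First, the dichotomy $\mathcal{C}_+ = \mathcal{C}_+^A \cup \mathcal{C}_+^G$ is both unnecessary and, as you handle it, incorrect. The paper's key structural observation (inside \autoref{lemma:iteration step}) is that under the ambient boundary‑ball hypothesis, condition (A) already \emph{implies} condition (G): once the Hessian bound survives, the $\delta$‑splitting property propagates down (\autoref{remark:smallHessianimpliessplitting}), \autoref{splitting vs isometry}(iii) gives a one‑dimensional factor $Z$, and the triangle inequality against the $\eta$‑closeness of the ambient ball to $\setR^N_+$, together with \autoref{lemma:onedimensionalrigidity}, forces $Z$ to be a half‑line and the new ball to be an $\eps^2$‑boundary ball after a small recentering. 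So $\mathcal{C}_+^G$ is vacuous, and there is only one stopping criterion. Your attempt to bound $\mathcal{C}_+^G$ by $C(N,\eps)\eta$ via a volume‑ratio drop and \autoref{thm:almostconesplittingcontent} does not go through: that theorem is a single‑ball cone‑splitting statement under a content hypothesis on the pinching set, not an estimate summing $r_x^{N-1}$ over a collection of bad balls, and the pinching you invoke occurs at each local scale $r_x$, so there is no mechanism to control the total by the ambient parameter $\eta$.

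Second, the object you build by the limiting argument is not directly an $(\eps,\delta)$‑neck region: the second inclusion in \autoref{def:Neckregion}(iii), namely $\mathcal{L}_{x,r}\cap B_r(x) \subset B_{10^3\tau r}(\mathcal{C})$, can fail near the boundary of a frozen ball whose radius jumps discontinuously relative to neighboring centers. The paper isolates this failure by introducing a weaker structure (``weak neck region,'' \autoref{def:weakneckregion}, where the inclusion is relaxed to $B_{100\tau^3\max\{r,r_y\}}(\mathcal{C})$), proves \autoref{thm:existenceofweakneck} first, and then carries out a nontrivial refinement step — regularizing the radius function with $\Lip r_x \le \tau^2$, enlarging the center set, and using the already‑proved \autoref{thm:neckstructure} together with \autoref{thm:conesplittingquant} to show the refined object satisfies the full condition (iii) and that the content bound is preserved. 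Your remark that (iii) ``follows from the maximality of the packing and the two‑sided compatibility between $\mathcal{L}_{x,r}$ and $\mathcal{L}_{y,s}$'' glosses over exactly this issue, which is where most of the real work in the paper's proof lies.
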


\begin{remark}
The combination of \eqref{eq:maxneck} with the lower Ahlfors bound in \autoref{thm:neckstructure} (ii) and \autoref{thm:neckstructure} (iii) implies that on $\eta$-boundary balls there exists a bunch of boundary points in $\mathcal{S}^{N-1}\setminus\mathcal{S}^{N-2}$.\\
This is the starting point of our rigidity and stability and it is unique to the codimension one setting.
Indeed, as it is pointed out in \cite{CheegerJiangNaber18}, for neck regions on smooth Riemannian manifolds, $\mathcal{C}_0$ is always empty (there is no singular set). Instead, the combination of \eqref{eq:maxneck} and \autoref{thm:neckstructure} above provides an analytic proof of a quantitative (and more general) version of the fact, proved in \cite{CheegerColding97}, that smooth Riemannian manifolds with lower Ricci curvature bounds cannot converge without volume collapse to a half-space.
\end{remark}

\subsubsection{Auxiliary results}

Before proving \autoref{thm:existenceofneck}, we prove three key lemmas. The first one deals with en elementary property of one dimensional Riemannian manifolds with boundary, the second one deals with the propagation of the $\delta$-splitting property, the last one is the fundmental iteration step in the proof of \autoref{thm:existenceofneck}.

\begin{lemma}[One dimensional rigidity]
	\label{lemma:onedimensionalrigidity}
	There exists $\eps_0>0$ such that, if a pointed one dimensional Riemannian manifold (possibly with boundary) $(Z,\dist_Z,z)$ satisfies
	\begin{equation}
		\dist_{GH}(B_1^{\setR_+}(0),B_1^Z(z))\le \eps\le \eps_0\, ,
	\end{equation}
	then there exist $0\le a\le \eps \le 1-\eps\le  b \le \infty$ such that
	$B_1^Z(z)$ is isometric to the ball of radius one centered at $a\in [0,b)$.
\end{lemma}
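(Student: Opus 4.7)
The proof proceeds by classification and direct analysis. A complete connected one-dimensional Riemannian manifold, possibly with boundary, is isometric to one of $\setR$, $[0,\infty)$, a closed interval $[0,L]$ with $L>0$, or a circle of length $L$. The first case gives $B_1^Z(z)=(z-1,z+1)$, with $z$ equidistant from two points at mutual distance $1$; the circle case similarly produces points whose distance relations cannot be matched in $[0,1)=B_1^{\setR_+}(0)$ via an $\eps$-correspondence sending $z$ to $0$. Both are ruled out for $\eps_0$ small. Hence $Z=[0,L]$ or $Z=[0,\infty)$; let $\alpha_L:=\dist_Z(z,\partial Z)$ denote the distance from $z$ to the nearest boundary point, and $\alpha_R\in(0,\infty]$ the distance to the remaining end. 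Up to symmetry the close boundary is on the left, so in the arc-length parametrization centered at $z$ we have $B_1^Z(z)=[-\alpha_L,\min(\alpha_R,1))$.

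Fixing an $\eps$-correspondence $R\subset B_1^{\setR_+}(0)\times B_1^Z(z)$ containing the base-point pair $(0,z)$, I would first show $\min(\alpha_L,\alpha_R)\leq 1/2$. If both were $\geq 1/2$, the points $w_L:=-1/2$ and $w_R:=1/2$ would lie in $B_1^Z(z)$ with $\dist(z,w_L)=\dist(z,w_R)=1/2$ and $\dist(w_L,w_R)=1$; their correspondents $u_L,u_R\in[0,1)$ would be forced within $O(\eps)$ of $1/2$ while $|u_L-u_R|\geq 1-O(\eps)$, a contradiction for $\eps$ small. After swapping sides if needed, this reduces us to $\alpha_L\leq 1/2$.

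The second step sharpens the bounds. The existence in $[0,1)$ of points arbitrarily close to $1$ yields via $R$ a point $w\in B_1^Z(z)$ with $\dist(z,w)>1-O(\eps)$; since $\alpha_L\leq 1/2$, this $w$ must lie to the right of $z$, hence $\alpha_R>1-O(\eps)$. Next, take $w_L=-\alpha_L\in B_1^Z(z)$ (the left boundary point of $Z$, which lies in the ball since $\alpha_L<1$) together with $w_R$ on the right at distance $1-O(\eps)$ from $z$. Then $\dist(w_L,w_R)=\alpha_L+(1-O(\eps))$, while the correspondents $v,u_R\in[0,1)$ satisfy $v\in[0,\alpha_L+O(\eps))$ and $u_R<1$, so $|v-u_R|<1$. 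The correspondence inequality then forces $\alpha_L+1-O(\eps)\leq 1+O(\eps)$, yielding $\alpha_L\leq O(\eps)$.

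Finally, setting $a:=\alpha_L$ and $b:=\alpha_L+\alpha_R$ (with $b:=\infty$ if $\alpha_R=\infty$), the translation $w\mapsto w+\alpha_L$ is an isometry from $B_1^Z(z)$ onto $B_1^{[0,b)}(a)$ sending $z$ to $a$, and the previous bounds give $a\leq O(\eps)$ and $b\geq 1-O(\eps)$. Choosing $\eps_0$ small enough to absorb the implicit universal constants produces the stated inequalities $a\leq\eps$ and $b\geq 1-\eps$. The main (minor) obstacle is the classification of one-dimensional Riemannian manifolds with boundary together with the constant-bookkeeping in the second step; the overall geometric content is elementary.
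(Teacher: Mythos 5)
Your proof follows the same route as the paper's: classify one-dimensional Riemannian manifolds, rule out $\setR$ and the circle by a correspondence/diameter argument, and then analyze the remaining interval/half-line case directly; the paper compresses this last analysis into the single phrase ``immediate to check,'' which your Steps 1--3 unpack. One small caution on the wrap-up: if the argument yields $\alpha_L\le C\eps$ with a universal constant $C>1$, shrinking $\eps_0$ does \emph{not} turn that into $a\le\eps$ --- for a fixed $\eps\le\eps_0$ the bound $C\eps$ is still larger than $\eps$ --- so either the constants must be tracked to show $C\le 1$ (which depends on the convention adopted for $\dist_{GH}$) or the conclusion should be read as $a\le C(N)\eps$, which is all that is actually invoked downstream. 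Both your proof and the paper's are silent on this bookkeeping, so it is not a substantive divergence.
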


\begin{proof}
Observe that $\partial Z \neq \emptyset$, since any ball of radius one in $\mathbb{S}^1(r)$ for some $r>0$, or in $\setR$ is $\eps_0$ far from $B_1^{\setR_+}(0)$, provided $\eps_0$ is small enough. This implies that $Z$ is isometric to $[0,b)$ for some $b \le \infty$. It is now immediate to check that a ball of radius one in $[0,b)$ is $\eps$-close to $B_1^{\setR_+}(0)$ if and only if it is centered at some point $0\le a \le \eps$ and $b\ge 1-\eps$.
\end{proof}

Next we give a general auxiliary result about the propagation of the $\delta$-splitting property. Basically, it amounts to saying that given a $\delta$-splitting map at a certain location and scale, the $\delta$-splitting property, up to slightly worsening $\delta$, propagates if we can control the Hessian in a slightly better than scale invariant sense. We refer to \cite[Lemma 5.91]{JiangNaber16} and to \cite{BruePasqualettoSemola19} for previous appearances of this argument.

\begin{lemma}\label{remark:smallHessianimpliessplitting}
	Let $1\le N<\infty$ be fixed. There exists $C=C(N)>0$ such that for any $\RCD(-(N-1),N)$ metric measure space $(X,\dist,\meas)$ with $p\in X$ the following holds.  If $u:B_2(p)\to \setR^{k}$ is a $\delta$-splitting map with $x\in B_1(p)$ such that
	\begin{equation}\label{eq:hessbd}
	s\fint_{B_s(x)}|\Hess u|^2 \di \meas \le \delta^{1/2}\, ,
	\quad\text{for any $\, 0<r<s<1$}\, ,
	\end{equation}
	then $u: B_s(x)\to \setR^{k}$ is a $C(N)\delta^{1/4}$-splitting map for any $r<s<1$.
\end{lemma}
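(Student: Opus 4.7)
The plan is to verify the three conditions of \autoref{def:deltasplitting} for $u:B_s(x)\to\setR^k$ with new parameter $C(N)\delta^{1/4}$. The pointwise gradient bound (i) is inherited from $B_2(p)$. The scale-invariant Hessian bound (ii) follows at once from the hypothesis by a trivial rewrite:
\begin{equation*}
s^2\fint_{B_s(x)}|\Hess u|^2\di\meas \;=\; s\cdot\Bigl(s\fint_{B_s(x)}|\Hess u|^2\di\meas\Bigr)\;\le\; s\,\delta^{1/2}\;\le\;\delta^{1/2}\;\le\;\delta^{1/4},
\end{equation*}
using $s\le 1$ and (without loss of generality) $\delta\le 1$. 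The substance of the lemma therefore lies in condition (iii), i.e.\ propagating the near-orthonormality of the gradients of $u_a$ from scale one all the way down to scale $s$.

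For (iii), I would set $g_{ab}:=\nabla u_a\cdot\nabla u_b$ and note that, thanks to the second order calculus on $\RCD$ spaces and to the pointwise gradient bound, $|\nabla g_{ab}|\le C(N)|\Hess u|$ almost everywhere. The strategy is a telescope of dyadic averages: pick $s_k:=2^k s$ for $k=0,1,\ldots,K$ with $s_K$ of unit order, then combine the local $(2,2)$-Poincar\'e inequality available on $\RCD(-(N-1),N)$ spaces with Jensen's inequality to estimate, at each scale,
\begin{equation*}
\fint_{B_{s_k}(x)}|g_{ab}-(g_{ab})_{B_{s_k}(x)}|\di\meas \;\le\; C(N)\,s_k\Bigl(\fint_{B_{\lambda s_k}(x)}|\Hess u|^2\di\meas\Bigr)^{1/2}\;\le\;C(N)\,s_k^{1/2}\,\delta^{1/4},
\end{equation*}
where the last step uses $\fint_{B_{\lambda s_k}(x)}|\Hess u|^2\di\meas\le (\lambda s_k)^{-1}\delta^{1/2}$ from the hypothesis. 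Doubling then yields $|(g_{ab})_{B_{s_k}(x)}-(g_{ab})_{B_{s_{k+1}}(x)}|\le C(N)s_{k+1}^{1/2}\delta^{1/4}$.

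Since the geometric series $\sum_k s_{k+1}^{1/2}$ has ratio $\sqrt{2}>1$, it is dominated by its largest term $s_K^{1/2}$, which is of unit order; hence the telescope gives $|(g_{ab})_{B_s(x)}-(g_{ab})_{B_{s_K}(x)}|\le C(N)\delta^{1/4}$. Since $B_{s_K}(x)\subset B_2(p)$, doubling together with the starting $\delta$-splitting hypothesis on $B_2(p)$ yield $|(g_{ab})_{B_{s_K}(x)}-\delta_{ab}|\le C(N)\delta$. Combining these two with the scale-$s$ Poincar\'e estimate on $|g_{ab}-(g_{ab})_{B_s(x)}|$ then gives (iii) with constant $C(N)\delta^{1/4}$. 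The only minor point requiring care is the Poincar\'e dilation factor $\lambda$: the enlarged ball $B_{\lambda s_k}(x)$ must stay inside $B_2(p)$ so that the hypothesis applies, which merely forces the chain to be truncated at a scale of order $1/\lambda$ instead of $1$, a change absorbed in $C(N)$. No serious obstacle is expected; the argument is of the weighted maximal-function/telescoping type used in \cite{CheegerNaber15}.
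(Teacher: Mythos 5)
Your proof is correct and follows essentially the same route as the paper: both reduce to condition (iii) of the splitting definition, exploit the pointwise bound $|\nabla(\nabla u_a\cdot\nabla u_b)|\le C(N)|\Hess u|$, apply a local Poincar\'e inequality together with the hypothesis \eqref{eq:hessbd} to get an oscillation estimate of order $s^{1/2}\delta^{1/4}$ at each scale, and then telescope dyadically, observing that $\sum_k s_k^{1/2}$ converges since the ratio exceeds one. The only cosmetic differences are that the paper works with $f_{a,b}=|\nabla u_a\cdot\nabla u_b-\delta_{ab}|$ and compares consecutive averages directly via the $(1,1)$-Poincar\'e and Cauchy--Schwarz, whereas you track the mean oscillation of $g_{ab}=\nabla u_a\cdot\nabla u_b$ via a $(2,2)$-Poincar\'e and Jensen; these are interchangeable.
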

\begin{proof}
	It is enough to check that
	\begin{equation}\label{eq:deltasplitprop}
	\fint_{B_s(x)} |\nabla u_a\cdot \nabla u_b-\delta_{a,b}|\di \meas <C(N) \delta^{1/4}	\qquad \text{for any }\ 0<r<s<1\, ,
	\end{equation}
	for $a,b=1,\ldots,k$.
	
	Let us set $f_{a,b}:= |\nabla u_a\cdot \nabla u_b-\delta_{a,b}|$ and note that $|\nabla f_{a,b}|\le 2 C(N)(|\Hess u_a|+|\Hess u_b|)$, here we have used (i) in \autoref{def:deltasplitting}.
	Using this bound, the local Poincaré inequality (cf \cite{VonRenesse08}) and \eqref{eq:hessbd} we deduce
	\begin{align*}
	\abs{\fint_{B_{2s}(x)}f_{a,b}\di \meas \right. & \left.-\fint_{B_{s}(x)}f_{a,b}\di \meas}\\
	\le & C(N) s\fint_{B_{2s}(x)}|\nabla f_{a,b}| \di \meas \\
	\le & 2C(N)\left( s^2\fint_{B_{2s}(x)} |\Hess u_a|^2\di \meas+s^2\fint_{B_{2s}(x)} |\Hess u_b|^2\di \meas\right)^{1/2}\\
	\le & 4C(N)\delta^{1/4}s^{1-1/2}
	\end{align*}
	for any $0<r<s<1$. Applying a telescopic argument it is simple to see that
	\begin{equation}
	\abs{\fint_{B_1(x)}f_{a,b}\di\meas-\fint_{B_{s}(x)}f_{a,b}\di \meas}\le C(N) \delta^{1/4}\qquad \text{for any}\ 0<r<s<1.
	\end{equation}
	Therefore, using that $u:B_2(p)\to \setR^k$, is a splitting map we deduce
	\begin{align*}
	\fint_{B_{s}(x)}f_{a,b}\di \meas \le & \abs{\fint_{B_1(x)}f_{a,b}\di\meas-\fint_{B_{s}(x)}f_{a,b}\di \meas}+\fint_{B_{1}(x)}f_{a,b}\di \meas\\
	\le & C(N) \delta^{1/4}+ C(N)\fint_{B_2(p)} f_{a,b} \di \meas\\
	\le & C(N)\delta^{1/4},
	\end{align*}
	therefore yielding \eqref{eq:deltasplitprop}.
\end{proof}

The next lemma is the main iterative step in the construction of maximal neck regions over boundary balls.

\begin{lemma}[Iteration step]
	\label{lemma:iteration step}
	Let $\tau=10^{-10N}$ and $\gamma\in (0,1/4)$ be fixed with $\eps_0$ as in \autoref{lemma:onedimensionalrigidity}.
	For any $\eps< 2\gamma\wedge\eps_0/2$ and $\delta\le \delta(\eps,\gamma, N)$ the following property holds. 
	Given an $\RCD(-\eps(N-1),N)$ m.m.s. $(X,\dist,\haus^N)$, an $\eps^2$-boundary ball $B_2(p)\subset X$ and $k\in \setN$, if there exists a $\delta^4$-splitting map $u:B_{2 \gamma^{-k}}(p)\to \setR^{N-1}$ such that
	\begin{equation}\label{eq:bdhessian}
		\tau^4 \gamma^{-m} \fint_{B_{\tau^4 \gamma^{-m}}(p)} |\Hess u|^2\di\haus^N
			\le \delta^2
			\quad\text{for $m=0,\dots,k$\, ,}
	\end{equation}
	then there exists a covering
	\begin{equation}\label{zz2}
		B_1(p)\cap \mathcal{L}_{p,2}\subset \bigcup_{\alpha} B_{ {2\tau^3}\gamma}(x_{\alpha})\bigcup_{\beta} B_{ {2\tau^3}\gamma}(x_{\beta})\, ,
	\end{equation}
	where
	\begin{itemize}
		\item[(i)] the family $\set{B_{\tau^4 \gamma}(x_{\alpha})}\cup \set{B_{\tau^4 \gamma}(x_{\beta})}$ is disjoint;
		\item[(ii)] for any $\alpha$, $B_{2\gamma}(x_{\alpha})$ is an $\eps^2$-boundary ball such that
		\[
		\tau^4\gamma \fint_{B_{\tau^4\gamma}(x_{\alpha})}|\Hess u|^2\di \haus^N \le \delta^2
		\]		
		with
		\begin{equation}
			u:B_s(x_{\alpha})\to \setR^{N-1}
			\quad\text{a $C_{\gamma, N}\delta$-splitting map for any $s\in [\tau^2 \gamma, \gamma^{-k}]$}\, ;
		\end{equation}
		\item[(iii)] for any $\beta$, 
		\begin{equation}\label{eq:maximality}
		\tau^4\gamma \fint_{B_{\tau^4\gamma}(x_{\beta})}|\Hess u|^2\di \haus^N >\delta^2\, ;
		\end{equation}
		\end{itemize}
		\item[(iv)]  $x_{\alpha}\in B_{2\gamma\eps}(\mathcal{L}_{p,2})$ for any $\alpha$, and $x_{\beta}\in\mathcal{L}_{p,2}$ for any $\beta$.
\end{lemma}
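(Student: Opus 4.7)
The plan is a Vitali-type selection combined with the Hessian-propagation mechanism for $\delta$-splitting maps, followed by a triangulation against the ambient half-space isometry.

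First I would build the covering. By a standard Vitali-type argument, select a maximal family of points $\{x_i\}_{i\in I}\subset B_1(p)\cap \mathcal{L}_{p,2}$ such that $\{B_{\tau^4\gamma}(x_i)\}$ is pairwise disjoint; maximality together with the fact that $2\tau^3\gamma$ is much larger than $2\tau^4\gamma$ guarantees that $\{B_{2\tau^3\gamma}(x_i)\}$ covers $B_1(p)\cap\mathcal{L}_{p,2}$, which is exactly \eqref{zz2} together with the disjointness (i). Since each $x_i$ already sits on $\mathcal{L}_{p,2}$, condition (iv) is satisfied for free, regardless of any subsequent relabeling. Split $\{x_i\}$ into a bad family $\{x_\beta\}$, where $\tau^4\gamma\fint_{B_{\tau^4\gamma}(x_i)}|\Hess u|^2\di\haus^N>\delta^2$, and a good family $\{x_\alpha\}$, where the opposite (non-strict) inequality holds: condition (iii) is then immediate.

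The heart of the proof is the boundary-ball statement in (ii) for good centers $x_\alpha$. Since $\dist(x_\alpha,p)\le 1$, doubling for $\haus^N$ on $\RCD(-\eps(N-1),N)$ spaces allows one to compare $\fint_{B_s(x_\alpha)}|\Hess u|^2\di\haus^N$ with $\fint_{B_{Cs}(p)}|\Hess u|^2\di\haus^N$, and on the dyadic scales $\tau^4\gamma^{-m}$ for $m=0,\dots,k$ the latter is controlled by \eqref{eq:bdhessian}. Interpolating across adjacent dyadic shells yields
\[
s\fint_{B_s(x_\alpha)}|\Hess u|^2\di\haus^N\le C(N,\gamma)\,\delta^{2}\qquad\text{for every }\tau^4\gamma\le s\le \tau^4\gamma^{-k}.
\]
Choosing $\delta<\delta(\eps,\gamma,N)$ small enough that $C(N,\gamma)\delta^{2}\le(\delta^{4})^{1/2}$, the Hessian-propagation lemma \autoref{remark:smallHessianimpliessplitting}, applied with $\delta^4$ playing the role of $\delta$, shows that $u:B_s(x_\alpha)\to\setR^{N-1}$ is a $C(N,\gamma)\delta$-splitting map at every scale $s\in[\tau^2\gamma,\gamma^{-k}]$, which is precisely the splitting part of (ii).

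To upgrade this to the boundary-ball property at scale $2\gamma$, I would apply \autoref{splitting vs isometry}(iii): provided $\delta$ is small enough, the output is an $\eps^3$-GH isometry
\[
(u-u(x_\alpha),f_\alpha):B_{2\gamma}(x_\alpha)\longrightarrow B_{2\gamma}^{\setR^{N-1}\times Z_\alpha}((0,z_\alpha))
\]
for some pointed one-dimensional Riemannian manifold $(Z_\alpha,\dist_{Z_\alpha},z_\alpha)$. Triangulating with the original $\eps^2$-isometry between $B_2(p)$ and $B_2^{\setR_+^N}(0)$ and using that $x_\alpha\in\mathcal{L}_{p,2}$ forces the $Z_\alpha$-factor image of $x_\alpha$ to lie within $O(\eps^2)\gamma$ of the model endpoint, one sees that $B_{2\gamma}^{Z_\alpha}(z_\alpha)$ is $O(\eps^2)\gamma$-GH close to $B_{2\gamma}^{\setR_+}(0)$; \autoref{lemma:onedimensionalrigidity} then identifies it with a genuine ball inside a half-line of length $\ge 2\gamma(1-O(\eps^2))$ centered $O(\eps^2)\gamma$ from the endpoint. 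Reading off the resulting product structure gives $B_{2\gamma}(x_\alpha)$ as an $\eps^2$-boundary ball after a harmless renormalization of constants.

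The main obstacle is the propagation of small quantitative parameters through this chain: each link (doubling across scales, interpolation between dyadic shells, \autoref{remark:smallHessianimpliessplitting}, \autoref{splitting vs isometry}, and \autoref{lemma:onedimensionalrigidity}) deteriorates the initial closeness by a factor depending on $N$ and $\gamma$. The hypothesis \eqref{eq:bdhessian} is stated with the extra-small exponent $\delta^{2}$ precisely so that these cumulative losses can be absorbed, and the output splitting is only $C_{\gamma,N}\delta$-small rather than $\delta$-small; tracking these constants and choosing $\delta\le\delta(\eps,\gamma,N)$ accordingly is the genuinely delicate point of the argument, and is the reason the lemma is formulated in this asymmetric way.
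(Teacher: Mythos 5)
There is a genuine gap at the point where you claim that $B_{2\gamma}(x_\alpha)$ is an $\eps^2$-boundary ball while insisting that $x_\alpha$ stays on $\mathcal{L}_{p,2}$. The set $\mathcal{L}_{p,2}$ is only known to lie within $\approx 2\eps^2$ of the model hyperplane via the $\eps^2$-isometry $\Psi_{p,2}$ at scale $2$; when you zoom down to the ball $B_{2\gamma}(x_\alpha)$ this absolute error becomes a \emph{relative} error of order $\eps^2/\gamma$, not $\eps^2\gamma$ as you assert when you write ``$O(\eps^2)\gamma$.'' Tracking constants through your chain (the $C_{\gamma,N}\delta$-splitting $\Rightarrow$ product structure $\setR^{N-1}\times Z$, triangulation with $\Psi_{p,2}$, one-dimensional rigidity) only yields that $B_{2\gamma}(x_\alpha)$ is an $O(\eps)$-boundary ball, since under the hypothesis $\eps<2\gamma$ the best one can say is $\eps^2/\gamma<2\eps$. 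That is weaker than the claimed $\eps^2$-boundary property, and the loss is not a harmless renormalization: the lemma must be iterable, and an $\eps\to\eps$ loss at each step destroys the induction.

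The fix --- which the paper implements, and which is what the asymmetric phrasing of condition (iv) is signalling --- is that the $\alpha$-centers must be \emph{moved off} $\mathcal{L}_{p,2}$. Starting from a Vitali center $x_\xi\in\mathcal{L}_{p,2}$ with the small Hessian bound, one obtains the product-splitting GH isometry $(u,f):B_{4\gamma}(x_\xi)\to B_{4\gamma}^{\setR^{N-1}\times Z}((0,z))$ with error $2\gamma\eps^2$; the one-dimensional rigidity lemma shows that $Z$ is a half-line $[0,b)$ with the marked point $z=a$ at distance $a\le 8\gamma\eps$ (not $\eps^2\gamma$) from the endpoint $0$. One then sets $x_\alpha:=\Psi(0)$, i.e.\ the image of the actual endpoint of $Z$ under the product isometry. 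This re-centered ball $B_{2\gamma}(x_\alpha)$ is genuinely an $\eps^2$-boundary ball, because the only remaining error is the $2\gamma\eps^2$ GH error; but $x_\alpha$ now lies in $B_{2\gamma\eps}(\mathcal{L}_{p,2})$ rather than on $\mathcal{L}_{p,2}$. That is exactly why condition (iv) of the statement allows $x_\alpha\in B_{2\gamma\eps}(\mathcal{L}_{p,2})$ while keeping $x_\beta\in\mathcal{L}_{p,2}$ --- the bad centers are not relocated. Your reading of (iv) as ``satisfied for free, regardless of any subsequent relabeling'' misses that the relabeling \emph{is} a relocation, and that relocation is the crux of the lemma.

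The rest of your proposal (the Vitali-type maximal disjoint family, the dyadic interpolation of the Hessian bound, the application of \autoref{remark:smallHessianimpliessplitting}, and the use of \autoref{splitting vs isometry} to produce the product structure) follows the paper's route correctly.
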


\begin{proof}
	We choose $\delta=\delta(\eps,\gamma,N)$ given by \autoref{splitting vs isometry} such that any ball endowed with a $C_{\gamma,N}\delta$-splitting map is $\eps^2/2$-close to a ball in a space splitting $\setR^{N-1}$, where $C_{\gamma,N}$ is the constant in \eqref{zz3} below.

	Let $\set{B_{\tau^3 \gamma}(x_{\xi})}$ be any covering of $B_1(p)\cap \mathcal{L}_{p,2}$ with $x_{\xi}\in \mathcal{L}_{p,2}$ and such that $\set{B_{\tau^4\gamma}(x_{\xi})}$ is a disjoint maximal collection. 
	
	Given $\xi$ such that
	\begin{equation}\label{zz1}
		\tau^4 \gamma \fint_{B_{\tau^4 \gamma}(x_{\xi})}|\Hess u|^2\di \haus^N \le \delta^2\, ,
	\end{equation}
	one  has  
	\begin{equation}
		s\fint_{B_s(x_{\xi})} |\Hess u|^2 \di \haus^N
		\le C_{\gamma, N}\delta^2
		\quad\text{for any $\tau \gamma \le s\le \tau \gamma^{-k}$}\, ,
	\end{equation}
	as a consequence of \eqref{eq:bdhessian}.
	Therefore, by \autoref{remark:smallHessianimpliessplitting},
	\begin{equation}\label{zz3}
			u: B_s(x_{\xi})\to \setR^{N-1}
			\quad\text{is $C_{\gamma,N}\delta$-splitting for any $s\in [\tau\gamma, \tau \gamma^{-k}]$\, .}
	\end{equation}
	In order to conclude the proof we just need to prove the existence of $x_{\alpha}\in B_{2\gamma\eps}(x_{\xi})$ such that $B_{2\gamma}(x_{\alpha})$ is an $\eps^2$-boundary ball.
	
	To do so notice that the following properties hold:
	\begin{itemize}
		\item[(a)] $B_{4\gamma }(x_{\xi})$ is a $2^{-1}\gamma^{-1}\eps^2 $-boundary ball;
		\item[(b)] there exists a one dimensional  manifold  (possibly with boundary) $(Z,\dist_Z)$ such that
		\begin{equation}
			\dist_{GH}(B_{4\gamma}(x_{\xi}),B^{\setR^{N-1}\times Z}_{4\gamma}((0,z)))\le 2\gamma \eps^2\, .
		\end{equation}
	\end{itemize}
	The property (a) follows from \autoref{remark:subballsofboundaryballs} while (b) comes from \eqref{zz3} and \autoref{splitting vs isometry}.

	Let us finally prove that (a) and (b) imply the existence of $x_{\alpha}\in B_{2\gamma\eps}(x_{\xi})$ such that $B_{2\gamma}(x_{\alpha})$ is an $\eps^2$-boundary ball.
	
	By exploiting (a), (b), the triangular inequality and our choice of $\eps$, we deduce
	\begin{equation}
		\dist_{GH}(B^{\setR_+^1}_{4\gamma  }(0),B^Z_{4\gamma}(z))\le 2\eps^2 + 2\gamma \eps^2
		\le 8\gamma\, \eps\, \le  \eps_0\,  4\gamma\, .
	\end{equation}
	Therefore by \autoref{lemma:onedimensionalrigidity} (in scale invariant form) there exist
	$0\le a\le 8\gamma\eps\le 4\gamma-8\gamma\eps\le b\le \infty$ such that
	\begin{equation}
	\dist_{GH}(B_{4\gamma}(x_{\xi}),B^{\setR^{N-1}\times [0,b]}_{4\gamma}((0,a)))\le 2\gamma \eps^2\, .
	\end{equation}
	Denoting by $\Psi: B^{\setR^{N-1}\times [0,b]}_{4\gamma}((0,a))\to B_{4\gamma}(x_{\xi})$ any $2\gamma \eps^2$-GH isometry, we set $x_{\alpha}:=\Psi(0)$. It is easily seen that $x_{\alpha}\in B_{2\gamma\eps}(x_{\xi})$ and $B_{2\gamma}(x_{\alpha})$ is an $\eps^2$-boundary ball (compare with \autoref{remark:subballsofboundaryballs}).
	
Changing $x_{\xi}$ into $x_{\alpha}$ in the above considered case and relabelling $x_{\xi}$ as $x_{\beta}$ in the other one, it is easily verified that the family $\set{B_{\gamma}(x_{\alpha})}\cup\set{B_{\gamma}(x_{\beta})}$ has the sought properties. 
\end{proof}

\subsubsection{Strategy of proof of \autoref{thm:existenceofneck}}
The overall strategy is similar to those \cite[Proposition 10.5]{CheegerJiangNaber18}, \cite[Proposition 7.13, Proposition 7.26]{JiangNaber16} and \cite[Theorem 5.4]{NaberValtorta19}. The key difference with respect to \cite{CheegerJiangNaber18} and \cite{JiangNaber16} is that we wish to build neck regions whose geometry is controlled by the same $\delta$-splitting map. Moreover, as we already pointed out, we need to prove that \emph{bad-balls} $B_{r_x}(x)$ with $x\in\mathcal{C}_+$ have small $(N-1)$-dimensional content.
\medskip

The first step in the construction is based on the iterative application of \autoref{lemma:iteration step}. It turns that the outcome of this construction is a decomposition which shares most of the properties of neck regions with a subtle difference: there might be nearby balls with uncontrollably different sizes. This implies in turn that the second inclusion in condition (iii) of \autoref{def:Neckregion} needs to be relaxed to a slightly weaker inclusion, where we do not look below the scale $r_y$ near to a point $y\in\mathcal{C}_+$, cf. with \eqref{eq:reifweak} below.

\begin{definition}[Weak neck region]\label{def:weakneckregion}
	Fix $\eps,\delta\in (0,1/2)$, an integer $N\ge 1$ and $\tau:=10^{-10N}$.
	Let $(X,\dist,\mathcal{H}^N,p)$ be a pointed $\RCD(-\eps(N-1),N)$ metric measure space. We say that $\mathcal{N}\subset B_2(p)$ is a weak$(\eps,\delta)$-neck region if there exist a closed set $\mathcal{C}\subset B_1(p)$ and a function $r:\mathcal{C}\to [0, 1/8]$ such that $\mathcal{N}:=B_2(p)\setminus \cup_{x\in \mathcal{C}}\bar{B}_{r_x}(x)$ and, setting $\mathcal{C}_0:=\set{x\in \mathcal{C}:\ r_x=0}$ and $\mathcal{C}_+:=\mathcal{C}\setminus\mathcal{C}_0$, the following hold:
	\begin{itemize}
		\item[(i)] the family $\set{\bar{B}_{\tau^4 r_x}(x)}_{x\in \mathcal{C}}\subset B_2(p)$ is disjoint;
		\item[(ii)] for any $x\in \mathcal{C}$ and $r_x\le r\le \tau^{-4}$, $B_r(x)$ is an $\eps^2$-boundary ball, i.e. there exists an $\eps^2 r$-GH isometry
		\begin{equation}
		\Psi_{x,r}: B_r^{\setR_+^N}(0)\to B_r(x)\, ;
		\end{equation}
		\item[(iii)] setting $\mathcal{L}_{x,r}:=\Psi_{x,r}(\set{x_N=0})$, it holds that 
		\begin{equation}\label{eq:reifweak}
		\mathcal{C}\cap B_r(x)\subset B_{2\eps r}(\mathcal{L}_{x,r})
		\quad\text{and}\quad
		\mathcal{L}_{x,r}\cap B_r(x)\subset B_{100\tau^3 \max\{r,r_y\}}(\mathcal{C})\, ,
		\end{equation}
		for any $x\in \mathcal{C}$ and $r_x< r < \tau^{-4}$, where we denoted by
		\begin{equation}
		B_{100\tau^3 \max\{r,r_y\}}(\mathcal{C}):=\bigcup_{y\in\mathcal{C}}B_{100\tau^3 \max\{r,r_y\}}(y) \,;
		\end{equation}
		\item[(iv)] there exists a $\delta^4$-splitting map $u:B_{2\tau^{-4}}(p)\to \setR^{N-1}$ such that, for any $x\in \mathcal{C}$ and $r_x< r <\tau^{-4}$ it holds that
		\begin{equation}
		u:B_r(x)\to \setR^{N-1}
		\quad\text{is a $\delta$-splitting map }
		\end{equation}
		and 
		\begin{equation}\label{eq:weakneckhess}
			r^2 \fint_{B_r(x)} |\Hess u|^2 \di \haus^{N} \le C(N)r \delta^2\, .
		\end{equation}
	\end{itemize}	
\end{definition}
The packing measure associated to a weak neck region is defined as in the case of neck regions, cf. with \autoref{def:packingmeasure}.
\medskip

The outcome of this first step will be a weak neck region for which we can additionally prove a small content bound for bad-balls, cf. with \eqref{eq:maxweakneck} below. This is due to the fact that in \autoref{lemma:iteration step} we only stop the decomposition when the Hessian bound fails. Since we start with an Hessian bound on the ambient ball, this allows to get a content bound for bad-balls via a standard weighted maximal argument.

\begin{proposition}[Existence of weak neck regions with content estimates]\label{thm:existenceofweakneck}
Let $\tau:=10^{-10N}$.
For any $0<\eps\le \eps(N)$, for any $\delta<\delta(\eps,N)$ and $\eta<\eta(\eps,\delta,N)$ the following holds. If $(X,\dist,\mathcal{H}^N)$ is an $\RCD(-\eta(N-1),N)$ m.m.s., $p\in X$ and $B_{4\tau^{-4}}(p)$ is an $\eta$-boundary ball, then there exists a weak $(\eps,\delta)$-neck region $\mathcal{N}=B_2(p)\setminus \cup_{x\in \mathcal{C}}\overline{B}_{r_x}(x)$ on $B_2(p)$ which additionally verifies
\begin{equation}\label{eq:maxweakneck}
\mu(\mathcal{C}_+)=\sum_{x\in \mathcal{C}_+}r_x^{N-1}\le \eps\, .
\end{equation}
\end{proposition}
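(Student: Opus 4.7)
The plan is to iteratively apply \autoref{lemma:iteration step} starting from a $\delta^4$-splitting map on the ambient boundary ball, letting $\mathcal{C}_+$ collect the bad sub-balls produced at each generation and $\mathcal{C}_0$ the limit points of infinite chains of good sub-balls, and then to derive the content bound from a Hessian-energy estimate summed over disjoint bad balls.

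\textbf{Construction.} Since $B_{4\tau^{-4}}(p)$ is an $\eta$-boundary ball with $\eta$ arbitrarily small, \autoref{splitting vs isometry}(i) together with \autoref{rm:hessiansuper} produces a $\delta^4$-splitting map $u\colon B_{2\tau^{-4}}(p)\to\setR^{N-1}$ with the scale-invariant Hessian bound $\tau^4 s\fint_{B_{\tau^4 s}(p)}|\Hess u|^2\di\haus^N\le\delta^2$ at every admissible scale, provided $\eta\le\eta(\delta,N)$. Fix $\gamma\in(0,1/4)$ compatible with the constraints of \autoref{lemma:iteration step} and begin with the single alive ball $B_2(p)$ at generation $0$. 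At generation $k\ge 0$ we have a disjoint family of alive balls $\{B_{2\gamma^k}(x_\alpha^k)\}$ on which $u$, by induction, is a $\delta$-splitting map and satisfies the scale-invariant Hessian bound at every intermediate scale. Rescaling each alive $B_{2\gamma^k}(x_\alpha^k)$ to unit size and applying \autoref{lemma:iteration step} refines it disjointly into new alive balls of radius $\gamma^{k+1}$ and bad balls $\{B_{2\gamma^{k+1}}(x_\beta)\}$ that violate the Hessian bound at the refined scale; the bad centers are added to $\mathcal{C}_+$ with $r_{x_\beta}:=\gamma^{k+1}$ and removed from further iteration. Define $\mathcal{C}_0$ as the set of accumulation points of centers of infinite alive chains (with $r_x:=0$), put $\mathcal{C}:=\mathcal{C}_0\cup\mathcal{C}_+$, and set $\mathcal{N}:=B_2(p)\setminus\bigcup_{x\in\mathcal{C}}\bar B_{r_x}(x)$.

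\textbf{Verification of the axioms.} Disjointness (i) is preserved across generations by the factor-$\gamma$ separation between consecutive levels. The $\eps^2$-boundary ball property (ii) and the $\delta$-splitting assertion in (iv) follow from \autoref{lemma:iteration step}(ii) combined with \autoref{remark:subballsofboundaryballs}, while the Hessian estimate \eqref{eq:weakneckhess} propagates through \autoref{remark:smallHessianimpliessplitting}. The first inclusion in axiom (iii) is precisely the placement condition from \autoref{lemma:iteration step}(iv). The second, weakened inclusion is the most delicate point: for $z\in\mathcal{L}_{x,r}\cap B_r(x)$ one distinguishes whether $z$ lies inside a previously stopped bad ball $B_{r_y}(y)$ or not. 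In the former case the enlargement by $\max\{r,r_y\}$ in the covering radius absorbs the geometric gap left by the stopped ball; in the latter case the refinement produced at the largest generation $k$ with $\gamma^k\ge r$ places $z$ within $2\tau^3\gamma^{k+1}\le 100\tau^3 r$ of some element of $\mathcal{C}$.

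\textbf{Content estimate.} Unwinding the bad-ball defining condition \eqref{eq:maximality} back to the original metric yields, for each $x_\beta\in\mathcal{C}_+$,
\[
\int_{B_{\tau^4 r_\beta}(x_\beta)}|\Hess u|^2\di\haus^N>\delta^2\,\frac{\haus^N(B_{\tau^4 r_\beta}(x_\beta))}{\tau^4 r_\beta}.
\]
Since $B_{r_\beta}(x_\beta)$ is an $\eps^2$-boundary ball, \autoref{rm:volumeonboundary balls} together with Bishop--Gromov gives $\haus^N(B_{\tau^4 r_\beta}(x_\beta))\ge c(N)(\tau^4 r_\beta)^N$, so the right-hand side dominates $c(N)\delta^2\tau^{4(N-1)}r_\beta^{N-1}$. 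Summing over the disjoint family $\{B_{\tau^4 r_\beta}(x_\beta)\}\subset B_{\tau^{-4}}(p)$ and invoking the global Hessian bound $\int_{B_{\tau^{-4}}(p)}|\Hess u|^2\di\haus^N\le C(N)\delta^8$ coming from the $\delta^4$-splitting property (cf.\ \autoref{rm:hessiansuper}) yields
\[
\sum_{x_\beta\in\mathcal{C}_+}r_\beta^{N-1}\le C(N)\,\tau^{-4(N-1)}\,\delta^6\le\eps
\]
once $\delta\le\delta(\eps,N)$, establishing \eqref{eq:maxweakneck}. The principal obstacle is the bookkeeping of scale-invariant Hessian and splitting controls ensuring that the hypotheses of \autoref{lemma:iteration step} remain available throughout the iteration, and the careful case analysis verifying the weakened Reifenberg inclusion near larger bad balls.
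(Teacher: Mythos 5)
Your proposal follows essentially the same route as the paper's proof (Section 5.3.3): iterate \autoref{lemma:iteration step}, accumulate the stopped $\beta$-balls into $\mathcal{C}_+$, take a Hausdorff limit of the alive chains to form $\mathcal{C}_0$, and control $\sum r_\beta^{N-1}$ by summing the lower Hessian bound over the disjoint stopped balls against the global Hessian bound from the ambient $\delta^4$-splitting map.

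One computational slip worth fixing: the $\delta^4$-splitting property gives $r^2\fint_{B_r(p)}|\Hess u|^2 < \delta^4$, hence $\int_{B_{\tau^{-4}}(p)}|\Hess u|^2 \le C(N,\tau)\delta^4$, not $C(N)\delta^8$ as you wrote; correspondingly the final sum is bounded by $C(N,\tau)\delta^{-2}\delta^4 = C(N,\tau)\delta^2$ (matching the paper) rather than your $\delta^6$. The conclusion $\le\eps$ for $\delta\le\delta(\eps,N)$ survives either way, but the cited exponents should read $\delta^4$ and $\delta^2$. Also, strictly speaking \autoref{remark:smallHessianimpliessplitting} is used to deduce the $\delta$-splitting property on alive balls \emph{from} the scale-invariant Hessian bound, rather than to ``propagate'' the Hessian bound itself; the Hessian bound on alive balls is immediate from the stopping criterion of \autoref{lemma:iteration step}.
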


Once we have built a weak neck region we need to refine the construction to get a neck region out of it, also keeping the small content bound for bad balls. This will be achieved refining the approximate singular set and regularizing the radius function $r_x$.\\ 
Via this procedure we might be enlarging the set $\mathcal{C}_+$ of centers of bad balls and the new bad balls might not verify anymore the maximality condition \eqref{eq:maximality}. The key observation will be that the new center points are all near to old center points at the right scale. Then the structure \autoref{thm:neckstructure} allows to turn the small content bound for the weak neck region into a small content bound for the neck region.

\subsubsection{Proof of \autoref{thm:existenceofweakneck}}\label{subsubsec:exweakneck}

\textbf{Base step.} Let us fix a scale parameter $\gamma=1/8$, $\eps\le \eps(N)$ and $\delta<\delta(\eps,\gamma,N)$ as in \autoref{lemma:iteration step}. We consider $\eta < \eta(N,\delta^4)$ such that, by \autoref{splitting vs isometry} (i), there exists a $\delta^4$-splitting map $u:B_{2\tau^{-4}}(p)\to\setR^{N-1}$.

Let us apply \autoref{lemma:iteration step} with $k=0$ to obtain a covering
	\begin{equation}
		B_1(p)\cap \mathcal{L}_{p,1}\subset \bigcup_{\alpha} B_{2\tau^3\gamma}(x_{\alpha}^1)\bigcup_{\beta} B_{2\tau^3\gamma}(x_{\beta}^1)\, ,
	\end{equation}
	where
	\begin{itemize}
		\item[(i)] the family $\set{B_{\tau^4\gamma}(x_{\alpha}^1)}\cup \set{B_{\tau^4 \gamma}(x^1_{\beta})}$ is disjoint;
		\item[(ii)] $\alpha$-balls $B_{2\gamma}(x^1_{\alpha})$ and $\beta$-balls $B_{2\gamma}(x^1_{\beta})$ verify the following:
		\begin{itemize}
		\item[($\alpha$-balls)] for any $\alpha$, $B_{2\gamma}(x^1_{\alpha})$ is an $\eps^2$-boundary ball and
		\begin{equation}
			u:B_s(x^1_{\alpha})\to \setR^{N-1}
			\quad\text{is a $C_{\gamma, N}\delta$-splitting map for any $s\in [\tau^3 \gamma, 1]$}\, ;
		\end{equation}
		\item[($\beta$-balls)] for any $\beta$, $\tau^4\gamma \fint_{B_{\tau^4\gamma}(x^1_{\beta})}|\Hess u|^2\di \haus^N >\delta^2$; 
		\end{itemize}
		\item[(iii)] $x^1_{\alpha}\in B_{2\gamma\eps}(\mathcal{L}_{p,2})$ for any $\alpha$, and $x^1_{\beta}\in\mathcal{L}_{p,2}$ for any $\beta$.
	\end{itemize}

\textbf{Iteration steps.} 
Observe that
\begin{equation}
\mathcal{N}^1:=B_2(p)\setminus\left(\bigcup_{\alpha}\bar{B}_{\tau^2\gamma}(x^1_{\alpha})\cup \bigcup_{\beta}\bar{B}_{\tau^2\gamma}(x^1_{\beta})\right)
\end{equation}
is easily seen to be a neck region. However, it could be that this neck region is not nearly maximal and can be extended. Therefore we
wish to iteratively refine the construction by decomposing the $\alpha$-balls in the decomposition via \autoref{lemma:iteration step}.
\medskip

To this aim, let us apply \autoref{lemma:iteration step} with $k=1$ and same choice of parameters as in the base step of the iteration to any approximate singular set $\mathcal{L}_{x_{\alpha}^1,\gamma}$ (after scaling the scale $\gamma$ to scale $1$).\\ 
We obtain a covering
\begin{align}\label{e:tau3_cL}
		\left(\bigcup_{\alpha}\mathcal{L}_{{x}_{\alpha}^1,\gamma}\right)\setminus \bigcup_{\beta} B_{\tau^3\gamma}(x^1_{\beta})\subset \bigcup_{\alpha}B_{2\tau^3\gamma^2}(x^2_{\alpha})\cup\bigcup_{\beta}B_{2\tau^3\gamma^2}(x_{\beta}^2)\, ,
	\end{align}
such that $B_{\tau^4\gamma^2}(x^2_{\alpha})$ and $B_{\tau^4\gamma^2}(x^2_{\beta})$ are disjoint, 
\begin{align}\label{eq:centrebetaclose}
 	x_{\beta}^2\in \Big(\bigcup_{\alpha}\mathcal{L}_{x_{\alpha}^1,\gamma}\Big)\setminus\bigcup_{\beta}B_{\tau^3\gamma}(x_{\beta}^1)
\end{align}
and 
\begin{align}\label{eq:centreclose}
 	x_{\alpha}^2\in \Big(\bigcup_{\alpha}B_{2\gamma^2\eps}\mathcal{L}_{x_{\alpha}^1,\gamma}\Big)\setminus\bigcup_{\beta} B_{\tau^3\gamma}(x_{\beta}^1)\, .
\end{align}
Moreover, the balls in the covering are labeled as $\alpha$-balls or $\beta$-balls according to the convention of \autoref{lemma:iteration step}. In particular:
\begin{itemize}
\item for any center of $\alpha$-ball $x^2_{\alpha}$, it holds that $B_{r}(x^2_{\alpha})$ is a $\gamma^{-1}\eps^2$-boundary ball for any $\gamma^2<r< \tau^{-4}$ and 
\begin{equation}
\tau^4\gamma^k\fint_{B_{\tau^4\gamma^k}(x^2_{\alpha})}\abs{\Hess u}^2\di\haus^N\le \delta^2\,\,\,\text{for $k=0,1,2$}\, .
\end{equation}
\item for any $k=1,2$ and any centre of $\beta$-ball $x^k_{\beta}$, it holds that
\begin{equation}
\tau^4\gamma^k\fint_{B_{\tau^4\gamma^k}(x^k_{\beta})}\abs{\Hess u}^2\di\haus^N>\delta^2.
\end{equation}
\end{itemize}
Moreover, being $\gamma\le 1/4$, the balls $B_{\tau^4\gamma^2}(x_{\alpha}^2)$ and $B_{\tau^4\gamma^2}(x_{\beta}^2)$ are also mutually disjoint with all the balls $B_{\tau^4\gamma}(x_{\beta}^1)$.

At this stage of the decomposition we have a weak neck region
 \begin{align}
  	\mathcal{N}^2:=  B_2(p)\setminus  \left(\bigcup_{\alpha} \bar{B}_{\gamma^2}(x_{\alpha}^2)\cup \bigcup_{1\le j\le 2} \bigcup_{\beta} \bar{B}_{\gamma^j}(x_{\beta}^j)\right)\, .
 \end{align}
Observe that after this second step we pass from a neck region to a weak neck region since there might already be balls of different sizes nearby. As we already pointed out this motivates the necessity for a refinement of the construction later on.
\medskip
  
After repeating this decomposition $i$ times, decomposing at each step the $\alpha$-balls via \autoref{lemma:iteration step} (applied with $k=i$ after scaling each $\alpha$-ball to radius $2$),
we get an approximate weak neck region
\begin{equation}\label{eq:weaknecki}
		\mathcal{N}^i:=  B_2(p)\setminus  \left(\bigcup_{\alpha} \bar{B}_{\gamma^i}(x_{\alpha}^i)\cup \bigcup_{1\le j\le i}\bigcup_{\beta} \bar{B}_{\gamma^j}(x_{\beta}^j)\right)\,.
\end{equation}
The balls in the covering in particular satisfy the following:
\begin{itemize}
\item for any centre of $\alpha$-ball $x^i_{\alpha}$, it holds that $B_{r}(x^i_{\alpha})$ is a $\gamma^{-1}\eps^2$-boundary ball for any $\gamma^i<r<\tau^{-4}$ and 
\begin{equation}
\tau^4\gamma^k\fint_{B_{\tau^4\gamma^k}(x^i_{\alpha})}\abs{\Hess u}^2\di\haus^N\le \delta^2\,\,\,\text{for $k=0,1,\dots,i$}\, .
\end{equation}

\item for any $k=1,2,\dots, i$, and any centre of $\beta$-ball $x^k_{\beta}$, it holds that
\begin{equation}
\tau^4\gamma^k\fint_{B_{\tau^4\gamma^k}(x^k_{\beta})}\abs{\Hess u}^2\di\haus^N>\delta^2\, .
\end{equation}
\end{itemize}
\medskip

\textbf{Limiting argument.}	
Set 
\begin{equation}
\mathcal{C}_{\alpha}^i:=\{x_{\alpha}^i\}\, .
\end{equation}
By construction (cf. with \eqref{eq:centrebetaclose} and \eqref{eq:centreclose}) we have 
\begin{equation}
\mathcal{C}_{\alpha}^{i+1}\subset B_{\gamma^i}(\mathcal{C}_{\alpha}^i)\, .
\end{equation}
Therefore, we can define the Hausdorff limit 
	\begin{align}
		\mathcal{C}_0:= \lim_{i\to\infty}\mathcal{C}_{\alpha}^{i}\, .
	\end{align}
By letting $i\to\infty$ and passing to the limit the weak neck regions $\mathcal{N}^i$ in \eqref{eq:weaknecki}, letting 
\begin{equation}
\mathcal{C}_+:=\set{x_{\beta}}=\bigcup_{i}\set{x_{\beta}^i}\, ,\,\,\,\text{and }\, r_{x_{\beta}}:=\gamma^i\,\,\,\text{if $x_{\beta}=x_{\beta}^i$}\, ,
\end{equation}
we get
	\begin{align}
		\mathcal{N}:= B_2(p)\setminus \left(\mathcal{C}_0\cup
 \bigcup_{\beta}\bar B_{r_{\beta}}(x_{\beta})\right)=B_2(p)\setminus \left(\mathcal{C}_0\cup
 \bigcup_{x\in \mathcal{C}_+}\bar B_{r_x}(x)\right)\, .
	\end{align}
By  construction, the balls $B_{\tau^4r_x}(x)$ are disjoint 
for $x\in\mathcal{C}_+$,
\begin{equation}\label{eq:hessbdbelow}
\tau^4r_x\fint_{B_{\tau^4r_x}(x)}\abs{\Hess u}^2\di\haus^N>\delta^2\, .
\end{equation}

Passing to the limit the bounds in the intermediate steps of the construction it is possible to infer that $\mathcal{N}$ is a $(\gamma^{-1}\eps,\delta)$-weak neck region.
\medskip


\textbf{Content bound.} Let us now verify the small content bound \eqref{eq:maxweakneck}. Taking into account the disjointness of the balls $B_{\tau^4r_x}(x)$ for $x\in\mathcal{C}_+$ and \eqref{eq:hessbdbelow} we can estimate
   \begin{align*}
   	\sum_{x\in \mathcal{C}_+} \frac{\mathcal{H}^N(B_{\tau^4 r_x}(x))}{\tau^4 r_x} 
   	\le  & \delta^{-2}\sum_{x\in\mathcal{C}_+} \int_{ B_{\tau^4 r_x}(x)}|\Hess u|^2 \di \mathcal{H}^{N}\\
   	\le  & \delta^{-2}\int_{B_{2}(p)}|\Hess u|^2\di \mathcal{H}^N\\
   	\le & C(N) \delta^{-2}\fint_{B_{2}(p)}|\Hess u|^2\di \mathcal{H}^N\\
   	\le & C(N,\tau) \delta^2\\
   	= & C(N)\delta^2 \le \eps\, .
   \end{align*}
The sought conclusion follows from \autoref{rm:volumeonboundary balls}, which yields $\mathcal{H}^N(B_{\tau^4 r_x}(x))\ge \frac{1}{4}\omega_N(\tau^4 r_x)^{N}$.

\subsubsection{Proof of \autoref{thm:existenceofneck}}\label{subsubsec:exneck}

We can now complete the proof of the existence of neck regions, together with the small content estimate. First we are going to apply \autoref{thm:existenceofweakneck}. Then we refine the approximate singular set and the radius function to get a neck region. In the last step of the proof we prove the content bound relying on \eqref{eq:maxweakneck} and on the structure of neck regions \autoref{thm:neckstructure}.

\medskip

\textbf{Refinement of approximate singular set and radius.}
Let $\eps',\delta'>0$ to be chosen later in terms of $\eps$, $\delta$ and $N$. Given $\eta<\eta(\eps',\delta',N)$ we apply \autoref{thm:existenceofweakneck} with $\eps'$ and $\delta'$ in place of $\eps$ and $\delta$ to obtain a weak neck region
	\begin{align}
		\tilde{\mathcal{N}}:=B_2(p)\setminus \left(\tilde{\mathcal{C}}_0\cup
 \bigcup_{\tilde{x}\in \tilde{\mathcal{C}}_+}\bar B_{\tilde{r}_{\tilde{x}}}(\tilde{x})\right)\, .
	\end{align}
Let $\gamma=1/8$ be the iteration scale in the proof of \autoref{thm:existenceofweakneck}.	
%


In order to refine the weak neck region $\tilde{\mathcal{N}}$, let us build an approximate singular set $\tilde{\mathcal{S}}$ as follows.	 
For any $x\in B_2(p)$ we denote by $\tilde{x}\in\tilde{ \mathcal{C}}$ a point verifying $\dist(\tilde{ \mathcal{C}},x) = \dist(\tilde{x},x)$ and set $s_x:= 2\max\{\dist(\mathcal{C}, x), \tilde{r}_{\tilde{x}}\}$.

We say that $x\in B_2(p)$ belongs to $\tilde{ \mathcal{S}}$ if $x\in \cup_{\tilde{y}\in\tilde{\mathcal{C}}} B_{ \tau \tilde{r}_{\tilde{y}}}(\tilde{y})$ and 
\begin{equation}\label{eq:enlsing}
x\in B_{\eps's_x}\, \mathcal{L}_{\tilde{x}_, s_x} \, .
\end{equation}

Now, let us define a radius function on $\tilde{\mathcal{S}}$ as
	\begin{equation}\label{eq:defry}
	r_x:= \tau^2 \max \{ \dist(x,\tilde{C}), \tau^4 \tilde r_{\tilde x} \}.
	\end{equation}
It is easily seen that $\Lip r_x\le \tau^2$, $\tilde{\mathcal{C}}\subset \tilde{\mathcal{S}}$ and $r_x=0$ for any $x\in \tilde{ \mathcal{C}}_0$.

Choose a maximal disjoint collection $\{B_{\tau^2 r_x}(x),x\in \tilde{\mathcal{S}}\}$ whose set of centers $\mathcal{C} = \mathcal{C}_0 \cup \mathcal{C}_+$ satisfies $\mathcal{C}_0 = \tilde{ \mathcal{C}}_0$ and  $\tilde{\mathcal{C}}_{+}\subset \mathcal{C}_+$.

We claim that
\begin{align}
	\mathcal{N}:=  B_2(p)\setminus \Big(\mathcal{C}_0\cup \bigcup_{x\in\mathcal{C}_{+}}\bar B_{r_x}(x)\Big)
\end{align}
is an $(\eps,\delta)$-neck region for $\eps'\le \eps'(\eps,N)$ and $\delta'\le \delta'(\eps,\delta,N)$.
\medskip

\textbf{Proof of the neck region properties for $\mathcal{N}$.}
The Vitali covering condition (i) in the neck region \autoref{def:Neckregion} is satisfied by the construction. Moreover, as we already pointed out, it holds $\Lip r_x\le \tau^2$.
\medskip

 Next, note that $B_{r}(x)$ is an $(\eps'^2\tau^{-6})$-boundary ball for all $x\in\mathcal{C}$ and $r_x \le r \le \gamma \tau^{-4}$. This follows from \autoref{remark:subballsofboundaryballs} thanks to the following observation: with this choice of parameters, all the balls $B_r(\tilde{x})$ for $\tilde{x}\in\tilde{\mathcal{C}}$ and $r>\tilde{r}_{\tilde{x}}$ in the weak neck region are $\gamma^{-1}\eps'^2$-boundary balls and the center point $x$ belongs to the approximate singular set by \eqref{eq:enlsing}.\\ 
 Since $\gamma^{-1}\eps'^2\tau^{-6}\le \eps^2$ for $\eps'\le \eps'(N,\eps)$, this proves condition (ii) in \autoref{def:Neckregion}.

\medskip

The first inclusion in \autoref{def:Neckregion} (iii) is also satisfied by the very construction of $\tilde{\mathcal{S}}$, since the center points all belong to the approximate singular set above their own scale.
\medskip

Let us now verify the second inclusion in (iii) of \autoref{def:Neckregion}, which is the main reason for the refinement of the weak neck region $\tilde{\mathcal{N}}$ into $\mathcal{N}$.

Let $x\in \mathcal{C}$ and $r_x \le r \le \tau^{-3}$. Then, if $\tilde{x}\in\tilde{\mathcal{C}}$ is such that $\dist(x,\tilde{x})=\dist(x,\tilde{\mathcal{C}})$,
\begin{equation}
	\mathcal{L}_{x,r}\cap B_r(x) \subset  B_{ 2 \tau^{-2} r } (\tilde{x})\, ,
\end{equation}
since $r\ge r_x \ge \tau^{2}\dist(x,\tilde{x})$ by \eqref{eq:defry}.\\
The cone splitting \autoref{thm:conesplittingquant} gives
\begin{equation}\label{eq:conscon}
	\mathcal{L}_{x,r}\cap B_r(x) \subset B_{\tau r/4} (\mathcal{L}_{\tilde{x}, 2\tau^{-2} r})
\end{equation}
for $\eps\le \eps(N)$. The application of the cone splitting theorem is justified by the fact that, as we already pointed out above, the balls $B_s(x)$ and $B_t(\tilde{x})$ are $\eps^2$-boundary balls for $s\ge r_x$ and $t\ge r_x$.

In order to get (iii) of \autoref{def:Neckregion} it is enough to see that
\begin{equation}\label{z102}
	\mathcal{L}_{\tilde{x}, 2\tau^{-2} r}\cap B_r(x) \subset B_{200 \tau \max\{2r,r_y\}}(\mathcal{C})\, .
\end{equation}
Indeed, if \eqref{z102} holds, then the Lipschitz property $\Lip r_x \le \tau^{2}$ gives
\begin{align*}
\mathcal{L}_{x,r}\cap B_r(x) &\overset{\eqref{eq:conscon}}{\subset} B_{\tau r/4} (\mathcal{L}_{\tilde{x}, 2\tau^{-2} r}) \cap B_r(x)\\
&\overset{\eqref{z102}}{\subset}
B_{400 \tau \max\{ 2 r,r_y\}}(\mathcal{C}) \cap B_r(x) \\
&\subset 
B_{10^3 \tau r}(\mathcal{C}\cap B_{3r}(x))\, .
\end{align*}

Let us prove \eqref{z102}.\\
The property (iii) in the definition of weak neck region $\tilde{\mathcal{N}}$ (see \autoref{def:weakneckregion}) gives the inclusion
\begin{equation}
	\cL_{\tilde{x}, 2\tau^{-2} r} 
	\subset \overline{B}_{100\tau^3 \max\{2\tau^{-2} r, \tilde r_{\tilde y}\}}(\tilde{ \mathcal{C}}) = \bigcup_{\tilde{r}_{\tilde y} \le 2 \tau^{-2}} r \overline{B}_{200\tau r}(\tilde y) 
	\quad
	\cup \quad
	\bigcup_{\tilde{r}_{\tilde y}\ge  2 \tau^{-2}} r   \overline{B}_{100 \tau^3 \tilde{r}_{\tilde y} }(\tilde y)\, .
\end{equation}
Moreover, the cone splitting \autoref{thm:conesplittingquant} implies
\begin{equation}
	\cL_{\tilde{x}, 2 \tau^{-2} r} \cap \overline{B}_{100\tau^3 \tilde{r}_{\tilde y}}(\tilde{y}) 
	\subset 
	\overline{B}_{\tau^{20}\tilde{r}_{\tilde y}} \cL_{\tilde{y}, \tau^2 \tilde{r}_{\tilde y}} \quad  \text{for any $\tilde{y}\in \tilde{ \mathcal{C}}$ with $\tilde{r}_{\tilde y}\ge  2 \tau^{-2}r$}\, ,
\end{equation}
for any $\eps'\le \eps'(\eps,N)$.\\ 
To finish the proof of \eqref{z102} we are going to prove that
\begin{align}
	 (B_{\tau^{20}\tilde{r}_{\tilde y}}\cL_{\tilde{y}, \tau^2 \tilde{r}_{\tilde y}}) \cap B_{2\tau^{-2}r}(\tilde x) &\subset B_{2\tau^{20}\tilde{r}_{\tilde y
	 }}(\tilde S) \cap B_{2\tau^{-2}r}(\tilde x) \label{z103} \\ 
	 &\subset B_{\tau/4 r }(\tilde S) \label{eq:secondincl}\\ 
	 &\subset B_{\tau/4 \max\{2r,2}r_y\}(\mathcal{C})\label{eq:lastincl}\, .
\end{align}
Let us first check \eqref{z103}. In particular we claim that $\cL_{\tilde{y}, \tau^2 \tilde{r}_{\tilde y}} \subset B_{\tau^{20}\tilde{r}_{\tilde y}}(\tilde S)$.\\ 
Given $z\in \cL_{\tilde{y}, \tau^2 \tilde{r}_{\tilde y}}$ we consider $\tilde z \in \mathcal{C}$ such that $\dist(z,\tilde{z})=\dist(z,\tilde{\mathcal{C}})$. If $\dist(z,\tilde z)\le \tau^{20} \tilde{r}_{\tilde{y}}$ then $z\in B_{\tau^{20}\tilde{r}_{\tilde y}}(\tilde S)$, since $\tilde z\in \tilde S$. If $\dist(z,\tilde z) > \tau^{20} \tilde{r}_{\tilde{y}}$ then
\begin{equation}
	z\in \cL_{\tilde{y}, \tau^2 \tilde{r}_{\tilde y}} \cap B_{s_z}(\tilde z) \subset  B_{\eps s_z} \cL_{\tilde z, s_z}\, , \quad \text{where $s_z:= 2\max\{\dist(z,\tilde z), \tilde{r}_{\tilde z}\}\ge 2\tau^{20}\tilde{r}_{\tilde y}$ }\, ,
\end{equation}
as a consequence of the cone splitting \autoref{thm:conesplittingquant}. This implies $z\in \tilde{S}$ by \eqref{eq:enlsing}, since we already know that $z\in B_{\tau^{2}\tilde{r}_{\tilde y}}(\tilde y) \subset \cup_{\tilde{w}\in\tilde{\mathcal{C}}} \bar B_{\tau \tilde{r}_{\tilde w}}(\tilde w)$.

To prove \eqref{eq:secondincl} we rely on the inequalities $\tau^{20} \tilde{r}_{\tilde y} \le \tau^{10} r_y$, $r_{\tilde x} \le r$ and the Lipschitz bound $\Lip r_y \le \tau^2$. The last inclusion \eqref{eq:lastincl} follows from the definition of $\mathcal{C}$.

\medskip

Let us now verify condition (iv) in \autoref{def:Neckregion}. 
For any $x\in \cC$ we consider a point in $\tilde{x}\in\tilde{\mathcal{C}}$ such that $\dist(x,\tilde{\mathcal{C}})=\dist(x,\tilde{x})$. Observe that, for any $r_x\le r\le \tau^3$, it holds $B_r(x)\subset B_{\tau^{-6}r}(\tilde{x})$. The sought conclusion follows from 
\begin{align}
r^2 \dashint_{B_r(x)} |\Hess u|^2 \di \haus^N
&\le C(\tau,N) \tau^{-12}r^2 \dashint_{B_{\tau^{-6}r}(\tilde x)} |\Hess u|^2 \di \haus^N\\
&\overset{\eqref{eq:weakneckhess}}{\le} C(\tau,N) r \delta'^2\, , 
\end{align}
by choosing $\delta'\le \delta'(N,\delta)$. 

If $r\ge \tau^3$ instead, the sought bound can be easily obtained by recalling that $u: B_{4\tau^{-4}}(p)\to \setR^{N-1}$ is a $\delta'$-splitting map and, again, choosing $\delta'\le \delta'(\delta,N)$.

\medskip

\textbf{Proof of the small content bound \eqref{eq:maxneck}. }
By the very construction of the neck region $\mathcal{N}$, the following holds:
\begin{equation}\label{eq:inclusionbadballs}
\mathcal{C}_+\subset\bigcup_{\tilde{x}\in\tilde{\mathcal{C}}_+}B_{\tau\tilde{r}_{\tilde{x}}}(\tilde{x})\, .
\end{equation}
Moreover, for any $\tilde{x}\in\tilde{\mathcal{C}}_+$ and for any $x\in\mathcal{C}_+\cap B_{\tau\tilde{r}_{\tilde{x}}}(\tilde x)$ it holds that
\begin{equation}\label{z1000}
r_x\le \tau^6 \tilde{r}_{\tilde{x}}\le \tau \tilde{r}_{\tilde x}  \, ,
\end{equation}
by definition of radius function \eqref{eq:defry}.

Denoting by $\mu$ the packing measure of the neck region $\mathcal{N}$ as in \autoref{def:packingmeasure}, it holds
\begin{align}
\sum_{x\in\mathcal{C}_+}r_x^{N-1}&=\mu(\mathcal{C}_+)\overset{\eqref{eq:inclusionbadballs}}{\le}\sum_{\tilde{x}\in \tilde{ \mathcal{C}}_+}\mu(B_{\tau\tilde{r}_{{\tilde{x}}}}(\tilde{x}))\\
&  \overset{\eqref{z1000}\, \eqref{eq:ahlforsneck}}{\le}  C(N) \sum_{\tilde{x}\in \tilde{ \mathcal{C}}_+} \tilde{r}_{\tilde x}^{N-1} \\
&\overset{\eqref{eq:maxweakneck}}{\le} C(N) \eps' \le \eps \, .
\end{align}

\vspace{.5cm}

\section{Neck Decomposition}\label{sec:neckdecomposition}

The main result of this part is \autoref{thm:neckdecompositiontheorem} below. Its proof is based on the combination of the boundary-interior decomposition \autoref{thm:decompositiontheorem}, dealing with a decomposition of the space into a singular part, regular balls and boundary balls with content bounds, the existence of neck regions \autoref{thm:existenceofneck} and the structure theorem for neck regions \autoref{thm:neckstructure}. The covering arguments needed in the proof are essentially those of  \cite[Section 7]{JiangNaber16} (see also \cite[Section 10]{CheegerJiangNaber18}) and we will only sketch them in most cases.

\begin{theorem}[Neck decomposition theorem]\label{thm:neckdecompositiontheorem}
Let $\eta>0$ with $\delta<\delta(N,v,\eta)$ and consider a noncollapsed $\RCD(-(N-1),N)$ m.m.s. $(X,\dist,\haus^N)$.   For any $p\in X$ such that $\mathcal{H}^N(B_1(p))\ge v$, there exists a decomposition
\begin{equation}
B_1(p)\subset\bigcup_a\left(\mathcal{N}_a\cap B_{r_a}(x_a)\right)\cup\bigcup_bB_{r_b}(x_b)\cup\mathcal{S}^{\delta,\eta}\, ,
\end{equation}

\begin{equation}\label{eq:decompositionboundary}
\mathcal{S}^{\delta,\eta}\subset\bigcup_a\left(\mathcal{C}_{0,a}\cap B_{r_a}(x_a)\right)\cup\tilde{\mathcal{S}}^{\delta,\eta}\, ,
\end{equation}
such that the following hold:
\begin{itemize}
\item[i)] for any $a$ the set $\mathcal{N}_a=B_{2r_a}(x_a)\setminus\bar{B}_{r_x}(\mathcal{C}_a)$ is an $(\eta,\delta)$-neck region with $(N-1)$-singular set $\mathcal{C}_{0,a}\subset B_{2r_a}(x_a)$;
\item[ii)] for any $b$ the ball $B_{2r_b}(x_b)$ is $(N,\eta)$-symmetric and $r_b^2\le \eta$;
\item[iii)] $\sum_ar_a^{N-1}+\sum_br_b^{N-1}+\mathcal{H}^{N-1}(\mathcal{S}^{\delta,\eta})\le C(N,v,\delta,\eta)$;
\item[iv)] it holds $\mathcal{H}^{N-1}(\tilde{\mathcal{S}}^{\delta,\eta})=0$;
\item[v)] the singular set $\mathcal{S}^{\delta,\eta}$ is $(N-1)$-rectifiable;
\item[vi)] if $\eta<\eta(N,v)$ and $\delta<\delta(N,v,\eta)$, then $\mathcal{S}^{N-1}\setminus \mathcal{S}^{N-2}\subset \mathcal{S}^{\delta,\eta}$.
\end{itemize}
\end{theorem}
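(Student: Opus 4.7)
The plan is to run a nested Vitali-type stopping time scheme that combines the boundary-interior decomposition (\autoref{thm:decompositiontheorem}) with the existence \autoref{thm:existenceofneck} and structure \autoref{thm:neckstructure} of $(\eta,\delta)$-neck regions. At scale $1$ we apply \autoref{thm:decompositiontheorem} to $B_1(p)$ to split it, up to a set of controlled $\haus^{N-1}$-content, into a family of $(N,\eta)$-symmetric balls (these enter directly into the second family $\{B_{r_b}(x_b)\}$ and are not further subdivided) and a family of $\eta$-boundary balls. On each boundary ball $B_{2r_a}(x_a)$ the closeness hypothesis of \autoref{thm:existenceofneck} is satisfied after rescaling, so we produce a maximal $(\eta,\delta)$-neck region $\mathcal{N}_a=B_{2r_a}(x_a)\setminus \overline{B}_{r_x}(\mathcal{C}_a)$ with small content
\[
\sum_{x\in \mathcal{C}_{+,a}}r_x^{N-1}\le \eta\, r_a^{N-1}\,.
\]
The centers $\mathcal{C}_{0,a}$ are absorbed into $\mathcal{S}^{\delta,\eta}$, while on every bad sub-ball $B_{r_x}(x)$ with $x\in \mathcal{C}_{+,a}$ we restart the scheme by applying \autoref{thm:decompositiontheorem} to $B_{r_x}(x)$ after rescaling.

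Iterating, the residual set $\tilde{\mathcal{S}}^{\delta,\eta}$ is the collection of points that remain covered by bad balls at every stage. If $\Sigma_k$ denotes the sum $\sum r^{N-1}$ over all bad balls produced at depth $k$, then the maximality estimate gives $\Sigma_{k+1}\le \eta\,\Sigma_k$, so $\Sigma_k\le \eta^k\Sigma_0$ and consequently $\haus^{N-1}_\infty(\tilde{\mathcal{S}}^{\delta,\eta})=0$, which proves (iv). The $(N-1)$-content bound (iii) follows from summing over all depths: the contributions of the $B_{r_b}(x_b)$ balls and of the neck radii $r_a$ are controlled by Bishop-Gromov (each $(N-1)$-content bound at one stage costs only a factor $C(N,v,\delta,\eta)$), while the contribution of $\mathcal{S}^{\delta,\eta}$ is estimated by the Ahlfors regularity \autoref{thm:neckstructure}(ii) applied to each $\mathcal{C}_{0,a}$ together with the geometric decay of $\Sigma_k$.

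For rectifiability (v), \autoref{thm:neckstructure}(i) shows that the splitting map $u_a$ associated to $\mathcal{N}_a$ is a bi-Lipschitz equivalence between $\mathcal{C}_{0,a}$ and its image in $\setR^{N-1}$, so each $\mathcal{C}_{0,a}$ is $(N-1)$-rectifiable; the countable union $\bigcup_a \mathcal{C}_{0,a}$ is therefore rectifiable, and $\tilde{\mathcal{S}}^{\delta,\eta}$ contributes nothing since it has null $\haus^{N-1}$-measure. Statement (vi) comes from the following dichotomy at each scale: if $x\in \mathcal{S}^{N-1}\setminus\mathcal{S}^{N-2}$, then some tangent cone at $x$ is $\setR^{N}_+$, and by the volume monotonicity and the uniqueness statement in \autoref{thm:neckstructure}(iii) together with the volume gap between $\setR^N_+$ and $\setR^N$, no sub-ball of $x$ at a sufficiently small scale can be $(N,\eta)$-symmetric provided $\eta\le \eta(N,v)$; hence the iteration starting at $x$ never terminates into a ball of the symmetric family, and $x$ is either caught on some $\mathcal{C}_{0,a}$ or remains in $\tilde{\mathcal{S}}^{\delta,\eta}$.

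The main obstacle is the bookkeeping of the covering: one must ensure that when restarting on a bad sub-ball $B_{r_x}(x)$ the decomposition at the new scale remains compatible with the Vitali disjointness requirements, and that the factor $C(N,v,\delta,\eta)$ produced at each iteration stage does not defeat the geometric decay $\eta^k$ coming from maximality. This is resolved by noting that at every depth the number of pieces generated per input ball is controlled by $C(N,v)$ via Bishop-Gromov (only finitely many disjoint balls of comparable radius fit inside the ambient ball, uniformly in the noncollapsing parameter), so the total content is bounded by a convergent series of the form $\sum_k C(N,v,\delta,\eta)\cdot \eta^k$, yielding (iii). This is the same mechanism driving the neck decomposition in \cite{JiangNaber16} and \cite{CheegerJiangNaber18}, and we expect only routine adaptations to the $\RCD$ setting here, since the nontrivial new inputs (existence and structure of neck regions near codimension-one singularities) have already been established.
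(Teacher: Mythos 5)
Your proposal follows the same overall strategy as the paper's proof: iterate the boundary-interior decomposition (\autoref{thm:decompositiontheorem}) with the existence (\autoref{thm:existenceofneck}) and structure (\autoref{thm:neckstructure}) of neck regions, restart on bad balls in $\mathcal{C}_+$, pass to the limit, and identify the residual set as $\haus^{N-1}$-null via geometric decay. The rectifiability of $\mathcal{S}^{\delta,\eta}$ via the biLipschitz charts on each $\mathcal{C}_{0,a}$ and the content bound via a convergent geometric series are both the paper's mechanisms. Two points are imprecise and would need to be fixed before this compiles into a proof.

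First, you conflate the small-content parameter with the neck parameter $\eta$. In \autoref{thm:existenceofneck} the maximality bound reads $\sum_{x\in\mathcal{C}_+} r_x^{N-1}\le \eps$ where $\eps$ is the neck parameter, which you are free to choose. But when you restart on a bad ball of radius $r_x$ you must first run \autoref{thm:decompositiontheorem}, which produces boundary balls with $(N-1)$-content bounded only by $C(N,v)\,r_x^{N-1}$, not by $r_x^{N-1}$. So the correct recursion is $\Sigma_{k+1}\le C(N,v)\,\eps\,\Sigma_k$, and convergence requires $\eps<1/C(N,v)$; your written recursion $\Sigma_{k+1}\le\eta\,\Sigma_k$ is both missing the $C(N,v)$ factor and using the wrong parameter. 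The theorem statement allows $\eta$ arbitrary, so you cannot simply assume it is small; the paper's fix is to build the neck regions with a smaller auxiliary parameter $\eps\le\eps(N,v)$ — which is legitimate since an $(\eps,\delta)$-neck region is a fortiori an $(\eta,\delta)$-neck region when $\eps\le\eta$ — and only then sum $\sum_k C(N,v,\eta)(C(N,v)\eps)^k$.

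Second, your argument for (vi) establishes that a point $x\in\mathcal{S}^{N-1}\setminus\mathcal{S}^{N-2}$ cannot land in an $(N,\eta)$-symmetric $b$-ball, but it does not rule out the possibility that $x$ lies in the open interior $\mathcal{N}_a\cap B_{r_a}(x_a)$ of some neck region, which is also excluded from $\mathcal{S}^{\delta,\eta}$. The paper handles this separately (claim (b) of its proof of (vi)): for $x\in\mathcal{N}_a$ and $y\in\mathcal{C}_a$ realizing $\dist(x,\mathcal{C}_a)$, the ball $B_{4\dist(x,y)}(y)$ is an $\eta$-boundary ball, so $B_{\dist(x,y)/2}(x)$ is $(N,\eps)$-symmetric, and the same volume-rigidity argument then gives $x\notin\mathcal{S}^{N-1}\setminus\mathcal{S}^{N-2}$. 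Your dichotomy should include this case explicitly.
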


The next theorem is typically the first step in the proof of the Neck Decomposition \autoref{thm:neckdecompositiontheorem}.  We emphasize it here, as once we have proven the $\eps$-regularity \autoref{thm:epsregintro} this leads to our Boundary Structure \autoref{thm:boundatystructure}:


\begin{theorem}[Boundary-Interior decomposition theorem]\label{thm:decompositiontheorem}
For any $\eta>0$ and $\RCD(-(N-1),N)$ m.m.s. $(X,\dist,\mathcal{H}^N)$ with $p\in X$ such that $\mathcal{H}^N(B_1(p))\ge v$, there exists a decomposition
\begin{equation}
B_1(p)\subset\bigcup_{a}B_{r_a}(x_a)\cup\bigcup_{b}B_{r_b}(x_b)\cup\tilde{\mathcal{S}}\, ,
\end{equation}
such that the following hold:
\begin{itemize}
\item[i)] the balls $B_{4\tau^{-4}r_a}(x_a)$ are $\eta$-boundary balls and $r_a^2\le \eta$;
\item[ii)] the balls $B_{2r_b}(x_b)$ are $(N,\eta)$-symmetric and $r_b^2\le \eta$;
\item[iii)]  $\tilde{\mathcal{S}}\subset\mathcal{S}$ and $\mathcal{H}^{N-1}(\tilde{\mathcal{S}})=0$;
\item[iv)] $\sum_br_b^{N-1}\le C(N,v,\eta)$;
\item[v)] $\sum_ar_a^{N-1}\le C(N,v)$.
\end{itemize}
\end{theorem}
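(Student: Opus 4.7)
The plan is to carry out an iterative Vitali-type covering at geometric scales, in the spirit of the quantitative stratification of Cheeger--Naber, refined so as to separate $\eta$-boundary balls from $(N,\eta)$-symmetric ones and exploit the codimension-one rigidity of the former.

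First I would fix a geometric refinement scale $\gamma=\gamma(N,v,\eta)\in(0,10^{-10N})$ to be tuned later via \autoref{thm:almostconesplittingcontent}, and define an inductive procedure starting at level $0$ with the single active ball $B_1(p)$. At each level $k\ge 0$, an active ball $B_{\gamma^k}(x)$ is classified as: type (a) if $B_{4\tau^{-4}\gamma^k}(x)$ is an $\eta$-boundary ball and $\gamma^{2k}\le\eta$; type (b) if $B_{2\gamma^k}(x)$ is $(N,\eta)$-symmetric and $\gamma^{2k}\le\eta$; type (c) otherwise. Refinement stops at (a) and (b); each type (c) ball is covered by a maximally $(\gamma^{k+1}/5)$-separated net, enlarged to balls of radius $\gamma^{k+1}$, yielding the active family at level $k+1$. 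The residual set $\tilde{\mathcal S}$ is the intersection over $k$ of the unions of type (c) balls. The Lipschitz condition $r^2\le\eta$ is enforced by delaying the start of selection until $\gamma^{2k}\le\eta$; the contribution of finitely many earlier levels is absorbed into the constants.

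The content bound on type (b) balls, item (iv), then follows the Cheeger--Naber content-decay argument: if too many children of a type (c) parent at level $k$ remained type (c), then by \autoref{thm:almostconesplittingcontent} (cone splitting via content) the parent ball itself would have to be $(N,\eta)$-symmetric, contradicting its classification. Quantitatively this yields a multiplicative decay $M_{k+1}\le (1-c(N,v,\eta))M_k$ for $M_k:=\sum_{\text{type (c) at level }k}(\gamma^k)^{N-1}$, and summing the type (b) content split off at each level produces $\sum_b r_b^{N-1}\le C(N,v,\eta)$. The same content decay shows $M_k\to 0$, so $\mathcal{H}^{N-1}(\tilde{\mathcal S})=0$. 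Finally $\tilde{\mathcal S}\subset\mathcal S$ follows because any $p\in\tilde{\mathcal S}$ is the center of a non-$(N,\eta)$-symmetric ball at every scale, hence no tangent cone at $p$ is Euclidean.

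The content bound (v) on type (a) balls, crucially with constant depending only on $N$ and $v$, is the main obstacle. By \autoref{rm:volumeonboundary balls} each type (a) ball has volume close to $\tfrac12\omega_N r_a^N$, and a direct volume summation using the Vitali disjointness of $\{B_{\tau^4 r_a/10}(x_a)\}$ combined with Bishop--Gromov yields only the weaker estimate $\sum_a r_a^N\le C(N,v)$. To upgrade this to the codimension-one bound $\sum_a r_a^{N-1}\le C(N,v)$ uniformly in $\eta$, I would exploit that each type (a) ball comes with an approximate $(N-1)$-sheet $\mathcal{L}_{x_a,r_a}$ and that by iterated application of the quantitative cone splitting \autoref{thm:conesplittingquant}, the sheets at neighboring scales patch together with bounded multiplicity. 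A Reifenberg-type packing argument then converts the volume bound into the desired codimension-one bound, with the $\eta$-independence coming from the fact that the half-space structure is sharper than $(N{-}1,\eta)$-symmetry alone and survives the limit $\eta\to 0$. The combination of the two content estimates with the negligibility of $\tilde{\mathcal S}$ concludes the proof.
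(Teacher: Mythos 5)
Your scheme captures the general shape of an iterative Vitali covering at geometric scales, but it is missing the two mechanisms that make the paper's version of this argument close, and the places where you wave your hands are exactly the two nontrivial content estimates.

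For (v), which you correctly identify as the main obstacle, your proposed fix — patch the sheets $\mathcal{L}_{x_a,r_a}$ together via \autoref{thm:conesplittingquant} and run a ``Reifenberg-type packing argument'' to upgrade $\sum_a r_a^N\le C(N,v)$ to $\sum_a r_a^{N-1}\le C(N,v)$ — is not a proof. You do not explain how the sheets at different scales can be summed coherently, nor why bounded multiplicity of the patching would cost you only one power of $r_a$. In the paper, the bound $\sum_a r_a^{N-1}\le C(N,v)$ is \emph{not} obtained by upgrading a volume bound; it comes directly from the cone-splitting-via-content machinery (\autoref{thm:almostconesplittingcontent}, imported from \cite[Proposition 10.3]{CheegerJiangNaber18} in \autoref{prop:firstdec}). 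The point there is that the balls which eventually become $a$-balls are precisely the $c$-balls, i.e.\ those where the set $\mathcal{P}_{r,\xi}$ of small volume-pinching has definite content, and the cone-splitting-via-content argument produces the codimension-one content bound $\sum_c r_c^{N-1}\le C(N,v)$ \emph{before} one even knows that these balls look like half-spaces. Replacing that estimate with a packing heuristic is a genuine gap.

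For (iv), your claim of a multiplicative decay $M_{k+1}\le (1-c(N,v,\eta))M_k$ is also not supported by the lemma you cite. \autoref{thm:almostconesplittingcontent} says that if the pinching set $\mathcal{P}_{r,\xi}(x)$ has definite volume content in $B_r(x)$, then a nearby ball is $(N-1,\delta^2)$-symmetric — it does \emph{not} say that a type (c) ball with too many type (c) children must itself be $(N,\eta)$-symmetric, nor does it produce a definite content drop per scale. Moreover, your scheme has no analogue of the $\nu$-balls of \autoref{prop:inductionstepdecomposition}: those are the balls where the volume ratio $\mathcal{V}_r$ has jumped up by a fixed amount $\nu^0$, and it is precisely this jump (together with the fact that the volume ratio lives in $[0,1]$) that makes the iteration terminate after finitely many steps. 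Without a quantity that is monotone along the refinement, your induction has no termination criterion and no way to control the accumulated $b$-content. You would need to reintroduce the volume-pinching stratification and follow the CJN bookkeeping, at which point you recover the paper's proof.
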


The proof of \autoref{thm:decompositiontheorem} proceeds via an iterative recovering argument. In the next subsection, we introduce various rougher decomposition which include different types of balls, and we show how we can iteratively get rid of them. The arguments are those of  \cite[Section 10]{CheegerJiangNaber18}, with some simplifications due to the strong rigidity of singularities in codimension one. We only sketch the proofs, omitting the details and referring to \cite{CheegerJiangNaber18} for all the details.

\subsection{Proof of the neck decomposition theorem}

Let us state an intermediate decomposition result and show how it can be used to derive the boundary-interior decomposition \autoref{thm:decompositiontheorem}. Here a different type of balls appears, which are not $\eta$-boundary balls nor $(N,\eta)$-symmetric balls but have a definite volume drop with respect to the background scale. 


\begin{proposition}\label{prop:inductionstepdecomposition}
For any $\eta>0$ there exists ${\nu}^0(N,v,\eta)>0$ such that, if $(X,\dist,\mathcal{H}^N)$ is an $\RCD(-(N-1),N)$ m.m.s. and $B_1(p)\subset X$ verifies $\mathcal{H}^N(B_1(p))\ge v$, then there exists a decomposition
\begin{equation}
B_1(p)\subset\bigcup_a B_{r_a}(x_a)\cup\bigcup_bB_{r_b}(x_b)\bigcup_vB_{r_{\nu}}(x_{\nu})\cup\tilde{\mathcal{S}}
\end{equation}
such that the following hold:
\begin{itemize}
\item[i)] for any $a$ the ball $B_{4\tau^{-4}r_a}(x_a)$ is an $\eta$-boundary ball and $r_a^2\le \eta$;
\item[ii)] for any $b$ the ball $B_{2r_b}(x_b)$ is $(N,\eta)$-symmetric and $r_b^2\le \eta$;
\item[iii)] $\sum_br_b^{N-1}+\sum_{\nu}r_{\nu}^{N-1}\le C(N,\eta,v)$;
\item[iv)] $\sum_ar_a^{N-1}\le C(N,v)$;
\item[v)] $\tilde{\mathcal{S}}\subset \mathcal{S}$ and $\mathcal{H}^{N-1}(\tilde{\mathcal{S}})=0$;
\item[vi)] for any $\nu$, $\inf_{y\in B_{4r_{\nu}}(x_{\nu})}\mathcal{V}_{r_{\nu}}(y)\ge \inf_{y\in B_4(p)}\mathcal{V}_1(y)+\nu^0$.
\end{itemize}

\end{proposition}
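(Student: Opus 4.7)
The plan is to prove this by a maximal-scale selection argument at each point combined with a Vitali covering. For each $q\in B_1(p)$ I would define $r_q\in[0,\sqrt{\eta}]$ to be the supremum of radii $r$ such that at least one of the three alternatives holds at $q$ at scale $r$: (a) $B_{4\tau^{-4}r}(q)$ is an $\eta$-boundary ball, (b) $B_{2r}(q)$ is $(N,\eta)$-symmetric, or (c) $\inf_{y\in B_{4r}(q)}\mathcal{V}_r(y)\ge \inf_{y\in B_4(p)}\mathcal{V}_1(y)+\nu^0$. Set $\tilde{\mathcal{S}}:=\{q\in B_1(p):r_q=0\}$.

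The first substantive step is to show $\tilde{\mathcal{S}}\subset\mathcal{S}^{N-2}$, so that $\mathcal{H}^{N-1}(\tilde{\mathcal{S}})=0$ by the Hausdorff dimension estimate of \cite{DePhilippisGigli18}. For $q\notin\mathcal{S}^{N-2}$, some tangent cone at $q$ splits an $\mathbb{R}^{N-1}$ factor, hence sufficiently small balls around $q$ are $(N-1,\eta^2)$-symmetric; the one-dimensional rigidity of \autoref{lemma:onedimensionalrigidity} then partitions such balls into either $(N,\eta)$-symmetric ones (realising (b)) or $\eta$-boundary balls (realising (a)). For the points that avoid this dichotomy at every scale one applies \autoref{thm:almostconesplittingcontent}: if the volume-pinching set $\mathcal{P}_{r,\xi}(q)$ carries large content at some scale $r$, then a nearby ball is $(N-1,\delta^2)$-symmetric and the previous dichotomy applies; if it has small content, then the complement contains a point where $\mathcal{V}_r$ has dropped by at least $\nu^0$ and alternative (c) is realised. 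Choosing $\xi,\gamma,\nu^0$ appropriately in terms of $N,v,\eta$ closes the loop.

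With $r_q>0$ outside the null set $\tilde{\mathcal{S}}$, a standard Vitali selection applied to $\{B_{r_q/5}(q)\}$ extracts a disjoint subfamily whose fivefold enlargements cover $B_1(p)\setminus\tilde{\mathcal{S}}$; partitioning the selected balls according to which of (a), (b), (c) realises the optimal radius gives the three families, and (i), (ii), (vi) are built into the selection. The genuine obstacle is the content summability (iii)-(iv). For the $\nu$-balls, the definite excess of $\mathcal{V}_{r_\nu}$ over the ambient infimum, together with Bishop-Gromov monotonicity and Vitali disjointness, produces $\sum_\nu r_\nu^{N-1}\le C(N,v,\eta)$ by a standard packing against the uniform volume bound $\mathcal{H}^N(B_2(p))\le C(N)$. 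For the $b$-balls, $(N,\eta)$-symmetry combined with the volume convergence theorem \autoref{thm:volumeconvergence} gives $\mathcal{H}^N(B_{r_b/5}(x_b))\ge(1-\Psi(\eta))\omega_N(r_b/5)^N$, which when summed against the ambient volume and $r_b\le\sqrt{\eta}$ yields the $\eta$-dependent bound.

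The hard part will be the $\eta$-independent estimate $\sum_a r_a^{N-1}\le C(N,v)$ on the $a$-balls, since these may be arbitrarily small so a direct volume packing is insufficient. I would extract this from the Minkowski-type bound for the top singular stratum proved in \cite[Corollary 2.7]{AntonelliBrueSemola19}: each $a$-ball center lies within distance $\eta r_a$ of $\mathcal{S}^{N-1}\setminus\mathcal{S}^{N-2}$, so the disjoint Vitali cores $B_{r_a/5}(x_a)$ of radius $\approx r_a$ fit inside a tubular neighbourhood of that stratum of thickness $\lesssim r_a$. Stratifying by dyadic scales $r_a\in[2^{-k-1},2^{-k}]$, the disjointness of the cores forces the number of $a$-balls at each dyadic scale to be controlled by $2^{k(N-1)}\cdot C(N,v)$ through the Minkowski estimate $\mathcal{H}^N(B_r(\mathcal{S}^{N-1}\setminus\mathcal{S}^{N-2})\cap B_2(p))\le C(N,v)r$, and summing the contributions $r_a^{N-1}\approx 2^{-k(N-1)}$ gives a geometric series bounded by $C(N,v)$ independently of $\eta$.
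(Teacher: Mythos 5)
Your maximal-radius-plus-Vitali scheme is a genuinely different route from the paper's: the paper first produces the $b$/$c$/$e$-ball decomposition of \autoref{prop:firstdec} (which specialises \cite[Proposition~10.3]{CheegerJiangNaber18}) and only afterwards converts $c$-balls into $a$-balls and $e$-balls into $\nu$-balls via \autoref{rm:recognizingaballs} and \autoref{rm:recognizingnuballs}. The content bounds --- in particular the $\eta$-independent estimate $\sum_a r_a^{N-1}\le C(N,v)$ in item (iv) --- are inherited in the paper from the $c$-ball estimate $\sum_c r_c^{N-1}\le C(N,v)$ of [CJN18], which is established by a multi-scale cone-splitting and pinching induction, not by packing cores against a tubular neighbourhood.

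Your argument for (iv) has two independent problems, and they are fatal. First, the linear tubular bound $\haus^N(B_r(\mathcal{S}^{N-1}\setminus\mathcal{S}^{N-2})\cap B_2(p))\le C(N,v)r$ that you want to use is precisely \eqref{eq:tubularestboundary}, which in this paper is a \emph{consequence} of \autoref{thm:neckdecompositiontheorem}, whose proof in turn goes through \autoref{thm:decompositiontheorem} and the present \autoref{prop:inductionstepdecomposition}; invoking it here is circular. What \cite[Corollary 2.7]{AntonelliBrueSemola19} gives at this stage is a quantitative-differentiation bound of the form $C(N,v,\eps')r^{1-\eps'}$ for the \emph{quantitative} stratum, with an unavoidable loss in the exponent, which is strictly weaker. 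Second, even granting the sharp linear bound, the dyadic accounting does not close: at scale $r_a\approx 2^{-k}$, the disjoint cores packed into a tubular neighbourhood of thickness $\approx 2^{-k}$ number at most $C(N,v)\,2^{-k}/(2^{-k})^N = C(N,v)2^{k(N-1)}$, each contributes $r_a^{N-1}\approx 2^{-k(N-1)}$, so each dyadic scale contributes a fixed $C(N,v)$ and $\sum_k C(N,v)$ diverges --- this is a constant sum per scale, not a geometric series. Indeed one can choose $m_j$ disjoint cores at scale $2^{-j}$ with $m_j=2^{j(N-1)}$ satisfying the tubular constraint $\sum_{j\ge k}m_j 2^{-jN}\lesssim 2^{-k}$ at every $k$ while $\sum_j m_j 2^{-j(N-1)}=\infty$, so a Minkowski bound alone cannot yield a scale-invariant content estimate. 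Maximality of $r_q$ would have to be exploited quantitatively to rule this out, and doing so amounts to redoing the pinching/cone-splitting induction of [CJN18, Proposition 10.3] --- which is exactly why the paper takes that route. The same caveat applies, with less bite because $\eta$-dependence is allowed, to your packing claims for (iii): a disjoint family of near-Euclidean or volume-dropped balls across all scales is not controlled by volume packing alone, and the $\eta$-dependent content bound in \autoref{prop:firstdec} is likewise supplied by [CJN18]. By contrast, your argument for (v) --- that $q\notin\mathcal{S}^{N-2}$ forces $r_q>0$ via a tangent cone that splits $\setR^{N-1}$ together with \autoref{lemma:onedimensionalrigidity} --- is the right idea and matches the paper's dichotomy in spirit.
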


Let us now prove the boundary-interior decomposition \autoref{thm:decompositiontheorem}. In order to do so we just need to iteratively apply a finite number of times \autoref{prop:inductionstepdecomposition} to get rid of $\nu$-balls in the decomposition.

\begin{proof}[Proof of \autoref{thm:decompositiontheorem}]
In order to prove \autoref{thm:decompositiontheorem} given \autoref{prop:inductionstepdecomposition} we just need to follow the first part of the proof of \cite[Theorem 2.12]{CheegerJiangNaber18}. After a finite number of iterations of the induction step decomposition we get a decomposition with only $a$-balls, $b$-balls and a subset of the singular stratum $\mathcal{S}^{N-2}$.
\end{proof}

The remainder of this subsection is devoted to the proof of \autoref{prop:inductionstepdecomposition}.\\ 
We are going to consider constants $\xi,\delta,\gamma,\eps$ which in general will satisfy
\begin{equation}
0<\xi\ll \delta<\gamma<\eps<\eps(N)\, .
\end{equation}
We will assume additionally that $(X,\dist,\mathcal{H}^{N})$ is an $\RCD(-\xi(N-1),N)$ space. The general cases of all the statements can be achieved via additional covering arguments.

Let us introduce the notation for the various families of balls we will use in the intermediate steps of our arguments.

We recall that the set with small volume pinching has been introduced in \eqref{eq:setsmallvolpinch} and the almost cone splitting via content \autoref{thm:almostconesplittingcontent}, to which we refer for the various constants appearing below.

Any ball $B_r(x)$ will be of one or more of these types, indexed by letters b, c, d and e:

\begin{itemize}
\item[i)] $b$-balls $B_{r_b}(x_b)$ are balls such that $B_{2r_b}(x_b)$ is $(N,\eta)$-symmetric;
\item[ii)] $c$-balls $B_{r_c}(x_c)$ are balls which are not $b$-balls but satisfy
\begin{equation}\label{eq:cball}
\mathcal{H}^{N}(B_{\gamma r_c}(\mathcal{P}_{r_c,\xi}(x_c)))\ge\eps\gamma r_c^N\, ;
\end{equation}
\item[iii)] $d$-balls are balls $B_{r_d}(x_d)$ for which $\mathcal{P}_{r_d,\xi}(x_d)\neq\emptyset$  but 
\begin{equation}\label{eq:dballs}
\mathcal{H}^{N}(B_{\gamma r_d}(\mathcal{P}_{r_d,\xi}(x_d)))<\eps\gamma r_d^N\, ;
\end{equation}
\item[iv)] $e$-balls $B_{r_e}(x_e)$ for which $\mathcal{P}_{r_e,\xi}(x_e)=\emptyset$.
\end{itemize}

\begin{remark}\label{rm:recognizingnuballs}
Let us point out that any $e$-ball $B_{r_e}(x_e)$ can be covered by $\nu$-balls as in vi) of \autoref{prop:inductionstepdecomposition} in such a way that
\begin{equation}
B_{r_e}(x_e)\subset \bigcup_{\nu}B_{r_{\nu}}(x_{\nu})
\end{equation}
and $\sum_{\nu}r_{\nu}^{N-1}\le C(N,v)r_e^{N-1}$.\\ 
In order to do so it is sufficient to consider a Vitali covering of $B_{r_e}(x_e)$ with balls $B_{\xi r_e}(x_e^i)$ such that $x_e^i\in B_{r_e}(x_e)$ and the balls $B_{\xi r_e/5}(x_e^i)$ are disjoint. At the end of the proof of \cite[Proposition 10.2]{CheegerJiangNaber18} it is verified that the balls $B_{\xi r_e}(x_e^i)$ are $\nu$-balls with $\nu_0=\xi$ and the content estimate follows from the Vitali covering property. 
\end{remark}

\begin{remark}\label{rm:recognizingaballs}
Let us see how to recover boundary balls starting from $c$-balls.\\
We wish to prove that, for $\delta$ sufficiently small, any $c$-ball $B_{r_c}(x_c)$ is such that $B_{\tau^{-6}r_c}(x'_c)$ is an $\eta$-boundary ball for some $x'_c\in B_{4r_c}(x_c)$, in particular $B_{r_c}(x_c)\subset B_{\tau^{-6}r_c}(x_c')$.\\ 
In order to do so we argue by contradiction. Recall that the parameters are set in such a way that the assumptions of the cone splitting via content \autoref{thm:almostconesplittingcontent} are satisfied. Observe that, if $B_{r_c}(x_c)$ is a $c$-ball (see \eqref{eq:cball}), then there exists $x_c'\in B_{4r_c}(x_c)$ such that $B_{\delta^{-1}r_c}(x_c')$ is $(N-1,\delta^2)$-symmetric. Since by assumption $B_{r_c}(x_c)$ is not $(N,\eta)$-symmetric, it is easy to check arguing by contradiction that, for $\delta$ sufficiently small, $B_{\tau^{-6}r_c}(x'_c)$ is an $\eta$-boundary ball.

\end{remark}

\begin{proposition}\label{prop:firstdec}
Let $v>0$ be fixed. For any $\eps\le\eps(N,v)$, $\gamma\le \gamma(N,v,\eps)$, $\delta\le \delta(N,v,\eta)$ and $\xi\le\xi(N,v,\eps,\gamma,\delta,\eta)$ and for any $\RCD(-\xi(N-1),N)$ m.m.s. $(X,\dist,\mathcal{H}^N)$ and $B_1(p)\subset X$ such that $\mathcal{H}^N(B_1(p))\ge v$ the following holds.
There exists a decomposition
\begin{equation}
B_1(p)\subset\bigcup_bB_{r_b}(x_b)\cup\bigcup_cB_{r_c}(x_c)\cup\bigcup_eB_{r_e}(x_e)\cup\tilde{\mathcal{S}}\, ,
\end{equation}
where we are adopting the usual notation for the various types of balls,
\begin{equation}
\sum_br_b^{N-1}+\sum_cr_c^{N-1}+\sum_er_e^{N-1}\le C(N,\gamma)\, ,
\end{equation}

\begin{equation}
\sum_cr_c^{N-1}\le C(N,v)
\end{equation}
and $\tilde{\mathcal{S}}\subset\mathcal{S}$, with $\haus^{N-1}(\tilde{\mathcal{S}})=0$. 
\end{proposition}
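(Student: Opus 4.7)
I would prove this via an iterative Vitali covering scheme, following the strategy of \cite[Proposition 10.2]{CheegerJiangNaber18} but simplified by the codimension-one setting in which the cone splitting via content \autoref{thm:almostconesplittingcontent} already produces an $(N-1,\delta^2)$-symmetric ball, which by \autoref{rm:recognizingaballs} coincides (up to rescaling) with a boundary ball. The scheme iterates on $d$-balls only: at each stage, any active $d$-ball $B_r(x)$ is refined by a standard Vitali covering at scale $\gamma r$ (so that $\{B_{\gamma r/5}(x_i)\}$ is disjoint and $B_r(x)\subseteq\bigcup_i B_{\gamma r}(x_i)$), and each child is classified as $b$, $c$, $d$, or $e$ according to the definitions. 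Only $d$-children stay active; $b$-, $c$-, $e$-children go into the final collections of the corresponding type.

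The first key step is a volume-counting estimate. For a $d$-ball $B_r(x)$, combining the defining inequality
\begin{equation*}
	\haus^N(B_{\gamma r}(\mathcal{P}_{r,\xi}(x))) < \eps\gamma r^N
\end{equation*}
with the Bishop--Gromov lower bound $\haus^N(B_{\gamma r/5}(y))\ge c(N,v)(\gamma r)^N$ for $y\in B_{4r}(x)$ (inherited from $\haus^N(B_1(p))\ge v$) shows that at most $C(N,v)\eps/\gamma^{N-1}$ of the Vitali children have centers in $B_{\gamma r}(\mathcal{P}_{r,\xi}(x))$. The crucial follow-up is that a Vitali child $B_{\gamma r}(x_i)$ whose center lies \emph{outside} $B_{\gamma r}(\mathcal{P}_{r,\xi}(x))$ automatically satisfies $\mathcal{P}_{\gamma r,\xi}(x_i)=\emptyset$ and hence is an $e$-ball; in particular all $b$-, $c$-, and $d$-children must come from inside the pinching neighborhood.

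The second step is the geometric summation. Since each $d$-child inherits the pinching restriction, the total $d$-content shrinks by a factor $\le C(N,v)\eps$ per stage: choosing $\eps=\eps(N,v)$ small enough that $C(N,v)\eps\le 1/2$ yields $\sum_{\text{$d$-balls at stage }k}r^{N-1}\le 2^{-k}$. The $c$-children at stage $k$ satisfy the same count, so
\begin{equation*}
	\sum_c r_c^{N-1} \le \sum_{k\ge 0}C(N,v)\eps\cdot 2^{-k} \le C(N,v)\, ,
\end{equation*}
giving the first (sharp) bound. For the total $\sum_b+\sum_c+\sum_e$, the standard Vitali bound on the number of \emph{all} children of a $d$-ball is $C(N)\gamma^{-N}$, producing an extra factor $\gamma^{-1}$ after a single stage and hence the bound $C(N,\gamma)$ after summing over stages. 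Finally, the residual never-finalized set $\tilde{\mathcal{S}}$ is the intersection of the nested active $d$-regions; by the geometric decay it has vanishing $(N-1)$-pre-Hausdorff measure, so $\haus^{N-1}(\tilde{\mathcal{S}})=0$. Any $x\in\tilde{\mathcal{S}}$ lies in $\mathcal{P}_{r,\xi}$ at all small scales, which forces pinching of the volume ratio at every scale and rules out a Euclidean tangent cone via volume rigidity \cite{Colding97,DePhilippisGigli18}; thus $\tilde{\mathcal{S}}\subset\mathcal{S}$.

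The main obstacle will be the scale-propagation in the first step: rigorously verifying that a Vitali child $B_{\gamma r}(x_i)$ whose center is outside $B_{\gamma r}(\mathcal{P}_{r,\xi}(x))$ has $\mathcal{P}_{\gamma r,\xi}(x_i)=\emptyset$. This requires a careful comparison of the auxiliary volume ratios $\mathcal{V}_{\xi r}$ and $\mathcal{V}_{\xi\gamma r}$ via Bishop--Gromov monotonicity under the $\RCD(-\xi(N-1),N)$ hypothesis, and explains why one needs the hierarchy $0<\xi\ll\delta<\gamma<\eps<\eps(N)$ in the statement. The remaining ingredients are the cone splitting via content \autoref{thm:almostconesplittingcontent} (invoked to recognize pinching children as $(N-1,\delta^2)$-symmetric, and then, via \autoref{rm:recognizingaballs}, as $\eta$-boundary balls at a comparable scale after translating the base-point by at most $4r$) and the lower volume bound propagation, both of which are by now standard in this setting.
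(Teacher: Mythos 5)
Your overall scheme — iterative Vitali covering of active $d$-balls, classifying children as $b$, $c$, $d$, or $e$, using the small pinching volume of a $d$-ball to show that only a small-content collection of children can avoid being $e$-balls, and summing geometrically — is exactly the structure of \cite[Prop.~10.2--10.3]{CheegerJiangNaber18}, which is what the paper's proof simply cites (specialized to $k=N-1$). So you are reconstructing the cited argument rather than finding a new route; that is legitimate and useful, but two technical points need attention.

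First, the spatial propagation you flag as ``the main obstacle'' is a genuine (if fixable) constant mismatch: from $\mathcal{P}_{\gamma r,\xi}(x_i)\neq\emptyset$ one only gets a pinched point $y\in B_{4\gamma r}(x_i)$, hence $x_i\in B_{4\gamma r}(\mathcal{P}_{r,\xi}(x))$, not $x_i\in B_{\gamma r}(\mathcal{P}_{r,\xi}(x))$; the counting estimate must therefore be run against $B_{4\gamma r}(\mathcal{P}_{r,\xi}(x))$, whose volume is still $\le C(N,v)\eps\gamma r^N$ by a covering/doubling argument. This is the point you correctly anticipate and it does not threaten the scheme.

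The actual gap is the inclusion $\tilde{\mathcal{S}}\subset\mathcal{S}$. You claim that $x\in\tilde{\mathcal{S}}$ ``lies in $\mathcal{P}_{r,\xi}$ at all small scales,'' but this is not what your construction delivers. A point $x\in\tilde{\mathcal{S}}$ lies in some active $d$-ball $B_{r_k}(x_k)$ at every stage $k$, and each $x_k$ is within distance $O(\gamma r_{k-1})$ of the pinching set of its parent; passing from $x_k$ to $x$ changes the scale-invariant volume ratio by an error $\Psi(\gamma)$ that does \emph{not} vanish as $k\to\infty$ (since $\gamma$ is a fixed constant in the hierarchy $\gamma\le\gamma(N,v,\eps)$, chosen before $\eta$). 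So at best you obtain $\Theta_X(x)\le\bar V+\xi+\Psi(\gamma)$, and concluding $\Theta_X(x)<1$ from this requires the additional, unstated observation that the existence of $d$-balls already forces $\bar V$ to be uniformly bounded away from $1$, together with a constant hierarchy $\gamma\ll\eta$ that the proposition's statement does not guarantee. The cleaner argument, which is the one actually used at the analogous step (item n-iv) in the proof of the Neck Decomposition \autoref{thm:neckdecompositiontheorem}, is to observe that active $d$-balls are never $b$-balls, so each $B_{2r_k}(x_k)$ fails to be $(N,\eta)$-symmetric. If $\Theta_X(x)$ were $1$, then for $k$ large $\haus^N(B_{10r_k}(x))$ would be almost Euclidean and volume almost-rigidity \cite[Thm.~1.6]{DePhilippisGigli18} would make $B_{5r_k}(x)$ GH-close to a Euclidean ball; since $B_{2r_k}(x_k)\subset B_{3r_k}(x)\subset B_{5r_k}(x)$, this would force $B_{2r_k}(x_k)$ to be $(N,\eta)$-symmetric, a contradiction. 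That route avoids the pinching set entirely and does not create any unwanted dependence between $\gamma$ and $\eta$. I would replace the ``pinching at all scales'' sentence by this non-symmetry argument.
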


\begin{proof}
Specializing \cite[Proposition 10.3]{CheegerJiangNaber18} to the case $k=N-1$, we obtain that there exist
$\eps\le\eps(N,v)$, $\gamma\le\gamma(N,v,\eps)$ and $\delta\le \delta(N,v,\eta)$ such that, if the additional assumption $\mathcal{H}^N(B_{\gamma}(\mathcal{P}_{1,\xi}(p)))<\eps\gamma$ is satisfied (that is to say $B_1(p)$ is a $d$-ball), the following holds: there exists a decomposition 
\begin{equation}
B_1(p)\subseteq \tilde{\mathcal{S}}_d\cup\bigcup_bB_{r_b}(x_b)\cup\bigcup_cB_{r_c}(x_c)\cup\bigcup_eB_{r_e}(x_e)\, ,
\end{equation} 
where 
\begin{itemize}
\item[i)] $\haus^{N-1}(\tilde{\mathcal{S}}_d)=0$
\item[ii)]$\sum_br_b^{N-1}+\sum_{e}r_e^{N-1}\le C(N,\gamma)$;
\item[iii)] $\sum_cr_c^{N-1}\le C(N,v)$.
\end{itemize}

To conclude, let us observe that, if $B_1(p)$ is either a $b$-ball, a $c$-ball or an $e$-ball, then the statement is trivially verified. Therefore we can assume that $B_1(p)$ is a $d$-ball and the conclusion follows from what we observed in the first part of the proof.
\end{proof}

\begin{proof}[Proof of \autoref{prop:inductionstepdecomposition}]
We divide the proof into two steps.

In the first one we reduce ourselves to balls such that, after rescaling of the space at the scale of their radii the lower Ricci curvature bound is $-\xi(N-1)$. Then, relying on \autoref{rm:recognizingnuballs} and \autoref{rm:recognizingaballs}, we get the sought decomposition starting from \autoref{prop:firstdec}.

\textbf{Step 1.} Considering a Vitali covering of $B_1(p)$ with balls of sufficiently small radius we reduce to balls that, when rescaled to radius one, verify the assumptions of \autoref{prop:firstdec}. The number of these balls can be controlled due to the Vitali property.

\textbf{Step 2.}
Given any ball arising from the first step, we apply \autoref{prop:firstdec}. Thanks to \autoref{rm:recognizingnuballs} we cover any $e$-ball with $\nu$-balls keeping to content bound. Next, relying on \autoref{rm:recognizingaballs}, for any $c$-ball $B_{r_c}(x_c)$ we find $x_a\in B_{4r_c}(x_c)$ such that $B_{\tau^{-6}r_c}(x_a)$ is an $\eta$-boundary ball and we substitute the given $c$-ball with the $a$-ball $B_{4\tau^{-4}r_c}(x_a)$. Since $B_{r_c}(x_c)\subset B_{r_a}(x_a)$ we keep the covering property and also the content bound is preserved.  

\end{proof}

The proof of the neck decomposition \autoref{thm:neckdecompositiontheorem} with properties i) to iv) relies on the iterative application of the boundary-interior decomposition \autoref{thm:decompositiontheorem} together with the existence of neck regions \autoref{thm:existenceofneck} and the structure \autoref{thm:neckstructure} to take care of the content and $\mathcal{H}^{N-1}$-measure estimates.

\begin{proof}[Proof of \autoref{thm:neckdecompositiontheorem}]

In the following we are going to denote by $B_{r_f}(x_f)$ a ball which has not been identified with an $a$-ball or $b$-ball yet.

Let us start combining and rephrasing \autoref{thm:existenceofneck} and \autoref{thm:neckstructure} in a way convenient for our purposes: for any $\eps>0$, $\eta<\eta(\eps,N)$ and $\delta\le\delta(N,\eta)$ there exists $\eta'>0$ such that, if a ball $B_{4\tau^{-4}r}(p)$ is an $\eta'$-boundary ball, then there exists an $(\eta,\delta)$-neck region $\mathcal{N}=B_{2r}(p)\setminus\bar{B}_{r_x}(\mathcal{C})$ over $B_{2r}(p)$ such that

\begin{itemize}

\item[i)]$
B_r(p)\subset\left(\mathcal{N}\cap B_{r}(p)\right)\cup \mathcal{C}_0\cup\bigcup_f B_{2r_f}(x_f);$ 
\item[ii)] $\mathcal{H}^{N-1}(\mathcal{C}_0)\le A(N)r^{N-1}$; 
\item[iii)] the singular set $\mathcal{C}_0$ is biLipschitz to a subset of $\setR^{N-1}$;
\item[iv)]  and $\sum_{f}r_f^{N-1}\le \eps r^{N-1}$;
\item[v)] for any $f$ it holds that 
\begin{equation}\label{eq:volboundneck}
\mathcal{H}^N(B_{r_f}(x_f))\le \frac{2}{3}\omega_Nr^N\, . 
\end{equation}
\end{itemize}
Let us just point out that item v) above follows from \autoref{rm:volumeonboundary balls}.

In order to get the sought covering we proceed inductively. First we apply \autoref{thm:decompositiontheorem} with $\eta=\eta'$ given in the discussion above. Then we build $(\eta,\delta)$-neck regions verifying i) to v) above on any ball $B_{2r_a}(x_a)$ of the decomposition. After this first stage of the procedure we get  
\begin{equation}
B_1(p)\subset\bigcup_a\left(\mathcal{C}_{0,a}\cap B_{r_a}(x_a)\right)\cup\bigcup\left(\mathcal{N}_a\cap B_{r_a}(x_a)\right)\cup\bigcup_bB_{r_b}(x_b)\cup\bigcup_{f}B_{r_f}(x_f)\cup\tilde{\mathcal{S}}\, ,
\end{equation}
with
\begin{itemize}
\item[i)] $\sum_br_b^{N-1}\le C(N,v,\eta)$;
\item[ii)] $\sum_a\mathcal{H}^{N-1}(\mathcal{C}_{0,a})\le A(N)\sum_{a}r_a^{N-1}\le C'(N,v)$;
\item[iii)]$\tilde{\mathcal{S}}\subset\mathcal{S}$ and $\haus^{N-1}(\tilde{\mathcal{S}})=0$;
\item[iv)] $\sum_{f}r_f^{N-1}\le \eps\sum_{a}r_a^{N-1}\le C(N,v)\eps.$
\end{itemize}
Next we apply again the procedure above to the balls $B_{r_f}(x_f)$: first we perform the boundary-interior decomposition of \autoref{thm:decompositiontheorem}, then we build neck regions on any new $\eta$-boundary ball appearing. At the first iteration we get
\begin{equation*}
B_1(p)\subset\bigcup_{a_1} \left(\mathcal{C}_{0,a_1}\cap B_{r_{a_1}}(x_{a_1})\right)\cup\bigcup_{a_1}\left(\mathcal{N}_{a_1}\cap B_{r_{a_1}}(x_{a_1})\right)\cup
\bigcup_{b_1}B_{r_{b_1}}(x_{b_1})\cup\bigcup_{f_1}B_{r_{f_1}}(x_{f_1})\cup\tilde{\mathcal{S}}_1\, ,
\end{equation*}
with
\begin{itemize}
\item[i)] $\sum_{b_1}(r_{b_1})^{N-1}\le C(N,v,\eta)(1+C(N,v)\eps)$;
\item[ii)] $\sum_{a_1}(r_{a_1})^{N-1}\le C(N,v)(1+\eps C(N,v))$;
\item[iii]$\sum_{a_1}\mathcal{H}^{N-1}(\mathcal{C}_{0,{a_1}})\le A(N)C(N,v)(1+\eps C(N,v))$;
\item[iv)] $\tilde{\mathcal{S}}_1\subset\mathcal{S}$ and $\haus^{N-1}(\tilde{\mathcal{S}}_1)=0$;
\item[v)] $\sum_{f_1}(r_{f_1})^{N-1}\le \eps\sum_{f}r_{f}^{N-1}\le C(N,v)\eps^2.$
\end{itemize} 
Arguing by induction, after $n$ iterations of the scheme we get 
\begin{equation*}
B_1(p)\subset\bigcup_{a_n}\left(\mathcal{C}_{0,a_n}\cap B_{r_{a_n}}(x_{a_n})\right)\cup\bigcup\left(\mathcal{N}_{a_n}\cap B_{r_{a_n}}(x_{a_n})\right)\cup\bigcup_{b_n}B_{r_{b_n}}(x_{b_n})\cup\bigcup_{f_n}B_{r_{f_n}}(x_{f_n})\cup\tilde{\mathcal{S}}_n\, ,
\end{equation*}
with
\begin{itemize}
\item[n-i)] $\sum_{b_n}(r_{b_n})^{N-1}\le C(N,v,\eta)(1+C(N,v)\eps+\dots+(C(N,v)\eps)^n)$;
\item[n-ii)] $\sum_{a_n}(r_{a_n})^{N-1}\le C(N,v)(1+C(N,v)\eps+\dots+(C(N,v)\eps)^n)$;
\item[n-iii)] $\sum_{a_n}\mathcal{H}^{N-1}(\mathcal{C}_{0,a_n})\le C(N,v)(1+C(N,v)\eps+\dots+(C(N,v)\eps)^n)$;
\item[n-iv)] $\tilde{\mathcal{S}}_n\subset\mathcal{S}$ and $\haus^{N-1}(\tilde{\mathcal{S}}_n)=0$;
\item[n-v)] $\sum_{f_n}(r_{f_n})^{N-1}\le C(N,v)\eps^n.$
\end{itemize}

Next we wish to pass to the limit the construction above.\\
To this aim choose $\eps$ small enough to ensure that $C(N,v)\eps<1$, $\eta$ and $\delta$ accordingly and let us set 
\begin{equation}
\tilde{\mathcal{S}}_f:=\bigcap_{n\ge 1}\bigcup_{f_n}B_{2r_{f_n}}(x_{f_n})\, .
\end{equation}
Furthermore we denote by $a$ any index belonging to $\cup_n\left\lbrace a_n\right\rbrace $ and by $b$ any index belonging to $\cup_n\left\lbrace b_n\right\rbrace $. Observe that $\left\lbrace a_n \right\rbrace\subset\left\lbrace a_m\right\rbrace  $ if $n\le m$ and and analogous inclusion holds for the indexes b.\\
Then it is easy to check that
\begin{equation}
B_1(p)\subset\bigcup_a\left(\mathcal{C}_{0,a}\cap B_{r_a}(x_a)\right)\cup\bigcup\left(\mathcal{N}_a\cap B_{r_a}(x_a)\right)\cup\bigcup_bB_{r_b}(x_b)\cup\tilde{\mathcal{S}}_f\cup\bigcup_{n\ge 1}\tilde{\mathcal{S}}_n\, .
\end{equation}
Passing to the limit n-i),n-ii) and n-iii) we can easily verify that:
\begin{itemize}
\item[i)] $\sum_ar_a^{N-1}+\sum_br_b^{N-1}\le C(N,v,\eta)$;
\item[ii)] $\sum_a\mathcal{H}^{N-1}(\mathcal{C}_{0,a})\le C(N,v)$
\item[iii)]$\cup_{n\ge 1}\tilde{\mathcal{S}}_n\subset\mathcal{S}$ and $\haus^{N-1}(\cup_{n\ge 1}\tilde{\mathcal{S}}_n)=0$.
\end{itemize}
To conclude we are left to verify that
\begin{equation}
\mathcal{H}^{N-1}\left(\tilde{\mathcal{S}}_f\right)=0\quad\text{ and }\quad \tilde{\mathcal{S}}_f\subset\mathcal{S}\, .
\end{equation}
The first conclusion can be checked relying on n-iv) above, taking into account the definition of the Hausdorff pre-measures $\mathcal{H}^{N-1}_{\xi}$.\\
The second conclusion can be verified since the balls $B_{r_{f_n}}(x_{f_n})$ satisfy the volume bounds \eqref{eq:volboundneck}. Therefore, at any point $x\in\tilde{\mathcal{S}}_f$, it holds that $\lim_{r\to 0}\mathcal{H}^{N}(B_r(x))/\omega_Nr^N<1$, hence $x\in\mathcal{S}$.

All in all, letting
\begin{equation}\label{eq:decsingularset}
\mathcal{S}^{\delta,\eta}:=\bigcup_a\left(\mathcal{C}_{0,a}\cap B_{r_a}(x_a)\right)\cup\tilde{\mathcal{S}}_f\cup\bigcup_{n\ge 1}\tilde{\mathcal{S}}_n\, ,
\end{equation}
we get the neck decomposition verifying the sought properties in the statement.

\smallskip
To address v) we just point out that, by \eqref{eq:decsingularset}, $\mathcal{S}^{\delta,\eta}$ is covered by the countable union 
\begin{equation}
\bigcup_{a}\left(\mathcal{C}_{0,a}\cap B_{r_a}(x_a)\right)
\end{equation}
up to $\mathcal{H}^{N-1}$-negligible sets. Therefore it is $(N-1)$-rectifiable by the neck structure \autoref{thm:neckstructure}. 

\smallskip
Now we deal with vi). In order to do so we follow the last part of the proof of \cite[Theorem 2.12]{CheegerJiangNaber18}, with simplifications due to the rigidity of codimension one. 

We claim that the following hold: 
\begin{itemize}
\item[a)] if $\eta<\eta(N)$ and  $B_{\tau^{-1}r_b}(x_b)$ is an $(N,\eta)$-symmetric ball such that $r_b^2(N-1)\le \eta$, then there is no point of $\mathcal{S}^{N-1}\setminus\mathcal{S}^{N-2}$ in $B_{r_b}(x_b)$;
\item[b)] if $\eta<\eta(N)$ and $\delta<\delta(\eta,N)$ then no $(\eta,\delta)$-neck region $\mathcal{N}_a=B_{2r_a}(x_a)\setminus\bar{B}_{r_x}(\mathcal{C})$ can contain points of $\mathcal{S}^{N-1}\setminus\mathcal{S}^{N-2}$. 
\end{itemize}
This will certainly suffice to establish vi), so let us prove (a) and (b) above.

To prove (a) let us fix $\eps<\dist_{GH}(B_1^{\setR^N}(0),B_1^{\setR^N_+}(0))/2$. Then by volume convergence, volume monotonicity and volume rigidity, if $\eta<\eta(\eps)=\eta(N)$, any tangent cone at any point $x\in B_{r_b}(x_b)$ has unit ball $\eps$-close to the unit ball of $\setR^N$, therefore $x\notin \mathcal{S}^{N-1}\setminus\mathcal{S}^{N-2}$.

In order to prove (b) let us consider $x\in \mathcal{N}_a=B_{2r_a}(x_a)\setminus\bar{B}_{r_x}(\mathcal{C}_a)$ and let $y\in \mathcal{C}_a$ be such that $\dist(x,y)=\dist(x,\mathcal{C}_a)$. Then by the first defining condition of neck region $B_{4\dist(x,y)}(y)$ is an $\eta$-boundary ball. Therefore, $B_{\dist(x,y)/2}(x)$ is $(N,\eps)$-symmetric if $\eta<\eta(\eps)$. Then, arguing as in the proof of (a) we infer that, if $\eps<\eps(N)$, then $x\notin \mathcal{S}^{N-1}\setminus\mathcal{S}^{N-2}$.  

\end{proof}


\section{Boundary rectifiability and stability}\label{sec:Boundary rectifiability and stability}
This section is dedicated to the proofs of the rectifiability and first stability results for boundaries of noncollapsed $\RCD$ spaces by means of the tools developed in \autoref{sec:neckdecomposition} and \autoref{sec:neckregion}.

\subsection{Proof of the stability results}\label{subsec:stability}

Let us start with a weak $\eps$-regularity result. Basically, it amounts to saying that balls sufficiently close in the GH sense to a model boundary ball have a definite amount of boundary points. This will be sharpened later on in \autoref{cor:bdryvolconvergencereg}.

\begin{theorem}\label{thm:stabilityS}
Let $N\ge 1$ be fixed. There exists $\eta(N)>0$ and $c(N)>1$ such that, if $\eta\le \eta(N)$ and
\begin{equation}\label{eq:closehalf}
	\dist_{GH}(B_1(p), B_1^{\setR_+^N}(0))\le \eta\, ,
\end{equation}
	where $B_1(p)$ is a ball of an $\RCD(-\eta(N-1),N)$ space $(X,\dist,\haus^N)$, then 
\begin{equation}\label{eq:massest}
	c(N)^{-1} \le \haus^{N-1}(\mathcal{S}^{N-1}\cap B_1(p)) \le c(N)\, .
\end{equation}
\end{theorem}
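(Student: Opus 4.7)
The plan is to prove the lower and upper bounds of \eqref{eq:massest} separately, combining the three main ingredients developed so far: the existence of maximal neck regions (Theorem \ref{thm:existenceofneck}), the structure theorem for neck regions (Theorem \ref{thm:neckstructure}), and the neck decomposition theorem (Theorem \ref{thm:neckdecompositiontheorem}).

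\textbf{Lower bound.} After rescaling the metric by a factor comparable to $4\tau^{-4}$, the hypothesis \eqref{eq:closehalf} produces a point $q \in B_1(p)$ and a scale $\rho = \rho(N) > 0$ such that $B_{4\tau^{-4}\rho}(q) \subset B_1(p)$ is an $\eta_0$-boundary ball, with $\eta_0$ as small as we wish provided $\eta \le \eta(N)$; indeed, if $\Psi: B_1^{\setR^N_+}(0)\to B_1(p)$ realizes the $\eta$-GH isometry, we may take $q := \Psi(0)$ and choose $\rho := (8\tau^{-4})^{-1}$, so that closeness of $B_{4\tau^{-4}\rho}(q)$ to the model boundary ball is inherited from \eqref{eq:closehalf}. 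Fix $\eps, \delta$ depending only on $N$, small enough to feed Theorem \ref{thm:neckstructure}, and choose $\eta \le \eta(N)$ small enough that Theorem \ref{thm:existenceofneck} applies at scale $\rho$ around $q$. This yields a maximal $(\eps, \delta)$-neck region $\mathcal{N} = B_{2\rho}(q) \setminus \bar B_{r_x}(\mathcal{C})$ whose packing measure $\mu$ satisfies
\[
  c(N)^{-1} \rho^{N-1} \le \mu(B_\rho(q)) \le c(N)\rho^{N-1}
\]
by Theorem \ref{thm:neckstructure}(ii), together with the maximality bound $\mu(\mathcal{C}_+) \le \eps \rho^{N-1}$. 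Subtracting yields $\mathcal{H}^{N-1}(\mathcal{C}_0 \cap B_\rho(q)) \ge c(N)^{-1}\rho^{N-1} - \eps \rho^{N-1}$, which for $\eps \le \eps(N)$ is bounded below by a constant depending only on $N$. Finally, Theorem \ref{thm:neckstructure}(iii) identifies the tangent cone at every point of $\mathcal{C}_0$ as $\setR^N_+$, hence $\mathcal{C}_0 \cap B_\rho(q) \subset (\mathcal{S}^{N-1}\setminus \mathcal{S}^{N-2}) \cap B_1(p)$, giving the desired lower bound.

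\textbf{Upper bound.} For $\eta \le \eta(N)$, volume convergence (Theorem \ref{thm:volumeconvergence}) applied to the hypothesis \eqref{eq:closehalf} yields $\mathcal{H}^N(B_1(p)) \ge v_0(N) > 0$, e.g. $v_0 := \omega_N/4$. We then apply the neck decomposition Theorem \ref{thm:neckdecompositiontheorem} to $B_1(p)$, choosing $\eta', \delta'$ as functions of $N$ alone so that condition (vi) yields
\[
  (\mathcal{S}^{N-1}\setminus \mathcal{S}^{N-2}) \cap B_1(p) \subset \mathcal{S}^{\delta', \eta'}.
\]
Property (iii) of the same theorem bounds $\mathcal{H}^{N-1}(\mathcal{S}^{\delta', \eta'} \cap B_1(p))$ by a constant $C(N, v_0, \delta', \eta') = c(N)$. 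Combined with the Hausdorff dimension estimate \eqref{eq:hausest}, which gives $\mathcal{H}^{N-1}(\mathcal{S}^{N-2}) = 0$, this produces the upper bound $\mathcal{H}^{N-1}(\mathcal{S}^{N-1}\cap B_1(p)) \le c(N)$.

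The main obstacle is technical: tracking the chain of parameter dependencies $\eps(N) \Rightarrow \delta(\eps, N) \Rightarrow \eta(\eps, \delta, N)$ through Theorems \ref{thm:existenceofneck} and \ref{thm:neckstructure}, and similarly through Theorem \ref{thm:neckdecompositiontheorem}, so that the final threshold and constant depend only on $N$. The scaling argument in the first paragraph (to convert GH-closeness of $B_1(p)$ to an $\eta$-boundary ball at the larger scale $4\tau^{-4}\rho$ in a suitable rescaled metric) also requires some care, but is essentially mechanical once one notes that all relevant quantities are scale invariant and the lower Ricci bound only improves under rescaling by a large factor.
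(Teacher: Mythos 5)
Your proposal is correct and follows essentially the same strategy as the paper's proof: rescale so the existence theorem for maximal neck regions (\autoref{thm:existenceofneck}) applies near the center, combine the Ahlfors lower bound of \autoref{thm:neckstructure}(ii) with the maximality bound $\mu(\mathcal{C}_+)\le\eps$ and the tangent-cone identification of \autoref{thm:neckstructure}(iii) for the lower bound, and invoke \autoref{thm:neckdecompositiontheorem}(iii),(vi) together with volume convergence to remove the $v$-dependence for the upper bound. The paper writes this very compactly (lower bound in one displayed inequality, upper bound as a two-line remark), but the mechanism and parameter chain are exactly as you describe.
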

\begin{proof}
	The lower bound in \eqref{eq:massest} follows by combining \autoref{thm:neckstructure} and \autoref{thm:existenceofneck}. Indeed by means of the latter, for $\eta>0$ small enough, we can build an $(\eps,\delta)$-neck region over $B_{4^{-1}\tau^4}(p)$ and from (ii), (iii) in \autoref{thm:neckstructure} and \eqref{eq:maxneck} we deduce that, up to take $\eps, \delta$ sufficiently small, it holds
	\[
	\haus^{N-1}(\mathcal{S}^{N-1}\cap B_1(p)) \ge \haus^{N-1}(\mathcal{C}_0) \ge \mu(B_{4^{-1}\tau^4}(p)) -\mu(\mathcal{C}_+) \ge c(N)\, .
	\]	
	
	The upper bound in \eqref{eq:massest} instead follows from \autoref{thm:neckdecompositiontheorem}. Indeed, it is sufficient to apply the neck decomposition with parameters $\eta$ and $\delta$ sufficiently small in such a way that, thanks to (vi) of \autoref{thm:neckdecompositiontheorem}, $\mathcal{S}^{N-1}\setminus\mathcal{S}^{N-2}\subset\mathcal{S}^{\delta,\eta}$ and then to rely on (iii) of the same statement to infer that 
	\[
	\haus^{N-1}\left((\mathcal{S}^{N-1}\setminus\mathcal{S}^{N-2})\cap B_1(p)\right)=\haus^{N-1}(\mathcal{S}^{N-1}\cap B_1(p))\le C(N,v)\, .
	\]
	To conclude we observe that the dependence of the constant on the volume can be removed taking into account the volume convergence \autoref{thm:volumeconvergence} and \eqref{eq:closehalf}.
\end{proof}

The $\eps$-regularity theorem above directly yields a stability result for the absence of boundary under noncollapsing pGH convergence.

\begin{theorem}\label{thm:stabimp}
	Let $(X_n,\dist_n,\haus^N,x_n)$ be a sequence of noncollapsed $\RCD(K,N)$ spaces with no boundary on $B_2(x_n)$ in the sense of \autoref{def:withoutboundary}. 
		Assume that
	\begin{equation}
		(X_n,\dist_n,\haus^N,x_n) \xrightarrow{\text{pGH}} (Y,\dist_Y,\haus^N,y)\, .
	\end{equation}
	Then $(Y,\dist_Y,\haus^N)$ has no boundary on $B_1(y)$.
\end{theorem}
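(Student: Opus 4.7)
The argument should be a direct contradiction based on \autoref{thm:stabilityS}, combined with a rescaling/diagonal procedure. Suppose $Y$ has boundary in $B_1(y)$, i.e. there exists $z\in(\mathcal{S}^{N-1}\setminus\mathcal{S}^{N-2})(Y)\cap B_1(y)$. By definition of the top stratum, no tangent cone at $z$ is the full $\setR^N$ and some tangent cone at $z$ splits off $\setR^{N-1}$. Since tangent cones on noncollapsed $\RCD$ spaces are metric cones over noncollapsed cross-sections, the one-dimensional factor complementing $\setR^{N-1}$ must be a one-dimensional Riemannian cone over a point, hence either $\setR$ or $\setR_+$; the first option is excluded by $z\notin\mathcal{R}$, so at least one tangent cone at $z$ is isometric to $\setR^N_+$. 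Consequently there exists $r_k\downarrow 0$ with
\begin{equation*}
(Y,r_k^{-1}\dist_Y,z)\xrightarrow{\text{pGH}}(\setR^N_+,\dist_{\rm eucl},0),
\end{equation*}
and by the volume convergence \autoref{thm:volumeconvergence} (applied to the rescaled noncollapsed sequence) this convergence upgrades automatically to pmGH.

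Next I would fix $\eta=\eta(N)$ as in \autoref{thm:stabilityS} and pick $k$ large enough so that
\begin{equation*}
\dist_{GH}\bigl(B_{16}^{(Y,r_k^{-1}\dist_Y)}(z),B_{16}^{\setR^N_+}(0)\bigr)<\eta/2, \qquad K r_k^2\ge -\eta(N-1).
\end{equation*}
Realising all pGH convergences $(X_n,\dist_n,\haus^N,x_n)\to(Y,\dist_Y,\haus^N,y)$ in a common proper ambient space and choosing $z_n\in X_n$ with $z_n\to z$, the rescaled spaces $(X_n,r_k^{-1}\dist_n,\haus^N/r_k^N,z_n)$ still converge to $(Y,r_k^{-1}\dist_Y,\haus^N,z)$, so for $n\ge n_0(k)$
\begin{equation*}
\dist_{GH}\bigl(B_{16}^{(X_n,r_k^{-1}\dist_n)}(z_n),B_{16}^{\setR^N_+}(0)\bigr)<\eta.
\end{equation*}
These rescaled spaces are noncollapsed $\RCD(Kr_k^2,N)\subset\RCD(-\eta(N-1),N)$, so \autoref{thm:stabilityS} yields
\begin{equation*}
\haus^{N-1}\bigl(\mathcal{S}^{N-1}(X_n)\cap B_1^{(X_n,r_k^{-1}\dist_n)}(z_n)\bigr)\ge c(N)^{-1}>0.
\end{equation*}
Combined with the Hausdorff dimension bound $\dim_H\mathcal{S}^{N-2}\le N-2$ (which gives $\haus^{N-1}(\mathcal{S}^{N-2})=0$), this forces the existence of a point in $(\mathcal{S}^{N-1}\setminus\mathcal{S}^{N-2})(X_n)\cap B_{r_k}^{X_n}(z_n)$. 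For $n$ large and $k$ fixed, $B_{r_k}^{X_n}(z_n)\subset B_2^{X_n}(x_n)$, contradicting the hypothesis that $X_n$ has no boundary in $B_2(x_n)$.

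\textbf{Main obstacle.} Most of the ingredients are already in place (weak $\eps$-regularity, volume convergence, existence of a half-space tangent at boundary points). The only delicate point is the bookkeeping of the two scales: one must choose the rescaling $r_k$ small enough so that $Kr_k^2$ absorbs into $-\eta(N-1)$ \emph{and} that the rescaled ball $B_{16}^{(Y,r_k^{-1}\dist_Y)}(z)$ is already GH-close to the model half-space ball, then let $n$ depend on this fixed $k$. No further analytical input seems needed.
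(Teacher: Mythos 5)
Your proof is correct and follows essentially the same route as the paper's: argue by contradiction, use that a tangent cone at a point of $\mathcal{S}^{N-1}\setminus\mathcal{S}^{N-2}$ is the half-space $\setR^N_+$ to find a scale at which a ball around $z$ is GH-close to the model boundary ball, transfer this closeness to nearby points $z_n\in X_n$, and invoke \autoref{thm:stabilityS} to force $\mathcal{S}^{N-1}\setminus\mathcal{S}^{N-2}(X_n)\cap B_2(x_n)\neq\emptyset$. The only differences are cosmetic: you spell out the scaling bookkeeping (choosing $r_k$ so that $Kr_k^2\ge-\eta(N-1)$ and then $n\ge n_0(k)$) and make explicit the passage from the $\haus^{N-1}$-measure lower bound of \autoref{thm:stabilityS} to the existence of a point in $\mathcal{S}^{N-1}\setminus\mathcal{S}^{N-2}$ via $\dim_H\mathcal{S}^{N-2}\le N-2$, both of which the paper leaves implicit.
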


\begin{proof}
	Let us argue by contradiction. Assume that there exists $z\in B_1(y)\cap \mathcal{S}^{N-1}\setminus \mathcal{S}^{N-2}$. Then we can find $r\in (0,1/5)$ such that $\dist_{GH}(B_r(z),B^{\setR_+^N}_r(0))\le \frac{\eta(N)}{2} r$ where $\eta(N)$ is as in \autoref{thm:stabilityS}. Let $X_n\ni z_n\to z\in Y$. Then we have
	\[
	\dist_{GH}(B_r(z_n), B^{\setR_+^N}_r(0)) \le \dist_{GH}(B_r(z_n),B_r(z)) + \dist_{GH}(B_r(z),B^{\setR_+^N}_r(0))
	<\eta(N) r
	\]
	for $n$ big enough. Thanks to \autoref{thm:stabilityS} above we can infer that
	\[
	(\mathcal{S}^{N-1}\setminus \mathcal{S}^{N-2})\cap B_r(z_n)\neq \emptyset\, ,
	\]
 contradicting the assumption that $X_n$ has no boundary in $B_2(x_n)\supset B_r(z_n)$.
\end{proof}

\vspace{.1cm}

\subsection{Rectifiable structure and volume estimates}

The main goal of this section is to prove \autoref{thm:boundatystructure} (i), (ii) and (iii). This will be achieved through some intermediate steps.

\begin{theorem}\label{thm:struc}
	Let $1\le N<\infty$ and $v>0$ be fixed. Let $(X,\dist,\haus^N)$ be a noncollapsed $\RCD(-(N-1),N)$ space and $p\in X$ be such that $\haus^N(B_1(p))>v>0$.
	Then the following hold:
\begin{itemize}	
\item[(i)] the singular set $\mathcal{S}^{N-1}$ is $(N-1)$-rectifiable; 
\item[(ii)] there exists a constant $C=C(N,v)>0$ such that
	\begin{equation}\label{eq:tubneighbound}
	\haus^N(B_r(\mathcal{S}^{N-1}\setminus \mathcal{S}^{N-2})\cap B_1(p)) \le C r
	\quad \text{for any $r\in (0,1)$, $p\in X$}\, .
	\end{equation} 
	In particular
\begin{equation}\label{eq:measurebound}
	\haus^{N-1}(\mathcal{S}^{N-1}\cap B_1(p))\le C
	\quad \text{for any $p\in X$}\, ;
\end{equation}
\item[(iii)] at any $x\in \mathcal{S}^{N-1}\setminus \mathcal{S}^{N-2}$, the tangent cone is unique and isomorphic to the Euclidean half space $\setR_+^{N-1}:=\set{x\in \setR^N:\, x_N\ge 0}$.
\end{itemize}
\end{theorem}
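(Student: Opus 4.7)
The overall strategy is to harvest the neck decomposition theorem \autoref{thm:neckdecompositiontheorem} together with the structural information provided by \autoref{thm:neckstructure}. For part (i), I would apply \autoref{thm:neckdecompositiontheorem} with parameters $\eta < \eta(N,v)$ and $\delta<\delta(N,v,\eta)$ small enough that property (vi) holds, so that $\mathcal{S}^{N-1}\setminus \mathcal{S}^{N-2} \subset \mathcal{S}^{\delta,\eta}$. Property (v) of that theorem then gives that $\mathcal{S}^{\delta,\eta}$ is $(N-1)$-rectifiable. Combined with the Hausdorff dimension bound $\dim_H \mathcal{S}^{N-2}\le N-2$, which forces $\haus^{N-1}(\mathcal{S}^{N-2})=0$ and hence trivial $(N-1)$-rectifiability of $\mathcal{S}^{N-1}\setminus(\mathcal{S}^{N-1}\setminus\mathcal{S}^{N-2})$, we get that $\mathcal{S}^{N-1}$ is $(N-1)$-rectifiable.

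For part (ii), I decompose $\mathcal{S}^{N-1}\setminus\mathcal{S}^{N-2} \subset \bigcup_a(\mathcal{C}_{0,a}\cap B_{r_a}(x_a)) \cup \tilde{\mathcal{S}}^{\delta,\eta}$ via the neck decomposition. The $\haus^{N-1}$-bound follows immediately: the neck structure theorem \autoref{thm:neckstructure} (ii) implies the upper Ahlfors regularity $\haus^{N-1}(\mathcal{C}_{0,a}) \le C r_a^{N-1}$, and summing with the content bound $\sum_a r_a^{N-1}\le C$ gives $\haus^{N-1}(\mathcal{S}^{N-1}\cap B_1(p))\le C$. For the tubular neighborhood estimate, the plan is to split the sum over $a$ by scales relative to $r$: for $r\le r_a$, upper Ahlfors regularity lets us cover $\mathcal{C}_{0,a}$ by $\lesssim r_a^{N-1}/r^{N-1}$ balls of radius $r$, contributing volume $\lesssim r\cdot r_a^{N-1}$; for $r>r_a$, a standard Vitali clustering of the $x_a$'s into $r$-clusters gives an analogous contribution. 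Summing produces the $Cr$ bound. The residual set $\tilde{\mathcal{S}}^{\delta,\eta}$ has $\haus^{N-1}$-measure zero and its contribution to the Minkowski content is controlled by iterating the neck decomposition to arbitrarily fine scales until its remainder is absorbed.

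For part (iii), given $x\in\mathcal{S}^{N-1}\setminus\mathcal{S}^{N-2}$, the stratification definition forces some tangent cone at $x$ to split off $\setR^{N-1}$ without being $\setR^N$, hence to be $\setR^N_+$, and lower semicontinuity of density with the a.e.\ identity $\Theta\equiv 1$ gives $\Theta(x)=\tfrac12$, so every tangent cone at $x$ is a noncollapsed $\RCD(0,N)$ metric cone with density $\tfrac12$. I would then choose scales $r_n\downarrow 0$ along which $B_{r_n}(x)$ is GH-close to $B_{r_n}^{\setR^N_+}(0)$, apply \autoref{thm:existenceofneck} to produce an $(\eps,\delta)$-maximal neck region on $B_{2r_n}(x)$, and use both Reifenberg-type inclusions in \autoref{def:Neckregion} (iii) together with the small-content bound $\mu(\mathcal{C}_+)\le\eps$ from \eqref{eq:maxneck} and the Ahlfors lower bound on $\mathcal{C}_0$ from \autoref{thm:neckstructure} (ii) to locate $x$ within $O(\eps r_n)$ of $\mathcal{C}_0$. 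The uniqueness of tangents at points of $\mathcal{C}_0$, coming from \autoref{thm:neckstructure} (iii), is then propagated to $x$ itself via a compactness argument on the sequence of rescalings $(X, r_n^{-1}\dist, \haus^N/r_n^N, x)$, using that all possible tangent cones are noncollapsed $\RCD(0,N)$ cones of density $\tfrac12$ and that one of them is pinned to be $\setR^N_+$.

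The main obstacle lies in part (iii): \autoref{thm:existenceofneck} does not automatically place $x$ itself inside $\mathcal{C}_0$, so the delicate step is bridging between ``$x$ is $O(\eps r_n)$-close to $\mathcal{C}_0$ at infinitely many scales'' and ``every tangent cone at $x$ is $\setR^N_+$''. This requires either a careful perturbation of the neck construction to adjoin $x$ as a $\mathcal{C}_0$-center (which is delicate because $x$ satisfying the neck-region axioms at all scales is essentially the uniqueness statement we are trying to prove), or a rigidity argument classifying the class of density-$\tfrac12$ noncollapsed $\RCD(0,N)$ cones, combined with the knowledge that one tangent is already $\setR^N_+$. Parts (i) and (ii) are essentially direct combinatorial consequences of the neck decomposition and structure theorems and should present only bookkeeping difficulties.
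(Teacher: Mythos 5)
Part (i) matches the paper's argument. Parts (ii) and (iii) have genuine gaps.

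\textbf{On (ii).} You decompose the singular set, $\mathcal{S}^{N-1}\setminus\mathcal{S}^{N-2}\subset\bigcup_a \mathcal{C}_{0,a}\cup\tilde{\mathcal S}^{\delta,\eta}$, then try to bound the tubular neighborhood of each piece. The $\mathcal{C}_{0,a}$ part is fine. But the residual piece breaks: $\haus^{N-1}(\tilde{\mathcal S}^{\delta,\eta})=0$ gives no control on $\haus^N\bigl(B_r(\tilde{\mathcal S}^{\delta,\eta})\cap B_1(p)\bigr)$ --- a $\haus^{N-1}$-null set can be dense, in which case its $r$-tubular neighborhood has full measure. The vague ``iterate the neck decomposition to finer scales'' does not obviously resolve this, because at every scale you regenerate a new residual set. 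The paper avoids the issue entirely by decomposing the \emph{ambient ball}, not the singular set: $B_1(p)\subset \bigcup_a(\mathcal{N}_a\cap B_{r_a})\cup\bigcup_b B_{r_b}\cup\mathcal{S}^{\delta,\eta}$. Since the tubular neighborhood lives inside $B_1(p)$ and $\mathcal{S}^{\delta,\eta}$ is $\haus^N$-null, that piece contributes nothing. To control the neck-region piece the paper then uses the Reifenberg condition (iii) in \autoref{def:Neckregion} to show
\begin{equation*}
B_r(\mathcal{S}^{N-1}\setminus\mathcal{S}^{N-2})\cap\mathcal{N}_a\cap B_{r_a}(x_a)\subset B_{2r}(\mathcal{C}_a)\, ,
\end{equation*}
after which Ahlfors regularity of the packing measure does the counting; and the $(N,\eps)$-symmetric balls contribute only when $r_b\le 2r$, since those balls are boundary-free. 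You omit the Reifenberg-inclusion step entirely, and it is what makes the neck-region contribution manageable.

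\textbf{On (iii).} You correctly identify $\Theta(x)=1/2$ and that $\setR^N_+$ is a tangent, and you gesture at ``a rigidity argument classifying the class of density-$\tfrac12$ noncollapsed $\RCD(0,N)$ cones,'' but the neck-region route you actually sketch cannot be closed: being $O(\eps r_n)$-close to $\mathcal{C}_0$ at infinitely many scales does not upgrade to ``$x$ satisfies the neck axioms at all scales,'' which is the content you would need and is essentially equivalent to uniqueness. The paper's argument is different and closes cleanly: by \autoref{lemma:conerigidity}, any $\RCD(0,N)$ cone with boundary and tip density exactly $\tfrac12$ is $\setR^N_+$; hence every element of $\Tan_x(X)$ is either $\setR^N_+$ or boundary-free. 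The set of pointed $\RCD(K,N)$ spaces without boundary is closed under noncollapsed pGH limits (\autoref{thm:stabimp}), and $\{\setR^N_+\}$ is closed, so $\Tan_x(X)$ is a disjoint union of two closed sets. Since $\Tan_x(X)$ is connected (\cite[Theorem 4.2]{CheegerJiangNaber18}) and contains $\setR^N_+$, the boundary-free part is empty, giving $\Tan_x(X)=\{\setR^N_+\}$. The connectedness-plus-dichotomy step is the missing idea in your outline.
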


\begin{proof}[Proof of \autoref{thm:struc} (i)]
	The rectifiability of $\mathcal{S}^{N-1}$ immediately follows from \autoref{thm:neckdecompositiontheorem}. Indeed
	\[
	\mathcal{S}^{N-1}\cap B_1(p) \subset \bigcup_a \mathcal{C}_{0,a} \cup \tilde{ \mathcal{S}}^{\delta,\eta}\, ,
	\]
	where $\haus^{N-1}(\mathcal{S}^{\delta,\eta})=0$ and $\mathcal{C}_{0,a}$ is $(N-1)$-rectifiable by (iv), (v) and (vi) of \autoref{thm:neckdecompositiontheorem}.
\end{proof}

\begin{proof}[Proof of \autoref{thm:struc} (ii)]

	 We divide the proof of \eqref{eq:tubneighbound} in three steps: volume estimate for the tubular neighbourhood intersected with neck regions (Step 1), volume estimate for the tubular neighbourhood intersected with regular balls (Step 2) and combination of the previous estimates (Step 3).
	 
	 Let us point out that \eqref{eq:measurebound} can be obtained either as a consequence of \autoref{thm:neckdecompositiontheorem}, or as a consequence of the volume bound for the tubular neighbourhood \eqref{eq:tubneighbound} by a standard argument (cf. \cite[Lemma 2.5]{AntonelliBrueSemola19}). 
	
	\textbf{Step 1.} We claim that if $\eps \le \eps(N)$ and $\delta\le \delta(N,v,\eps)$, then for any $(\eps,\delta)$-neck region $\mathcal{N}_a=B_{2r_a}(x_a)\setminus \overline B_{r_x}(\mathcal{C}_a)$  it holds
	\begin{equation}\label{eq:tubestbigneck}
	\haus^N(B_r(\mathcal{S}^{N-1}\setminus \mathcal{S}^{N-2}) \cap \mathcal{N}_a\cap B_{r_a}(x_a)) \le C(N,v) r r_a^{N-1}\, ,
	\quad \text{for any $r\in (0,1)$}.
	\end{equation}
	Observe that \eqref{eq:tubestbigneck} is trivially verified when $r> r_a/2$. Indeed
	\[
	\haus^N(B_r(\mathcal{S}^{N-1}\setminus \mathcal{S}^{N-2}) \cap \mathcal{N}_a\cap B_{r_a}(x_a)) \le \haus^N(B_{r_a}(x_a))\le C(N,v) r_a^N \le 2C(N,v) r r_a^{N-1}\, .
	\]
	Therefore let us assume $r \le  r_a/2$. Notice that
	\begin{equation}\label{zzz2}
	B_r(\mathcal{S}^{N-1}\setminus \mathcal{S}^{N-2}) \cap \mathcal{N}_a\cap B_{r_a}(x_a) \subset B_{2r}(\mathcal{C}_a)\, .
	\end{equation}
	Indeed, if this is not the case we could find $x\in B_r(\mathcal{S}^{N-1}\setminus \mathcal{S}^{N-2}) \cap \mathcal{N}_a\cap B_{r_a}(x_a)$  and $y\in \mathcal{C}_a$ such that $2r\le \dist(x,\mathcal{C}_a)=\dist(x,y)=:s$. 
	Observe that $B_{2s}(y)$ is an $\eps$-boundary ball and 
	\begin{equation}
	(\mathcal{S}^{N-1}\setminus \mathcal{S}^{N-2})\cap B_{2s}(y) \subset B_{8\tau s}(\mathcal{C}_a)\, ,
	\end{equation}
	as a consequence of (iii) in \autoref{def:Neckregion} (recall that we set $\tau:=10^{-10N}$).
	Being $x\in B_r(\mathcal{S}^{N-1}\setminus \mathcal{S}^{N-2}) \cap \mathcal{N}_a\cap B_{r_a}(x_a)$ there exists 
	\begin{equation}
	z\in (\mathcal{S}^{N-1}\setminus \mathcal{S}^{N-2})\cap B_{2s}(y)\subset B_{8\tau s}(\mathcal{C}_a)
	\end{equation}
	such that $\dist(x,z)\le r$. This yields to a contradiction since
	\begin{equation}
	r\ge \dist(x,z) \ge \dist(x,\mathcal{C}_a) - 8\tau s \ge s(1-8\tau)\ge 2r(1-8\tau)\, .
	\end{equation}
	
	With \eqref{zzz2} at our disposal let us conclude the proof of \eqref{eq:tubestbigneck}. Let $x_1,\ldots ,x_m\in \mathcal{C}_a$ be such that $\set{B_{r}(x_i)}$ is a disjoint family, $2r> r_{x_i}$ for any $i=1,\dots,m$ and 
	\begin{equation}
	\set{x\in\mathcal{C}_a:\, r_x < 2r }\cap B_{r_a}(x_a)\subset \cup_{i=1}^m B_{5r}(x_i)\, .
	\end{equation}
	It is immediate to check that
	\begin{equation}
	B_{2r}(\mathcal{C}_a)\cap\mathcal{N}_a\cap B_{r_a}(x_a)
	\subset \bigcup_{i=1}^m B_{10r}(x_i)\, .
	\end{equation}
	Hence from \eqref{zzz2} we deduce
	\begin{equation}
	\haus^N(B_r(\mathcal{S}^{N-1}\setminus \mathcal{S}^{N-2}) \cap \mathcal{N}_a\cap B_{r_a}(x_a)) \le \sum_{i=1}^m \haus^N( B_{10 r}(x_i)) \le C(N,v) m r^N\, .
	\end{equation}
	It remains only to show that $m\le C(N) r^{1-N} r_a^{N-1}$. In order to do so we use (ii) in \autoref{thm:neckstructure} which gives
	\begin{equation}
	c r_a^{N-1} \ge \mu(B_{2r_a}(x_a)) \ge \sum_{i=1}^m \mu(B_r(x_i)) \ge c^{-1} m r^{N-1}\, , 
	\end{equation}
	with $\mu$ packing measure associated to the neck region as in \eqref{eq:packingmeasure}.

	\textbf{Step 2.} We claim that, for any $\eps<\eps(N)$, it holds
	\begin{equation}\label{eq:tubestbigsymmball}
	\haus^N(B_r(\mathcal{S}^{N-1}\setminus \mathcal{S}^{N-2}) \cap  B_{r_b}(x_b)) \le C(N,v) r r_b^{N-1}\quad \text{for any $r\in (0,1)$}\, ,
	\end{equation}
	whenever $B_{2r_b}(x_b)$ is an $(N,\eps)$-symmetric ball.
	
	Let us choose $\eps(N)$ small enough to ensure that 
	\[
	(\mathcal{S}^{N-1}\setminus \mathcal{S}^{N-2}) \cap B_{\frac{3}{2} r_b}(x_b) = \emptyset
	\]
	whenever $B_{2r_b}(x_b)$ is $(N,\eps)$-symmetric for some $\eps\le \eps(N)$. This choice gives the implication
	\begin{equation}
	B_r(\mathcal{S}^{N-1}\setminus \mathcal{S}^{N-2}) \cap  B_{r_b}(x_b)\neq \emptyset  \implies r_b\le 2 r
	\end{equation}
	that easily leads to \eqref{eq:tubestbigsymmball}.

	\textbf{Step 3.} Let us conclude the proof of \eqref{eq:tubneighbound} relying on \autoref{thm:neckdecompositiontheorem} and the previous two steps. Let $\eps(N)>0$ be smaller than the ones in Step 1 and Step 2, and let $\delta\le \delta(N,v,\eps(N))$, smaller than the one in Step 2 and in \autoref{thm:neckdecompositiontheorem}. By \autoref{thm:neckdecompositiontheorem} we can find a covering 
	\begin{equation}\label{eq:decreminder}
	B_1(p)\subset\bigcup_a\left(\mathcal{N}_a\cap B_{r_a}(x_a)\right)\cup\bigcup_bB_{r_b}(x_b)\cup\mathcal{S}^{\delta,\eta}\, ,
	\end{equation}
	where any $B_{2r_b}(x_b)$ is an $(N,\eps(N))$-symmetric ball, $\mathcal{N}_a=B_{2r_a}\setminus \overline B_{r_x}(\mathcal{C}_a)$ is an $(\eps(N), \delta)$ neck region and $\mathcal{S}^{\delta,\eta}$ is $\haus^N$-negligible.
	Then we can estimate
	\begin{align*}
	\haus^N  (B_r(\mathcal{S}^{N-1}\setminus \mathcal{S}^{N-2})\cap B_1(p))
	 \le&
	\sum_a \haus^N(B_r(\mathcal{S}^{N-1}\setminus \mathcal{S}^{N-2}) \cap \mathcal{N}_a\cap B_{r_a}(x_a))\\
	&+ \sum_b \haus^N(B_r(\mathcal{S}^{N-1}\setminus \mathcal{S}^{N-2}) \cap  B_{r_b}(x_b))
	\\\le & C(N,v) r\left( \sum_a r_a^{N-1} + \sum_b r_b^{N-1} \right) 
	\\\le & C(N,v) r\, ,
	\end{align*}
	 where the first inequality follows from \eqref{eq:decreminder}, the second one from Step 1 and Step 2 and the last one from (iii) in \autoref{thm:neckdecompositiontheorem}.
\end{proof}

\begin{proof}[Proof of \autoref{thm:struc} (iii)]
	When $x\in \mathcal{S}^{N-1}\setminus \mathcal{S}^{N-2}$ any tangent cone has the density at the tip equal to $\Theta_X(x)=1/2$ since $(\setR_+^N,\dist_{\text{Eucl}},\haus^N,0)\in \Tan_x(X,\dist,\haus^N)$. Hence, \autoref{lemma:conerigidity} below implies that
	if $(Y,\varrho,\haus^N,y)\in \Tan_x(X,\dist,\haus^N)$ then, either $Y=\setR_+^{N}$ or it has no boundary according to \autoref{def:withoutboundary}.\\	
	This along with a classical result (see for instance \cite[Theorem 4.2]{CheegerJiangNaber18}) ensuring that the set of tangent cones at given point $x\in X$ is connected with respect to the pmGH topology, implies the sought conclusion. Indeed the set of pointed $\RCD(K,N)$ spaces without boundary is closed with respect to noncollapsed pGH convergence by \autoref{thm:stabimp}.	
\end{proof}

\begin{remark}\label{rm:bdrytobdryhalf}
It follows from the lower semicontinuity of the density $\Theta$ and the observation that $\Theta(x)=1/2$ for any $x\in\mathcal{S}^{N-1}\setminus\mathcal{S}^{N-2}$, that for any noncollapsed $\RCD(K,N)$ m.m.s. $(X,\dist,\haus^N)$, it holds that $\Theta(x)\le 1/2$ for any $x\in\partial X$.	
\end{remark}

\begin{lemma}\label{lemma:conerigidity}
	Let $C(Y)$ be a noncollapsed $\RCD(0,N)$ m.m.s. which is a cone over an $\RCD(N-2,N-1)$ m.m.s. $(Y,\dist_Y,\haus^{N-1})$ with tip $p$. If $C(Y)$  has boundary according to \autoref{def:withoutboundary} then $\Theta(p)\le \frac{1}{2}$. Moreover, the equality holds if and only if $C(Y)$ is isometric to the Euclidean half-space $\setR_+^N$. 
\end{lemma}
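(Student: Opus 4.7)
The plan is to prove the two assertions separately; the inequality $\Theta(p)\le 1/2$ is short, while the rigidity case will be handled by induction on $N$.

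For the inequality, I would pick $x\in \mathcal{S}^{N-1}\setminus \mathcal{S}^{N-2}$ inside $C(Y)$, which exists by hypothesis and carries density $\Theta(x)=1/2$. Bishop--Gromov on the noncollapsed $\RCD(0,N)$ space $C(Y)$, centered at $x$, gives that $R\mapsto \haus^N(B_R(x))/(\omega_N R^N)$ is non-increasing in $R$. Its limit as $R\downarrow 0$ equals $\Theta(x)=1/2$. Its limit as $R\uparrow \infty$ equals $\Theta(p)$, because the inclusions $B_{R-d}(p)\subset B_R(x)\subset B_{R+d}(p)$ with $d=\dist(p,x)$, together with the cone identity $\haus^N(B_R(p))=\omega_N R^N\Theta(p)$, squeeze the ratio. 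Monotonicity yields $\Theta(p)\le \Theta(x)=1/2$.

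For the equality case I argue by induction on $N$. When $N=1$, the noncollapsed $\RCD(0,1)$ cones are, up to isometry, $\setR$ (without boundary) and $[0,\infty)=\setR^1_+$ (whose tip has density $1/2$), so the base case is immediate. For $N\ge 2$, assume the statement in dimension $N-1$ and $\Theta(p)=1/2$. The monotone function above is then constantly $1/2$, so the volume-cone $\iff$ metric-cone rigidity for noncollapsed $\RCD(0,N)$ spaces (already used in \cite{DePhilippisGigli18} after \cite{DePhilippisGigli16}) forces $C(Y)$ to be a metric cone also with tip $x$. A cone with two distinct tips $p\neq x$ contains a line, obtained by extending the $p$--$x$ geodesic as a ray from each tip and concatenating (both cone scalings guarantee the concatenation is length-minimizing). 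Gigli's splitting theorem \cite{Gigli13} then yields $C(Y)\cong \setR\times Z$ with $Z$ noncollapsed $\RCD(0,N-1)$. Since the $\setR$-factor is Euclidean and $C(Y)$ is a cone with tip $p=(0,q)$, the factor $Z$ is itself a metric cone with tip $q$, and the product volume identity gives $\Theta_Z(q)=\Theta_{C(Y)}(p)=1/2$.

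To close the induction, I use the product behavior of tangent cones, $\Tan_{(t,q')}(\setR\times Z)=\setR\times \Tan_{q'}(Z)$, which gives $(t,q')\in \mathcal{S}^{k}\setminus \mathcal{S}^{k-1}$ on $\setR\times Z$ iff $q'\in \mathcal{S}^{k-1}_Z\setminus \mathcal{S}^{k-2}_Z$ on $Z$. Since both cone tips $p,x$ lie on the splitting line we can write $x=(t_0,q)$ for some $t_0\neq 0$, and the hypothesis $x\in \mathcal{S}^{N-1}\setminus \mathcal{S}^{N-2}$ becomes $q\in \mathcal{S}^{N-2}_Z\setminus \mathcal{S}^{N-3}_Z$, so $Z$ has boundary. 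The induction hypothesis forces $Z\cong \setR^{N-1}_+$, whence $C(Y)\cong \setR\times \setR^{N-1}_+=\setR^N_+$. The main obstacles I anticipate are two technical points: the assertion that a cone with two distinct tips contains a line (requiring a careful ray-concatenation or Busemann-function argument invoking both cone scalings), and the clean product structure of the singular strata under the splitting. Both are standard in the Ricci-limit setting and should transfer to the $\RCD$ framework using the tangent-cone description available throughout the paper.
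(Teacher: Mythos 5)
Your proof is correct, and it is worth comparing to the paper's in two parts. For the inequality $\Theta(p)\le 1/2$ you take a genuinely different and more direct route: you run Bishop--Gromov in $C(Y)$ itself centered at a point $x\in\mathcal{S}^{N-1}\setminus\mathcal{S}^{N-2}$, identify the small-scale limit with $\Theta(x)=1/2$ and the large-scale limit with $\Theta(p)$ via the squeeze $B_{R-d}(p)\subset B_R(x)\subset B_{R+d}(p)$, and read off $\Theta(p)\le 1/2$ from monotonicity. The paper instead passes to the cross section: it first observes that $Y$ has boundary, invokes Ketterer's maximal diameter theorem to get $\mathrm{diam}\,Y<\pi$, applies Bishop--Gromov for the $\CD(N-2,N-1)$ condition on $Y$ from a boundary point of $Y$ to bound $\haus^{N-1}(Y)\le\tfrac12 N\omega_N$, and then uses the cone identity $\Theta(p)=\haus^{N-1}(Y)/(N\omega_N)$. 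Your version avoids the detour through the cross section entirely and, in fact, makes the inequality a special case of the very mechanism you (and the paper) use for the rigidity statement.

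For the equality case your argument coincides with the paper's in its key steps — showing the volume ratio at $x$ is constant, invoking volume cone $\Rightarrow$ metric cone to conclude $C(Y)$ is a cone with respect to the second tip $x$, and then splitting off a line and inducting — but you spell out the inductive step and the product decomposition of tangent cones and singular strata, which the paper compresses into the single sentence ``Arguing inductively and relying on the cone splitting theorem we can now conclude.'' The two technical points you flag (a cone with two distinct tips contains a line, and the product behaviour of singular strata under the splitting) are indeed the ones requiring care, but both are standard; the former is exactly the rigid version of the quantitative cone splitting stated in the paper as \autoref{thm:conesplittingquant}, and the latter follows from the uniqueness of the Euclidean factor supplied by Gigli's splitting theorem.
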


\begin{proof}
	
	It is simple to verify that if $C(Y)$ has boundary then $Y$ has boundary as well. Observe that $(Y,\dist_Y,\haus^{N-1})$ is an $\RCD(N-2,N-1)$ space, therefore by \cite{Ketterer15} it has diameter less than $\pi$. Let $y\in \mathcal{S}^{N-2}\setminus \mathcal{S}^{N-3}(Y)$. Then the Bishop-Gromov inequality for the $\CD(N-2,N-1)$ condition ensures that
	\begin{equation}\label{eq:densbound}
	\haus^{N-1}(Y) = \haus^{N-1}(B_{\pi}(y)) \le \Theta_Y(y) N\omega_N \le  \frac{1}{2} N \omega_N\, .
	\end{equation}
	Therefore
	\begin{equation}
	\Theta_{C(Y)}(p) = \lim_{r\to 0} \frac{\haus^N(B_r(p))}{\omega_N r^N}
	=\frac{\haus^{N-1}(Y)}{N \omega_N} \le \frac{1}{2}\, ,
	\end{equation}
	where the second equality follows from the definition of metric measure cone while the inequality follows from \eqref{eq:densbound} (cf. with \eqref{eq:volcross}).
	
	Let us now deal with the equality case. Assume that $\Theta(p)=\frac{1}{2}$. We claim that $C(Y)$ is a cone with respect to any $x\in \mathcal{S}^{N-1}\setminus \mathcal{S}^{N-2}$.
	Indeed, for any $x\in C(Y)$, it holds
	\begin{equation}\label{eq:densityatinfinity}
	\lim_{r\to\infty}\frac{\mathcal{H}^N(B_r(x))}{\omega_Nr^N}
	= \lim_{r\to\infty}\frac{\mathcal{H}^N(B_r(p))}{\omega_Nr^N}
	=1/2\, ,
	\end{equation}
	since
	\begin{align*}
	\lim_{r\to\infty}\frac{\haus^N(B_r(x))}{\omega_Nr^N}=&\lim_{r\to\infty}\frac{\haus^N(B_{r+R}(x))}{\omega_N(r+R)^N}\\
	\ge&\lim_{r\to\infty}\frac{\haus^N(B_r(p))}{\omega_Nr^N}\cdot\frac{r^N}{(r+R)^N}\\
	=&\lim_{r\to\infty}\frac{\haus^N(B_r(p))}{\omega_Nr^N}\, ,
	\end{align*}
where we set $R:=\dist(x,p)$ and the converse inequality can be obtained switching the roles of $x$ and $p$.

Therefore, if we additionally assume that $x\in\mathcal{S}^{N-1}\setminus\mathcal{S}^{N-2}$, then
	\begin{equation}
	r\mapsto\frac{\mathcal{H}^N(B_r(x))}{\omega_Nr^N}
	\quad \text{is constant on $(0,\infty)$}\, .
	\end{equation}
	The volume cone implies metric cone theorem \cite{DePhilippisGigli16} (see also \cite{CheegerColding96} for the previously considered case of Ricci limits) gives then the claimed conclusion.

	Arguing inductively and relying on the cone splitting theorem we can now conclude that $C(Y)=\setR_+^N$. 	
\end{proof}

    \subsection{A second notion of boundary and further regularity properties}
	Recall that our working definition of boundary $\partial X$, taken from \cite{DePhilippisGigli18}, is as topological closure of the top dimensional singular stratum:
	\begin{equation}
	\partial X:=\overline{\mathcal{S}^{N-1}\setminus\mathcal{S}^{N-2}}\, .
	\end{equation}
	
	In \cite{KapovitchMondino19} an alternative definition of boundary has been proposed, inspired by the one adopted for Alexandrov spaces \cite{Perelman91}:
	\begin{equation}\label{eq:bdkm}
	\mathcal{F}X:=\left\lbrace x\in X: \text{ there exists a cone with boundary  $(Y,\dist_Y)\in\Tan_x(X,\dist,\mathcal{H}^N)$} \right\rbrace, 
	\end{equation}
	where cones with boundary are cones for which the cross section, that is a noncollapsed $\RCD(N-2,N-1)$ space thanks to \cite{DePhilippisGigli18,Ketterer15}, has boundary. Arguing recursively we reduce to $\RCD(0,1)$ spaces and, thanks to the classification in \cite{KitabeppuLakzian16}, we know that they are isometric to manifolds of dimension one, possibly with boundary (in the topological sense).  In this case we say that the space has boundary if and only if it is a manifold with boundary.

\begin{theorem}\label{thm:lipbdry}
	Let $1\le N<\infty$ and $v>0$ be fixed. Let $(X,\dist,\haus^N)$ be an $\RCD(-(N-1),N)$ space  and $p\in X$ be such that $\haus^{N}(B_1(p))>v$. Then, either $(\mathcal{S}^{N-1}\setminus \mathcal{S}^{N-1})\cap B_2(p)=\emptyset$ 
	or the following hold:
	\begin{itemize}
		\item[(i)] $\mathcal{F} X \neq \emptyset$ and $\mathcal{F} X\subset \partial X$;
		\item[(ii)] $\partial X$ is $(N-1)$-rectifiable and
		\[
		\haus^{N-1}(\partial X\cap B_r(x))\le C(N,v) r^{N-1}\quad \text{for any $x\in \partial X\cap B_1(p)$ and $r\in (0,1)$}\, ;
		\]
		\item[(iii)]
		\begin{equation}
		\haus^N(B_r(\partial X)\cap B_1(p)) \le C(N,v) r
		\quad \text{for any $r\in (0,1)$, $p\in X$}\, ,
		\end{equation} 
	\end{itemize}
\end{theorem}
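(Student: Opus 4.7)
The conclusions (ii) and (iii) will follow almost immediately from \autoref{thm:struc} combined with the relation $\partial X\setminus(\mathcal{S}^{N-1}\setminus\mathcal{S}^{N-2})\subset\mathcal{S}^{N-2}$ noted in the introduction, which is $\haus^{N-1}$-negligible by the Hausdorff dimension estimate for $\mathcal{S}^{N-2}$. Since a set and its closure share the same open $r$-tubular neighbourhood, $B_r(\partial X)=B_r(\mathcal{S}^{N-1}\setminus\mathcal{S}^{N-2})$, so (iii) is immediate from \eqref{eq:tubneighbound}. For (ii), the $(N-1)$-rectifiability of $\partial X$ follows from that of $\mathcal{S}^{N-1}$ modulo the negligible lower stratum, and the local measure bound $\haus^{N-1}(\partial X\cap B_r(x))\le C(N,v)r^{N-1}$ can be obtained by rescaling \autoref{thm:struc}(ii), using Bishop--Gromov to maintain a uniform lower bound on the rescaled volume, or alternatively from (iii) via a Minkowski-content argument based on the rectifiable structure.

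For part (i), nonemptiness of $\mathcal{F}X$: pick any $x\in(\mathcal{S}^{N-1}\setminus\mathcal{S}^{N-2})\cap B_2(p)$, nonempty by assumption. By \autoref{thm:struc}(iii) the unique tangent cone at $x$ is $\setR^N_+$. This is a cone with boundary in the recursive sense of \cite{KapovitchMondino19}: its cross section is the closed upper hemisphere $\mathbb{S}^{N-1}_+$, whose iterated cross sections at equatorial points reduce to the half-interval $[0,\pi/2]$, a manifold with boundary. Thus $x\in\mathcal{F}X$.

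The main content is the inclusion $\mathcal{F}X\subset\partial X$. Let $x\in\mathcal{F}X$ and let $(Y,\dist_Y,\haus^N,y)\in\Tan_x(X)$ be a cone with boundary, realised as pmGH limit under rescalings $r_n\downarrow 0$. The first step is to produce a point $w\in Y$ whose tangent cone is isomorphic to $\setR^N_+$, i.e.\ $w\in(\mathcal{S}^{N-1}\setminus\mathcal{S}^{N-2})(Y)$. This is done by a straightforward induction on dimension along the recursive definition of cone with boundary: at the $\RCD(0,1)$ base case a manifold with boundary has endpoints whose tangent cone is $\setR_+$; each inductive step lifts a top-stratum point of the cross section to a non-tip point on the ray above it in the cone, using that the tangent cone of $C(\Sigma)$ at a non-tip point over $w'\in\Sigma$ splits as $\setR\times\Tan_{w'}(\Sigma)$. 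Once $w$ is at hand, fix $s>0$ small enough that $B_{2s}^Y(w)$ is $\eta(N)/2$-GH-close to $B_{2s}^{\setR^N_+}(0)$, with $\eta(N)$ as in \autoref{thm:stabilityS}. Lifting $w$ through the pGH approximations yields $x_n\in X$ with $\dist(x,x_n)\to 0$ such that $B_{r_n s}(x_n)$ is $\eta(N) r_n s$-close to $B_{r_n s}^{\setR^N_+}(0)$ for $n$ large. Applying the $\eps$-regularity \autoref{thm:stabilityS} after rescaling (which brings the Ricci lower bound well within the required range for large $n$), we find $z_n\in(\mathcal{S}^{N-1}\setminus\mathcal{S}^{N-2})\cap B_{r_n s}(x_n)$, and $z_n\to x$, so $x\in\overline{\mathcal{S}^{N-1}\setminus\mathcal{S}^{N-2}}=\partial X$. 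The principal technical obstacle lies in tracking the boundary data faithfully through the recursive cone-over-cross-section construction to extract $w$; after that, the \textit{stability} of codimension-one singularities provided by \autoref{thm:stabilityS} carries the argument.
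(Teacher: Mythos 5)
Your handling of (ii) and (iii) matches the paper: both drop out of \autoref{thm:struc} together with $\dim_H\mathcal{S}^{N-2}\le N-2$; the identity $B_r(\partial X)=B_r(\mathcal{S}^{N-1}\setminus\mathcal{S}^{N-2})$ and the Bishop--Gromov rescaling of \eqref{eq:measurebound} are exactly the right moves. You also prove $\mathcal{F}X\neq\emptyset$ explicitly, via uniqueness of the tangent $\setR^N_+$ at a regular boundary point from \autoref{thm:struc}(iii); the paper leaves this step implicit. The genuine divergence is the inclusion $\mathcal{F}X\subset\partial X$. The paper argues by contrapositive: since $\partial X$ is closed, any $x\notin\partial X$ has a ball $B_r(x)$ free of boundary points, so by the stability \autoref{thm:stabimp} every tangent cone $Y$ at $x$ is boundary-free, and the inductive implication \eqref{zzz4} (in contrapositive form) forces $\mathcal{F}Y=\emptyset$, hence $x\notin\mathcal{F}X$. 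You argue directly: from a tangent cone $Y$ with boundary at $x\in\mathcal{F}X$ you extract, by the same dimension recursion that underlies \eqref{zzz4}, a point $w\in(\mathcal{S}^{N-1}\setminus\mathcal{S}^{N-2})(Y)$, then lift $w$ back through the blow-up sequence and invoke the quantitative $\eps$-regularity of \autoref{thm:stabilityS}, after rescaling both the metric and the lower Ricci bound, to produce $z_n\in(\mathcal{S}^{N-1}\setminus\mathcal{S}^{N-2})(X)$ with $z_n\to x$. Both routes rest on the same two pillars, the recursive structure of cones with boundary and the codimension-one stability/regularity; your direct version is more constructive and, as you say, the lifting is where the real work sits, whereas the paper's contrapositive is a little more economical in how it deploys stability, using it once to sterilize all tangent cones at $x$ at the same time. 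One small slip: the terminal iterated cross section of $\setR^N_+$ is a half-circle of length $\pi$, i.e.\ an interval isometric to $[0,\pi]$ rather than $[0,\pi/2]$; this is harmless.
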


\begin{proof}
	Let us begin by proving that 
	\begin{equation}\label{zzz4}
	\mathcal{F} X\neq \emptyset \implies 
	 \mathcal{S}^{N-1}\setminus\mathcal{S}^{N-2}\neq \emptyset\, ,
	\end{equation}
	by induction on $N$. The case $N=1$ is trivial, thanks to the classification of $\RCD(0,1)$ spaces \cite{KitabeppuLakzian16}. Let us deal with the inductive step. Given a noncollapsed $\RCD(K,N)$ m.m.s. $(X,\dist,\haus^N)$ and $x\in \mathcal{F}X$ there exists a cone $(C(Y),\varrho,\haus^N,y)\in \Tan_x(X,\dist,\meas)$ where the cross section $(Y,\dist_Y,\haus^{N-1})$ is an $\RCD(N-2,N-1)$ space such that $\mathcal{F} Y \neq \emptyset$. The inductive assumption gives
	\[
	 \mathcal{S}^{N-1}\setminus\mathcal{S}^{N-2}(Y)\neq \emptyset\, ,
	\]
	which easily yields the claimed conclusion.
	
	Let us now prove the inclusion $\mathcal{F} X \subset \partial X$.
	Being $\partial X$ closed, for any $x\in X\setminus \partial X$ there exists $r>0$ such that 
	\[
	B_r(x)\cap (\mathcal{S}^{N-1}\setminus \mathcal{S}^{N-2}) = \emptyset\, .
	\]
	Therefore any tangent cone $(Y,\varrho, \haus^N,y)$ at $x$ satisfies $\mathcal{S}^{N-1}\setminus \mathcal{S}^{N-2}=\emptyset$ as a consequence of \autoref{thm:stabilityS}. Hence, from \eqref{zzz4} we deduce $\mathcal{F} Y=\emptyset$ which, by definition, yields $x\notin \mathcal{F} X$.
	
	The rectifiability and the measure estimate in (ii) follow from \autoref{thm:struc} (i) and \eqref{eq:measurebound} respectively, taking into account the dimension estimate $\dim{\mathcal{S}^{N-2}}\le N-2$. The volume bound for the tubular neighbourhood is a consequence of \eqref{eq:tubneighbound} and the very definition of $\partial X$. 
\end{proof}

\begin{remark}
Thanks to \autoref{thm:lipbdry} (i), the notion of having boundary for a noncollapsed $\RCD$ space is independent of the definition of boundary we choose, between the ones in \cite{DePhilippisGigli18} and \cite{KapovitchMondino19}. This gives a positive answer to \cite[Question 4.8]{KapovitchMondino19}.
\end{remark}

\begin{remark}
	With the techniques of this paper we are not able to show the identity $\mathcal{F}X = \partial X$ in full generality, which would give a positive answer to \cite[Question 4.9]{KapovitchMondino19}. 
	Nevertheless the analysis of the Laplacian of the distance from the boundary performed in \autoref{sec:distance from the boundary} allows us to prove this identity, together with the local Ahlfors regularity of the boundary volume
	measure, in the case of Ricci limits with boundary. Moreover, the improved neck structure \autoref{thm:improvedneckregion} gives the same conclusion on $\delta$-boundary balls whenever $\delta < \delta(N)$.
\end{remark}

	\begin{corollary}\label{cor:hauseprehau}
Let $1\le N<\infty$ be a fixed natural number. Then, for any $v>0$ there exists a constant $C=C(N,v)>0$ such that the following holds. If $(X,\dist,\haus^N)$ is an $\RCD(-(N-1),N)$ space and $x\in X$ is such that $\haus^N(B_1(x))>v$, then 
\begin{equation}\label{eq:precfnonpre}
\haus^{N-1}_{\infty}\res (\partial X\cap B_1(x))\le \haus^{N-1}\res (\partial X\cap B_1(x))\le C(n,v)\haus^{N-1}_{\infty}\res (\partial X\cap B_1(x))\, .
\end{equation} 
		
	\end{corollary}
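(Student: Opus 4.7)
The first inequality in \eqref{eq:precfnonpre} is automatic from the definition of the pre-Hausdorff measure, so the content of the statement is the second inequality. The plan is to derive it from the upper Ahlfors bound for $\partial X$ already established in \autoref{thm:struc}(ii) (equivalently \autoref{thm:lipbdry}(ii)) by the standard covering argument showing that the Hausdorff and pre-Hausdorff measures are comparable on sets satisfying uniform upper Ahlfors regularity.

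First I would reduce to the set inequality $\haus^{N-1}(E)\le C(N,v)\haus^{N-1}_\infty(E)$ for an arbitrary Borel set $E\subset \partial X\cap B_1(x)$. Given $\varepsilon>0$, pick an efficient ball cover $\{B_{r_i}(z_i)\}_{i\in\setN}$ of $E$ with
\begin{equation*}
\sum_{i} \omega_{N-1} r_i^{N-1}\le \haus^{N-1}_\infty(E)+\varepsilon\, .
\end{equation*}
Without loss of generality each ball intersects $E$; then I choose $y_i\in E\cap B_{r_i}(z_i)\subset \partial X\cap B_1(x)$ so that $B_{r_i}(z_i)\subset B_{2r_i}(y_i)$.

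The next step is a simple case split on $r_i$. If $r_i\le 1/4$, then $2r_i\in(0,1)$ and $y_i\in\partial X\cap B_1(x)$, so \autoref{thm:lipbdry}(ii) applies with $p=x$ and gives
\begin{equation*}
\haus^{N-1}\bigl(E\cap B_{r_i}(z_i)\bigr)\le \haus^{N-1}\bigl(\partial X\cap B_{2r_i}(y_i)\bigr)\le C_0(N,v)\,(2r_i)^{N-1}\, .
\end{equation*}
If instead $r_i>1/4$, I use the a priori global bound $\haus^{N-1}(\partial X\cap B_1(x))\le C_1(N,v)$ coming from \eqref{eq:measurebound} in \autoref{thm:struc}(ii), together with $r_i^{N-1}\ge 4^{1-N}$, to obtain
\begin{equation*}
\haus^{N-1}\bigl(E\cap B_{r_i}(z_i)\bigr)\le C_1(N,v)\le C_1(N,v)\,4^{N-1} r_i^{N-1}\, .
\end{equation*}
Summing over $i$ and letting $\varepsilon\downarrow 0$ then yields $\haus^{N-1}(E)\le C(N,v)\haus^{N-1}_\infty(E)$, which is the required measure inequality.

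I do not expect any serious obstacle: the only point to check is that in applying \autoref{thm:lipbdry}(ii) the constant $C_0$ depends only on $N$ and $v$ (not on the individual $y_i$), but this is precisely the form in which the Ahlfors upper estimate was proved, using $p=x$ and the hypothesis $\haus^N(B_1(x))>v$. The whole argument is the general principle that uniform upper $(N-1)$-Ahlfors regularity at the relevant scales forces the Hausdorff measure to be controlled by the pre-Hausdorff measure with a constant depending only on the Ahlfors constant and the total measure.
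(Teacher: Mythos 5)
Your argument is correct and follows essentially the same route as the paper's own proof: cover the set, use countable subadditivity of $\haus^{N-1}$, bound each summand by the upper Ahlfors estimate for $\partial X$ from \autoref{thm:boundatystructure}(i), and pass to the infimum over coverings. The paper's version is more compressed --- it simply says ``up to worsening the constant'' where you make the center relocation $z_i\mapsto y_i\in E\cap B_{r_i}(z_i)$ and the case split $r_i\le 1/4$ versus $r_i>1/4$ explicit --- so your write-up actually supplies the details the paper's one-line summation glosses over.
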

	
	\begin{proof}
The first inequality above is true in great generality by the very definition of the Hausdorff and pre-Hausdorff measures. 

Let us pass to the verification of the second one.\\
In order to do so let $C(N,v)$ be such that $\haus^{N-1}(\partial X\cap B_r(x))\le C(N,v) r^{N-1}$ for any $x\in \partial X\cap B_1(p)$ and $r\in (0,1)$ given by \autoref{thm:boundatystructure} (i). Let $B_{r_i}(x_i)$ any covering of a Borel set $A\subset\partial X\cap B_1(x)$. Then, up to worsening the constant $C(n,v)$ we can estimate 
\begin{equation}
\haus^{N-1}(A)\le \sum_i\haus^{N-1}(A\cap\partial X)\le C(N,v)\sum_i r_i^{N-1}\, .
\end{equation} 		
Passing to the infimum on the family of all coverings of $A$ we get the sought estimate
\begin{equation}
\haus^{N-1}(A)\le C(N,v)\haus^{N-1}_{\infty}(A)\, .
\end{equation}
	\end{proof}

	\begin{corollary}\label{cor:stabwithlowerbound}
	Let $1\le N<\infty$ be a fixed natural number and $v>0$, then the following holds. Assume that $(X_n,\dist_n,\haus^N,p_n)$ are noncollapsed $\RCD(-(N-1),N)$ spaces converging in the pGH topology to $(X,\dist,\haus^N,p)$ and verifying the noncollapsing assumption $\haus^N(B_1(p_n))>v$ for any $n\in\setN$. 
Then 
\begin{equation}\label{eq:distusc}
\haus^{N-1}(\partial X\cap \overline{B}_1(p))\ge \frac{1}{C(N,v)}\limsup_{n\to\infty}\haus^{N-1}(\partial X_n\cap \overline{B}_1(p_n))\, ,
\end{equation}
where $C(N,v)>0$ is the constant appearing in \autoref{cor:hauseprehau} above.

	\end{corollary}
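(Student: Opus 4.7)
The plan is to realize the pGH convergence inside a common proper ambient space $(Z,d_Z)$ and, after extracting a subsequence that realizes the $\limsup$ on the right-hand side and produces a Hausdorff limit $K_n := \partial X_n \cap \overline{B}_1(p_n) \to K$ for some compact $K\subseteq \overline{B}_1(p)$ (via Blaschke's selection theorem), to establish the inclusion $K \subseteq \partial X \cup \mathcal{S}^{N-2}(X)$. Granted this inclusion, the dimensional bound $\mathcal{H}^{N-1}(\mathcal{S}^{N-2})=0$ (since $\dim_H \mathcal{S}^{N-2}\le N-2$ by \cite{DePhilippisGigli18}), the trivial inequality $\mathcal{H}^{N-1}_\infty\le \mathcal{H}^{N-1}$ and the upper semicontinuity \eqref{eq:limsuphaus} yield
\[
\mathcal{H}^{N-1}(\partial X\cap \overline{B}_1(p)) \ge \mathcal{H}^{N-1}_\infty(K) \ge \limsup_n \mathcal{H}^{N-1}_\infty(K_n),
\]
while \autoref{cor:hauseprehau} applied to each $X_n$ at $p_n$ (using $\mathcal{H}^N(B_1(p_n))>v$) gives $\mathcal{H}^{N-1}(K_n)\le C(N,v)\,\mathcal{H}^{N-1}_\infty(K_n)$; chaining these inequalities produces the desired bound.

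To establish the key inclusion, fix $x\in K$ and a sequence $x_n\in K_n$ with $x_n\to x$ in $Z$. Since $\partial X_n = \overline{\mathcal{S}^{N-1}(X_n)\setminus \mathcal{S}^{N-2}(X_n)}$, a diagonal extraction allows me to assume $x_n\in \mathcal{S}^{N-1}(X_n)\setminus \mathcal{S}^{N-2}(X_n)$, so \autoref{thm:struc}(iii) gives $\Theta_{X_n}(x_n) = 1/2$. The noncollapsing assumption combined with Bishop--Gromov yields a uniform lower bound $\mathcal{H}^N(B_1(x_n))\ge v'(N,v)>0$, so \autoref{thm:volumeconvergence} upgrades the convergence to pmGH and produces $\mathcal{H}^N(B_r(x_n))\to \mathcal{H}^N(B_r(x))$ at $\mathcal{H}^N$-continuity radii. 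Combining this with the Bishop--Gromov bound $\mathcal{H}^N(B_r(x_n)) \le \Theta_{X_n}(x_n)\, v_{-(N-1),N}(r)$ and letting first $n\to \infty$ and then $r\downarrow 0$ produces the upper density estimate $\Theta_X(x) \le 1/2$.

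Now suppose $x\in K\setminus \mathcal{S}^{N-2}(X)$; then some tangent cone at $x$ splits off $\setR^{N-1}$, and since tangent cones of noncollapsed $\RCD$ spaces are metric cones over noncollapsed $\RCD(N-2,N-1)$ cross sections, such a tangent cone must be isometric to $\setR^{N-1}\times L$ with $L$ a one-dimensional noncollapsed $\RCD(0,1)$ cone. By the classification of one-dimensional $\RCD$ spaces \cite{KitabeppuLakzian16} we have $L\in \{\setR,\setR_+\}$, and the density at the tip of the cone equals $1$ or $1/2$ respectively; the bound $\Theta_X(x)\le 1/2<1$ rules out $L=\setR$, forcing the tangent cone to be $\setR^{N}_+$ and hence $x\in \mathcal{S}^{N-1}\setminus \mathcal{S}^{N-2}\subseteq \partial X$, which proves the inclusion.

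The hard part is precisely this identification of $K$ with $\partial X$ up to an $\mathcal{H}^{N-1}$-negligible set: pointwise stability of boundary points remains open in this generality (see the discussion after \autoref{thm:riccilimit}), so one cannot hope to conclude directly that every limit of boundary points is a boundary point. The substitute is the density pinching $\Theta_X\le 1/2$ provided by Bishop--Gromov and noncollapsed volume convergence which, combined with the rigidity of noncollapsed tangent cones in codimension one, confines any "rogue" limit point to the lower codimension stratum $\mathcal{S}^{N-2}$ which is invisible to $\mathcal{H}^{N-1}$.
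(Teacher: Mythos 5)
Your proposal is correct and follows essentially the same route as the paper's own proof: realize the convergence in a common ambient space, take a Hausdorff limit $K$ of the boundary sets, bound densities by $1/2$ using lower semicontinuity of $\Theta$ along the sequence (you re-derive this semicontinuity from Bishop--Gromov and volume convergence, while the paper invokes it via \autoref{rm:bdrytobdryhalf}), conclude $K\subset \mathcal{S}^{N-1}\cap\overline{B}_1(p)\subset(\partial X\cup\mathcal{S}^{N-2})\cap\overline{B}_1(p)$, and chain $\haus^{N-1}\ge\haus^{N-1}_\infty$, the upper semicontinuity \eqref{eq:limsuphaus}, and \autoref{cor:hauseprehau}. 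One small point: your third paragraph's tangent-cone classification is superfluous --- once you know $\Theta_X(x)\le 1/2<1$ (hence $x\in\mathcal{S}=\mathcal{S}^{N-1}$), the assumption $x\notin\mathcal{S}^{N-2}$ already places $x\in\mathcal{S}^{N-1}\setminus\mathcal{S}^{N-2}\subset\partial X$ by the definition of the strata, so no analysis of one-dimensional cone factors is needed.
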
	
	
	\begin{proof}
		
	Let us denote by $C\subset X$ the limit of the sequence of compact sets $\partial X_n\cap\overline{B}_1(p_n)$ in the Hausdorff topology, possibly after passing to a subsequence. Here it is understood that the convergence of the ambient spaces is realized in a common proper metric space $(Z,\dist_Z)$. Since, as we already remarked, any boundary point has density less than $1/2$ and the density is lower semicontinuous along pGH converging sequences, we infer that $\Theta_X(x)\le 1/2$ for any $x\in C$. In particular $C\subset\mathcal{S}\cap\overline{B}_1(p)$. Moreover, it easily follows from the Hausdorff dimension estimate $\dim_H(\mathcal{S}^{N-2})\le N-2$ that $\haus^{N-1}_{\infty}(C)\le \haus^{N-1}_{\infty}(\partial X\cap\overline{B}_1(p))$.\\
	Taking into account the general inequality $\haus^{N-1}_{\infty}\le \haus^{N-1}$ and the discussion above, in order to prove \eqref{eq:distusc} it suffices now to observe that
\begin{align*}
\haus^{N-1}_{\infty}(C)\ge& \limsup_{n\to\infty}\haus^{N-1}_{\infty}(\partial X_n\cap\overline{B}_1(p_n))\\
\ge& \frac{1}{C(N,v)}\limsup_{n\to\infty}\haus^{N-1}(\partial X_n\cap\overline{B}_1(p_n))\, ,
\end{align*} 
where the first inequality is a consequence of \eqref{eq:limsuphaus} while the second one follows from \autoref{cor:hauseprehau}.
\end{proof}

\vspace{.4cm}

\section{Distance from the boundary and noncollapsing of boundaries}\label{sec:distance from the boundary}
In this section we are going to study some key properties of the distance function from the boundary. They will be useful to better understand the convergence of boundaries of $\RCD$ spaces under noncollapsing pGH convergence and their topological regularity in the next sections.

Given a noncollapsed $\RCD(-(N-1),N)$ space $(X,\dist,\haus^N)$ with boundary we denote by
\[
\dist_{\partial X} : X \to \setR_+,\;\;\;\dist_{\partial X}(x):=\min_{p\in\partial X}\dist(x,p)
\]
the distance function from $\partial X$.

Let us start with a key lemma regarding convergence of distance functions from the boundary in case the limit space is the model half space.

\begin{lemma}\label{lemma:distancetodistance}
	Let $1\le N<\infty$ be a fixed natural number. For any sequence of pointed $\RCD(-(N-1),N)$ spaces $(X_n,\dist_n,\haus^N,x_n)$ such that $B_8(x_n) \to B_8^{\setR_+^N}(0)$ in the GH-topology one has that
	\begin{equation}\label{eq:boundarytoboundary}
	\partial X_n \cap \overline B_3(p_n) \to \partial \setR_+^N \cap \overline B_3(0)\quad \text{in the Hausdorff sense}.
	\end{equation}
	
	Moreover $\dist_{\partial X_n} \to \dist_{\partial \setR_+^N}$ uniformly and in $W^{1,2}$ on $B_2(0)$.
\end{lemma}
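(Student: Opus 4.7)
My plan is to first establish the Hausdorff convergence \eqref{eq:boundarytoboundary}, then deduce uniform convergence of the distance functions, and finally upgrade this to $W^{1,2}$ convergence using the eikonal equation together with Colding's volume convergence. The two key inputs are the weak $\eps$-regularity \autoref{thm:stabilityS} (to produce boundary points near every point of the model boundary) and the stability of boundary-freeness \autoref{thm:stabimp} (to prevent boundary points from accumulating at interior points of the model).

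\textbf{Hausdorff convergence of the boundaries.} By \autoref{rm:compvarconv} it suffices to verify the two inclusions in the Kuratowski sense (after realizing all convergences in a common proper ambient space). For the first inclusion, fix $y\in \partial\setR_+^N\cap \overline B_3(0)$ and pick a sequence $y_n\in X_n$ with $y_n\to y$. Given any $r\in (0,1)$, the rescaled spaces $(X_n, r^{-1}\dist_n, \haus^N, y_n)$ are $\RCD(-r^2(N-1),N)$ and $B_{8/r}^{r^{-1}\dist_n}(y_n)$ converges in the GH topology to $B_{8/r}^{\setR_+^N}(0)$, since small balls around a boundary point of $\setR_+^N$ are isometric to model boundary balls. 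Thus for $n\ge n_0(r)$ the assumptions of \autoref{thm:stabilityS} (applied at unit scale in the rescaled metric) are met, giving $(\mathcal S^{N-1}\setminus\mathcal S^{N-2})\cap B_r(y_n)\neq\emptyset$, hence $\dist(y_n,\partial X_n)<r$. A diagonal argument produces $q_n\in \partial X_n$ with $q_n\to y$. For the second inclusion, let $q_n\in \partial X_n\cap \overline B_3(p_n)$ converge to some $y\in \overline B_3^{\setR_+^N}(0)$ and suppose by contradiction that $y\notin \partial \setR_+^N$. Choose $r>0$ small with $B_{2r}(y)\subset \setR_+^N\setminus \partial\setR_+^N$, so that $B_{2r}(y)$ is isometric to a Euclidean open ball. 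Picking $y_n\in X_n$ with $y_n\to y$, the balls $B_{2r}(y_n)$ converge in the noncollapsed pGH sense to $B_{2r}(y)$, which has no boundary. Then \autoref{thm:stabimp} forces $\partial X_n\cap B_r(y_n)=\emptyset$ for $n$ large, contradicting $q_n\to y$.

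\textbf{Uniform convergence.} For $x\in B_2(0)\subset \setR_+^N$ the nearest boundary point satisfies $\dist_{\partial \setR_+^N}(x)\le 2$ and actually lies in $\overline B_3(0)$ (since $\dist(x,0)\le 2$ and the foot of the perpendicular has norm at most that of $x$). The same holds asymptotically for $x_n\in B_2(p_n)$ converging to $x$, because one can compare $\dist(x_n,\partial X_n)$ with the distance to any chosen approximant $q_n$ of the foot of the perpendicular, which lies in $\partial X_n\cap \overline B_3(p_n)$ up to an infinitesimal error, by the first step. Then \autoref{rm:compvarconv} (iii) applied to the converging compact sets $\partial X_n\cap \overline B_3(p_n)\to \partial \setR_+^N\cap \overline B_3(0)$ in a common ambient space yields uniform convergence of $\dist_{\partial X_n}$ to $\dist_{\partial \setR_+^N}$ on $B_2$.

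\textbf{$W^{1,2}$ convergence.} The function $\dist_{\partial X_n}$ is $1$-Lipschitz with $|\nabla \dist_{\partial X_n}|=1$ $\haus^N$-almost everywhere on $B_2(p_n)$ (eikonal equation for distance functions from closed sets, valid on $\RCD$ spaces). The analogous identity holds for $\dist_{\partial \setR_+^N}$ on $B_2(0)$. Consequently
\begin{equation}
\int_{B_2(p_n)} |\nabla \dist_{\partial X_n}|^2\di\haus^N = \haus^N(B_2(p_n)) \longrightarrow \haus^N(B_2^{\setR_+^N}(0)) = \int_{B_2(0)} |\nabla \dist_{\partial \setR_+^N}|^2 \di\haus^N\, ,
\end{equation}
by Colding's volume convergence \autoref{thm:volumeconvergence}, applicable here thanks to the noncollapsing forced by the half-space assumption. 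Uniform convergence combined with volume convergence yields $L^2$-convergence, while the uniform Lipschitz bound gives (along a subsequence) weak $W^{1,2}$-convergence in the sense of \cite{AmbrosioHonda,AmbrosioHonda18}. The convergence of the $L^2$-norms of the gradients then upgrades this to strong $W^{1,2}$-convergence, and the uniqueness of the limit dispenses with the subsequence.

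\textbf{Main obstacle.} The delicate point is the last step: one needs the eikonal identity $|\nabla \dist_{\partial X_n}|=1$ a.e.\ in a form adapted to the $\RCD$ framework, together with the Ambrosio-Honda machinery to pass from convergence of energies plus uniform convergence to strong $W^{1,2}$ convergence across the varying metric measure structures. The Hausdorff convergence \eqref{eq:boundarytoboundary} itself is the structural heart of the lemma, and depends crucially on the \emph{quantitative} content of \autoref{thm:stabilityS}: a merely qualitative stability statement would not suffice to force boundary points to appear at every scale.
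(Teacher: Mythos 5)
Your overall architecture — Kuratowski reformulation of Hausdorff convergence, then uniform convergence via \autoref{rm:compvarconv}, then $W^{1,2}$ convergence via the eikonal identity together with Colding's volume convergence — matches the paper's, and your first Kuratowski inclusion (every point of $\partial\setR^N_+\cap\overline B_3(0)$ is a limit of boundary points $q_n\in\partial X_n$) via rescaling and \autoref{thm:stabilityS} is essentially the paper's argument.

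There is, however, a genuine gap in your second Kuratowski inclusion, namely that limit points of sequences $q_n\in\partial X_n\cap\overline B_3(p_n)$ lie in $\partial\setR^N_+$. You invoke \autoref{thm:stabimp}, but that theorem states the \emph{opposite} implication from the one you need: it says that if the approximating spaces $X_n$ have no boundary on $B_2(x_n)$, then the limit $Y$ has no boundary on $B_1(y)$. You are trying to deduce that if a limit ball $B_{2r}(y)$ has no boundary then $\partial X_n\cap B_r(y_n)=\emptyset$ for $n$ large, i.e. you want ``no boundary in the limit $\Rightarrow$ eventually no boundary in the sequence,'' which \autoref{thm:stabimp} does not give (and cannot, since its hypothesis is on the sequence, not the limit). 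As written, the step ``Then \autoref{thm:stabimp} forces $\partial X_n\cap B_r(y_n)=\emptyset$ for $n$ large'' does not follow. The paper handles this direction by a density argument instead: by \autoref{rm:bdrytobdryhalf} every point of $\partial X_n$ has density $\Theta_{X_n}\le 1/2$, so by the lower semicontinuity of the density under noncollapsed pGH convergence the limit point $z$ satisfies $\Theta_{\setR^N_+}(z)\le 1/2<1$, hence $z$ is singular, and in $\setR^N_+$ the only singular points are the boundary points. This is the clean (and essentially the only available) route, since the key quantitative input — Bishop--Gromov plus volume convergence — is exactly what makes density lower semicontinuous. Your intended conclusion is correct, but the cited tool is the wrong one; replacing it with the density argument repairs the proof.

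One minor point: you flag the eikonal identity $|\nabla\dist_{\partial X_n}|=1$ a.e.\ as the ``main obstacle,'' but this is a standard fact for distance functions to closed sets on $\RCD$ (indeed on PI) spaces and the paper uses it without comment; the genuine structural content of the lemma, as you correctly note elsewhere, is the Hausdorff convergence \eqref{eq:boundarytoboundary}.
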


\begin{proof}
    Taking into account \autoref{rm:compvarconv} it is sufficient to prove that the convergence holds in the Kuratowski sense.
	
	Let us first prove that any limit point of a sequence of points $y_n\in\partial X_n\cap \overline B_3(p_n)$ belongs to $\partial\setR^N_+\cap \overline B_3(0)$. To this aim it is sufficient to take into account \autoref{rm:bdrytobdryhalf} and the lower semicontinuity of the density along pGH converging sequences of noncollapsed spaces. We conclude that the limit point has density less than $1/2$ and therefore it belongs to the boundary of the half space, since those are the only singular points.
	
	Next we wish to prove that any point in $\partial\setR^N_+\cap \overline B_3(0)$ is the limit of a sequence of points in $\partial X_n\cap \overline B_3(p_n)$. To prove this claim we rely on the stability of the boundary. If the claim were false then we could find a scale $r>0$ and points $y_n\in X_n$ such that $y_n\to 0\in\setR^N_+$ and $B_r(y_n)\subset X_n$ has no boundary for any $n\in\setN$. The contradiction follows by \autoref{thm:stabilityS}, since the ball $B_r(0)\subset \setR^N_+$ has boundary.
	
	The uniform convergence $\dist_{\partial X_n} \to \dist_{\partial \setR_+^N}$ on $B_2(0)$ is a simple consequence of \eqref{eq:boundarytoboundary} (see again \autoref{rm:compvarconv}). 
	
	To obtain the $W^{1,2}$ convergence it is sufficient to observe that, as pointed out in \cite{AmbrosioHondaTewodrose}, for uniformly continuous functions the uniform and the $L^2$ convergence are equivalent. Moreover, $\abs{\nabla \dist_{\partial X_n}}=\abs{\nabla\dist_{\partial X}}=1$ $\haus^N$ a.e., therefore the $W^{1,2}$ convergence follows from the volume convergence \autoref{thm:volumeconvergence}, since 
		\begin{equation}
		\int_{B_2(p_n)}\abs{\nabla \dist_{\partial X_n}}^2\di\haus^N=\haus^N(B_2(p_n))\to \haus^N(B_2(0))=\int_{B_2(0)}\abs{\nabla \dist_{\partial X}}^2\di\haus^N\, .
		\end{equation}
\end{proof}

Given the stability \autoref{thm:stabilityS} and \autoref{lemma:distancetodistance} we can provide a useful improvement upon the form of the $\eps$-isometry in \autoref{splitting vs isometry} in the case of $\delta$-boundary balls.	

\begin{corollary}\label{lemma:improvedepsio}
	Let $1\le N<\infty$ be a fixed natural number. Then for any $\eps>0$ there exists $\delta=\delta(\eps,N)>0$ such that, if $(X,\dist,\haus^N)$ is a noncollapsed $\RCD(-\delta(n-1),N)$ space and $B_2(x)\subset X$ is a $\delta$-boundary ball, then for any $\eps$-splitting map $u:B_1(x)\to\setR^{N-1}$ such that $u(x)=0$ one has that
	\begin{equation}
	(u,\dist_{\partial X}):B_1(x)\to\setR^{N}_+\;\;\;\text{is an $\eps$-isometry}\, ,
	\end{equation}	
	and 
	\begin{equation}
	\sum_{k=1}^{N-1}\fint_{B_1(p)} |\nabla u_k \cdot \nabla \dist_{\partial X}|\di \haus^N \le \eps\, . 
	\end{equation}
\end{corollary}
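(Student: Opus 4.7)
I would argue by compactness and contradiction, in the spirit of \autoref{splitting vs isometry} and \autoref{rm:epssplitimplepsisoimpr}. Suppose the statement fails for some $\eps_0>0$: then there exist $\delta_n\downarrow 0$, noncollapsed $\RCD(-\delta_n(N-1),N)$ spaces $(X_n,\dist_n,\haus^N)$ with $x_n\in X_n$ such that $B_2(x_n)$ is a $\delta_n$-boundary ball, and $\delta_n$-splitting maps $u_n:B_1(x_n)\to\setR^{N-1}$ with $u_n(x_n)=0$, for which either $(u_n,\dist_{\partial X_n}):B_1(x_n)\to B_1^{\setR^N_+}(0)$ is not an $\eps_0$-isometry, or
\[
\sum_{k=1}^{N-1}\fint_{B_1(x_n)}|\nabla u_n^k\cdot\nabla\dist_{\partial X_n}|\di\haus^N>\eps_0\,.
\]
By \autoref{thm:volumeconvergence}, the sequence converges in the noncollapsed pmGH topology on $B_2$ to $(\setR_+^N,\dist_{\mathrm{eucl}},\haus^N,0)$. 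The uniform Lipschitz bounds and the scale-invariant smallness of $\fint|\Hess u_n|^2$ built into \autoref{def:deltasplitting}, combined with the convergence and stability results of \cite{AmbrosioHonda,AmbrosioHonda18}, let me extract a subsequence along which $u_n$ converges locally uniformly and strongly in $H^{1,2}$ to a harmonic map $u_\infty:B_1(0)\subset\setR_+^N\to\setR^{N-1}$ with $u_\infty(0)=0$, $\Hess u_\infty\equiv 0$, and $\nabla u_\infty^a\cdot\nabla u_\infty^b=\delta_{ab}$ a.e. Simultaneously \autoref{lemma:distancetodistance} provides $\dist_{\partial X_n}\to\dist_{\partial\setR_+^N}$ uniformly and strongly in $H^{1,2}$ on $B_{3/2}(0)$.

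The crux is to identify $u_\infty$. Applying \autoref{splitting vs isometry}(ii)--(iii) to $u_n$ at a slightly smaller scale yields, for each $n$, one-dimensional Riemannian factors $(Z_n,\dist_{Z_n})$ and auxiliary maps $f_n$ such that $(u_n-u_n(x_n),f_n)$ is an $\eps_n$-GH isometry onto a ball of $\setR^{N-1}\times Z_n$, with $\eps_n\downarrow 0$. Passing to the limit, I obtain an honest isomorphism of metric measure spaces $(u_\infty,f_\infty):B_{1/(N-1)}(0)\subset\setR_+^N\to B^{\setR^{N-1}\times Z_\infty}_{1/(N-1)}((0,z_0))$. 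By rigidity of metric splittings of the Euclidean half-space, $Z_\infty$ must be isometric to an interval $[0,L)$ with $L\ge 1/(N-1)$, and, up to an orthogonal transformation $O$ of $\setR^{N-1}$ (absorbing the freedom left by $u_\infty(0)=0$), one has $u_\infty=O\circ\pi$ with $\pi:\setR_+^N\to\setR^{N-1}$ the canonical projection, and $f_\infty=\dist_{\partial\setR_+^N}$ up to an additive constant. In particular $\nabla u_\infty^k\cdot\nabla\dist_{\partial\setR_+^N}\equiv 0$ for each $k=1,\dots,N-1$, and $(u_\infty,\dist_{\partial\setR_+^N})$ is an isometry of $B_{1/(N-1)}^{\setR_+^N}(0)$.

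To close the argument, the uniform convergences $u_n\to u_\infty$ and $\dist_{\partial X_n}\to\dist_{\partial\setR_+^N}$ imply that $(u_n,\dist_{\partial X_n})$ converges uniformly to the isometry $(u_\infty,\dist_{\partial\setR_+^N})$; hence for $n$ large it is an $\eps_0/2$-isometry, ruling out the first failure mode. For the second, the strong $H^{1,2}$-convergence of both families together with the uniform $L^\infty$-bounds $|\nabla u_n|\le C(N)$ and $|\nabla\dist_{\partial X_n}|\le 1$ yields strong $L^1_{\loc}$-convergence of the products $\nabla u_n^k\cdot\nabla\dist_{\partial X_n}\to\nabla u_\infty^k\cdot\nabla\dist_{\partial\setR_+^N}\equiv 0$, contradicting the standing hypothesis. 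The main technical obstacle is precisely this last step: passing to the limit in the pointwise product of two gradient fields on varying $\RCD$ spaces requires the strong $H^{1,2}$-convergence of both factors — available here for $u_n$ via the vanishing of $\fint|\Hess u_n|^2$ and the Bochner-based machinery of \cite{AmbrosioHonda18}, and for $\dist_{\partial X_n}$ via \autoref{lemma:distancetodistance}.
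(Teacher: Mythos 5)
Your proposal is correct and follows essentially the same route the paper has in mind: the paper's own proof is the one-line remark that both conclusions follow "arguing by contradiction as in the proof of \autoref{splitting vs isometry} and relying on \autoref{lemma:distancetodistance}," and your write-up is a faithful expansion of exactly that — a compactness/contradiction argument passing to the half-space limit, identifying the limit splitting map as an orthogonal projection via \autoref{thm:functionalsplittinglocal}/\autoref{thm:functionalsplittingcodimension1}, and using the uniform and $W^{1,2}$ convergence of $\dist_{\partial X_n}\to\dist_{\partial\setR_+^N}$ from \autoref{lemma:distancetodistance} to close both failure modes. The only cosmetic point worth flagging is that you (correctly) run the contradiction with $\delta_n$-splitting maps, $\delta_n\downarrow 0$, rather than with the fixed "$\eps$-splitting" quality that appears in the statement — which is the sensible reading, as the literal "$\eps$-splitting $\Rightarrow$ $\eps$-isometry" cannot hold with the same constant on both sides (e.g.\ take $u(y)=(y_1,\dots,y_{N-2},(1+\eps)y_{N-1})$ on $\setR^N_+$).
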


\begin{proof}
	Both conclusions can be obtained arguing by contradiction as in the proof of \autoref{splitting vs isometry} and relying on \autoref{lemma:distancetodistance}.
\end{proof}

\subsection{Laplacian of the distance from the boundary}
Next we study the Laplacian of the distance function from the boundary,
which plays a fundamental role in establishing noncollapsing estimates for the boundary measure.

Let us begin by recalling that $\dist_{\partial X}$ has locally measure valued Laplacian $\boldsymbol{\Delta}\dist_{\partial X}$ on $X\setminus\partial X$ as a consequence of the general representation theorem for Laplacians of distance functions \cite[Corollary 4.16]{CavallettiMondino18}. Moreover in \cite[Corollary 4.16]{CavallettiMondino18} it is also proved that the singular part of $\boldsymbol{\Delta}\dist_{\partial X}$ on $X\setminus\partial X$ is non positive. 
The following conjecture regards the absolutely continuous part.

\begin{openquestion}\label{conj:laplasign}
	Let $(X,\dist,\haus^N)$ be a noncollapsed $\RCD(K,N)$ m.m.s. for some $K\in\setR$ and $1\le N<\infty$. Assume that $\partial X\neq\emptyset$. Then 
	\begin{equation}\label{eq:conjlapl}
	\boldsymbol{\Delta}^{\text{ac}}\dist_{\partial X}\le -K\dist_{\partial X}
	\quad \text{on $X\setminus\partial X$}\, .
	\end{equation}	
	where $\boldsymbol{\Delta}^{\text{ac}}\dist_{\partial X}$ denotes the absolutely continuous part of $\boldsymbol{\Delta}\dist_{\partial X}$ on $X\setminus \partial X$.
\end{openquestion}

As we shall see below, \autoref{conj:laplasign} can be verified for Alexandrov spaces with curvature bounded from below, Riemannian manifolds with convex boundary and interior lower Ricci curvature bounds and their noncollapsed pGH limits. 

Let us first present the main analytic and geometric implications of a positive answer to \autoref{conj:laplasign}.

\begin{theorem}\label{thm:consequencesconj}
	Let $N\in \setN$, $N\ge 1$ and $K\in \setR$ be fixed. Given an $\RCD(K,N)$ m.m.s. $(X,\dist,\haus^N)$ with $\partial X\neq 0$ such that \autoref{conj:laplasign} is verified, the following hold:
	\begin{itemize}
		\item[(i)] $\dist_{\partial X}$ has measure valued Laplacian on $X$ and $\boldsymbol{\Delta}\dist_{\partial X} \res \partial X = \haus^{N-1}\res \partial X$;
		\item[(ii)] for any $p\in \partial X$ one has
		\begin{equation}\label{eq:lowerareabound}
		\haus^{N-1}(B_2(p)\cap\partial X) > C(K) \haus^N(B_1(p))\, ;
		\end{equation}
		\item[(iii)] any tangent cone at $x\in \partial X$ has boundary, in particular $\mathcal{F}X = \partial X$ (recall that $\mathcal{F}X$ is defined in \eqref{eq:bdkm}). 
	\end{itemize}
\end{theorem}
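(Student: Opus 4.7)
Set $d := \dist_{\partial X}$ and $\mu := \boldsymbol{\Delta}d$, the measure-valued Laplacian on $X\setminus \partial X$ (existing by \cite[Corollary 4.16]{CavallettiMondino18}) whose singular part is non-positive. The conjecture grants in addition $\mu^{\text{ac}}\le -Kd\cdot\haus^N$. My plan is to first upgrade $\mu$ to a locally finite signed measure on all of $X$, identifying its restriction to $\partial X$ with $\haus^{N-1}\res\partial X$; then to extract (ii) by testing the resulting global Gauss--Green identity against a suitable Lipschitz cut-off; and finally to deduce (iii) by rescaling at boundary points and invoking upper semicontinuity of the boundary measure.

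For (i) I would truncate away from $\partial X$: fix $\phi\in\Lip(X)$ with compact support and choose $\chi_\eps\in C^\infty(\setR)$ vanishing on $[0,\eps/2]$ and equal to $1$ on $[\eps,\infty)$, so that $\phi\,\chi_\eps(d)$ is compactly supported in $X\setminus\partial X$. Writing the Laplacian identity for this product and using $|\nabla d|=1$ $\haus^N$-a.e.\ yields
\[
	\int \phi\,\chi_\eps(d)\di\mu = -\int \chi_\eps'(d)\,\phi\di\haus^N - \int \chi_\eps(d)\,\nabla\phi\cdot\nabla d\di\haus^N.
\]
The conjecture's upper bound, combined with the sharp Laplacian comparison from below for distance functions on $\RCD(K,N)$ spaces, forces $|\mu|$ to have locally finite mass on $X$, so that as $\eps\to 0^+$ the LHS converges to $\int_{X\setminus\partial X}\phi\di\mu$ and dominated convergence handles the last RHS term. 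The remaining piece, by the coarea formula \autoref{thm:coarea}, equals
\[
	\int_0^\infty \chi_\eps'(t)\Big(\int\phi\di\Per(\{d>t\},\cdot)\Big)\di t,
\]
whose limit is $\lim_{t\to 0^+}\int\phi\di\Per(\{d>t\},\cdot)$ by Lebesgue differentiation. To identify this limit with $\int\phi\di(\haus^{N-1}\res\partial X)$ I combine the Hausdorff convergence $\{d=t\}\to\partial X$ (from continuity of $d$) with the rectifiability and locally finite $\haus^{N-1}$-bound on $\partial X$ from \autoref{thm:boundatystructure}(i)-(ii), which together force the perimeter measures of the level sets to converge weakly to $\haus^{N-1}\res\partial X$.

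For (ii), the plan is to feed into (i) the test function $\phi(x):=\eta(x)\,(1-d(x))_+$, where $\eta$ is a good cut-off (\autoref{lem:good_cut-off}) with $\eta\equiv 1$ on $B_1(p)$, $\supp\eta\subset B_2(p)$, $|\nabla\eta|\le C(N)$. On $\partial X$ we have $d=0$, so $\phi=\eta$, giving $\int\phi\di(\haus^{N-1}\res\partial X)\ge\haus^{N-1}(B_1(p)\cap\partial X)$. The chain rule yields
\[
	\nabla\phi\cdot\nabla d = (1-d)_+\,\nabla\eta\cdot\nabla d - \eta\,\chi_{\{0<d<1\}},
\]
so $-\int\nabla\phi\cdot\nabla d\di\haus^N\ge \haus^N(B_1(p)\cap\{d<1\}) - C(N)\haus^N(B_2(p))$, while the conjecture bounds the interior contribution by $\bigl|\int_{X\setminus\partial X}\phi\di\mu\bigr|\le |K|\haus^N(B_2(p))$. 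Substituting into (i) and applying Bishop--Gromov to compare $\haus^N(B_2(p))$ with $\haus^N(B_1(p))$ yields $\haus^{N-1}(B_2(p)\cap\partial X)>C(K)\haus^N(B_1(p))$.

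For (iii) I would rescale: each $X_r:=(X,\dist/r,\haus^N/r^N)$ is $\RCD(r^2K,N)$ and inherits the conjecture with constant $r^2K$. Applying (ii) on $X_r$ centred at $x\in\partial X$ gives $\haus^{N-1}_{X_r}(\partial X_r\cap B_2(x))\ge C(r^2K)\haus^N_{X_r}(B_1(x))$, with $C(r^2K)\to C(0)>0$ as $r\to 0^+$. Along a pGH-convergent subsequence $X_{r_n}\to Y$ realising a tangent cone at $x$, \autoref{thm:volumeconvergence} gives $\haus^N_{X_{r_n}}(B_1(x))\to\haus^N(B_1(y))>0$, and the upper semicontinuity of boundary measures \autoref{cor:stabwithlowerbound} passes the lower bound to the limit, so $\haus^{N-1}(\partial Y\cap \bar B_2(y))>0$. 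Hence $Y$ has boundary, so $x\in\mathcal{F}X$; combined with the reverse inclusion \autoref{thm:lipbdry}(i) this yields $\mathcal{F}X=\partial X$.

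The main obstacle is the identification $\boldsymbol{\Delta}d\res\partial X=\haus^{N-1}\res\partial X$ in (i): establishing the weak convergence $\Per(\{d>t\})\to\haus^{N-1}\res\partial X$ as $t\to 0^+$ in the absence of smooth equidistant hypersurfaces requires exploiting the rectifiable structure and area bounds from \autoref{thm:boundatystructure} in a non-trivial way.
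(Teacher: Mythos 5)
Your overall architecture (truncate, integrate by parts, identify the boundary contribution, then rescale for (iii)) parallels the paper's, but there are two genuine gaps — one you flag yourself, one you don't.

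\textbf{Part (ii) is broken as written.} Take $\phi=\eta(1-d)_+$ with $\eta$ a cutoff, $\eta\equiv 1$ on $B_1(p)$, $\supp\eta\subset B_2(p)$, $|\nabla\eta|\le C$. Then the computation
\[
-\int\nabla\phi\cdot\nabla d\di\haus^N=\int\eta\,\chi_{\{0<d<1\}}\di\haus^N-\int(1-d)_+\,\nabla\eta\cdot\nabla d\di\haus^N
\]
produces an error $\int_{B_2\setminus B_1}(1-d)_+|\nabla\eta|\di\haus^N\le C\,\haus^N(B_2(p)\setminus B_1(p))$. Even in $\setR^N$ this is $C(2^N-1)\omega_N$, which swamps $\haus^N(B_1(p))=\omega_N$; Bishop--Gromov only gives $\haus^N(B_2)\le C(K,N)\haus^N(B_1)$, not the reverse, so the resulting lower bound for $\haus^{N-1}(\partial X\cap B_2(p))$ is a negative quantity and the conclusion is vacuous. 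The paper avoids this by never committing to a single spatial cutoff: it derives a differential inequality for $f(s,t):=\haus^N(B_s(\partial X)\cap B_t(p))$ in the tubular-radius variable $s$, integrates in the ball-radius variable $t$ by coarea (so that the perimeter term $\Per(B_t(p),B_s(\partial X))$ gets absorbed as a $t$-derivative of $f$), and then moves along the anti-diagonal $t=1-s$ so the resulting $f(s,1-s)$ term cancels. The outcome is a Gronwall inequality for $s\mapsto\int_0^{1-s}f(s,1-r)\di r$ where the cutoff error never appears. Some version of this averaging over a family of scales is unavoidable; a single test function will not do.

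\textbf{Part (i): the perimeter-convergence claim is the same gap, not a removable technicality.} You appeal to ``rectifiability and the locally finite $\haus^{N-1}$-bound force $\Per(\{d>t\})\rightharpoonup\haus^{N-1}\res\partial X$.'' They don't. Lower semicontinuity of perimeter only gives $\liminf_{t\to 0}\Per(\{d>t\},A)\ge\Per(X\setminus\partial X,A)=0$, since $X\setminus\partial X$ has full measure; so semicontinuity is useless here, and a priori the limit of the perimeter measures could be strictly smaller than $\haus^{N-1}\res\partial X$ on a positive-measure set. The paper's identification is a two-step argument you would also need: first one shows $\boldsymbol{\Delta}d\ll\haus^{N-1}$ via the normal-trace bound $|\boldsymbol{\mathrm{div}}\,b(B_r(x))|\le C\|b\|_\infty r^{N-1}$ from the Gauss--Green theory of \cite{BuffaComiMiranda}; then, because $\partial X$ is $(N-1)$-rectifiable with locally finite measure, it suffices to compute the density of $\boldsymbol{\Delta}d$ at $\haus^{N-1}$-a.e.\ boundary point, and at a point of $\mathcal{S}^{N-1}\setminus\mathcal{S}^{N-2}$ this is done by blowing up to $\setR^N_+$ and using the $W^{1,2}$ convergence of distance functions from \autoref{lemma:distancetodistance}. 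That $\eps$-regularity input is where the density $1$ actually comes from; without it, the weak limit of $\Per(\{d>t\})$ remains unidentified.

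\textbf{Part (iii)} is fine and matches the paper: scale-invariance of (ii), volume convergence, and \autoref{cor:stabwithlowerbound} pass the lower boundary-measure bound to every tangent cone.
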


	Let us state and prove a lemma that is independent of the validity of \autoref{conj:laplasign} and will play a role in the proof of \autoref{thm:consequencesconj}.

\begin{lemma}\label{lemma:distancebdry}
	Let $1\le N<\infty$ be a natural number and $(X,\dist,\haus^N)$ be a noncollapsed $\RCD(-(N-1),N)$ metric measure space. Assume that $\partial X\neq\emptyset$. Then $\dist_{\partial X}:X\to [0,\infty)$ has locally measure valued Laplacian on $X\setminus\partial X$ and the singular part of $\boldsymbol{\Delta}\dist_{\partial X}$ is non positive on $X\setminus\partial X$. Moreover
		\begin{equation}\label{eq:replapl}
		\int_X\nabla\phi\cdot\nabla\dist_{\partial X}\di\haus^N=-\int\phi\di\mu-\lim_{r_i\to 0}\int_{\{\dist_{\partial X}>r_i\}}\phi\di\boldsymbol{\Delta}\dist_{\partial X}
		\quad \text{for any $\phi \in \Lip_c(X)$}\, ,
		\end{equation}
		for some sequence $r_i\downarrow 0$ and locally finite measure $\mu$ on $X$ (a priori depending on the chosen sequence).
	\end{lemma}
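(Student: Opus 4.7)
The first assertion follows by applying the representation theorem for Laplacians of distance functions of \cite[Corollary 4.16]{CavallettiMondino18} to $\dist_{\partial X}$, regarded as the distance from the closed set $\partial X$, on the open set $X\setminus \partial X$ on which it is positive. This gives simultaneously the existence of the measure-valued Laplacian $\boldsymbol{\Delta}\dist_{\partial X}$ as a locally finite Radon measure on $X\setminus\partial X$ and the non-positivity of its singular part there.

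For the representation formula \eqref{eq:replapl}, fix $\phi\in\Lip_c(X)$ with $\supp\phi\subset B_R(p)$. Since $\dist_{\partial X}$ is $1$-Lipschitz and $|\nabla\dist_{\partial X}|=1$ $\haus^N$-a.e., the coarea formula \autoref{thm:coarea} yields
\begin{equation*}
\int_0^\infty \Per(\{\dist_{\partial X}>r\},B_R(p))\,\di r \;=\; \haus^N(B_R(p)\setminus \partial X) \;<\;\infty,
\end{equation*}
so $\liminf_{r\to 0^+}\Per(\{\dist_{\partial X}>r\},B_R(p))<\infty$. A diagonal extraction over $R=R_k\uparrow\infty$ produces a sequence $r_i\to 0^+$ along which the perimeters $\Per(\{\dist_{\partial X}>r_i\},B_R(p))$ are uniformly bounded for every $R$. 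By weak compactness of Radon measures we may then pass to a subsequence (not relabeled) such that $\Per(\{\dist_{\partial X}>r_i\},\cdot)\rightharpoonup \mu$ for some locally finite Borel measure $\mu$ on $X$.

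Next, for each sufficiently small $r>0$, introduce the Lipschitz cutoff $\chi_{r,\eps}\colon\setR\to[0,1]$ with $\chi_{r,\eps}=0$ on $(-\infty,r]$, $\chi_{r,\eps}=1$ on $[r+\eps,\infty)$ and linear in between. Then $\phi\cdot(\chi_{r,\eps}\circ\dist_{\partial X})$ is a valid Lipschitz test function with compact support in $X\setminus\partial X$, and the defining property of the measure $\boldsymbol{\Delta}\dist_{\partial X}$ gives
\begin{equation*}
\int (\chi_{r,\eps}\circ\dist_{\partial X})\,\nabla\phi\cdot\nabla\dist_{\partial X}\,\di\haus^N+\tfrac{1}{\eps}\int_{\{r<\dist_{\partial X}<r+\eps\}}\phi\,\di\haus^N=-\int \phi\,(\chi_{r,\eps}\circ\dist_{\partial X})\,\di\boldsymbol{\Delta}\dist_{\partial X},
\end{equation*}
using $|\nabla\dist_{\partial X}|=1$ $\haus^N$-a.e.\ for the middle term. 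For every Lebesgue point $r$ of the $L^1$-function $t\mapsto\int\phi\,\di\Per(\{\dist_{\partial X}>t\},\cdot)$ produced by coarea, sending $\eps\to 0^+$ gives
\begin{equation*}
\int_{\{\dist_{\partial X}>r\}}\nabla\phi\cdot\nabla\dist_{\partial X}\,\di\haus^N+\int\phi\,\di\Per(\{\dist_{\partial X}>r\},\cdot)=-\int_{\{\dist_{\partial X}>r\}}\phi\,\di\boldsymbol{\Delta}\dist_{\partial X},
\end{equation*}
where convergence of the first and last terms is by dominated convergence (the RHS is finite because $\{\dist_{\partial X}>r\}\cap\supp\phi$ is compactly contained in $X\setminus\partial X$, where $\boldsymbol{\Delta}\dist_{\partial X}$ is locally finite).

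Finally, take the $r_i$ from the first step to lie in this set of regular values (possible since its complement has Lebesgue measure zero) and pass to the limit. The left-hand side integral converges to $\int_X\nabla\phi\cdot\nabla\dist_{\partial X}\,\di\haus^N$ by dominated convergence, since $\haus^N(\partial X)=0$ (the boundary lies in the singular set $\mathcal{S}^{N-1}$, which is $\haus^N$-negligible in a noncollapsed $\RCD$ space). The perimeter term converges to $\int\phi\,\di\mu$ by construction. Rearranging produces \eqref{eq:replapl}. The main technical obstacle is establishing the uniform local bound on the perimeters, which is exactly what the coarea identity supplies; a subtler issue is that $\boldsymbol{\Delta}\dist_{\partial X}$ may have unbounded total variation near $\partial X$, which is precisely why we cannot integrate by parts against $\phi$ itself and must truncate and take limits along a subsequence.
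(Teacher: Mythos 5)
There is a genuine gap in the step where you obtain a sequence $r_i\downarrow 0$ along which the perimeters are uniformly bounded. From the coarea identity
\[
\int_0^\infty \Per(\{\dist_{\partial X}>r\},B_R(p))\,\di r \;=\; \haus^N(B_R(p)) \;<\;\infty
\]
you conclude that $\liminf_{r\to 0^+}\Per(\{\dist_{\partial X}>r\},B_R(p))<\infty$. This inference is false: a nonnegative function can be integrable near $0$ and still blow up as $r\to 0^+$ (e.g.\ $g(r)=r^{-1/2}$ has $\int_0^1 g<\infty$ but $\lim_{r\to 0^+}g(r)=\infty$). Mere integrability of the perimeter function near $0$ gives no control on its liminf, so the diagonal extraction that follows is unjustified. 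What the paper actually uses at this point is the tubular neighbourhood volume estimate $\haus^N(\{\dist_{\partial X}<r\})\le Cr$ established earlier (\autoref{thm:lipbdry}(iii)). Combined with coarea this gives $\int_0^{r_0}\Per(\{\dist_{\partial X}>r\})\,\di r \le C r_0$, i.e.\ a scale-invariant bound on the averaged perimeter, and a simple averaging argument then yields for every $r_0>0$ some $s\in(0,r_0)$ with $\Per(\{\dist_{\partial X}>s\})\le C$; this is what produces a sequence $r_i\downarrow 0$ with uniformly bounded perimeters. You need this quantitative tubular estimate — which is itself one of the nontrivial structural outputs of the paper — and your proof cannot get by on coarea alone.

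As a secondary remark, your mechanism for the integration-by-parts on $\{\dist_{\partial X}>r\}$ (testing $\boldsymbol{\Delta}\dist_{\partial X}$ against $\phi\cdot(\chi_{r,\eps}\circ\dist_{\partial X})$ and letting $\eps\to 0$ at Lebesgue points) is a legitimate and in fact more elementary alternative to the paper's route, which invokes the Gauss--Green theorem for bounded vector fields with measure-valued divergence and produces a bounded normal-trace density $f_i$. Your version implicitly yields $f_i\equiv 1$, which is consistent with $|\nabla\dist_{\partial X}|=1$, and if the perimeter-bound step were repaired the rest of your argument would go through with the measure $\mu$ arising as the weak limit of $\Per(\{\dist_{\partial X}>r_i\},\cdot)$.
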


\begin{proof}
	We have already observed that $\boldsymbol{\Delta}\dist_{\partial X}$ has locally measure valued Laplacian on $X\setminus\partial X$ and the singular part of $\boldsymbol{\Delta}\dist_{\partial X}$ is non positive on $X\setminus\partial X$ as a consequence of \cite[Corollary 4.16]{CavallettiMondino18}.

		Let us verify that $\dist_{\partial X}$ verifies \eqref{eq:replapl}. We treat only the case when $(X,\dist)$ (and a fortiori $\partial X$) is compact, the general one can be handled with an additional cut-off argument.
	
		In order to do so we wish to pass to the limit the integration by parts formula on (sufficiently regular) superlevel sets of the distance from the boundary.\\ 
		Observe that, by the coarea formula \autoref{thm:coarea}, for almost every $r>0$, the superlevel set $\{\dist_{\partial X}>r\}$ has finite perimeter. Moreover, the volume bound for the tubular neighbourhood of the boundary 
		\begin{equation}
		\haus^N(\{\dist_{\partial X}<r\})\le Cr\, ,
		\end{equation}
		obtained thanks to \autoref{thm:lipbdry} (iii) via a covering argument,
		together with a further application of the coarea formula, yield the existence of a sequence $(r_i)$ with $r_i\downarrow 0$ as $i\to\infty$ and 
		\begin{equation}\label{eq:boundper}
		\Per (\{\dist_{\partial X}>r_i\})\le C\;\;\;\text{for any $i\in\setN$}\, .
		\end{equation}

		Since $\dist_{\partial X}$ has measure valued Laplacian on $X\setminus\partial X=\{\dist_{\partial X}>0\}$, the bounded vector field $\nabla \dist_{\partial X}$ has measure valued divergence on the same domain. Therefore, applying the Gauss Green theorem \cite[Section 6]{BuffaComiMiranda} to the vector field $\phi\nabla\dist_{\partial X}$ on the domain $\{\dist_{\partial X}>r_i\}$ we infer that
		\begin{equation}\label{eq:approxgg}
		\int_{\{\dist_{\partial X}>r_i\}}\nabla\phi\cdot\nabla\dist_{\partial X}\di\haus^N=-\int_{\{\dist_{\partial X}>r_i\}}\phi\di\boldsymbol{\Delta}\dist_{\partial X}-\int\phi f_i\di\Per(\{\dist_{\partial X}>r_i\})\, ,
		\end{equation} 
		for some function $f_i$ verifying 
		\begin{equation}\label{eq:boundf}
		\norm{f_i}_{L^{\infty}(\Per(\{\dist_{\partial X}>r_i\}))}\le 1\, .
		\end{equation}
		Thanks to \eqref{eq:boundper} and \eqref{eq:boundf} we can assume that, up to extracting a subsequence, the measures $f_i\Per(\{\dist_{\partial X}>r_i\})$ weakly converge to a finite measure $\nu$ on $X$ in duality with continuous functions. Therefore we can pass to the limit in \eqref{eq:approxgg} as $i\to\infty$ to get that 
		\begin{equation}
		\int_X\nabla\phi\cdot\nabla\dist_{\partial X}\di\haus^N=-\int\phi\di\mu-\lim_{r_i\to 0}\int_{\{\dist_{\partial X}>r_i\}}\phi\di\boldsymbol{\Delta}\dist_{\partial X}\, ,
		\end{equation}
		as we claimed.
\end{proof}

\begin{proof}[Proof of \autoref{thm:consequencesconj}]

	Let $\mu$ and $r_i\downarrow 0$ be as in \autoref{lemma:distancebdry}. If \autoref{conj:laplasign} holds then we deduce that
	\begin{equation*}
	\int \nabla\phi\cdot\nabla\dist_{\partial X}\di\haus^N \le -\int\phi\di\mu -K\int\phi\dist_{\partial X}\di\haus^N
	\quad \text{for any $\phi \in \Lip_c(X)$ s.t. $\phi\ge 0$}\, .
	\end{equation*}
	In particular $\phi \mapsto \int \nabla\phi\cdot\nabla\dist_{\partial X}\di\haus^N + \int\phi\di\mu + K\int\phi\dist_{\partial X}\di\haus^N$ is a negative linear map. Hence there exists a nonnegative locally finite measure $\nu$ such that
	\[
	\int \nabla\phi\cdot\nabla\dist_{\partial X}\di\haus^N + \int\phi\di\mu +K\int\phi\dist_{\partial X}\di\haus^N = - \int \phi \di \nu,\quad \text{for any $\phi \in \Lip_c(X)$}\, .
	\]
	This implies that $\dist_{\partial X}$ has measure valued Laplacian on $X$.\\

	Let us now prove that 
	\begin{equation}\label{eq:reslapldist}
	\boldsymbol{\Delta}\dist_{\partial X}\res\partial X = \haus^{N-1}\res \partial X\, .
	\end{equation}

   Observe first that $\boldsymbol{\Delta}\dist_{\partial X}\ll\haus^{N-1}$ as consequence of the following more general observation, applied to $b=\nabla\dist_{\partial X}$: if $b$ is a bounded vector field with measure valued divergence $\boldsymbol{\mathrm{div}}\,b$ on a noncollapsed $\RCD$ space $(X,\dist,\haus^N)$, then $\boldsymbol{\mathrm{div}}\,b\ll\haus^{N-1}$. 
    	
    In order to prove the property above we rely on the integration by parts formula for bounded vector fields with measure valued divergence proved in this framework in \cite{BuffaComiMiranda}, taking into account the bound $\Per(B_r(x))\le C_{K,N}r^{N-1}$ for any $0<r<1$ and following the Euclidean strategy in \cite{NguyenTorres}. Relying on these tools we infer that
    \begin{equation}
    	\abs{\boldsymbol{\mathrm{div}}\,b(B_r(x))}\le C_{K,N}\norm{b}_{\infty}r^{N-1},\;\;\;\text{for any $0<r<1$}\, ,
    \end{equation}		
    which suffices to conclude that $\boldsymbol{\mathrm{div}}\,b\ll\haus^{N-1}$.
    	
    Next observe that $\boldsymbol{\Delta} \dist_{\partial X}\res\partial X$ is absolutely continuous with respect to $\haus^{N-1}\res\partial X$, which is a locally finite measure on an $(N-1)$-rectifiable set by \autoref{thm:lipbdry}. By standard differentiation of measures tools we infer that, in order to prove \eqref{eq:reslapldist}, we need to show that
    	\begin{equation}\label{eq:density}
    	\lim_{r\to 0}\frac{\boldsymbol{\Delta} \dist_{\partial X}(B_r(x))}{\omega_{N-1}r^{N-1}}=1,\;\;\;\text{for $\haus^{N-1}$-a.e. $x\in\partial X$}\, .
    	\end{equation}
    The validity of \eqref{eq:density} can be checked at any regular boundary point $x\in\mathcal{S}^{N-1}\setminus\mathcal{S}^{N-2}$, applying \autoref{lemma:distancetodistance} to the sequence of scaled spaces converging to the tangent half-space. Indeed the $W^{1,2}$ convergence of the distance functions, yields the weak convergence of their Laplacians, which yields in turn \eqref{eq:density} by scaling.
    
    \medskip
    
    We can now pass to the proof of (ii). Let $\phi\in \Lip(X)$ be nonnegative with bounded support. The coarea formula \autoref{thm:coarea} yields 
	\[
     \frac{\di}{\di s} \int_{B_s(\partial X)} \phi \di \haus^N =
	\int \phi \di \Per(\set{\dist_{\partial X}>s})\quad \text{for a.e. $s\ge 0$}\, .
	\]
	Using again the Coarea formula, the $\haus^N$-a.e. identity $|\nabla \dist_{\partial X}|^2=1$ and integrating by parts, we get 
	\begin{equation}\label{z33}
	\int_0^{\infty} a'(s) \int \phi \di \Per(\set{\dist_{\partial X}>s}) = -\int a(\dist_{\partial X}) \div (\phi \nabla \dist_{\partial X}) \di \haus^{N}\, ,
	\end{equation}
	for any $a\in C^{\infty}_c(0,\infty)$. A simple approximation and Lebesgue points argument allows plugging $a(s)=\chi_{\{s\le r\}}$ on \eqref{z33} for almost every $r>0$, yielding
	\[
	\int \phi \di \Per(\set{\dist_{\partial X}> r}) = \int_{\{\dist_{\partial X}<r\}} \div (\phi \nabla \dist_{\partial X}) \di \haus^{N} \quad \text{for a.e. $r\ge 0$}\, .
	\]
	All in all we have
	\begin{align*}
	\frac{\di}{\di s} \int_{B_s(\partial X)} \phi \di \haus^N &=
	\int \phi \di \Per(\set{\dist_{\partial X}>s}) 
	\\& = \int_{B_s(\partial X)} \div(\phi \nabla \dist_{\partial X}) \di \haus^N
	\\& \le \int_{B_s(\partial X)} |\nabla \phi| \di \haus^N + \int_{B_s(\partial X)} \phi \di \boldsymbol{\Delta}\dist_{\partial X}
	\\& \overset{\eqref{eq:conjlapl}, (i)}{\le} \int_{B_s(\partial X)} |\nabla \phi| \di \haus^N + \int_{\partial X} \phi \di \haus^{N-1} + K^- s \int_{B_s(\partial X)} \phi \di \haus^N\, ,
	\end{align*}
	for a.e. $s\ge 0$. 
	
	Let $t\in (0,1)$. An additional approximation argument allows to plug $\phi= \chi_{B_t}(p)$ in the previous inequality. Setting $f(s,t):= \haus^N(B_s(\partial X)\cap B_t(p))$, we then infer that $s\mapsto f(s,t)$ is absolutely continuous and
	\begin{equation}\label{z31}
	\frac{\di}{\di s} f(s,t) \le  \haus^{N-1}(\partial X\cap B_t(p)) +  \Per(B_t(p), B_s(\partial X)) + K^- s f(s,t) \le C(N,K)\, ,
	\end{equation}
	for a.e. $s\in (0,1)$. This yields in turn
	\begin{equation}
		\frac{\di}{\di s} \int_0^t f(s,r)\di r \le  t \haus^{N-1}(\partial X\cap B_t(p)) +  f(s,t) + K^- s \int_0^t f(s,r) \di r\, ,
	\end{equation}
	for a.e. $s\in (0,1)$,
	thanks to the coarea formula and the bound
	\begin{equation}\label{z32}
		|f(s,t) - f(s',t)|\le |s-s'| C(N,K) \quad \text{for any $s,s',t\in (0,1)$}\, .
	\end{equation}	

   By using \eqref{z31} and \eqref{z32} it is simple to verify that $s\mapsto \int_0^{1-s}f(s,1-r)\di r$ is absolutely continuous in $(0,1)$. We now prove that
	\begin{equation}\label{z30}
	\frac{\di}{\di s} \int_0^{1-s}f(s,1-r)\di r \le \haus^{N-1}(\partial X\cap B_{1-s}(p)) + s K^{-} \int_0^{1-s}f(s,1-r)\di r\, ,
	\end{equation}
	for a.e. $s\in (0,1)$.\\
    For any $t\in (0,1)$ we denote by $I_t\subset [0,1]$ the set of $s\in (0,1)$ such that \eqref{z31} holds true. Given $s\in \cap_{t\in \setQ\cap (0,1)} I_t=:I$ and $\eps>0$ we consider $q\in \setQ$ such that $s<q<s+\eps$. Then we have
	\begin{equation}
	\begin{split}
		 \int_0^{1-s} &  \frac{f(s+h,1-r)-f(s,1-r)}{h}\di r \\ 
		\le & \int_0^{1-q} \frac{f(s+h,1-r)-f(s,1-r)}{h}\di r + \eps C(N,K)\, ,
	\end{split}
	\end{equation}
	for any $0<h<1$ small enough, as a consequence of \eqref{z31}. Therefore, using the fact that $s<q$, we get
	\begin{equation*}
	\begin{split}
	\limsup_{h\to 0}  &\int_0^{1-s}  \frac{f(s+h,1-r)-f(s,1-r)}{h}\di r
	 \\
	\le& (1-q) \haus^{N-1}(\partial X\cap B_{1-q}(p)) +  f(s,1-q)\\ 
	&+ K^- s \int_0^{1-q} f(s,1-r) \di r + \eps C(N,K)
	\\
	\le &(1-s) \haus^{N-1}(\partial X\cap B_{1-s}(p)) +  f(s,1-s)\\ 
	&+ K^- s \int_0^{1-s} f(s,1-r) \di r + \eps C(N,K)\, ,
	\end{split}
	\end{equation*}
	for any $s\in I$. Letting $\eps\to 0$ we conclude
	\begin{equation}\label{z35}
    \begin{split}
    \limsup_{h\to 0}  &\int_0^{1-s}  \frac{f(s+h,1-r)-f(s,1-r)}{h}\di r
   \\&
   \le (1-s) \haus^{N-1}(\partial X\cap B_{1-s}(p)) +  f(s,1-s) + K^- s \int_0^{1-s} f(s,1-r) \di r\, ,
    \end{split}
	\end{equation}
	for any $s\in I$.

	Using once more \eqref{z32} we easily deduce
	\begin{equation}
	\lim_{h\to 0}-\frac{1}{h}\int_{1-s-h}^{1-s} f(s+h,1-r)\di r = - f(s,1-s)\, , 
	\end{equation}
	which along with \eqref{z35} gives \eqref{z30} for any $s\in I$ such that the derivative $\frac{\di}{\di s} \int_0^{1-s}f(s,1-r)\di r$ exists.

	We can finally conclude the proof of \eqref{eq:lowerareabound} by integrating \eqref{z30} in $(0,1/2)$:
	\[
	\frac{1}{2} e^{\frac{1}{8}K^{-}}\haus^N(B_{1/2}(p))
	\le \int_0^{1/2} e^{\frac{1}{8}K^{-}} \haus^N(B_{1/2}(\partial X)\cap B_{1/2}(p))
	\le \frac{1}{2} \haus^{N-1}(\partial X\cap B_{1}(p)).
	\]

   \medskip
   
   Let us now address (iii). This assertion can be obtained by combining \autoref{cor:stabwithlowerbound} and the inequality
      \begin{equation}\label{eq:scalinginvariantnoncollapsboundary}
     \liminf_{r\to 0} \frac{\haus^{N-1}(B_r(x)\cap \partial X)}{\omega_{N-1} r^{N-1}} \ge C(K) \Theta_X(x)\quad \text{for any $x\in \partial X$}
      \end{equation}
   which follows in turn from  the scaling invariant version of \eqref{eq:lowerareabound}. 
\end{proof}

\subsection{Alexandrov spaces and noncollapsed Ricci limits with boundary}

We are able to verify \autoref{conj:laplasign} in the setting of Alexandrov spaces with curvature bounded below and in the case of Ricci limits with boundary.
\medskip
	
Let us recall that an $N$-dimensional Alexandrov space with curvature bounded from below by $k$ and equipped with the measure $\haus^N$ is a noncollapsed $\RCD(k(N-1),N)$ space, see \cite{Petrunin,ZhangZhu10}.

\begin{proposition}\label{prop:laplalex}
Let $(X,\dist,\haus^N)$ be an Alexandrov space with curvature bounded from below by $k$ and assume that $\partial X\neq\emptyset$. Then $\dist_{\partial X}$ has locally measure valued Laplacian,
\begin{equation}\label{eq:intbdlaplaalex}
\boldsymbol{\Delta}^{ac}\dist_{\partial X}\le -k(N-1)\dist_{\partial X}\;\;\text{and}\;\;
\boldsymbol{\Delta}^{s}\dist_{\partial X}\le 0
\quad
\text{on $X\setminus\partial X$}\, .
\end{equation}	
In particular, there exists a constant $C(k,N)>0$ such that the following holds: if $p\in \partial X$, then
\begin{equation}\label{eq:lowerareaboundalex}
\haus^{N-1}(B_2(p)\cap\partial X) > C(k,N) \haus^N(B_1(p))\, .\footnote{Let us point out that this lower bound for the boundary volume seems to be not present in the literature, although it is pointed out, without proof and in the compact case, by Petrunin in a discussion on MathOverflow.}
\end{equation}
\end{proposition}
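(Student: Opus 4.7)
The plan is to reduce to proving the pointwise bound $\boldsymbol{\Delta}^{ac}\dist_{\partial X}\le -k(N-1)\dist_{\partial X}$ on $X\setminus\partial X$, since \autoref{lemma:distancebdry} already provides the existence of the measure-valued Laplacian on $X\setminus\partial X$ and the nonpositivity of its singular part (those parts only use the $\RCD(-(N-1),N)$ hypothesis, which is implied by curvature $\ge k$). Once this sharp absolutely continuous bound is established, \autoref{conj:laplasign} is verified for $X$ with $K=k(N-1)$, and the boundary volume estimate \eqref{eq:lowerareaboundalex} follows immediately from \autoref{thm:consequencesconj}(ii).

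The geometric input will be Perelman's doubling theorem: the double $\tilde X:=X\sqcup_{\partial X}X$ is again an $N$-dimensional Alexandrov space with curvature bounded below by $k$, and hence a noncollapsed $\RCD(k(N-1),N)$ space; it carries an isometric involution $\iota$ whose fixed point set is $\partial X$. The function $\tilde f:=\dist_{\partial X}^{\tilde X}$ restricts to $\dist_{\partial X}$ on each copy of $X$. A minimizing geodesic in $\tilde X$ from an interior point to its foot on $\partial X$ meets $\partial X$ orthogonally and can be extended as a geodesic of $\tilde X$ into the opposite copy; Toponogov comparison applied in $\tilde X$ with the model space of constant curvature $k$ then yields Perelman's $k$-concavity of $\tilde f$: along every unit-speed geodesic $\gamma$ in $\tilde X$,
\begin{equation*}
(\tilde f\circ\gamma)''(t)\le -k\,(\tilde f\circ\gamma)(t)\qquad\text{in the barrier sense on }\{\tilde f>0\}.
\end{equation*}

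The remaining step is to convert this one-dimensional concavity into the trace Laplacian bound $\boldsymbol{\Delta}^{ac}\tilde f\le -k(N-1)\tilde f$ on $\tilde X\setminus\partial X$. Since $|\nabla\tilde f|=1$ almost everywhere, the second derivative of $\tilde f$ along its own gradient flow vanishes, so that only the $(N-1)$ directions transverse to $\nabla\tilde f$ contribute to the trace, each producing the $-k\tilde f$ factor; summing gives the $-k(N-1)\tilde f$ bound. Making this rigorous can be done either via Petrunin's generalized Hessian theory for semi-concave functions on Alexandrov spaces combined with the identification of the Alexandrov and $\RCD$ Laplacians, or by mollifying $\tilde f$ through the heat semigroup of $\tilde X$ and passing to the limit in the improved Bochner inequality with Hessian term \cite{Gigli18}. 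Since $\iota$ is an isometry preserving $\tilde f$, the bound restricts to each copy of $X$ and yields the desired inequality on $X\setminus\partial X$.

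The main technical obstacle will be precisely this compatibility check: one has to verify that the absolutely continuous part of the $\RCD$-Laplacian of $\tilde f$ on $\tilde X\setminus\partial X$ coincides with the trace of Petrunin's generalized Hessian, and that the barrier-sense $k$-concavity along geodesics survives the smoothing procedure at the order needed to produce the dimensional factor $(N-1)$. This is where the interaction between the Alexandrov metric notion of concavity and the Cheeger--Sobolev notion of Laplacian used in \autoref{conj:laplasign} has to be carefully controlled; the rest of the argument is, by contrast, a direct application of the tools developed in this section.
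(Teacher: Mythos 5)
Your overall structure is correct and matches the paper's: reduce to the pointwise bound on $\boldsymbol{\Delta}^{ac}\dist_{\partial X}$, invoke \autoref{lemma:distancebdry} for the existence and nonpositivity of the singular part, and then close with \autoref{thm:consequencesconj}(ii). You also correctly identify that the one-dimensional barrier concavity along geodesics needs to be traced, with the gradient direction contributing zero (the restriction of $\dist_{\partial X}$ to a minimizing geodesic to its foot point is affine), producing the factor $(N-1)$ rather than $N$. Where you diverge from the paper is in the two substeps.

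For the barrier concavity $(\dist_{\partial X}\circ\gamma)''+k\,\dist_{\partial X}\circ\gamma\le 0$, the paper cites Alexander--Bishop \cite{AlexanderBishop} directly: their $\mathcal{F}k$-concavity result is proved for Alexandrov spaces with boundary and applies without any detour. Your route through Perelman's doubling theorem plus Toponogov comparison in the double $\tilde X$ is a valid alternative proof of the same fact, but it adds machinery (doubling, identification of the fixed-point set of the involution, orthogonality at the boundary) that the direct citation avoids. It does not simplify the argument and introduces additional points that would themselves need verification.

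For the trace step, your first proposed route (Petrunin's generalized Hessian for semiconcave functions on Alexandrov spaces, combined with identifying the Alexandrov Laplacian with the Sobolev Laplacian) is essentially what the paper does. Concretely, it invokes Alexandrov's theorem for DC functions \cite[Proposition 7.5]{AmbrosioBertrand18} to obtain a pointwise a.e. defined absolutely continuous Hessian $\Hess^{\rm ac}\dist_{\partial X}$, uses the barrier concavity plus the affine direction to get $\mathrm{tr}\,\Hess^{\rm ac}\dist_{\partial X}\le -k(N-1)\dist_{\partial X}$ a.e., and then uses \cite[Propositions 5.8, 5.9]{AmbrosioBertrand18} to identify the Laplacian with the trace of the Hessian. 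You flag exactly this compatibility as the main technical obstacle, which is accurate; the paper resolves it by citing the precise results in Ambrosio--Bertrand. Your second proposed route (heat semigroup mollification plus the improved Bochner inequality) is not convincing as stated: Bochner's inequality controls $\Delta|\nabla u|^2$ in terms of $|\Hess u|^2$ and Ricci, and offers no evident mechanism to transfer one-dimensional barrier concavity of a nonsmooth $\tilde f$ into a sharp pointwise bound on $\Delta^{\rm ac}\tilde f$ with the dimensional factor $(N-1)$. I would not count on that route working without substantial additional ideas; the DC-calculus route is the one that closes the argument.
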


Next we deal with the case of $\RCD$ spaces with boundary that are also smooth Riemannian manifolds, see \cite{Han17}. This is a key tool in order to address the case of limits of Riemannian manifolds with boundary later. Let us point out that bounds for the Laplacian on the distance from the boundary in the sense of barriers and under different assumptions have been considered in \cite{Perales16} (see also the references therein).

	\begin{proposition}\label{prop:laplnonriem}
		Let $(X,\dist,\haus^N)$ be a smooth $N$-dimensional Riemannian manifold with convex boundary $\partial X$ and Ricci curvature bounded from below by $K$ in the interior. Then $\dist_{\partial X}$ has locally measure valued Laplacian,
		\begin{equation}\label{eq:laplacompriem}
		\boldsymbol{\Delta}^{ac}\dist_{\partial X}\le - K \dist_{\partial X}\;\;\text{and}\;\;
		\boldsymbol{\Delta}^{s}\dist_{\partial X}\le 0
		\quad
		\text{on $X\setminus\partial X$}\, .
		\end{equation}	
		In particular
		\begin{equation}\label{eq:lowerareaboundriema}
		\haus^{N-1}(B_2(p)\cap\partial X) > C(K) \haus^N(B_1(p))\quad \text{ for any $p\in \partial X$}\, .
		\end{equation}
	\end{proposition}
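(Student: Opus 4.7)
The plan is to establish the Laplacian bound \eqref{eq:laplacompriem}; once this is in hand, \eqref{eq:lowerareaboundriema} is immediate from \autoref{thm:consequencesconj}(ii), which is applicable because a smooth Riemannian manifold with convex boundary and interior Ricci bound $K$ is a noncollapsed $\RCD(K,N)$ space by \cite{Han17}. So all the work reduces to verifying \autoref{conj:laplasign} in this smooth setting via a Riccati/Bochner comparison for the distance from a convex hypersurface.

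First I would work on the open subset $U\subset X\setminus\partial X$ of points with a unique foot on $\partial X$ and lying strictly before the focal/cut locus of $\partial X$; on $U$ the function $d_{\partial X}$ is smooth. For $x\in U$, let $\gamma:[0,d_{\partial X}(x)]\to X$ be the unit-speed geodesic realizing the distance, with $\gamma(0)\in \partial X$ and $\gamma'(0)$ equal to the inward unit normal to $\partial X$. Setting $h(s):=\Delta d_{\partial X}(\gamma(s))$ and applying the Bochner identity to $d_{\partial X}$ along $\gamma$, together with the trace inequality $|\Hess\, d_{\partial X}|^2\ge h^2/(N-1)$ (valid since $\Hess\, d_{\partial X}$ vanishes in the direction of $\nabla d_{\partial X}$), yields the scalar Riccati inequality
\begin{equation}
h'(s)+\frac{h(s)^2}{N-1}+\Ric(\gamma'(s),\gamma'(s))\le 0\qquad \text{for } 0<s\le d_{\partial X}(x).
\end{equation}
The initial value $h(0^+)$ is the mean curvature of $\partial X$ at $\gamma(0)$ with respect to the inward normal $\gamma'(0)$, which is nonpositive by the convexity assumption on $\partial X$. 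Dropping the nonnegative quadratic term and using $\Ric\ge K$ gives $h'(s)\le -K$, and integrating from $0$ to $s$ produces $h(s)\le -K s = -K\, d_{\partial X}(\gamma(s))$, which is the pointwise form of \eqref{eq:laplacompriem} on $U$.

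The main obstacle is to upgrade this pointwise bound on $U$ to the full measure-theoretic statement on $X\setminus \partial X$, since at the focal/cut locus of $\partial X$ the function $d_{\partial X}$ is only Lipschitz and the distributional Laplacian may pick up a singular contribution. The key facts to invoke are: (a) $d_{\partial X}$ is semiconcave on $X\setminus\partial X$ (standard, as $\partial X$ is a smooth closed hypersurface in $X$); and (b) by \autoref{lemma:distancebdry}, which specializes \cite[Corollary 4.16]{CavallettiMondino18} to our setting, $d_{\partial X}$ admits locally measure-valued Laplacian on $X\setminus \partial X$ with nonpositive singular part. Combining the pointwise bound on $U$ with the vanishing of $\haus^N$-measure of the cut locus and the sign of the singular part then gives $\boldsymbol{\Delta}^{ac}\dist_{\partial X}\le -K\dist_{\partial X}$ and $\boldsymbol{\Delta}^{s}\dist_{\partial X}\le 0$ on $X\setminus\partial X$, completing the verification of \autoref{conj:laplasign} in the smooth case. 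Plugging this into \autoref{thm:consequencesconj}(ii) yields \eqref{eq:lowerareaboundriema} and concludes the proof.
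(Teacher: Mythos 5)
Your proof is correct, but it follows a genuinely different route from the paper. Where you argue classically — applying Bochner's identity along the distance-minimizing geodesics on the open set $U$ where $\dist_{\partial X}$ is smooth, deriving the scalar Riccati inequality $h' + h^2/(N-1) + \Ric(\gamma',\gamma')\le 0$, using the nonpositive mean curvature of the convex boundary as initial datum, and then appealing to the fact that the cut/focal locus of $\partial X$ is $\haus^N$-null to extend the pointwise inequality to the absolutely continuous part $\haus^N$-a.e. — the paper instead avoids explicit geodesic/cut-locus analysis and invokes the one-dimensional localization (needle) technique of Cavalletti--Mondino. There the volume measure is disintegrated along the transport rays of $\dist_{\partial X}$, each conditional density $h_\alpha$ is $\log$-$K$ concave because the ray is a $\CD(K,N)$ space, the identity $\boldsymbol{\Delta}^{\rm ac}\dist_{\partial X}=-(\log h_\alpha)'$ is used, and the sign at the boundary plus $K$-concavity propagates the bound along the ray. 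Your version is more elementary and is essentially the classical comparison argument for the distance from a convex hypersurface; it buys transparency and needs no machinery beyond standard Riemannian geometry and the (already recorded) fact from \autoref{lemma:distancebdry} that the singular part is nonpositive. The paper's version is more in the spirit of the synthetic techniques used throughout, works entirely at the level of measures without reference to smooth loci, and is therefore more readily adapted to nonsmooth or limiting settings — which is why the authors favor it, given the later aim of passing to Ricci limits in \autoref{thm:riccilimits}. Both correctly reduce the final estimate \eqref{eq:lowerareaboundriema} to \autoref{thm:consequencesconj}(ii).
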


The lower bound for the volume of the boundary in the case of smooth $\RCD$ spaces with boundary allows us to get a more complete picture about their pGH limits.
The last result of this section provides, in particular, a positive answer to \cite[Questions 4.4, 4.7, 4.9]{KapovitchMondino19} in this setting.

	\begin{theorem}\label{thm:riccilimits}
		Let $(X,\dist,\haus^N,p)$ be the noncollapsed pGH limit of a limit of smooth $N$-dimensional Riemannian manifolds $(X_n,\dist_n)$ with convex boundary and Ricci curvature bounded from below by $K$ in the interior. Assume that there exists $R>0$ such that $B_R(p_n)\cap \partial X_n\neq\emptyset$ for any $n\in\setN$. Then
		\begin{itemize}
			\item[i)]  $\partial X\neq\emptyset$. Moreover, if points $x_n\in \partial X_n$ converge to $x\in X$, then $x\in\partial X$;
			\item[ii)] $\dist_{\partial X}$ has measure valued Laplacian,
			\begin{equation}\label{eq:laplbound}
			\boldsymbol{\Delta}^{ac}\dist_{\partial X}\le - K \dist_{\partial X}\;\;\text{and}\;\;
			\boldsymbol{\Delta}^{s}\dist_{\partial X}\le 0
			\quad
			\text{on $X\setminus\partial X$.}
			\end{equation}
			and
			\begin{equation}\label{eq:laplarep}
			\boldsymbol{\Delta}\dist_{\partial X}\res\partial X=\haus^{N-1}\res\partial X;
			\end{equation}
			\item[iii)] $\partial X=\mathcal{F}X$, $\haus^{N-1}\res\partial X$ is locally Ahlfors regular and for any $x\in\partial X$ any tangent cone at $x$ has boundary.
		\end{itemize}	
	\end{theorem}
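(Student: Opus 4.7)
The plan is to reduce everything to \autoref{thm:consequencesconj}: once (ii) is in place, both (i) and (iii) follow formally from the general theory already developed. The strategy is therefore to first establish Hausdorff convergence of boundaries $\partial X_n\to\partial X$, and then to pass the smooth Laplacian bound \eqref{eq:laplacompriem} from \autoref{prop:laplnonriem} to the limit.

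For (i), I would use the scale invariant form of \eqref{eq:lowerareaboundriema} on each smooth $X_n$: at any $x_n\in\partial X_n$, since $\Theta_{X_n}(x_n)=1/2$, one has $\haus^{N-1}(\partial X_n\cap B_r(x_n))\ge c(K)\,r^{N-1}/2$ for small $r$. Plugging this into \autoref{cor:stabwithlowerbound} at the scale $r$ around $x_n\to x$ gives $\haus^{N-1}(\partial X\cap\overline B_r(x))>0$ for every $r>0$, and since $\partial X$ is closed, $x\in\partial X$. In particular the assumption $B_R(p_n)\cap\partial X_n\neq\emptyset$ yields $\partial X\neq\emptyset$. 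The converse direction---every $x\in\partial X$ is approximated by points of $\partial X_n$---follows from \autoref{thm:stabimp}: if no such approximating sequence existed, some ball $B_r(x_n)$ with $x_n\to x$ would be boundary-free for large $n$, forcing $B_{r/2}(x)$ to be boundary-free and contradicting $x\in\partial X$. Together these give Kuratowski and hence Hausdorff convergence on compact sets, and therefore uniform convergence $\dist_{\partial X_n}\to\dist_{\partial X}$ by \autoref{rm:compvarconv}, which strengthens to $W^{1,2}_{\loc}$ convergence as in \autoref{lemma:distancetodistance} using $|\nabla\dist_{\partial X_n}|=|\nabla\dist_{\partial X}|=1$ a.e. and noncollapsed volume convergence.

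For (ii), I would test the Laplacian inequality of \autoref{prop:laplnonriem} on $X_n\setminus\partial X_n$ against a nonnegative $\phi\in\Lip_c(X\setminus\partial X)$ with support in $\{\dist_{\partial X}\ge 2\eps\}$. For $n$ large, the Hausdorff convergence of boundaries puts $\supp\phi\subset\{\dist_{\partial X_n}\ge\eps\}$, and integration by parts on $X_n$ combined with the full bound $\boldsymbol{\Delta}\dist_{\partial X_n}\le -K\,\dist_{\partial X_n}\cdot\haus^N$ (which merges the absolutely continuous inequality with $\boldsymbol{\Delta}^s\le 0$) yields
\[
\int_{X_n}\nabla\phi\cdot\nabla\dist_{\partial X_n}\di\haus^N\ge K\int_{X_n}\phi\,\dist_{\partial X_n}\di\haus^N\,.
\]
Passing to the limit via the $W^{1,2}_{\loc}$ convergence of distance functions and the noncollapsed volume convergence, and then exhausting by $\eps\downarrow 0$, gives the distributional inequality $\boldsymbol{\Delta}\dist_{\partial X}\le -K\,\dist_{\partial X}\cdot\haus^N$ on $X\setminus\partial X$. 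Combining with the nonpositivity of $\boldsymbol{\Delta}^s\dist_{\partial X}$ from \autoref{lemma:distancebdry} separates the two inequalities \eqref{eq:laplbound}. The identity \eqref{eq:laplarep} is then obtained exactly as in the proof of \autoref{thm:consequencesconj} (i): the absolute continuity $\boldsymbol{\Delta}\dist_{\partial X}\ll\haus^{N-1}$ comes from the perimeter/divergence-measure bound available in the noncollapsed setting, and the density computation at $\haus^{N-1}$-a.e.\ regular boundary point $x\in\mathcal{S}^{N-1}\setminus\mathcal{S}^{N-2}$ is carried out by the $W^{1,2}$ convergence in \autoref{lemma:distancetodistance} applied to rescalings converging to $\setR_+^N$.

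Assertion (iii) is then immediate: (ii) is exactly the hypothesis of \autoref{thm:consequencesconj}, so \eqref{eq:lowerareabound} combined with the upper bound from \autoref{thm:boundatystructure} (i) gives local Ahlfors regularity; the scale invariant form \eqref{eq:scalinginvariantnoncollapsboundary} of the lower bound forces every tangent cone at $x\in\partial X$ to carry strictly positive $\haus^{N-1}$ measure on its boundary stratum, hence to have boundary, yielding $\partial X\subset\mathcal{F}X$; the reverse inclusion is \autoref{thm:lipbdry} (i). The main obstacle is precisely the distributional limit in (ii), since although the $W^{1,2}_{\loc}$ convergence handles the linear term cleanly, each $\dist_{\partial X_n}$ develops its singular Laplacian contribution on the moving set $\partial X_n$; the exhaustion by test functions supported in $\{\dist_{\partial X}\ge 2\eps\}$ is what makes the passage uniform and keeps all integrations by parts confined to the smooth-on-both-sides region away from the singular loci.
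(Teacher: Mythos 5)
Your proof follows essentially the same route as the paper's: establish both directions of the boundary convergence (the scale-invariant lower bound from \autoref{prop:laplnonriem} plus \autoref{cor:stabwithlowerbound} for "limit of boundary points is a boundary point", the stability result for the converse), deduce locally uniform and $W^{1,2}_{\loc}$ convergence of $\dist_{\partial X_n}$, pass the smooth Laplacian bound to the limit, and invoke \autoref{thm:consequencesconj} for (iii). You spell out the integration-by-parts passage to the limit with test functions supported in $\{\dist_{\partial X}\ge 2\eps\}$, which the paper treats tersely ("pass to the limit their measure valued Laplacians"), but the argument is the same.
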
	

The remaining part of this subsection is devoted to the proof of \autoref{prop:laplalex}, \autoref{prop:laplnonriem} and \autoref{thm:riccilimits}.

\begin{proof}[Proof of \autoref{prop:laplalex}]
We avoid introducing all the relevant background about calculus on Alexandrov spaces, since this is not the main topic of the paper. We refer to \cite{AmbrosioBertrand18,BuragoBuragoIvanov01} for the relevant notions and references.
	
In \cite{AlexanderBishop} it is proved that on any Alexandrov space with curvature bounded from below by $k$ and non empty boundary, the distance function from the boundary is $\mathcal{F}k$-concave, that is to say its restriction to any unit speed geodesic $\gamma:[0,1]\to X$ verifies
\begin{equation}\label{eq:barrier}
\left(\dist_{\partial X}\circ\gamma\right)''+k\dist_{\partial X}\circ\gamma\le 0
\end{equation}		
in the sense of barriers.
We already know that the singular part of $\boldsymbol{\Delta}\dist_{\partial X}$ is non positive on $X\setminus\partial X$ (see \autoref{lemma:distancebdry}). It is then sufficient to prove that its absolutely continuous part verifies 
\begin{equation}\label{z34}
\boldsymbol{\Delta}^{ac}\dist_{\partial X}\le - K \dist_{\partial X}
\quad
\text{on $X\setminus\partial X$.}
\end{equation}	

Combining \eqref{eq:barrier} with Alexandrov's theorem (see \cite[Proposition 7.5]{AmbrosioBertrand18} for its proof in the setting of DC functions on Alexandrov spaces) and the fact that the restriction of $\dist_{\partial X}$ to a minimizing geodesic connecting a regular point to a point of minimal distance on the boundary is affine, we infer that 
\begin{equation}\label{eq:hessac}
\mathrm{tr}\Hess^{\rm{ac}}\dist_{\partial X}\le- k(N-1)\dist_{\partial X},\;\;\;\text{$\haus^N$-a.e. on $X$}\, ,
\end{equation}
where $\Hess^{\rm{ac}}\dist_{\partial X}$ denotes the absolutely continuous part of the Hessian of $\dist_{\partial X}$.  Indeed, we recall that the tangent cone on an Alexandrov space can be equivalently characterised as the set of geodesic directions. Then \eqref{eq:hessac} follows by tracing, since there is a direction $v$ along which $\Hess^{\rm{ac}}\dist_{\partial X}(v,v)=0$, while along all the others the estimate $\Hess^{\rm{ac}}\dist_{\partial X}(v,v)\le -k\dist_{\partial X}$ holds by \eqref{eq:barrier}.

The conclusion \eqref{z34} follows from the fact that the Laplacian is the trace of the Hessian in this context, see \cite[Proposition 5.8, 5.9]{AmbrosioBertrand18}.

The second conclusion of \autoref{prop:laplalex} follows from \autoref{thm:consequencesconj}.

\end{proof}

\begin{proof}[Proof of \autoref{prop:laplnonriem}]
Observe that, as pointed out in \cite{Han17}, smooth Riemannian manifolds with convex boundary and Ricci curvature bounded from below in the interior are $\RCD$ spaces.

Also in this case it is sufficient to deal only with the absolutely continuous part of the Laplacian. The bound \eqref{eq:laplacompriem} can be obtained starting from the following observation: since the boundary is smooth and convex it has non positive second fundamental form. Therefore by tracing we infer that it has nonpositive mean curvature. By smoothness again we infer that the restriction of the a.c. part of the Laplacian of the distance from the boundary on the boundary is non positive (it coincides with the mean curvature). By the localization technique \cite{CavallettiMondino18} we can then propagate the non positivity of the Laplacian on the boundary to the interior to obtain \eqref{eq:laplacompriem}.\\ 
More in detail, we recall that $\dist_{\partial X}$ induces a decomposition of the Riemannian manifold into \emph{rays} $(X_{\alpha},\meas_{\alpha},\dist)$ and an associated disintegration of the volume measure:
\begin{equation}
\int_M\phi\di\, \haus^N=\int_{\mathcal{Q}}\int_{X_{\alpha}}\phi\di\, \meas_{\alpha}\di\, \mathrm{q}(\alpha)\, ,
\end{equation} 
where $\mathcal{Q}$ is a set parametrizing the rays in the decomposition and $\mathrm{q}$ is a suitable weight on $\mathcal{Q}$.\\
Moreover, for $\mathrm{q}$-a.e. ray $(X_{\alpha})$ the one dimensional metric measure space $(X_{\alpha},\meas_{\alpha},\dist)$ is a $\CD(K,N)$ space, hence $\meas_{\alpha}=h_{\alpha}\haus^1$ for some density $h_{\alpha}$ which is $\log-K$ concave, once we parametrize the ray $X_{\alpha}$ with a geodesic $\gamma_{\alpha}$ such that its ending point belongs to $\partial X$.

By \cite{CavallettiMondino18} we also know that 
\begin{equation}\label{eq:reprlaploc}
\boldsymbol{\Delta}^{ac}\dist_{\partial X}=-\left(\log h_{\alpha}\right)'\, ,
\end{equation}
with canonical identifications. The sought conclusion follows observing that since $h_{\alpha}$ is $\log-K$ concave, it holds that
\begin{equation}\label{eq:Kmon}
(-\log h_{\alpha})'(s)\ge (-\log h_{\alpha})'(t)+K(s-t)\, .
\end{equation}  
Let $p=\gamma_{\alpha}(t)$ and $\gamma_{\alpha}(s)$ is the footpoint on $\partial X$ of the minimizing geodesic for the distance to the boundary $\gamma_{\alpha}$.
Since we already pointed out that the convexity of the boundary yields $(-\log h_{\alpha})'(\gamma_{\alpha}(s))\le 0$ when $\gamma_{\alpha}(s)\in\partial X$, we infer from \eqref{eq:Kmon} and \eqref{eq:reprlaploc} that, 
\begin{align}
(\boldsymbol{\Delta}^{ac}\dist_{\partial X})(p)&=(-\log h_{\alpha})'(s-\dist_{\partial X}(\gamma_{\alpha}(t)))\\
&\le (-\log h_{\alpha})'(s)-K\dist_{\partial X}(\gamma_{\alpha}(t))\le -K\dist_{\partial X}(\gamma_{\alpha}(t))=-K\dist_{\partial X}(p)\, .
\end{align}

The inequality \eqref{eq:lowerareaboundriema} and the fact that $\dist_{\partial X}$ has measure valued Laplacian globally now follow from \autoref{thm:consequencesconj}.
\end{proof}	

\begin{proof}[Proof \autoref{thm:riccilimits}]
Let us start by proving the second part of (ii). The first part will directly follow. In order to do so we observe that, if $x_n\in\partial X_n$, then by (the scale invariant version of) \eqref{eq:lowerareaboundriema} we infer that
\begin{equation}
\haus^{N-1}(\partial X_n\cap B_r(x_n))>C(K,N)\haus^N(B_1(x_n))r^{N-1}\;\;\;\text{for $n\in\setN$ and $0<r<1$}\, .
\end{equation}
Therefore we can apply \autoref{cor:stabwithlowerbound} to get that
\begin{equation}\label{eq:scalelow}
\haus^{N-1}(\partial X\cap B_r(x))\ge C(K,N,v)r^{N-1}\;\;\;\text{for any $0<r<1$}\, ,
\end{equation}
where $v>0$ is a noncollapsing bound for $\haus^N(B_1(x_n))$. From \eqref{eq:scalelow} we infer in particular that $x\in\partial X$.

By the stability \autoref{thm:stabilityboundary} we know that if $x\in\partial X$ then there exists a sequence $\partial X_n\ni x_n\to x$, as $n\to\infty$. Therefore $\dist_{\partial X_n}$ converge locally uniformly and locally in $W^{1,2}$ to $\dist_{\partial X}$. Hence we can pass to the limit their measure valued Laplacians and the bounds obtained in \autoref{prop:laplnonriem} to infer that $\dist_{\partial X}$ has measure valued Laplacian satisfying the bound \eqref{eq:laplbound}.

The remaining conclusions follow immediately from \autoref{thm:consequencesconj}.
\end{proof}

\section{Improved neck structure and boundary measure convergence}\label{sec:improved neck structure}
 
 In this section we are going to improve upon the regularity of balls sufficiently close to the model boundary ball on the half-space. This will provide a key tool in order to obtain topological regularity of boundaries, away from sets of ambient codimension two, and convergence of the boundary measures under noncollapsing pGH convergence as stated in \autoref{intro:stabilityboundary}.

\subsection{Improved neck structure theorem and Boundary volume rigidity}

Below we state the key result we will rely on. As in the case of $(N,\delta)$-symmetric balls, where regularity propagates at all locations and scales, in the case of a $\delta$-boundary ball we shall see that balls centered at boundary points are still $\delta'$-boundary balls at any scale, while balls centered at interior points become $(N,\delta')$-symmetric at sufficiently small scales.

\begin{theorem}\label{thm:improvedneckregion}
	Let $1\le N<\infty$ be a fixed natural number. For any $0<\eps<\eps(N)$, $\delta\le \delta(N,\eps)$ and for any $\RCD(-\delta(N-1),N)$ m.m.s. $(X,\dist,\haus^N)$, $p\in X$ such that $B_2(p)$ is a $\delta$-boundary ball, the following properties hold:
	\begin{itemize}
		\item[(i)] for any $x\in \partial X\cap B_1(p)$ and any $r\in (0,1)$ there exists an $\eps r$-GH isometry
		\[
		\Psi_{x,r}:B_r^{\setR_+^N}(0) \to B_r(x)\, ;
		\]
		\item[(ii)] for any $x\in\partial X\cap B_1(p)$ and for any $0<r<1$, setting $\mathcal{L}_{x,r}:= \Psi_{x,r}(\set{x_N=0})$ it holds
		\begin{equation}\label{eq:reifbdry}
		\dist_H(\partial X\cap B_r(x), \mathcal{L}_{x,r}\cap B_r(x)) \le  \eps r\, .
		\end{equation}
		It follows in particular that 
		\begin{equation}\label{eq:reifbound}
			\dist_{GH}(\partial X\cap B_r(x),B^{\setR^{N-1}}_r(0))\le 2\eps r\, ;
		\end{equation}
		\item[(iii)] for any $x\in B_{1/2}(p)\setminus \partial X$ and $r\in (0, \dist_{\partial X}(x)/2)$ the ball $B_r(x)$ is $(N,\eps)$-symmetric.
	\end{itemize}       
\end{theorem}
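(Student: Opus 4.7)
\smallskip

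\textbf{Proof plan for \autoref{thm:improvedneckregion}.} The plan is to run a compactness and contradiction argument, with the three statements proved in the order (i) $\Rightarrow$ (ii) $\Rightarrow$ (iii), the first being the main work. Fix $\eps \in (0, \eps(N))$ and suppose the theorem fails: there exist $\delta_n \downarrow 0$, $\RCD(-\delta_n(N-1),N)$ spaces $(X_n,\dist_n,\haus^N)$ with $p_n \in X_n$ such that $B_2(p_n)$ is a $\delta_n$-boundary ball, yet one of (i)--(iii) fails at a point $x_n \in B_1(p_n)$ and scale $r_n \in (0,1)$. By volume convergence (\autoref{thm:volumeconvergence}), $(X_n,p_n) \to (\setR_+^N,0)$ in the noncollapsed pmGH topology.

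For (i), the key is the boundary volume $\eps$-regularity alluded to in the introduction (based on \autoref{lemma:conerigidity}). The idea is that once $B_R(x)$ is GH-close to a half-space ball and $x \in \partial X$, the density $\Theta_X(x) \le 1/2$ from \autoref{rm:bdrytobdryhalf} combines with the volume lower bound $\haus^N(B_R(x))/(\omega_N R^N) \ge \tfrac{1}{2} - \Psi(\delta_n,N)$ coming from \autoref{rm:volumeonboundary balls} and Bishop--Gromov monotonicity to force $\haus^N(B_r(x))/(\omega_N r^N)$ to equal $1/2$ up to error $\Psi(\delta_n,N)$ for every $r \in (0,R]$. By the almost-rigidity version of \autoref{lemma:conerigidity} (every cone with boundary whose tip has density $1/2$ is $\setR_+^N$), combined with the volume convergence and the cone splitting \autoref{thm:conesplittingquant}, every intermediate ball $B_r(x)$ must be GH-close to $B_r^{\setR_+^N}(0)$. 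Taking $x_n \in \partial X_n \cap B_1(p_n)$ in the contradiction sequence and passing to the limit, the pinching of volumes propagates to all scales $r_n \to r^*$ or $r_n \downarrow 0$ (after rescaling), which contradicts the failure of (i).

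For (ii), granted (i), apply \autoref{lemma:distancetodistance} to the rescaled sequence $(X, r^{-1}\dist, \haus^N, x)$ converging to $(\setR_+^N, 0)$: the distance functions $\dist_{\partial X}$ converge uniformly to $\dist_{\partial \setR_+^N}$, and by \autoref{rm:compvarconv} the sets $\partial X \cap \overline{B}_r(x)$ converge in the Hausdorff sense to $\partial \setR_+^N \cap \overline{B}_r(0)$. Composing with the $\eps r$-isometry $\Psi_{x,r}$ from (i) yields \eqref{eq:reifbdry} up to a worsened constant, and hence \eqref{eq:reifbound} by the triangle inequality.

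For (iii), let $x \in B_{1/2}(p) \setminus \partial X$ with $d := \dist_{\partial X}(x)$ and $r \in (0, d/2)$. Pick $y \in \partial X$ nearest to $x$ so $\dist(x,y) = d$; by (i) applied at scale $4d \le 2$, the ball $B_{4d}(y)$ is $\eps$-close to $B_{4d}^{\setR_+^N}(0)$. Inside the model half-space, the ball of radius $r$ centered at a point at distance exactly $d > 2r$ from $\partial \setR_+^N$ is isometric to a ball of $\setR^N$; transferring this closeness back via the $\eps$-isometry shows that $B_r(x)$ is $C(N)\eps (d/r)$-close to $B_r^{\setR^N}(0)$ up to rescaling. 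Since we only require $(N,\eps)$-symmetry (closeness to some metric cone over $\setR^N$) and not a quantitative scale-invariant estimate, a direct cone splitting argument via \autoref{thm:conesplittingquant}, applied using the half-space splitting off $\setR^{N-1}$ at scale $d$ and the absence of boundary in $B_r(x)$, yields that $B_r(x)$ is $(N,\eps)$-symmetric for $\delta$ small enough.

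The main obstacle is (i): upgrading the $\delta$-boundary ball assumption at the initial scale to an $\eps$-boundary ball condition uniformly across all scales $r \in (0,1)$ and all boundary points $x$. This is where the volume pinching via \autoref{lemma:conerigidity} plays a crucial role, since it is precisely the statement that forbids the appearance of non-half-space cones with boundary in the tangent structure, once the density has been pinched to $1/2$ at one scale.
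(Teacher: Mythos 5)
Your approach for (i) is essentially the same as the paper's: both hinge on the boundary volume $\eps$-regularity (\autoref{cor:volrigidityb}, which you paraphrase as ``the boundary volume $\eps$-regularity alluded to in the introduction''), combined with $\Theta_X(x)\le 1/2$ at boundary points and the volume lower bound from \autoref{rm:volumeonboundary balls}. The paper isolates and invokes \autoref{cor:volrigidityb} together with a short preliminary observation that $B_1(x)$ is already a $\delta_0$-boundary ball for every $x\in\partial X\cap B_1(p)$ (this step is needed to bootstrap from the assumption on $B_2(p)$ alone, and uses the stability in \autoref{lemma:distancetodistance}); you sketch a compactness/contradiction argument that re-derives the same content less explicitly but is morally equivalent. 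For (ii), your route via \autoref{lemma:distancetodistance} on the rescaled sequence is a legitimate alternative to what the paper does, which is a direct quantitative two-inclusion argument: one inclusion via \autoref{thm:stabilityS} (nearby existence of boundary points), the other via elementary geometry of $\eps$-boundary balls in the model half-space. Either works.

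However, your argument for (iii) has a genuine gap. As you yourself note, restricting the $\eps(4d)$-GH isometry of $B_{4d}(y)$ to $B_r(x)$ gives an error of order $\eps\,d/r$, which blows up as $r\downarrow 0$; so the direct transfer of GH-closeness only gives $(N,C(N)\eps)$-symmetry for $r$ \emph{comparable} to $d=\dist_{\partial X}(x)$, not for arbitrary $r\in(0,d/2)$. Your proposed fix via \autoref{thm:conesplittingquant} does not close this gap: cone splitting would improve the degree of symmetry of the ambient ball $B_{4d}(y)$ (or similar) given a pinched off-axis point, but it says nothing about propagating $(N,\eps)$-symmetry of $B_{d/2}(x)$ down to arbitrarily small balls $B_r(x)$. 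Moreover, feeding the half-space $(N-1)$-symmetry at scale $4d$ into the cone-splitting theorem together with the interior point $x$ would lead to the conclusion that the \emph{big} ball is $(N,\psi)$-symmetric, which is false since it contains boundary. The correct tool, and the one the paper uses, is: once $B_{d/2}(x)$ is $(N,\eps)$-symmetric, volume convergence (\autoref{thm:volumeconvergence}) gives $\haus^N(B_{d/2}(x))$ almost Euclidean; Bishop--Gromov monotonicity then propagates the almost-Euclidean volume ratio down to every $r<d/2$; and the volume almost rigidity theorem of De Philippis--Gigli (\cite[Theorem 1.6]{DePhilippisGigli18}) turns the near-Euclidean volume at scale $r$ back into $(N,\eps')$-symmetry of $B_r(x)$. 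You should replace the cone-splitting step by this volume-monotonicity/volume-rigidity argument.
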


The proof of \autoref{thm:improvedneckregion} builds upon the following {$\eps$-regularity results for the Boundary,} which is based in turn on the stability of boundaries \autoref{thm:stabilityS} and the volume rigidity for cones with boundary \autoref{lemma:conerigidity}.

\begin{theorem}[Volume $\eps$-regularity for the Boundary]\label{cor:volrigidityb}
	Let $N\in \setN$, $N\ge 1$ be fixed. For any $\eps>0$, if $\delta\le \delta(N,\eps)$ and $(X,\dist,\haus^N,x)$ is an $\RCD(-\delta (N-1),N)$ pointed m.m.s., $x\in \partial X$ and 
	$\haus^N (B_1(x))  \ge  \frac{1}{2}\omega_N-\delta$,	 
	then 
	\begin{equation}
			\dist_{GH}(B_{1/2}(x),B_{1/2}^{\setR_+^N}(0))\le \eps\, .
	\end{equation}
\end{theorem}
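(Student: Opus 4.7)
\textbf{Proof plan for \autoref{cor:volrigidityb}.} The plan is to argue by contradiction and compactness. Suppose the conclusion fails: there exist $\eps_0>0$ and a sequence of noncollapsed $\RCD(-\delta_n(N-1),N)$ spaces $(X_n,\dist_n,\haus^N,x_n)$ with $\delta_n\downarrow 0$, $x_n\in\partial X_n$, $\haus^N(B_1(x_n))\ge\tfrac{1}{2}\omega_N-\delta_n$, but
\[
\dist_{GH}(B_{1/2}(x_n),B_{1/2}^{\setR^N_+}(0))>\eps_0.
\]
By Gromov precompactness, stability of the $\RCD$ condition, and the noncollapsing volume bound, up to subsequence $(X_n,\dist_n,\haus^N,x_n)\to(Y,\varrho,\haus^N,y)$ in the pmGH topology with $(Y,\varrho,\haus^N)$ a noncollapsed $\RCD(0,N)$ space (\autoref{thm:volumeconvergence}). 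The volume convergence yields $\haus^N(B_1(y))\ge\tfrac{1}{2}\omega_N$.

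The next step is to identify $B_1(y)$ as a half-space ball. By Bishop-Gromov monotonicity on $Y$ we have $\Theta_Y(y)\ge\haus^N(B_1(y))/\omega_N\ge 1/2$. On the other hand, since $x_n\in\partial X_n=\overline{\mathcal{S}^{N-1}\setminus\mathcal{S}^{N-2}(X_n)}$, one can pick $z_n\in\mathcal{S}^{N-1}\setminus\mathcal{S}^{N-2}(X_n)$ with $\dist(z_n,x_n)\le 1/n$, so that $z_n\to y$ and $\Theta_{X_n}(z_n)=1/2$; lower semicontinuity of the density along noncollapsed pGH convergence then forces $\Theta_Y(y)\le 1/2$. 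Hence $\Theta_Y(y)=1/2$ and the Bishop-Gromov ratio is constant equal to $1/2$ on $(0,1]$, so the volume cone theorem of \cite{DePhilippisGigli16} gives that $B_1(y)$ is isometric to a ball of radius one in the metric measure cone $C(Z)$ over the cross-section $Z$, which is noncollapsed $\RCD(N-2,N-1)$ with $\haus^{N-1}(Z)=\tfrac{1}{2}N\omega_N$.

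The crucial remaining point is to show that $y\in\partial Y$ (equivalently, that $C(Z)$ has boundary in the sense of \autoref{def:withoutboundary}); this is the main obstacle, since stability of boundary points under noncollapsing pGH convergence is not known in general. The idea is to exploit the auxiliary sequence $z_n\to y$: since the tangent cone at $z_n$ is $\setR^N_+$ by uniqueness (\autoref{thm:struc}(iii)), for every fixed small $r>0$ and for $n$ large enough, $B_r(z_n)$ is $\eta(N)r$-close in the GH sense to $B_r^{\setR^N_+}(0)$, where $\eta(N)$ is as in \autoref{thm:stabilityS}. The scaled version of \autoref{thm:stabilityS} then yields
\[
\haus^{N-1}(\partial X_n\cap B_r(z_n))\ge c(N)^{-1}r^{N-1}.
\]
Since $z_n\to y$, the upper semicontinuity result \autoref{cor:stabwithlowerbound} (applied on a slightly larger ball) transfers this lower bound to $Y$, giving $\haus^{N-1}(\partial Y\cap \overline{B}_{2r}(y))>0$ for every $r>0$. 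As $\partial Y$ is closed, this forces $y\in\partial Y$.

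Finally, with $y\in\partial Y$ and $B_1(y)\cong B_1^{C(Z)}(z^*)$, $C(Z)$ is a noncollapsed $\RCD(0,N)$ cone with boundary whose tip has density $1/2$; \autoref{lemma:conerigidity} therefore forces $C(Z)\cong\setR^N_+$, hence $B_{1/2}(y)$ is isometric to $B_{1/2}^{\setR^N_+}(0)$. The pmGH convergence $(X_n,\dist_n,\haus^N,x_n)\to(Y,\varrho,\haus^N,y)$ implies in particular the GH convergence of the balls $B_{1/2}(x_n)$ to $B_{1/2}(y)$, contradicting our standing assumption $\dist_{GH}(B_{1/2}(x_n),B_{1/2}^{\setR^N_+}(0))>\eps_0$. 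This completes the plan.
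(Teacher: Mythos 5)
Your setup (contradiction, compactness, volume convergence, lower semicontinuity of density pinching $\Theta_Y(y)=1/2$, volume cone $\Rightarrow$ metric cone, and the final appeal to \autoref{lemma:conerigidity}) matches the paper's Step~1. The difficulty, as you correctly identify, is to show that the limit cone actually \emph{has} boundary, and this is exactly where your argument breaks.

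The claim that ``since the tangent cone at $z_n$ is $\setR^N_+$ by uniqueness, for every fixed small $r>0$ and for $n$ large enough, $B_r(z_n)$ is $\eta(N)r$-close to $B_r^{\setR^N_+}(0)$'' is not justified. Uniqueness of the tangent cone at $z_n$ is a pointwise statement: it guarantees, for each fixed $n$, a threshold scale $\rho_n>0$ below which balls centered at $z_n$ are $\eta(N)$-close to half-balls, but it gives no lower bound on $\rho_n$ that is uniform in $n$. If $\rho_n\to 0$, you never get the estimate $\haus^{N-1}(\partial X_n\cap B_r(z_n))\ge c(N)^{-1}r^{N-1}$ at a \emph{fixed} scale $r$, and \autoref{cor:stabwithlowerbound} then produces no lower bound on $\haus^{N-1}(\partial Y\cap\overline B_{2r}(y))$. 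A statement of the kind you want — that once a ball centered at a boundary point is close to a half-ball, all smaller concentric balls remain close — is precisely \autoref{thm:improvedneckregion}(i), which is proved \emph{later} using the very theorem at hand; invoking it here would be circular.

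The paper circumvents this by splitting the argument into two steps. Step~1 proves the statement under the \emph{additional} hypothesis $\haus^{N-1}(\partial X\cap B_{1/2}(x))\ge c$, which survives the limit via \autoref{cor:stabwithlowerbound} and directly forces $\partial Y\cap\overline B_{1/2}(y)\neq\emptyset$, so \autoref{lemma:conerigidity} applies. Step~2 removes that hypothesis by a bootstrap \emph{within a single space}: reduce to $x\in\mathcal{S}^{N-1}\setminus\mathcal{S}^{N-2}$, use uniqueness of the tangent cone at that one point to find some small scale at which the conclusion holds, and if it failed at scale $1/2$ pick a transition radius $r$ where $B_{r/4}(x)$ is close to a half-ball but $B_{r/2}(x)$ is not; then \autoref{thm:stabilityS} at scale $r/4$ supplies exactly the boundary-measure lower bound needed to apply Step~1 at scale $r$, yielding a contradiction. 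You should adopt this two-step structure; the contradiction/compactness core of your argument is fine but needs the extra hypothesis in Step~1 to identify the limit as having boundary, and the bootstrap in Step~2 to remove it.
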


\begin{proof}
We divide the proof into two steps, first proving a weak version of the statement, where we additionally assume a definite size of boundary points in the given ball, and then passing to the strong form via bootstrap.

\textbf{Step 1.}
We claim that the following holds: for any $\eps>0$ and $c>0$, if $\delta\le \delta(N,\eps,c)$ and $(X,\dist,\haus^N,x)$ is an $\RCD(-\delta (N-1),N)$ pointed m.m.s., $x\in \partial X$,
	$\haus^N (B_1(x))  \ge  \frac{1}{2}\omega_N-\delta$ and $\haus^{N-1}(\partial X\cap B_{1/2}(x))\ge c$,
	then 
	\begin{equation}\label{eq:bdryvolumerigidity}
			\dist_{GH}(B_{1/2}(x),B_{1/2}^{\setR_+^N}(0))\le \eps\, .
	\end{equation}

  We argue by contradiction. Let us assume that for some $\eps>0$ and $c>0$ and a sequence $(X,\dist_n,\haus^N,x_n)$ of $\RCD(-1/n,N)$ p.m.m.s. verifying with $x_n\in \partial X_n$, such that $\haus^N(B_1(x_n))\ge \frac{1}{2}\omega_N -\frac{1}{n}$,  $\haus^{N-1}(\partial X_n\cap B_{1/2}(x_n))\ge c$ and
   \begin{equation}
  \dist_{GH}(B_1(x_n),B_1^{\setR_+^N}(0))\ge \eps\quad \text{for any $n\in \setN$}\, .
   \end{equation}
  Up to extracting a subsequence $(X_n, \dist_n,\haus^N, x_n) \to (X,\dist,\haus^N,x)$ in the pmGH topology.\\ 
  Notice that, by \autoref{cor:stabwithlowerbound} and the uniform lower bound on the boundary measure, we infer that $\partial X\cap \bar{B}_{1/2}(x)\neq\emptyset$.

 Since $x$ is limit of boundary points it holds $\Theta_X(x)\le 1/2$, by lower semicontinuity of the density. Therefore, by volume convergence and thanks to the volume pinching assumption, $\haus^N(B_r(x))=1/2$ for any $r\in [0,1]$. Hence the volume cone implies metric cone theorem (see \cite{DePhilippisGigli16}) gives that $B_{1/2}(x)$ is isometric to the ball of radius $1/2$ of a cone $C(Z)$ centered at a tip point $z\in C(Z)$ with isometry sending $x$ to $z$. The sought contradiction comes from \autoref{lemma:conerigidity}. Indeed $Z$ has boundary and $\Theta(z)=\frac{1}{2}$, therefore $C(Z)$ is isometric to $\setR^N_+$.

\textbf{Step 2.}
Next we wish to remove the lower volume boundary assumption. In order to do so we first observe that, by a limiting argument, it is sufficient to prove the statement under the assumption that $x\in\mathcal{S}^{N-1}\setminus\mathcal{S}^{N-2}$.

Let us set $c:=4^{-N}c(N)^{-1}$ and assume without loss of generality that $\eps<\eta(N)$, where $c(N)$ and $\eta(N)$ are as in \autoref{thm:stabilityS}. We wish to prove that $\delta(N,\eps)=\delta(N,\eps,c)$ given by the previous step does the right job. Let us argue by contradiction. If this is not the case then we can find $0<r<1$ such that 
\begin{equation}
\dist_{GH}(B_{r/4}(x), B_{r/4}^{\setR^N_+}(0))\le\eps r/4\, ,
\end{equation}
but 
\begin{equation}\label{eq:contrboot}
\dist_{GH}(B_{r/2}(x), B_{r/2}^{\setR^N_+}(0)>\eps r/2\, ,
\end{equation}
since we know that the only element of the tangent cone at $x$ is $\setR^N_+$ (cf. with \autoref{thm:struc} (iii)).\\
Observe that 
\begin{equation}
\haus^{N-1}(\partial X\cap B_{r/2}(x))\ge \haus^{N-1}(\partial X\cap B_{r/4}(x)) \ge cr^{N-1}\, ,
\end{equation}
by our choice of $c$ and (the scale invariant version of) \autoref{thm:stabilityS}.\\
Moreover, by volume monotonicity, 
\begin{equation}
\haus^N(B_r(x))\ge \frac{1}{2}(\omega_N-\delta)r^N\, .
\end{equation}
Applying the result of Step 1 (in scale invariant form) we infer that
\begin{equation}
\dist_{GH}(B_{r/2}(x), B_{r/2}^{\setR^N_+}(0)\le \eps r/2\, ,
\end{equation}
therefore reaching a contradiction with \eqref{eq:contrboot}.
\end{proof}

\begin{remark}\label{rm:improvedbdryvolrigidity}
Under the same assumptions of \autoref{cor:volrigidityb}, it follows by volume monotonicity and scaling that \eqref{eq:bdryvolumerigidity} can be slightly improved to the statement
 \begin{equation}
			\dist_{GH}(B_{r/2}(x),B_{r/2}^{\setR_+^N}(0))\le \eps r\, ,
	\end{equation}
 for any $0<r<1$.
\end{remark}

\begin{proof}[Proof of \autoref{thm:improvedneckregion}]

	Let us begin by observing that for any $\delta \le \delta(\delta_0,N)$ if $B_2(p)$ is a $\delta$-boundary ball of an $\RCD(-\delta(N-1),N)$ space then
	\begin{equation}\label{zzz7}
		\text{$B_1(x)$ is a $\delta_0$-boundary ball
			for any $x\in \partial X \cap B_1(p)$}\, .
	\end{equation}
	This claim can be checked arguing by contradiction and exploiting the fact that boundary points converge to boundary points when the limit ball is isometric to a ball centered on the boundary of the half space $\setR_+^N$ (cf. with \autoref{lemma:distancetodistance}).
\smallskip

Next, by volume convergence, choose $\delta_0=\delta_0(\eps,N)<\eps/2$ such that if $B_1(x)$ is a $\delta_0$-boundary ball on an $\RCD(-\delta(N-1),N)$ and $x\in\partial X$, then the assumptions of \autoref{cor:volrigidityb} are satisfied.\\
If we choose $\delta$ accordingly given by the observations above, then by \autoref{cor:volrigidityb} we infer that, for any $x\in\partial X$ and for any $0<r<1$, $B_r(x)$ is an $\eps$-boundary ball.

  From now on we let 
   \begin{equation}
    \Psi_{x,r}:B_r^{\setR_+^N}(0) \to B_r(x)
   \end{equation}
   be $\eps r$-isometries, for any $x\in \partial X$ and any $0<r<1$.
   	\medskip

	Let us prove (ii). We first prove that for any $z\in B_r^{\setR^N_+}(0)\cap \{x_N=0\}$ there exists $y\in B_r(x)\cap\partial X$ such that $\dist(\Psi_{x,r}(z),y)\le C(N)\eps r$. In order to do so it is sufficient to observe that for any $s>0$ the ball $B_{sr}(\Psi_{x,r}(z))$ is an $s^{-1}\eps$-boundary ball. By \autoref{thm:stabilityS}, if $s^{-1}\eps<\eta(N)$, then there exists a boundary point $y\in B_{rs}(\Psi_{x,r}(z))\cap\partial X$. Therefore, minimizing we infer that there exists a boundary point $y\in B_r(x)\cap\partial X$ such that $\dist(\Psi_{x,r}(z),y)\le C(N)\eps r$, where $C(N):=1/\eta(N)$.
	
    It remains to prove that for any $y\in B_r(x)\cap\partial X$ there exists $z\in B_r^{\setR^N_+}(0)\cap\{x_N=0\}$ such that $\dist(y,\Psi_{x,r}(z))\le C(N)\eps r$. In order to do so it is sufficient to observe that, by elementary considerations, if $B_1(p)\subset \setR^N_+$ is an $\eps$-boundary ball, then $\dist(p,\{x_N=0\})\le C(N)\eps$, therefore by scaling invariance of the half-space, if $B_r(p)$ is an $\eps$-boundary ball, then $\dist(p,\{x_N=0\})\le C(N)\eps r$.

   \medskip

	Let us finally prove (iii). In order to obtain the conclusion it is sufficient to prove the following statement: for any $\eps>0$ there exists $\delta>0$ such that if $(X,\dist,\haus^N)$ is an $\RCD(-\delta(N-1),N)$ m.m.s., $p\in X$ and $B_4(p)$ is a $\delta$-boundary ball, then for any $x\in B_1(p)\setminus\partial X$ the ball $B_{\dist_{\partial X}(x)/2}(x)$ is $(N,\eps)$-symmetric.
	Indeed, if $B_{\dist_{\partial X}(x)/2}(x)$ is $(N,\eps)$-symmetric, then by volume convergence \autoref{thm:volumeconvergence} it has almost Euclidean volume and by volume almost rigidity \cite[Theorem 1.6]{DePhilippisGigli18} we infer that $B_r(x)$ is $(N,\eps)$-symmetric for any $0<r<\dist_{\partial X}(x)/2$, up to worsening $\eps$.

   Let us now prove the claimed conclusion. Let $q\in \partial X$ be such that $\dist_{\partial X}(x) = \dist(q,x)$ and set $r:= 2\dist_{\partial X}(x)$. As a consequence of (i) and (ii) we know that there exists an $\eps r$-GH isometry $\Psi_{x,r}: B_r^{\setR_+^N}(0) \to B_r(q)$.\\ 
   By elementary considerations we can find $z\in \setR_+^N$ such that 
   \begin{equation}
   B_{\dist_{\partial X}(x)/2}(z) \subset B_r^{\setR_+^N}(0) \setminus \partial \setR_+^N
   \end{equation}
   	and
   	\begin{equation}
   	\Psi_{x,r} :B_{\dist_{\partial X(x)/2}}(z) \to B_{\dist_{\partial X(x)/2}}(x)
   \end{equation}
    is an $8\eps r$-GH isometry.

\end{proof}


\subsection{Topological regularity of the boundary}
Thanks to \autoref{thm:improvedneckregion} we better understand the geometry of $\delta$-boundary balls. Below we build a parametrization of the boundary of a $\delta$-boundary ball well suited for its geometry. In particular this parametrization will put us in position to control both the topology and the volume near to sufficiently regular boundary points (cf. with \cite{Anderson90,CheegerColding97,KapovitchMondino19} in the case of interior regular points).\\ 
With respect to \autoref{thm:neckstructure} here we heavily rely on \autoref{thm:improvedneckregion} and on the transformation \autoref{prop:transformation} to get bi-H\"older continuity of the splitting map, as in \cite{CheegerJiangNaber18}.

\begin{theorem}\label{thm:goodpar}
	Let $1\le N<\infty$ be a fixed natural number. Then for each $0<\eps<1/5$ there exists $\delta=\delta(\eps,N)>0$ such that for any $\RCD(-\delta(N-1),N)$ space $(X,\dist,\haus^N)$ and for any $\delta$-boundary ball $B_{16}(p)\subset X$, $\partial X\cap B_8(p)$ is homeomorphic to a smooth $(N-1)$-dimensional manifold without boundary.\\ 
	Moreover, there exists a map $u:B_8(p)\to\setR^{N-1}$ verifying the following properties:
	\begin{itemize}
		\item[i)] $u:B_8(p)\to\setR^{N-1}$ is an $\eps$-splitting map;
		\item[ii)] there exists a closed set $U\subset B_1(p)\cap \partial X$ such that  
		\begin{equation}\label{eq:volest}
		\haus^{N-1}\left((B_1(p)\cap \partial X)\setminus U\right)\le \eps
		\end{equation}
		and 
		\begin{equation}\label{eq:bilipest}
		(1-\eps)\dist(x,y)\le \abs{u(x)-u(y)}\le (1+\eps)\dist(x,y)
		\quad \text{for any $x,y\in U$}\, ;
		\end{equation}
		\item[iii)] for any $x,y\in\partial X\cap B_1(p)$ it holds that
		\begin{equation}\label{eq:holderest}
		(1-\eps)\dist(x,y)^{1+\eps}\le \abs{u(x)-u(y)}\le (1+\eps)\dist(x,y)\, ;
		\end{equation}
		\item[iv)] $u(B_1(p)\cap\partial X)\supset B_{1-2\eps}^{\setR^{N-1}}(0)$.
	\end{itemize}
	
\end{theorem}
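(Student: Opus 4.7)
The strategy is to build the map $u$ by combining \autoref{splitting vs isometry} with the transformation \autoref{prop:transformation}, then upgrade (i) to the bi-H\"older estimate (iii) by controlling how the transformations evolve along scales, and finally deduce the topological/measure-theoretic statements.

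First I would choose $\delta=\delta(\eps,N)$ small enough and apply \autoref{splitting vs isometry} (i) to the $\delta$-boundary ball $B_{16}(p)$: since $B_{16}(p)$ is in particular $mGH$ close to the splitting $\setR^{N-1}\times[0,\infty)$, we obtain a $\delta'$-splitting map $u:B_8(p)\to\setR^{N-1}$ with $u(p)=0$, where $\delta'=\delta'(\eps,N)\to 0$ as $\delta\to 0$. With $\delta'\ll \eps$ this already gives (i). By the improved neck structure \autoref{thm:improvedneckregion}, every $B_r(x)$ with $x\in\partial X\cap B_1(p)$ and $0<r\le 1$ is a $\delta''$-boundary ball, with $\delta''\to 0$ as $\delta\to 0$, and this is exactly the hypothesis under which the first case of the transformation \autoref{prop:transformation} applies. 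Thus at every such $x$ and $r$ we obtain lower triangular matrices $T_{x,r}$ with $T_{x,r}u:B_r(x)\to\setR^{N-1}$ an $\eps$-almost splitting map satisfying the orthogonality condition (ii), together with the scale-to-scale comparison $|T_{x,s}\circ T_{x,2s}^{-1}-\Id|\le\eps$ and the polynomial growth bound $|T_{x,r}^{-1}\circ T_{x,\bar r}|_\infty\le (r/\bar r)^{C\eps}$ from \autoref{cor:growth}.

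For the upper bound in (iii) the Lipschitz estimate \eqref{eq:lipschitzdeltasplitt} for $\delta'$-splitting maps gives $|u(x)-u(y)|\le (1+\eps)\dist(x,y)$ for all $x,y\in B_1(p)$ once $\delta'$ is small. For the lower bound fix $x,y\in\partial X\cap B_1(p)$ and set $r:=\dist(x,y)$. By the transformation theorem $\tilde u:=T_{x,r}u$ is a normalized $\eps$-almost splitting map on $B_r(x)$, so by \autoref{rmk:almostdeltasplitting} and \autoref{lemma:improvedepsio} applied at scale $r$, $(\tilde u,\dist_{\partial X})$ is an $\eps r$-GH isometry of $B_r(x)$ onto $B_r^{\setR_+^N}(0)$; since both $x,y$ lie on $\partial X$, this yields $|\tilde u(x)-\tilde u(y)|\ge (1-C\eps)r$. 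Combining with the growth estimate $|T_{x,r}^{-1}|\le r^{-C\eps}$ and $|T_{x,1}|\le C(N)$ (which gives $|T_{x,r}|\ge r^{C\eps}/C(N)$) we obtain $|u(x)-u(y)|\ge (1-C\eps)r^{1+C\eps}/C(N)$. Adjusting $\eps$ at the start gives \eqref{eq:holderest}. In particular $u$ is injective on $\partial X\cap B_1(p)$.

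For (ii) I would take $U$ to be the closed set of $x\in\partial X\cap B_1(p)$ at which the matrices $T_{x,r}$ stay $\eps$-close to the identity for \emph{all} scales $0<r\le 1$. By the transformation theorem and \autoref{thm:neckdecompositiontheorem} (applied to the good part of the boundary as in the proofs of \autoref{thm:struc} and \autoref{cor:bdryvolconvergencereg}), the complementary set has $\haus^{N-1}$-measure less than $\eps$ once $\delta$ is small enough: these are precisely the non-typical boundary points, covered by small bad-balls of a neck decomposition with summability control. On $U$ one has $T_{x,r}\approx\Id$ for all $r$, so repeating the argument of the previous paragraph with $T_{x,r}=\Id$ yields \eqref{eq:bilipest}. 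For (iv), given $w\in B_{1-2\eps}^{\setR^{N-1}}(0)$, one applies \autoref{lemma:improvedepsio} at unit scale: $(u,\dist_{\partial X}):B_1(p)\to\setR^N_+$ is a $2\eps$-GH isometry (adjusting constants), and since $(w,0)\in B_{1-\eps}^{\setR_+^N}(0)$ one obtains a point $z\in B_1(p)$ close to $\partial X$ with $u(z)\approx w$; minimizing the distance to the boundary and projecting, one finds $z'\in\partial X\cap B_1(p)$ with $u(z')=w$. The hard part here is making the projection quantitative, and this is exactly where the Reifenberg-type closeness \eqref{eq:reifbound} of $\partial X$ to $\setR^{N-1}$ at every scale, provided by \autoref{thm:improvedneckregion} (ii), is used.

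Finally, for the topological conclusion: by (iii), $u$ restricted to $\partial X\cap B_8(p)$ is continuous and injective (after applying the improved neck structure on the larger ball $B_8(p)$, which is licit up to rescaling our initial $\delta$). Its image is an open subset of $\setR^{N-1}$ by the surjectivity statement (iv) together with \autoref{thm:improvedneckregion} (ii) applied at every boundary point. Invariance of domain for continuous injective maps between topological manifolds of the same dimension would give that $u$ is a homeomorphism of $\partial X\cap B_8(p)$ onto an open subset of $\setR^{N-1}$; however $\partial X\cap B_8(p)$ is not \emph{a priori} known to be a topological manifold, so I would instead argue directly: for each $x\in\partial X\cap B_8(p)$ one finds a small neighborhood $V_x$ such that $u|_{V_x}$ is a continuous injection of a compact Hausdorff space into $\setR^{N-1}$, hence a homeomorphism onto its image $u(V_x)$, and then show $u(V_x)$ is open in $\setR^{N-1}$ using (iv) applied at $x$ at small scales. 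Patching these local charts gives the topological $(N-1)$-manifold structure without boundary. The principal obstacle is the quantitative surjectivity (iv) on small balls, which requires the stability of boundary Reifenberg approximations from \autoref{thm:improvedneckregion} (ii); once this is in hand all other conclusions follow from the transformation machinery and standard covering arguments.
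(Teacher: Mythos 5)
Your treatment of (i) and (iii) matches the paper: the $\eta$-splitting map is produced by \autoref{splitting vs isometry}, and the two-sided H\"older bound is obtained by feeding the transformation \autoref{prop:transformation} and the growth estimate \autoref{cor:growth} into \autoref{lemma:improvedepsio} at the scale $r=\dist(x,y)$. For (ii) the paper instead defines
\[
U:=\left\lbrace x\in B_1(p)\cap\partial X\,:\, s\fint_{B_s(x)}|\Hess u|^2\di\haus^N\le\eta^{1/2}\ \text{for all $s\in(0,5)$}\right\rbrace
\]
and estimates $\haus^{N-1}((B_1(p)\cap\partial X)\setminus U)$ directly via a Vitali covering plus the global bound $\fint_{B_8(p)}|\Hess u|^2\le C(N)\eta$; this is a weighted maximal-function argument rather than a neck-decomposition one, but the two are morally equivalent and your $U$ ``where $T_{x,r}$ stays close to $\Id$'' is controlled by the same Hessian maximal function (cf.\ \autoref{lemma:growth}).

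The genuine gap is in your treatment of (iv) and of the topological conclusion, and they interact. Your surjectivity argument at unit scale produces, from $(w,0)\in B_{1-\eps}^{\setR^N_+}(0)$ and the $\eps$-GH isometry $(u,\dist_{\partial X})$, a point $z'\in\partial X\cap B_1(p)$ with $u(z')$ merely \emph{close} to $w$ — an $\eps$-GH isometry cannot deliver the exact equality $u(z')=w$ that (iv) asserts. To upgrade approximate surjectivity to exact surjectivity one needs a topological argument: the paper shows $u(\overline B_s(p)\cap\partial X)\cap\overline B_{1-2\eps}^{\setR^{N-1}}(0)$ is nonempty, closed, and — by invariance of domain — \emph{open} in $\overline B_{1-2\eps}^{\setR^{N-1}}(0)$, hence equal to it by connectedness. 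But invariance of domain requires $\partial X\cap B_8(p)$ to already be a topological $(N-1)$-manifold. The paper establishes this \emph{before} anything about $u$, by applying the metric Reifenberg theorem (\cite[Theorem A.1.1]{CheegerColding97}) to the Reifenberg-flatness of $\partial X$ from \autoref{thm:improvedneckregion} (ii). You instead try to build the manifold structure from $u$ being a local homeomorphism and simultaneously deduce openness of $u(V_x)$ ``using (iv) at small scales,'' which is circular: openness is what gives (iv), not the other way around. The fix is to decouple the two steps as the paper does — Reifenberg first for the manifold structure, then invariance of domain for openness of $u$ and the exact inclusion (iv).
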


\begin{proof}

The fact that $B_8(p)\cap\partial X$ is homeomorphic to a smooth $N-1$-dimensional manifold without boundary follows from \autoref{thm:improvedneckregion} (ii) thanks to Reifenberg's theorem for metric spaces \cite[Theorem A.1.1]{CheegerColding97}, see also \cite{Reifenberg,Anderson90} and \cite{KapovitchMondino19}.
\smallskip

	Let us fix $0<\eta<\eps$ to be specified later. Choosing $\delta$ small enough we can build an $\eta$-splitting map $u:B_8(p)\to \setR^{N-1}$ by \autoref{splitting vs isometry}. This in particular proves (i).
	\medskip
	
	Let us now show (ii). We set
	\[
	U:=\left\lbrace x\in B_1(p) \cap \partial X\, :\, s\fint_{B_s(x)} |\Hess u|^2 \di \haus^N \le \eta^{1/2}\quad \text{for any $s\in (0,5)$} \right\rbrace\, .
	\]
	Notice that $U$ is closed and $u: B_s(x) \to \setR^{N-1}$ is a $C(N)\eta^{1/4}$-splitting map for any $s\in (0,5)$, by \autoref{remark:smallHessianimpliessplitting}. For $\delta$ and $\eta$ small enough we deduce from \autoref{lemma:improvedepsio} that 
	\[
	(u,\dist_{\partial X}): B_s(x) \to \setR_+^N\quad \text{is an $\frac{\eps}{1+\eps}$-GH isometry}
	\]
	for any $x\in U$ and $s\in (0,5/2)$. In particular, given $x,y\in U$ and $s=\dist(x,y)(1+\eps)$, it holds
	\[
	||u(x) - u(y)|-\dist(x,y)| \le \frac{\eps}{1+\eps} s = \eps \dist(x,y)\, ,
	\]
	therefore yielding \eqref{eq:bilipest}.
	
    Let us prove \eqref{eq:volest}.
	A standard Vitali's covering argument produces a disjoint family of balls $\set{B_{s_i}(x_i)}_{i\in \setN}$ with $x_i\in B_1(p)\cap  \partial X$, $s_i\in (0,1)$ such that 
	\begin{equation}\label{zzzz1}
	(B_1(p)\cap \partial X) \setminus U\subset \bigcup_{i\in \setN} B_{5r_i}(x_i)
	\quad \text{and}\quad
	5s_i\fint_{B_{5s_i}(x)} |\Hess u|^2 \di \haus^N > \eta^{1/2}\, .
	\end{equation}
	Relying on \eqref{zzzz1}, the Bishop-Gromov inequality, \autoref{rm:volumeonboundary balls} and \autoref{thm:boundatystructure} (i) (see also \autoref{thm:lipbdry} (ii)), we conclude that
	\begin{align*}
		\haus^{N-1}((B_1(p)\cap \partial X) \setminus U) & \le \sum_{i\in \setN} \haus^{N-1}(B_{5s_i}(x_i)\cap \partial X)
		 \le C(N) \sum_{i\in \setN} s_i^{N-1} 
		\\& \le C(N) \eta^{-1/2} \sum_{i\in\setN} \int_{B_{s_i}(x_i)} |\Hess u|^2 \di \haus^N
		\\&\le C(N) \eta^{1/2} \le \eps\, ,
	\end{align*}
    for $\eta$ sufficiently small.
\medskip

The H\"older estimate \eqref{eq:holderest} will follow from \autoref{thm:neckdecompositiontheorem} (i) and the transformation \autoref{prop:transformation}, arguing as in the proof of \cite[Theorem 7.10]{CheegerJiangNaber18}. 

More precisely, if $x,y\in B_1(p)\cap\partial X$ we set $r:=\dist(x,y)$. Then by \autoref{thm:improvedneckregion} (i) we infer that $B_r(x)$ is an $\eta$-boundary ball, for $\eta$ small to be chosen later, if $\delta\le\delta(N,\eta)$. Then we apply the transformation \autoref{prop:transformation} to obtain existence of a lower triangular matrix $T_{x,r}$ such that $T_{x,r}u:B_r(x)\to\setR^{N-1}$ is an $\eps'$-splitting map for $\eps'$ small to be chosen later and for any $\eta\le\eta(N,\eps')$. Taking into account \autoref{lemma:improvedepsio} we obtain that
\begin{equation}
\abs{T_{x,r}u(x)-T_{x,r}u(y)}\ge (1-\eps)\dist(x,y)\, .
\end{equation}
Taking into account the matrix growth estimate $\abs{T_{x,r}}\le r^{-\eps}$ (cf. \autoref{cor:growth}) and that $r=d(x,y)$ we get that
\begin{equation}
\abs{u(x)-u(y)}\ge(1-\eps)\dist(x,y)^{1+\eps}\, .
\end{equation}
The upper bound in \eqref{eq:holderest} follows from \autoref{rm:sharpgradientbound}.

\medskip

To prove the last assertion we argue as in \cite[Remark 2.10]{KapovitchMondino19}. We claim that there exists $0<s<1$ such that
	\[
	u(\overline B_s(p)\cap\partial X) \cap \overline B_{1-2\eps}^{\setR^{N-1}}(0) = \overline B_{1-2\eps}^{\setR^{N-1}}(0)\, .
	\]
	To this aim we observe that $u(\overline B_s(p)\cap\partial X) \cap \overline B_{1-2\eps}^{\setR^{N-1}}(0)$ is non empty and closed in  $\overline B_{1-2\eps}^{\setR^{N-1}}(0)$. Moreover, it holds
	\[
		u(\overline B_s(p)\cap\partial X) \cap \overline B_{1-2\eps}^{\setR^{N-1}}(0) = 	u( B_s(p)\cap\partial X) \cap \overline B_{1-2\eps}^{\setR^{N-1}}(0)\, ,
	\]
	whenever $s^{1+\eps}(1-\eps)>1-2\eps$. Indeed for any $q\in \partial B_s(p)\cap \partial X$ one has
	\[
	|u(q)|=|u(q)-u(p)|\ge (1-\eps) s^{1+\eps} > 1-2\eps\, ,
	\]
	as a consequence of \eqref{eq:holderest}. Therefore, for such a choice of $s$ the set $u(\overline B_s(p)\cap\partial X) \cap \overline B_{1-2\eps}^{\setR^{N-1}}(0)$ is also open in $\overline B_{1-2\eps}^{\setR^{N-1}}(0)$ as a consequence of the invariance of the domain (here we are using that $B_8(p)\cap\partial X$ is an $(N-1)$-dimensional topological manifold, as we already pointed out). 
\end{proof}

    	\begin{remark}
    		In the smooth case, i.e. when $(X,\dist,\haus^N)$ is a Riemannian manifold with convex boundary and Ricci curvature bounded below by $-\delta(N-1)$, the map $u:B_1(p)\cap\partial X\to \setR^{N-1}$ obtained in \autoref{thm:goodpar} is also a diffeomorphism onto its image. This follows by observing that $u$ is smooth and 
    		\begin{equation}
    		\di u_x : T_x \partial X \to \setR^{N-1}\quad \text{is nondegenerate for any}\, x\in B_1(p)\cap\partial X\, .
    		\end{equation}

Observe that $u:B_2(p) \to \setR^{N-1}$ is a $\eps$-splitting map where $\eps\le \eps(N)$. Let $x\in B_1(p)\cap\partial X$. By the transformation \autoref{prop:transformation} for any $r\le 1$, there exists an $N\times N$ matrix $A_{x,r}$ such that $A_{x,r}\circ u : B_r(x) \to \setR^{N-1}$ is a $\delta$-splitting map, where $\delta\le \delta(\delta',N)$.
    		For $r\le r_x$ small enough, standard elliptic regularity estimates up to the boundary give 
    		\begin{equation}
    			\sup_{B_r(x)} | \nabla (A_{x,r}\circ u)^a \cdot \nabla (A_{x,r}\circ u)^b - \delta_{\alpha,\beta}| \le C(N)\delta'
    			\quad \text{for any $a,b=1,\ldots,N-1$}\, ,
    		\end{equation}
    		which implies that $\di u_x$ is nondegenerate.  		
    	\end{remark}

\vspace{.3cm}

\begin{corollary}
Let $1\le N<\infty$ be a natural number and $(X,\dist,\haus^N)$ be an $\RCD(-(N-1),N)$ metric measure space. Assume that $\partial X\neq\emptyset$, then for any $0<\alpha<1$ there exists $U_{\alpha}\subset\partial X$ such that:
\begin{itemize}
	\item[i)] $U_{\alpha}$ is relatively open and dense in $\partial X$;
	\item[ii)] $\dim_{H}(\partial X\setminus U_{\alpha})\le N-2$;
	\item[iii)] $U_{\alpha}$ is an $(N-1)$-dimensional $\alpha$-H\"older topological manifold without boundary and the charts can be chosen with components that are restriction of harmonic maps (on the ambient space).
\end{itemize}
\end{corollary}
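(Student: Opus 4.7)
I would define $U_\alpha$ directly via closeness to the model half-space. Fix $\eps = \eps(\alpha) \in (0,1/5)$ small enough so that the H\"older exponent $1/(1+\eps)$ in \autoref{thm:goodpar} (iii) exceeds $\alpha$, and let $\delta = \delta(\eps,N)$ be the corresponding constant from that theorem. Set
\[
 U_\alpha := \Big\{ x \in \partial X : \text{there exists } r \in (0,1) \text{ with } r^2(N-1) \le \delta \text{ and } B_{16r}(x) \text{ is a } \delta\text{-boundary ball} \Big\}.
\]
Openness of $U_\alpha$ in $\partial X$ is immediate from the definition. To obtain density, recall from the paper's definition $\partial X = \overline{\mathcal{S}^{N-1}\setminus \mathcal{S}^{N-2}}$, so it suffices to show $\mathcal{S}^{N-1}\setminus \mathcal{S}^{N-2} \subset U_\alpha$. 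This follows from \autoref{thm:struc} (iii): at every $x \in \mathcal{S}^{N-1}\setminus \mathcal{S}^{N-2}$ the tangent cone is unique and isomorphic to $\setR^N_+$, hence for all sufficiently small $r$ the rescaled ball $(X, r^{-1}\dist, x)$ satisfies the half-space GH closeness condition, and we may choose $r$ additionally small so that the rescaled ambient curvature bound becomes $-r^2(N-1) \ge -\delta(N-1)$, placing $x$ in $U_\alpha$.

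For the dimension estimate, the inclusion
\[
 \partial X \setminus U_\alpha \subset \partial X \setminus (\mathcal{S}^{N-1}\setminus \mathcal{S}^{N-2}) \subset \mathcal{S}^{N-2}
\]
noted in the introduction, combined with the Hausdorff dimension bound $\dim_H \mathcal{S}^{N-2} \le N-2$ from \cite{DePhilippisGigli18}, immediately gives (ii). For the manifold structure at a point $x \in U_\alpha$, apply \autoref{thm:goodpar} to the corresponding rescaled $\delta$-boundary ball $B_{16r}(x)$: this produces an $\eps$-splitting map $u : B_{8r}(x) \to \setR^{N-1}$, harmonic by the definition of $\delta$-splitting map, whose restriction to $\partial X \cap B_{r}(x)$ is a bi-H\"older homeomorphism onto its image with exponent $1/(1+\eps) \ge \alpha$ by property (iii) of that theorem, and whose image contains an open ball in $\setR^{N-1}$ by property (iv). Since the statement of \autoref{thm:goodpar} already ensures that $\partial X \cap B_{8r}(x)$ is a topological $(N-1)$-manifold without boundary, the map $u$ provides a chart around $x$ whose components are restrictions of harmonic functions on the ambient space, as required.

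The only delicate point in the plan is matching the various parameter dependencies: one must verify that the same $\eps$-splitting map $u$ simultaneously serves as a chart and satisfies the bi-H\"older estimate with the prescribed exponent $\alpha$, and that the rescaling used to reduce the ambient curvature from $-(N-1)$ to $-\delta(N-1)$ is compatible with the choice of $r$ defining membership in $U_\alpha$. Both are routine: the H\"older exponent in \autoref{thm:goodpar} (iii) is $1/(1+\eps)$, which tends to $1$ as $\eps \downarrow 0$, so fixing $\eps$ depending only on $\alpha$ before choosing $\delta$ is enough. A final minor bookkeeping step is to produce countably many such charts by a standard separability argument, so that one genuinely obtains a manifold atlas on all of $U_\alpha$ rather than only a manifold structure in a neighborhood of each chosen point.
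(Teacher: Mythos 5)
Your proposal follows the same essential route as the paper: use \autoref{thm:struc} (iii) (uniqueness of the tangent half-space at points of $\mathcal{S}^{N-1}\setminus\mathcal{S}^{N-2}$) for density, the inclusion $\partial X\setminus(\mathcal{S}^{N-1}\setminus\mathcal{S}^{N-2})\subset\mathcal{S}^{N-2}$ for the dimension bound, and \autoref{thm:goodpar} (after rescaling so the curvature lower bound becomes $-\delta(N-1)$) for the $C^\alpha$-H\"older chart with harmonic components. These are exactly the three inputs the paper invokes, and your parameter bookkeeping ($\eps$ so that $1/(1+\eps)\ge\alpha$, then $\delta=\delta(\eps,N)$) is also fine.

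The one genuine gap is the assertion that ``openness of $U_\alpha$ is immediate from the definition.'' With your definition, $U_\alpha$ is the set of $x\in\partial X$ for which there exists an admissible scale $r$ with $\dist_{GH}(B_{16r}(x), B_{16r}^{\setR^N_+}(0))\le \delta\cdot 16r$. The map $x\mapsto\dist_{GH}(B_{16r}(x),B_{16r}^{\setR^N_+}(0))$ is Lipschitz in $x$ at fixed $r$, so the inner condition defines a \emph{closed} set in $(x,r)$; projecting out $r$ gives a set that need not be relatively open. In particular, if the defining GH-distance bound is exactly attained at some $x$, you cannot move $x$ even slightly and keep the same $r$, nor is there an obvious smaller scale that works without invoking \autoref{thm:improvedneckregion} and a further tuning of constants (which then changes the parameter in your definition). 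The clean fix is the paper's: instead of defining $U_\alpha$ by an existential condition at each point, set
\[
U_\alpha:=\bigcup_{x\in\mathcal{S}^{N-1}\setminus\mathcal{S}^{N-2}} \big(B_{r_x}(x)\cap\partial X\big)\, ,
\]
where for each $x$ one fixes a radius $r_x\in(0,\delta)$ (so $r_x^2<\delta$, giving the correct rescaled curvature bound) with $B_{16r_x}(x)$ a $\delta$-boundary ball, which exists by uniqueness of tangent cones. This $U_\alpha$ is a union of relatively open sets, hence open; it contains $\mathcal{S}^{N-1}\setminus\mathcal{S}^{N-2}$ by construction (each $x$ lies in its own ball), and the rest of your argument goes through verbatim: density from $\partial X=\overline{\mathcal{S}^{N-1}\setminus\mathcal{S}^{N-2}}$, the dimension estimate from $\partial X\setminus U_\alpha\subset\mathcal{S}^{N-2}$ together with $\dim_H\mathcal{S}^{N-2}\le N-2$, and the harmonic $C^\alpha$ charts from \autoref{thm:goodpar} applied on each $B_{16r_x}(x)$. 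Everything else in your write-up is correct.
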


\begin{proof}
	Fix $\alpha\in (0,1)$. Thanks to \autoref{thm:goodpar} we can find $\delta<\delta(N,\alpha)$ with the property that if $B_{16}(p)$ is a $\delta$-boundary ball of an $\RCD(-\delta(N-1),N)$ m.m.s. $(X,\dist,\haus^N)$, then $\partial X\cap B_1(p)$ is a $C^{\alpha}$ manifold of dimension $(N-1)$.\\	
	For any $x\in \mathcal{S}^{N-1}\setminus\mathcal{S}^{N-2}$ we consider $r_x\in (0,\delta)$ such that $B_{16 r_x}(x)$ is a $\delta$-boundary ball and we set 
	\[
	U_{\alpha}:=\bigcup_{x\in \mathcal{S}^{N-1}\setminus\mathcal{S}^{N-2}} B_{r_x}(x) \cap\partial X\, .
	\]
	By construction $U_\alpha$ satisfies (iii).
	Notice that $U_{\alpha}$ is open and dense in $\partial X$ and $\partial X\setminus U \subset \mathcal{S}^{N-2}$, yielding (i) and (ii). 
\end{proof}

\begin{corollary}\label{cor:bdryvolconvergencereg}
Let $1\le N<\infty$ be a fixed natural number. For any $\eps>0$ and $\delta<\delta(\eps,N)>0$ the following holds. If $(X,\dist,\haus^N)$ is an $\RCD(-\delta(N-1),N)$ space and $B_{16}(p)$ is a $\delta$-boundary ball, then 
\begin{equation}\label{eq:sharpahlfors}
(1-\eps)\omega_{N-1}\le\haus^{N-1}(\partial X\cap B_1(p))\le (1+\eps)\omega_{N-1}\, . 
\end{equation}
Moreover, $\mathcal{F}X\cap B_1(p)=\partial X\cap B_1(p)$ and for any $x\in\partial X\cap B_1(p)$, any tangent cone at $x$ has boundary.
 
\end{corollary}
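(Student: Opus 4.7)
The plan is to use the bi-Hölder parametrization provided by \autoref{thm:goodpar} to transfer Hausdorff measure estimates from $\partial X \cap B_1(p)$ to $\setR^{N-1}$. I would apply \autoref{thm:goodpar} with an auxiliary parameter $\eps' = \eps'(\eps, N)$ much smaller than $\eps$, and correspondingly small $\delta < \delta(\eps', N)$. This yields an $\eps'$-splitting map $u : B_8(p) \to \setR^{N-1}$, normalised by an additive constant so that $u(p_0) = 0$ for a chosen $p_0 \in \partial X \cap B_1(p)$ (which exists by \autoref{thm:stabilityS}), together with a closed subset $U \subset B_1(p) \cap \partial X$ satisfying $\haus^{N-1}((B_1(p) \cap \partial X)\setminus U)\le \eps'$ on which $u$ is bi-Lipschitz with both constants in $1 \pm \eps'$, the global Hölder estimate \eqref{eq:holderest}, and the covering $u(B_1(p) \cap \partial X) \supset B^{\setR^{N-1}}_{1-2\eps'}(0)$.

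For the lower bound, I would use the global $(1+\eps')$-Lipschitz continuity of $u$ on $\partial X \cap B_1(p)$ coming from \eqref{eq:holderest} to get
\begin{equation*}
\omega_{N-1}(1 - 2\eps')^{N-1} = \haus^{N-1}(B^{\setR^{N-1}}_{1-2\eps'}(0)) \le \haus^{N-1}(u(\partial X \cap B_1(p))) \le (1 + \eps')^{N-1}\haus^{N-1}(\partial X \cap B_1(p)),
\end{equation*}
which yields $\haus^{N-1}(\partial X \cap B_1(p)) \ge (1 - \eps)\omega_{N-1}$ once $\eps'$ is chosen small enough in terms of $\eps$ and $N$. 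For the upper bound I would decompose $\haus^{N-1}(\partial X \cap B_1(p)) \le \haus^{N-1}(U) + \eps'$ and apply the lower Lipschitz bound on $u|_U$ to obtain $\haus^{N-1}(U) \le (1 - \eps')^{-(N-1)} \haus^{N-1}(u(U))$. The crucial step is to show that $u(\partial X \cap B_1(p)) \subset B^{\setR^{N-1}}_{1 + C(N)\eps'}(0)$: this would follow by combining the Hölder estimate with the fact that $u$ differs, up to a small error and an isometry of $\setR^{N-1}$, from the projection onto the $\setR^{N-1}$-factor of the underlying $\delta$-isometry $\Phi : B_{16}^{\setR_+^N}(0) \to B_{16}(p)$, as implicit in the construction of splitting maps in \autoref{splitting vs isometry}. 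These estimates combine to give $\haus^{N-1}(\partial X \cap B_1(p)) \le (1 - \eps')^{-(N-1)}(1 + C(N)\eps')^{N-1}\omega_{N-1} + \eps' \le (1 + \eps)\omega_{N-1}$ for sufficiently small $\eps'$.

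For the final assertions, the inclusion $\mathcal{F}X \subset \partial X$ on $B_1(p)$ is already part of \autoref{thm:lipbdry} (i), so I only need to show that for every $x \in \partial X \cap B_1(p)$ every tangent cone has boundary. Fix such an $x$ and a tangent cone $Y$ at $x$ arising from radii $r_n \downarrow 0$. By \autoref{thm:improvedneckregion} (i), each ball $B_{r_n}(x)$ is $\eps' r_n$-GH close to $B_{r_n}^{\setR_+^N}(0)$, so after rescaling the unit balls are $\eps'$-GH close to $B_1^{\setR_+^N}(0)$ and, by continuity of GH distance, so is $B_1^Y(y)$. Provided $\delta$ is chosen so that $\eps' \le \eta(N)$ from \autoref{thm:stabilityS}, this forces the presence of a top-dimensional boundary point in $B_1^Y(y)$; since $Y$ is a metric cone, this top-dimensional singularity forces the cross section to have boundary, so $Y$ is a cone with boundary and $x \in \mathcal{F}X$. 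The main obstacle is the careful verification in the upper bound that $u(\partial X \cap B_1(p))$ sits in a ball of radius $1 + O(\eps')$ rather than the trivial $2(1+\eps')$ afforded by the Lipschitz estimate alone; this requires exploiting the identification of $u$ with the Euclidean projection from the ambient $\delta$-isometry, so that the image is genuinely controlled by the geometry of the model half-space $B^{\setR_+^N}_1(0)$ itself.
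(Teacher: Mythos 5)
Your overall structure matches the paper's: parametrize $\partial X\cap B_1(p)$ by the $\eps'$-splitting map from \autoref{thm:goodpar} and compare Hausdorff measures with $\Leb^{N-1}$, then derive the last assertions from the first. The lower bound is handled the same way as in the paper, up to a harmless technical variation.

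For the upper bound, however, you have introduced an unnecessary complication and then flagged it as the "main obstacle." The renormalization $u(p_0)=0$ at an arbitrary $p_0\in\partial X\cap B_1(p)$ is what creates the apparent difficulty: if you only estimate $|u(x)|=|u(x)-u(p_0)|$ via the H\"older/Lipschitz bound on $\partial X\cap B_1(p)$ you indeed only get radius $2(1+\eps')$. The fix you propose (identifying $u$, up to an isometry of $\setR^{N-1}$ and a small error, with the projection of the ambient GH approximation) would work but is more involved than needed. The paper's proof avoids this entirely: $u$ from \autoref{thm:goodpar} is an $\eps'$-splitting map on $B_8(p)$, so by \autoref{rm:sharpgradientbound} it is $(1+C(N)\sqrt{\delta'})$-Lipschitz on the whole of $B_1(p)$, not just on $\partial X$. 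Hence $u(\partial X\cap B_1(p))\subset u(B_1(p))\subset B_{1+C(N)\sqrt{\delta'}}\bigl(u(p)\bigr)$, a ball of the correct radius (around whatever $u(p)$ happens to be), and since Lebesgue measure is translation-invariant the center is irrelevant: $\Leb^{N-1}(u(U))\le\omega_{N-1}(1+\eps')^{N-1}$ follows directly. No identification with the background GH map is needed, and the renormalization at $p_0$ is a red herring. Note also that after that renormalization your assertion that $u(B_1(p)\cap\partial X)\supset B^{\setR^{N-1}}_{1-2\eps'}(0)$ (centered at $0$) is not literally what \autoref{thm:goodpar}~(iv) gives you, since the covered ball translates along with $u$; fortunately you only use its Lebesgue measure, which is translation-invariant.

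For the final assertions you argue genuinely differently from the paper and the argument is sound. You push \autoref{thm:improvedneckregion}~(i) to the tangent cone to get $\dist_{GH}(B_1^Y(y),B_1^{\setR^N_+}(0))\le\eps'$, then invoke \autoref{thm:stabilityS} to find top-dimensional boundary points in $B_1^Y(y)$, and conclude via the cone structure that the cross section has boundary. The paper instead applies the Ahlfors lower bound just proved, in its scale-invariant form, to verify $\liminf_{r\to 0}\haus^{N-1}(\partial X\cap B_r(x))/r^{N-1}>0$ and then uses \autoref{cor:stabwithlowerbound}. Your route is somewhat more direct (it bypasses the measure bound and works with GH distance alone), while the paper's route illustrates that the boundary measure lower bound is itself strong enough to force the tangent cone to have boundary; both are correct.
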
	

\begin{proof}

Let $\eps'<\eps$ to be fixed later. For $\delta\le \delta(N,\eps')$ we find a $\eps'$-splitting function $u:B_8(p)\to \setR^{N-1}$ satisfying (i)-(iv) in \autoref{thm:goodpar} with $\eps'$ in place of $\eps$. Let in particular $U\subset B_1(p)\cap\partial X$ be the good set appearing in \autoref{thm:goodpar} (ii).

The inclusion $u(U)\subset B_{1+\eps'}(0)$ implies
\begin{equation}\label{zzzzz5}
\begin{split}
\haus^{N-1}(\partial X\cap B_1(p))  & \le \eps' + \haus^{N-1}(U)\le \eps' + (1+\eps')^{N-1}\Leb^{N-1}(u(U)) 
\\& \le \eps' + (1+\eps')^{2N-2}\omega_{N-1}\, .
\end{split}
\end{equation}
On the other hand since $u(B_1(p)\cap\partial X)\supset B_{1-2\eps'}^{\setR^{N-1}}(0)$, $u((\partial X\cap B_1(p))\setminus U)\le C(N)\eps'$ and $u$ is bi-Lipschitz on $U$ we infer that 
\begin{equation}\label{zzzzz6}
\haus^{N-1}(\partial X\cap B_1(p))\ge\haus^{N-1}(U)\ge \frac{1}{(1+\eps')^{N-1}}\left(\omega_{N-1}(1-2\eps')^{N-1}-C(N)\eps'\right)\, .
\end{equation}
The sought conclusion \eqref{eq:sharpahlfors} follows from \eqref{zzzzz5} and \eqref{zzzzz6} by choosing $\eps'$ small enough.
\medskip 

The second part of the statement follows from \eqref{eq:sharpahlfors} taking into account the following general property: given a noncollapsed $\RCD(-(N-1),N)$ space $(X,\dist,\haus^N)$ and a point $x\in\partial X$ such that
	\begin{equation}\label{eq:lowliminf}
	\liminf_{r\to 0}\frac{\haus^{N-1}(\partial X\cap B_r(x))}{r^{N-1}}>0\, ,
	\end{equation}
	then any tangent cone to $(X,\dist)$ at $x$ has boundary.\\	
	The verification of the claim above follows from \autoref{cor:stabwithlowerbound}, taking into account the scaling properties of $\haus^{N-1}$.

\end{proof}

\subsection{Convergence of boundary measures}
Let us recall that for measures defined on sequences of metric spaces converging in the pGH sense, weak convergence is understood in duality with continuous functions with bounded support once the pGH converging metric spaces are embedded in a common proper metric space (cf. \cite{AmbrosioHonda,AmbrosioHonda18}).

In this framework, two standard consequences of weak convergence are the lower semicontinuity of the evaluation on open sets and the upper semicontinuity of the evaluation on closed sets: if $\mu_n$ are locally finite measures on $Z$ weakly converging to $\mu$ in duality with continuous functions with bounded support as $n\to\infty$ and $A\subset Z$ and $C\subset Z$ are an open and a closed subset respectively, then
\begin{equation}
\mu(A)\le\liminf_{n\to\infty}\mu_n(A)\;\;\;\text{and}\;\;\;\mu(C)\ge\limsup_{n\to\infty}\mu_n(C)\, .
\end{equation}
From the two properties above one can easily infer the full convergence $\lim_{n\to\infty}\mu_n(O)=\mu(O)$, for any Borel set $O\subset Z$ such that $\mu(\partial O)=0$, where we denoted by $\partial O$ the topological boundary of $O$.

We now prove \autoref{intro:stabilityboundary} which is restated below for reader's convenience.

\begin{theorem}[Boundary Volume Convergence]
	Let $1\le N<\infty$ be a fixed natural number. Assume that $(X_n,\dist_n,\haus^N,p_n)$ are $\RCD(-(N-1),N)$ spaces converging in the pGH topology to $(X,\dist,\haus^N,p)$. 
	Then
	\begin{equation}
	\haus^{N-1}\res \partial X_n \to \haus^{N-1}\res \partial X\quad \text{ weakly}\, .
	\end{equation}
	
	In particular
	\[
	\lim_{n\to \infty} \haus^{N-1}(\partial X_n\cap B_r(x_n)) = \haus^{N-1}(\partial X\cap B_r(x))\, ,
	\]
	whenever $X_n\ni x_n\to x\in X$ and $\haus^{N-1}(\partial X\cap \partial B_r(x))=0$.	
\end{theorem}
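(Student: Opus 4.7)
The plan is to extract subsequential weak limits $\mu$ of $\mu_n := \haus^{N-1}\res\partial X_n$ and identify each one with $\haus^{N-1}\res\partial X$, so that the full sequence converges. I realize the pGH convergence inside a common proper ambient metric space $(Z,\dist_Z)$. First, $\mu_n$ is uniformly locally finite by the upper bound $\haus^{N-1}(\partial X_n\cap B_r(y))\le C(N)\,r^{N-1}$ of \autoref{thm:boundatystructure} (i) (applied to each fixed point in $Z$ where volume does not collapse), so up to a subsequence $\mu_n\weakto\mu$ weakly in duality with $C_c(Z)$. The same upper Ahlfors bound passes to $\mu$, so $\mu(B_r(y))\le C(N) r^{N-1}$ for all $y$ and small $r$; in particular $\mu\ll\haus^{N-1}$ via a standard covering argument.

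To locate $\supp\mu$ inside $\partial X$, pick $x\in X\setminus\partial X$ and $r>0$ with $\partial X\cap\overline B_r(x)=\emptyset$. For $x_n\to x$, applying the scale-invariant version of \autoref{cor:stabwithlowerbound} gives
\[
\limsup_{n\to\infty}\mu_n(\overline B_r(x_n))\le C(N,v)\haus^{N-1}(\partial X\cap\overline B_r(x))=0.
\]
Upper semicontinuity of weak convergence on closed sets then yields $\mu(\overline B_{r/2}(x))=0$. Combined with the absolute continuity $\mu\ll\haus^{N-1}$ and the vanishing $\haus^{N-1}(\mathcal{S}^{N-2})=0$ (a consequence of the dimension bound $\dim_H\mathcal{S}^{N-2}\le N-2$ in \autoref{thm:boundatystructure}), it suffices to compute the Radon-Nikodym density of $\mu$ with respect to $\haus^{N-1}\res\partial X$ at every $x\in\mathcal{S}^{N-1}\setminus\mathcal{S}^{N-2}$ and check it equals $1$.

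This density computation is the crux of the argument and is where the sharp Ahlfors regularity on boundary balls is decisive. Fix such an $x$: by \autoref{thm:boundatystructure} (iii) the unique tangent cone at $x$ is $\setR^N_+$, so for every $\eta>0$ there is $r_0>0$ with $B_{16r}(x)$ an $\eta$-boundary ball of $X$ for all $r\le r_0$. For any sequence $x_n\to x$ the ball $B_{16r}(x_n)$ is then a $2\eta$-boundary ball of $X_n$ for all $n$ large, and after rescaling to unit radius, picking $\eta\le\eta(\eps,N)$, the scale-invariant form of \autoref{cor:bdryvolconvergencereg} yields
\[
(1-\eps)\omega_{N-1}r^{N-1}\le\mu_n(B_r(x_n))\le(1+\eps)\omega_{N-1}r^{N-1}
\]
together with the analogous bounds for $\haus^{N-1}(\partial X\cap B_r(x))$. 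On a dense set of radii $r$ with $\mu(\partial B_r(x))=0$, weak convergence in $Z$ gives $\mu_n(B_r(x_n))\to\mu(B_r(x))$, so the two-sided bound passes to $\mu(B_r(x))$. Dividing by $\omega_{N-1}r^{N-1}$, letting first $r\to 0$ and then $\eps\to 0$, the density of $\mu$ with respect to $\haus^{N-1}\res\partial X$ is seen to equal $1$ at every point of $\mathcal{S}^{N-1}\setminus\mathcal{S}^{N-2}$, and hence $\mu=\haus^{N-1}\res\partial X$. Uniqueness of the subsequential limit upgrades to full weak convergence, and the pointwise convergence on balls with $\haus^{N-1}(\partial X\cap\partial B_r(x))=0$ follows from the general continuity criterion for weak convergence.

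The main obstacle is obtaining the \emph{sharp} constant in the density identification. The a priori upper semicontinuity of \autoref{cor:stabwithlowerbound} carries an unavoidable multiplicative factor $C(N,v)>1$ and only shows that any weak limit is comparable to $\haus^{N-1}\res\partial X$, which cannot give equality. The matched sharp lower and upper Ahlfors bounds of \autoref{cor:bdryvolconvergencereg}, themselves by-products of the improved neck structure \autoref{thm:improvedneckregion} and the volume $\eps$-regularity \autoref{cor:volrigidityb}, are precisely what forces the Radon-Nikodym density to equal $1$ on the full boundary stratum, and without them only a non-quantitative, comparable convergence would be available.
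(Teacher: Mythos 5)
Your proposal follows essentially the same route as the paper: extract a subsequential weak limit $\mu$, control it from above by the Ahlfors bound of \autoref{thm:boundatystructure}~(i), localize its support in $\partial X$, and then identify the Radon--Nikodym density with $1$ at $\haus^{N-1}$-a.e.\ point of $\mathcal{S}^{N-1}\setminus\mathcal{S}^{N-2}$ by means of the sharp two-sided Ahlfors bounds of \autoref{cor:bdryvolconvergencereg}; the paper's proof is the same, the only cosmetic difference being that it uses lower semicontinuity of the density $\Theta_{X_n}$ (cf.\ \autoref{rm:bdrytobdryhalf}) to place $\supp\nu$ inside $\mathcal{S}^{N-1}$ rather than the content bound of \autoref{cor:stabwithlowerbound}. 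One small slip: to deduce $\mu(B_{r/2}(x))=0$ from $\limsup_n\mu_n(\overline B_r(x_n))=0$ you should invoke lower semicontinuity on \emph{open} sets, $\mu(A)\le\liminf_n\mu_n(A)$ (combined with the inclusion $B_{r/2}(x)\subset\overline B_r(x_n)$ for $n$ large), not upper semicontinuity on closed sets, which gives the inequality $\mu(C)\ge\limsup_n\mu_n(C)$ in the wrong direction; the conclusion is unaffected.
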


\begin{proof}
Set $\nu_n:= \haus^{N-1}\res \partial X_n$. Up to extracting a subsequence one has that $\nu_n \to \nu$ weakly, where $\nu$ is a nonnegative measure on $X$ satisfying
\begin{itemize}
	\item[(i)] $\nu(B_r(x)) \le C(N) r^{N-1}$ for any $x\in X$ and a.e. $r\in (0,2)$;
	\item[(ii)] $\supp \nu \subset \mathcal{S}^{N-1}$.
\end{itemize}
Here we have used standard compactness theorem for measures along with \autoref{thm:boundatystructure} (i), the lower semicontinuity of the density $\Theta_{X_n}$ w.r.t. the GH convergence, and \autoref{rm:bdrytobdryhalf}.
We need to prove that  $\nu=\haus^{N-1}\res \partial X$.

Let us begin by observing that, as a consequence of (i) and (ii), it holds that $\nu \ll \haus^{N-1}\res \mathcal{S}^{N-1} = \haus^{N-1}\res \partial X $. In particular if $\mathcal{S}^{N-1}(X)\setminus \mathcal{S}^{N-2}(X) = \emptyset$ then $v = \haus^{N-1}\res \partial X = 0$. We can therefore assume that $\mathcal{S}^{N-1}(X)\setminus \mathcal{S}^{N-2}(X) \neq \emptyset$. To get the claimed conclusion  it is enough to verify that
\begin{equation}\label{zzzzz7}
	\lim_{r\to 0} \frac{\nu(B_r(x))}{\omega_{N-1}r^{N-1}} = 1\quad \text{for $\haus^{N-1}$-a.e. $x\in\partial X$}\, ,
\end{equation}
thanks to a classical result about differentiation of measures \cite{Kirchheim94}. Here we  used that $\partial X$ is $(N-1)$-rectifiable with locally finite $\haus^{N-1}$-measure.

To prove \eqref{zzzzz7} we rely on \autoref{cor:bdryvolconvergencereg}. Observe that we can check \eqref{zzzzz7} just considering regular boundary points $x\in \mathcal{S}^{N-1}\setminus \mathcal{S}^{N-2}$ for which the limit 
\begin{equation}
\lim_{r\to 0} \frac{\nu(B_r(x))}{\omega_{N-1}r^{N-1}} 
\end{equation}
exists.

Let us fix $\delta>0$. For any $x\in \partial X$ as above there exists $r_x<1$ such that $B_r(x)$ is a $\delta/2$-boundary ball for every $r\in (0,r_x)$, thanks to \autoref{thm:improvedneckregion} (i). In particular, if $\partial X_n\ni x_n\to x\in \partial X$ then $B_r(x_n)$ is a $\delta$-boundary ball for any $r\in (0,r_x]$, and $n$ big enough (here we rely again on \autoref{thm:improvedneckregion} (i) to handle radii in $(0,r_x)$). Notice that the existence of the sequence $(x_n)$ verifying the property above for $r=r_x$ is a consequence of the stability \autoref{thm:stabilityS}. Let us now fix $\eps>0$ and assume that $\delta\le \delta(N,\eps)$ so that \autoref{cor:bdryvolconvergencereg} holds true. We get
\[
\abs{\frac{\nu(B_r(x))}{\omega_{N-1} r^{N-1}} - 1} =\abs{ \lim_{n\to \infty}  \frac{\haus^{N-1}(\partial X\cap B_r(x_n))}{\omega_{N-1}r^{N-1}} - 1} \le \eps
\quad \text{for a.e. $r\in (0,r_x)$}\footnote{in particular for all those radii such that $\nu(\overline{B}_r(x)\setminus B_r(x))=0$, a property which fails in at most countable cases.},
\]
which yields \eqref{zzzzz7}, being $\eps$ arbitrary.

\end{proof}

\section{Topological regularity up to the boundary}\label{sec:topologicalstructureuptotheboundary}
In \cite[Corollary 3.2]{KapovitchMondino19}, following the arguments of \cite{CheegerColding97} (see also the previous \cite{Anderson90}) and relying on Reifenberg's theorem for metric spaces, it has been proved that on any noncollapsed $\RCD(K,N)$ space $(X,\dist,\haus^N)$ and for any $\alpha\in(0,1)$ there exists an open and dense subset $U\subset X$ such that:
\begin{itemize}
	\item $\dim_H(X\setminus (U\cup\partial X))\le N-2$;
	\item $U$ is an $N$-dimensional topological manifold with no boundary and $C^{\alpha}$-charts.
\end{itemize}
The aim of this section is to sharpen this result including the boundary in the topological regularity statement. We shall prove that any noncollapsed $\RCD(K,N)$ is space (with boundary) is homeomorphic, up to a set of codimension two, to a topological manifold (with boundary) with $\alpha$-biH\"older charts for any $0<\alpha<1$.

In view of \cite[Theorem 4.11]{KapovitchMondino19} and of \autoref{thm:neckdecompositiontheorem} the main tool needed to get topological regularity up to the boundary are biH\"older topological charts on $\delta$-boundary balls.

\begin{theorem}\label{thm:boundaryballshomeo}
		Let $N\in \setN$, $N \ge 1$ and $(X,\dist,\haus^N)$ be an $\RCD(-\delta(N-1),N)$ space, $p\in X$ such that $B_{16}(p)$ is a $\delta$-boundary ball.
		Then, for any $0<\eps<1/5$ if $\delta<\delta(N,\eps)$ there exists
		$F:B_1(p)\to \setR_+^N$ such that 
		\begin{itemize}
			\item[(i)] $(1-\eps) \dist(x,y)^{1+\eps} \le |F(x) - F(y)| \le C(N) \dist(x,y)$ for any $x,y\in B_1(p)$;
			\item[(ii)] $F(p)=0$ and $\partial \setR_+^N \cap B_{1-2\eps}(0) \subset F(\partial X \cap B_1(p)) = \partial \setR^N_+\cap F(B_1(p))$;
			\item[(iii)] $F$ is open and a homeomorphism onto its image;
			\item[(iv)] $ B_{1-2\eps}^{\setR_+^N}(0) \subset F(B_1(p))$.
		\end{itemize}
\end{theorem}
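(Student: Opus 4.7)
The plan is to take $F:=(u-u(p),\dist_{\partial X}):B_1(p)\to\setR^N_+$, where $u:B_8(p)\to\setR^{N-1}$ is the $\eps$-splitting map produced by \autoref{thm:goodpar} (up to shifting $p$ to a nearby boundary point, whose existence within distance $O(\delta)$ follows from \autoref{thm:stabilityS}). The upper bound $|F(x)-F(y)|\le C(N)\dist(x,y)$ in (i) is immediate from the sharp gradient estimate \autoref{rm:sharpgradientbound} for the components of $u$ and the $1$-Lipschitz continuity of $\dist_{\partial X}$.

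The heart of the proof is the lower Hölder bound. Given $x,y\in B_1(p)$ with $r:=\dist(x,y)$, we distinguish two regimes. If $r\gtrsim\dist_{\partial X}(x)$, we pick $q\in\partial X$ nearest to $x$: by \autoref{thm:improvedneckregion} (i) the ball $B_s(q)$ is an $\eps'$-boundary ball for every $s\le 1$, so the first part of the transformation theorem \autoref{prop:transformation} yields a lower triangular $(N-1)\times(N-1)$ matrix $T_\rho$ at each scale $\rho\sim r$ making $T_\rho u:B_\rho(q)\to\setR^{N-1}$ an $\eps'$-almost splitting map, and \autoref{lemma:improvedepsio} then upgrades $(T_\rho u,\dist_{\partial X})$ to an $\eps'$-isometry into $\setR^N_+$. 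If instead $r\ll\dist_{\partial X}(x)$, we pick $q=x$: by \autoref{thm:improvedneckregion} (iii) the ball $B_s(q)$ is $(N,\eps')$-symmetric for every $s\le\dist_{\partial X}(x)/2$, and a comparison of $(u,\dist_{\partial X})$ with the intrinsic $N$-splitting map supplied by \autoref{splitting vs isometry} (i) on $B_{\dist_{\partial X}(x)/2}(x)$ shows, up to an affine adjustment, that $(u,\dist_{\partial X})$ is itself an $\eps'$-almost splitting map there; the second part of \autoref{prop:transformation} then provides an $N\times N$ matrix $T_\rho$ at each $\rho\sim r$ with $T_\rho F$ an $\eps'$-isometry on $B_\rho(x)$. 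In both regimes one obtains $|T_\rho(F(x)-F(y))|\ge (1-\eps')r$, and combining with the matrix growth estimate $|T_\rho|\le\rho^{-C\eps'}$ from \autoref{cor:growth} delivers $|F(x)-F(y)|\ge \rho^{C\eps'}(1-\eps')r\ge (1-\eps)r^{1+\eps}$ provided $\eps'=\eps'(\eps,N)$ is chosen small enough.

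For (ii), the inclusions $F(\partial X\cap B_1(p))\subset\partial\setR^N_+$ and its converse are immediate from the equivalence $\dist_{\partial X}(x)=0\Leftrightarrow x\in\partial X$, while the inclusion $\partial\setR^N_+\cap B_{1-2\eps}(0)\subset F(\partial X\cap B_1(p))$ is exactly \autoref{thm:goodpar} (iv). For (iii), $F$ is continuous and injective by (i), hence a homeomorphism onto its image; openness follows by invariance of domain on $B_1(p)\setminus\partial X$ combined with the homeomorphism on $\partial X\cap B_1(p)$ given by \autoref{thm:goodpar}. For (iv), the lower Hölder bound forces $F(\partial B_1(p))$ to lie outside $B_{1-2\eps}^{\setR^N_+}(0)$, and a topological degree argument using $F(p)=0$ together with \autoref{thm:goodpar} (iv) yields $B_{1-2\eps}^{\setR^N_+}(0)\subset F(B_1(p))$. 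The main obstacle is the interior regime of the lower Hölder bound: the orthogonality $\fint_{B_1(p)}|\nabla u_a\cdot\nabla\dist_{\partial X}|\le\eps$ from \autoref{lemma:improvedepsio} is an ambient-scale bound that does not rescale to sub-balls much smaller than $\dist_{\partial X}(x)$, and the comparison with the intrinsic $\eps'$-splitting map of \autoref{splitting vs isometry} (i) on $B_{\dist_{\partial X}(x)/2}(x)$ is essential to verify the splitting-map hypothesis required by the interior transformation.
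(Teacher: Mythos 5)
Your overall structure (splitting into the regimes $r \lesssim \dist_{\partial X}(x)$ and $r \gg \dist_{\partial X}(x)$, transformation theorem, matrix growth, then the topological arguments for (ii)--(iv)) matches the paper's, but there is a real gap in the interior regime that you flag but do not actually close, and it changes the shape of the proof.

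You take $F = (u - u(p), \dist_{\partial X})$ with $\dist_{\partial X}$ as the last coordinate and then want to feed $F$ into the transformation theorem \autoref{prop:transformation} on small interior balls $B_\rho(x)$ with $\rho \ll \dist_{\partial X}(x)$. But the transformation theorem acts on $\delta$-\emph{almost splitting} maps as in \autoref{def:almostsplittingmap}, which requires in particular the scale-invariant $L^2$ Laplacian bound $\rho^2\fint_{B_\rho(x)}|\Delta f|^2 \le \delta$ for each component. The function $\dist_{\partial X}$ has, a priori, only a locally measure-valued Laplacian with a possibly non-trivial singular part (cf. \autoref{lemma:distancebdry}), and there is no way to verify such a bound for it, either at the ambient scale or after restriction to a Whitney ball. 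Your suggested fix --- comparing $(u,\dist_{\partial X})$ with the intrinsic $\eps'$-splitting map of \autoref{splitting vs isometry} (i) on $B_{\dist_{\partial X}(x)/2}(x)$ --- does not repair this: that comparison at best gives closeness in $W^{1,2}$ (or uniform/GH-closeness), and closeness in $W^{1,2}$ does not transfer Laplacian control from the intrinsic splitting map to $\dist_{\partial X}$. So the hypothesis of the second part of \autoref{prop:transformation} is genuinely unverifiable for the map $(u,\dist_{\partial X})$.

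This is precisely the problem that \autoref{lemma:distanceapproximation} is designed to solve. The paper replaces $\dist_{\partial X}$ by a regularization $b$ built via a Whitney partition of unity and harmonic replacements on balls $B_{r_k}(x_k)$ with $r_k \sim \dist_{\partial X}(x_k)$; the resulting $b$ satisfies the pointwise estimate $|b - \dist_{\partial X}| \le \eps\,\dist_{\partial X}$ together with the scale-invariant bound $\fint_{B_r(x)}|\nabla b - \nabla\dist_{\partial X}|^2 + r^2\fint_{B_r(x)}|\Delta b|^2 \le \eps$ at the Whitney scale $r = \dist_{\partial X}(x)/3$. It is $b$, not $\dist_{\partial X}$, that goes into the definition $F := (u,b)$, and it is $b$ that makes $(T_x u, b)$ an $\eps''$-almost splitting map on interior balls so that the interior transformation theorem can be invoked. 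Your proof needs this regularization step inserted before the interior case; once it is in place, the rest of the argument goes through essentially as you describe.
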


    By combining \cite[Theorem 4.11]{KapovitchMondino19} and \autoref{thm:boundaryballshomeo} we infer that on a noncollapsed $\RCD(-(N-1),N)$ m.m.s., if $\eps<\eps(N)$ then any point in $X\setminus\mathcal{S}^{N-2}_{\eps}$ has a neighbourhood which is $C^{\alpha}$-homeomorphic either to an open set in $\setR^N$ or to an open set in $\setR^N_+$ (see \eqref{eq:quantstrat} for the definition of the quantitative singular stratum $\mathcal{S}^{N-2}_{\eps}$). It is then easy to infer the following.

\begin{theorem}\label{cor:manifoldstructure}
	Let $N\in \setN$, $N\ge 1$ be fixed and $0<\alpha<1$. If $(X,\dist,\haus^N)$ is a $\RCD(-(N-1),N)$ metric measure space, then there exists a closed set of codimension two $C_\alpha \subset\mathcal{S}^{N-2}_{\eps}(X)$, for some $0<\eps<\eps(N,\alpha)$, such that $X\setminus C_\alpha$ is a topological manifold with boundary and $C^{\alpha}$-charts. Moreover the topological boundary of $X\setminus C_\alpha$ coincides with $\partial X\cap (X\setminus C_\alpha)$.
\end{theorem}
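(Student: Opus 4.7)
The plan is to take $C_\alpha := \mathcal{S}^{N-2}_{\eps}$ for a suitable threshold $\eps = \eps(N,\alpha) > 0$ chosen below. This set is closed by the very definition of the quantitative stratification, it is contained in $\mathcal{S}^{N-2}_{\eps}$ trivially, and the Hausdorff dimension estimate $\dim_H C_\alpha \le N-2$ holds in view of \eqref{eq:hausest} (together with the observation that $\mathcal{S}^{N-2}_{\eps}\subset \mathcal{S}^{N-2}$). So the task reduces to producing a compatible system of $C^\alpha$-biHölder charts on $X\setminus C_\alpha$ onto either open subsets of $\setR^N$ or of $\setR^N_+$, and then identifying the topological boundary with $\partial X\cap (X\setminus C_\alpha)$.

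First I would fix $\eps_1 = \eps_1(N,\alpha)$ small enough that \cite[Theorem 4.11]{KapovitchMondino19} produces a $C^\alpha$-biHölder chart onto an open subset of $\setR^N$ on any ball which is $(N,\eps_1)$-symmetric, and $\delta_1 = \delta_1(N,\alpha)$ small enough that \autoref{thm:boundaryballshomeo} produces a $C^\alpha$-biHölder chart onto an open subset of $\setR^N_+$ (mapping $\partial X$ to $\partial \setR^N_+$) on every $\delta_1$-boundary ball. Then I would pick $\eps = \eps(N,\alpha)$ substantially smaller than $\min(\eps_1,\delta_1)$ so that the dichotomy argument below closes up.

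For $x\notin C_\alpha$ there exists a scale $r_x\in(0,1)$ at which $B_{r_x}(x)$ is $(N-1,\eps)$-symmetric; since the only one-dimensional $\RCD(0,1)$ metric cones are $\setR$ and $[0,\infty)$, this means $B_{r_x}(x)$ is $\eps r_x$-close either to a ball in $\setR^N$ or to a ball in the half-space $\setR^N_+$ centered at a point $(0,\tilde z)$ with $\tilde z\ge 0$. In the first case $B_{r_x}(x)$ is actually $(N,\eps)$-symmetric and I apply \cite[Theorem 4.11]{KapovitchMondino19} to get an interior chart. In the second case two subcases arise, governed by the distance $\tilde z$ of the model center from $\partial \setR^N_+$: if $\tilde z \ge c(N)\eps_1^{-1}\eps\, r_x$ then $B_{\tilde z/2}(x)$ is $(N,\eps_1)$-symmetric after volume monotonicity and one falls back into the interior case; otherwise there is a point $x'\in B_{c\eps r_x}(x)\cap \partial X$ such that, rescaling if necessary, $B_{16 s}(x')$ is a $\delta_1$-boundary ball with $x\in B_{s/2}(x')$, which is where \autoref{thm:boundaryballshomeo} applies and yields a biHölder chart onto an open subset of $\setR^N_+$ carrying $\partial X\cap B_s(x')$ onto $\partial\setR^N_+\cap F(B_s(x'))$. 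The main technical obstacle is precisely this dichotomy-with-bootstrap: one has to quantify the transition from $(N-1,\eps)$-symmetry with half-line factor either to $(N,\eps_1)$-symmetry at a smaller scale or to a $\delta_1$-boundary ball at a nearby boundary point; this relies on volume monotonicity and on \autoref{cor:volrigidityb} to recenter, and forces $\eps \ll \min(\eps_1,\delta_1)$.

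Having produced these charts everywhere on $X\setminus C_\alpha$, the compatibility is automatic: two interior charts overlap on open subsets of $\setR^N$; two boundary charts overlap on open subsets of $\setR^N_+$, with $\partial X$ sent coherently to $\partial \setR^N_+$ by \autoref{thm:boundaryballshomeo}(ii); and on the overlap of an interior and a boundary chart, the relevant point lies strictly away from $\partial X$, hence from $\partial \setR^N_+$ in the boundary chart, so the overlap lies in the interior of $\setR^N_+$ which is an open subset of $\setR^N$. This gives $X\setminus C_\alpha$ a well-defined structure of a topological manifold with boundary and $C^\alpha$-charts. Finally, by construction the chart images realize neighborhoods of interior points as open sets in $\setR^N$ (no local topological boundary) and neighborhoods of points of $\partial X \cap (X\setminus C_\alpha)$ as open sets of $\setR^N_+$ meeting $\partial \setR^N_+$ along the image of $\partial X$; hence the topological boundary of $X\setminus C_\alpha$ as a manifold with boundary is exactly $\partial X\cap (X\setminus C_\alpha)$, completing the proof.
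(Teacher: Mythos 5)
Your proposal is essentially the paper's own argument: set $C_\alpha := \mathcal{S}^{N-2}_\eps$, observe that any $x\notin\mathcal{S}^{N-2}_\eps$ has some ball $B_r(x)$ which is $(N-1,\eps)$-symmetric, and then invoke \cite[Theorem 4.11]{KapovitchMondino19} in the $(N,\eps)$-symmetric case and \autoref{thm:boundaryballshomeo} in the $\eps$-boundary-ball case. The one place you diverge is the extra subcase $\tilde z>0$, and this rests on a misreading of the definition of $(k,\eps)$-symmetric ball: the model ball $B_s(z)$ must be centered at a \emph{tip} of $C(Z)\times\setR^{N-1}$, and when $C(Z)=[0,\infty)$ the tips of $[0,\infty)\times\setR^{N-1}=\setR^N_+$ are exactly $\{0\}\times\setR^{N-1}=\partial\setR^N_+$; an interior point of $\setR^N_+$ is not a vertex of that cone. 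So $\tilde z=0$ always, $B_{r_x}(x)$ is directly an $\eps$-boundary ball, and the dichotomy-with-bootstrap you describe (comparing $\tilde z$ with $c(N)\eps_1^{-1}\eps\, r_x$, recentering via \autoref{cor:volrigidityb}) is handling a scenario that the definition of the quantitative stratum rules out from the start. The remaining remarks on chart compatibility and on identifying the topological boundary are correct but not strictly needed; the paper leaves them implicit since a covering by biH\"older homeomorphisms onto open subsets of $\setR^N$ and $\setR^N_+$ carrying $\partial X$ into $\partial\setR^N_+$ already yields the manifold-with-boundary structure.
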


\begin{proof}
It is sufficient to prove that if $\eps<\eps(N,\alpha)$, then any point in $X\setminus\mathcal{S}^{N-2}_{\eps}$ admits a neighbourhood which is either $C^{\alpha}$-homeomorphic to an open subset of $\setR^N$, or $C^{\alpha}$-homeomorphic to an open subset of $\setR^N_+$.\\
In order to do so we just observe that, if $x\in X\setminus\mathcal{S}^{N-2}_{\eps}$, then there exists $0<r<1$ such that either $B_r(x)$ is $(N,\eps)$-symmetric, or $B_r(x)$ is an $\eps$-boundary ball. In the first case $x$ has a neighbourhood $C^{\alpha}$-homeomorphic to an open subset of $\setR^N$ by \cite[Theorem 4.11]{KapovitchMondino19} (see also \cite{CheegerColding97,Anderson90}), for $\eps<\eps(\alpha,N)$. In the second case, by \autoref{thm:boundaryballshomeo} $x$ has a neighbourhood $C^{\alpha}$-homeomorphic to an open subset of $\setR^N_{+}$, if $\eps<\eps(\alpha,N)$.

\end{proof}

	In the framework of limits of $N$-dimensional manifolds with convex boundary and Ricci tensor bounded below by $-(N-1)$ in the interior we can improve \autoref{cor:manifoldstructure} with the following. 
	
\begin{theorem}\label{thm:manifoldlimits}
	Let $(X,\dist,\haus^N)$ be an $\RCD$ m.m.s. arising as noncollapsed limit of a sequence of smooth Riemannian manifolds with convex and Ricci curvature bounded from below in the interior by $-(N-1)$. Then for any $0<\alpha<1$ there exists a constant $C=C(N,\alpha,\haus^N(B_1(p)))$ and a closed set of codimension two $C_\alpha \subset\mathcal{S}^{N-2}(X)$ such that
	\begin{equation}
	\haus^{N-2}(C_\alpha\cap B_1(p)) \le C(N,\alpha,\haus^N(B_1(p)))\footnote{Actually, even a stronger Minkowski and packing type estimate can be obtained with the same strategy.},\quad \text{for any $p\in X$}
	\end{equation}
	 and $X\setminus C_\alpha$ is a topological manifold with boundary and $C^{\alpha}$-charts. Moreover the topological boundary of $X\setminus C_\alpha$ coincides with $\partial X\cap (X\setminus C_\alpha)$.
\end{theorem}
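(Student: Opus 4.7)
The plan is to combine the topological conclusion already supplied by \autoref{cor:manifoldstructure} with a Minkowski-type content estimate on the quantitative singular stratum $\mathcal{S}^{N-2}_{\eps}$, obtained by reducing to the Cheeger--Jiang--Naber theorem \cite{CheegerJiangNaber18} via the metric doubling construction.

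Concretely, I would fix $\alpha\in(0,1)$ and let $\eps=\eps(N,\alpha)>0$ be the parameter furnished by \autoref{cor:manifoldstructure}. Applying that corollary to $(X,\dist,\haus^N)$ produces a closed set $C_\alpha\subset\mathcal{S}^{N-2}_\eps(X)$ realising all the stated topological properties (manifold-with-boundary structure, $C^{\alpha}$-charts, identification of topological boundary with $\partial X$), so the entire content of the theorem reduces to establishing the Hausdorff measure bound
\begin{equation*}
\haus^{N-2}\bigl(\mathcal{S}^{N-2}_\eps(X)\cap B_1(p)\bigr)\le C(N,\eps,\haus^N(B_1(p))).
\end{equation*}

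To prove this, for each smooth $N$-manifold $M_n$ with convex $\partial M_n$ and interior $\Ric\ge-(N-1)$ in the approximating sequence, I would form the metric double $\tilde M_n:=M_n\sqcup_{\partial M_n}M_n$ equipped with the glued length distance. Convexity of $\partial M_n$ together with the interior Ricci lower bound implies that $\tilde M_n$ is a noncollapsed $\RCD(-(N-1),N)$ space (the synthetic Ricci counterpart of Perelman's Alexandrov doubling theorem; one may alternatively smooth the glued metric so as to preserve the lower Ricci bound up to an error vanishing in $n$). Consequently the sequence $\tilde M_n$ converges in the noncollapsed pGH topology to the metric double $\tilde X:=X\sqcup_{\partial X}X$, based at the image $\tilde p$ of $p$, and $\tilde X$ is a noncollapsed Ricci limit of smooth Riemannian manifolds \emph{without} boundary. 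Applying the Cheeger--Jiang--Naber $(N-2)$-content estimate \cite{CheegerJiangNaber18} to $\tilde X$ yields
\begin{equation*}
\haus^{N-2}\bigl(\mathcal{S}^{N-2}_{\eps'}(\tilde X)\cap B_1(\tilde p)\bigr)\le C(N,\eps',\haus^N(B_1(p)))
\end{equation*}
for every $\eps'>0$, using $\haus^N(B_1(\tilde p))\le 2\haus^N(B_1(p))$. The canonical quotient $\pi:\tilde X\to X$ is $1$-Lipschitz with fibres of cardinality at most two; since the isometric involution $\sigma$ of $\tilde X$ fixes $\partial X$ and interchanges the two copies, I would verify that every $x\in\mathcal{S}^{N-2}_\eps(X)$ lifts to a point $\tilde x\in\mathcal{S}^{N-2}_{\eps'}(\tilde X)$ for a suitable $\eps'=\eps'(N,\eps)>0$: if some $B_s(\tilde x)$ were $(N-1,\eps')$-symmetric in $\tilde X$, the $\sigma$-symmetric version of the approximating model would force an $(N-1,\eps)$-symmetric ball in $X$, contradicting $x\in\mathcal{S}^{N-2}_\eps(X)$. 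Pushing the content bound on $\tilde X$ down through $\pi$ then produces the required estimate on $X$.

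The principal obstacle is ensuring that $\tilde M_n$ (and hence $\tilde X$) genuinely fits into the framework to which \cite{CheegerJiangNaber18} applies: the glued metric is only $C^0$ across $\partial M_n$, so one must either establish directly that $\tilde M_n$ is noncollapsed $\RCD(-(N-1),N)$ and invoke the $\RCD$ extension of the Cheeger--Jiang--Naber content estimate (running the neck decomposition machinery of \autoref{sec:neckregion} in codimension two rather than codimension one), or construct a smoothing of the double preserving the lower Ricci bound. A second delicate point is the analysis of the lift along $\pi$: one must rule out that doubling creates spurious symmetries turning $(N-2)$-singular points of $X$ into $(N-1)$-symmetric points of $\tilde X$ at some scale, which is precisely what the $\sigma$-averaging argument above accomplishes, but requires some care because at boundary points the tangent structure changes qualitatively under doubling (a regular boundary point of $X$ becomes a regular interior point of $\tilde X$).
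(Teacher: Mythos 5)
Your proposal follows essentially the same route as the paper's alternative proof: double along $\partial X$, appeal to the Schlicting and Kosovsky gluing/smoothing results to show the double is a noncollapsed Ricci limit of closed manifolds, apply the Cheeger--Jiang--Naber content estimate to the double, and push back down to $X$. Two differences are worth noting.

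First, your lift of singular points from $X$ to $\tilde X$ via a $\sigma$-averaging argument is more delicate than necessary. The paper instead works directly with the density $\Theta$: it sets $I:=\{x\in\partial X : \Theta_X(x)<\frac12-\eta\}$ and $P:=\{x\in X\setminus\partial X : \Theta_X(x)<1-\eta\}$, observes that $\Theta_{\hat X}(\iota(x))\le 1-2\eta$ for every $x\in I\cup P$ (density doubles at boundary points, unchanged in the interior), and then concludes $\iota(I\cup P)\subset\mathcal{S}^{N-2}_{\eps}(\hat X)$ for a suitable $\eps(N,\alpha)$. The crucial point, which you flag as a ``delicate'' concern without fully resolving it, is dealt with cleanly here: $\hat X$ is a noncollapsed Ricci limit of closed manifolds, so its top singular stratum is empty, and a uniform density gap away from $1$ directly forbids $(N-1,\eps)$-symmetric balls at all scales. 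The density gap is preserved by doubling, so there is nothing to average. Your argument that a $\sigma$-symmetric approximating model would force an $(N-1,\eps)$-symmetric ball in $X$ is not obviously correct, because the putative splitting direction in $\tilde X$ need not be $\sigma$-invariant, and the restriction of a nearly-splitting model to one half may not remain a good half-space approximant at the same scale.

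Second, the paper's doubling argument is explicitly given only under the additional technical hypothesis that the approximating boundaries $\partial M_n$ have uniformly bounded diameter (needed for the Schlicting/Kosovsky smoothing step); you do not mention this. The paper's main line of proof actually dispenses with the doubling entirely, simply asserting that the Cheeger--Jiang--Naber machinery applies verbatim to noncollapsed limits of manifolds with convex boundary and interior lower Ricci bounds. The doubling is offered only as a self-contained alternative. Your proposal correctly identifies the $C^0$ regularity of the glued metric across the seam as the principal obstacle and the Kosovsky/Schlicting smoothing as the remedy, but it leaves that step at the level of a sketch, where the paper treats it as a black box with specific citations.
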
	

  The improvement will follow from the sharp measure estimates for the effective singular stratum $\mathcal{S}^{N-2}_{\eps}$ on noncollapsed Ricci limit spaces obtained in \cite{CheegerJiangNaber18}.  The conclusion is almost straightforward once we point out that the verbatim arguments of \cite{CheegerJiangNaber18} allow to treat also the case of noncollapsed limits of smooth Riemannian manifolds with convex boundary (and interior lower Ricci curvature bounds). Since that case was not considered therein, we also give a detailed proof relying on a gluing procedure (see \cite{Schlicting12}) in order to reduce the study of singularities in the boundary to that of interior singularities.

\begin{proof}[Proof of \autoref{thm:manifoldlimits}]

First let us point out that, the analogue of \cite[Theorem 1.9]{CheegerJiangNaber18} in the case of non collapsed limits of smooth manifolds with convex boundary and interior lower ricci curvature bounds yields that, under our assumptions,
\begin{equation}
\haus^{N-2}(\mathcal{S}^{N-2}_{\eps}\cap B_1(p))<C(N,\eps,\haus^N(B_1(p)))\, .
\end{equation}
The conclusion then follows from \autoref{cor:manifoldstructure}, where we proved that the topologically singular set is included in $\mathcal{S}^{N-2}_{\eps}$. The validity of \cite[Theorem 1.9]{CheegerJiangNaber18} in the case of manifolds with boundary can be checked with the verbatim arguments therein indicated.
\medskip

However, since the case of manifolds with boundary is not considered in \cite{CheegerJiangNaber18}, below we provide an alternative proof under the additional technical assumption that the smooth approximating manifolds have boundaries with uniformly bounded diameter.

	Let us start by pointing out that 
	\begin{itemize}
	\item[i)] if $x\in\partial X$ and $\Theta_X(x)\ge 1/2-\eta(N,\alpha)$, then $x$ has a neighbourhood $C^{\alpha}$-homeomorphic to an open set in $\setR^N_+$; 
	\item[ii)] if $x\in X$ verifies $\Theta_X(x)\ge 1-\eta(N,\alpha)$, then $x$ has an open neighbourhood $C^{\alpha}$-homeomorphic to an open subset of $\setR^N$.
	\end{itemize}
	We refer to \cite{KapovitchMondino19,CheegerColding97,Anderson90} for the proof of (ii), which is based on Reifenberg's theorem for metric spaces.\\
	Property (i) instead directly follows from the Boundary volume rigidity \autoref{cor:volrigidityb} and
	\autoref{thm:boundaryballshomeo}.
	
	It follows from the discussion above that, letting 
	\begin{equation}
	I:=\{x\in\partial X\,:\,\Theta_X(x)<\frac12-\eta(N,\alpha) \},\;\;\; P:=\{x\in X\setminus\partial X\,:\,\Theta_X(x)<1-\eta(N,\alpha) \}\, ,
	\end{equation}
	it suffices to prove 
	\begin{equation}\label{eq:bounbound}
	\haus^{N-2}\left((I\cup P)\cap B_1(p)\right)\le C(N,\alpha,\haus^N(B_1(p)))\, .
	\end{equation} 

Let now $(\hat X, \hat \dist, \haus^N)$ be the doubling of $(X,\dist,\haus^N)$ gluing along $\partial X$, see for instance \cite{ProfetaSturm18} for the precise definition. We claim that it is noncollapsed Ricci limit (of a sequence of smooth $N$-dimensional Riemannian manifolds with no boundary and Ricci curvature bounded from below by $-N$).
\smallskip

Before proving the claim let us see how it implies \eqref{eq:bounbound}. In order to do so we let $\iota:X\to\hat{X}$ be one of the canonical immersions of the starting space into its double. Since $\iota$ is isometric, in order to prove \eqref{eq:bounbound} it suffices to prove that 
\begin{equation}
	\haus^{N-2}\left(\iota(I\cup P)\cap B_1(\iota(p))\right)\le C(N,\alpha,\haus^N(B_1(\iota(p))))\, .
\end{equation}
It is easy to check that, for any $\hat{x}\in\iota(I\cup P)$ it holds
	\[
\Theta_{\hat X} (x) \le 1 - 2\eta(N,\alpha)\, .
\]
Hence, there exists $\eps=\eps(N,\alpha)$ such that
\begin{equation}
\iota(I\cup P)\subset \mathcal{S}^{N-2}_{\eps}(\hat{X})\, .
\end{equation}
Applying \cite[Theorem 1.9]{CheegerJiangNaber18} to $(\hat{X},\hat{\dist})$ we infer that
\begin{equation}
\haus^{N-2}(\iota(I\cup P)\cap B_{1}(\iota(p)))\le C(N,\alpha,\haus^N(B_1(p)))\, ,
\end{equation}
which yields the sought estimate.
\smallskip

Let us pass to the verification of the claim. In order to do so we let $(X_n,\dist_n)$ be the sequence of smooth manifolds with boundary converging to $(X,\dist)$. We then let $(\hat{X}_n,\hat{\dist}_n)$ be the doubling along the boundary of $(X_n,\dist_n)$. From \cite{Schlicting12} (see also the previous \cite{Kosolvsky02}) we deduce that $(\hat{X}_n,\hat{\dist}_n)$ is a noncollapsed limit of a sequence of smooth Riemannian manifolds with no boundary and Ricci curvature bounded below by $-N$, for any $n\in\setN$. Then it is easy to check that $(\hat{X}_n,\hat{\dist}_n)$ converge in the pGH topology to $(\hat{X},\hat{\dist})$ without collapse. To conclude we observe that a diagonal argument yields that $(\hat{X},\hat{\dist})$ is a noncollapsed Ricci limit space.

\end{proof}

The remaining part of this section is devoted to the proof of \autoref{thm:boundaryballshomeo}. Let us first introduce a regularization result for the distance to the boundary on $\delta$-boundary balls. 

\begin{lemma}\label{lemma:distanceapproximation}
	Let $N\in \setN$, $N\ge 1$ be fixed. For any $\eps>0$, if $\delta<\delta(N,\eps)$ then the following holds. Given an $\RCD(-\delta(N-1),N)$ m.m.s. $(X,\dist,\haus^N)$ and $p\in X$ such that $B_8(p)$ is a $\delta$-boundary ball, there exists a $(1+\eps)$-Lipschitz function $b: B_4(p) \to \setR_+$ satisfying:
	\begin{itemize}
		\item[(i)] $|b(x) - \dist_{\partial X}(x)| \le \eps \dist_{\partial X}(x)$ for any $x\in B_2(p)$;
		\item[(ii)] $b\in D_{\loc}(\Delta, B_4(p)\setminus \partial X )$ and 
		\begin{equation}\label{eq:almostsplit}
			\fint_{B_r(x)} |\nabla b - \nabla \dist_{\partial X}|^2 \di \haus^N + r^2 \fint_{B_r(x)} |\Delta b|^2 \di \haus^N \le \eps\, ,
		\end{equation}
		for any $x\in B_2(p)\setminus\partial X$, and $r=\dist_{\partial X}(x)/3$.
	\end{itemize}
\end{lemma}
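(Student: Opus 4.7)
The idea is to smooth $\dist_{\partial X}$ at the natural Whitney scale (comparable to $\dist_{\partial X}$ itself) by harmonic replacement, using the improved neck structure to guarantee that interior balls at Whitney scale are almost Euclidean, together with the fact that on such balls $\dist_{\partial X}$ is close to a harmonic coordinate function.

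First, fix an auxiliary small parameter $\eps' = \eps'(\eps,N) \ll \eps$ to be chosen. Assuming $\delta \le \delta(\eps',N)$, invoke \autoref{thm:improvedneckregion}(iii) so that for every interior point $x \in B_4(p)\setminus\partial X$ and every $r \in (0,\dist_{\partial X}(x)/2)$ the ball $B_r(x)$ is $(N,\eps')$-symmetric. By \autoref{splitting vs isometry}(i) applied on each such ball one obtains an $\eps'$-splitting map $u^x : B_r(x) \to \setR^N$. Arguing as in the proof of \autoref{lemma:improvedepsio}, using the $W^{1,2}$-convergence of $\dist_{\partial X_n}$ to $\dist_{\partial \setR^N_+}$ supplied by \autoref{lemma:distancetodistance} and the fact that $\dist_{\partial X}$ is close in $L^\infty$ to the orthogonal coordinate on $\delta$-boundary balls (which descends to interior balls via the ambient $\eps$-isometry), we can orient $u^x$ so that one of its components $h^x$ is harmonic, $(1+C(N)\eps')$-Lipschitz, and satisfies $\fint_{B_r(x)}|\nabla h^x - \nabla \dist_{\partial X}|^2\,\di\haus^N \le \eps'$; combined with Poincar\'e and the mean-value property for harmonic functions this also gives $\sup_{B_{r/2}(x)} |h^x - \dist_{\partial X}| \le C(N)\sqrt{\eps'}\, r$.

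Next, fix a Whitney-type covering $\{B_{r_i/50}(x_i)\}_{i}$ of $B_4(p)\setminus \partial X$ with $r_i = \dist_{\partial X}(x_i)$ such that the balls $B_{r_i/200}(x_i)$ are pairwise disjoint, and a subordinate Lipschitz partition of unity $\{\phi_i\}$ with $\supp \phi_i \subset B_{r_i/20}(x_i)$ and $|\nabla \phi_i| + r_i |\Delta \phi_i| \le C(N)/r_i$ (the latter exists by \autoref{lem:good_cut-off} and doubling). Letting $h_i := h^{x_i}$ be the harmonic approximation constructed above on $B_{r_i}(x_i)$, define
\[
b := \sum_i \phi_i h_i \quad \text{on } B_4(p)\setminus\partial X, \qquad b := 0 \quad \text{on } B_4(p)\cap\partial X.
\]
Then (i) follows from $|h_i - \dist_{\partial X}| \le C(N)\sqrt{\eps'}\, r_i \le C(N)\sqrt{\eps'}\, \dist_{\partial X}$ on $\supp \phi_i$, which in particular gives $b \ge 0$ and continuity up to $\partial X$. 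For the Lipschitz bound, use $\sum_i \nabla \phi_i \equiv 0$ to write
\[
\nabla b = \sum_i \phi_i \nabla h_i + \sum_i (h_i - \dist_{\partial X})\nabla \phi_i,
\]
whence $|\nabla b| \le 1 + C(N)\sqrt{\eps'} \le 1+\eps$ after choosing $\eps'$ small. The membership $b \in D_{\loc}(\Delta, B_4(p)\setminus\partial X)$ is automatic since each summand $\phi_i h_i$ is in the local Laplacian domain. For (ii), harmonicity of $h_i$ and the identities $\sum_i \nabla\phi_i \equiv 0$, $\sum_i \Delta\phi_i\equiv 0$ give
\[
\Delta b = 2\sum_i \nabla\phi_i\cdot\nabla(h_i-\dist_{\partial X}) + \sum_i (h_i-\dist_{\partial X})\Delta\phi_i,
\]
and at Whitney scale $r = \dist_{\partial X}(x)/3$ only a bounded number $C(N)$ of terms contribute on $B_r(x)$, each controlled in $L^2$ by $\eps'$ after multiplication by $r$; this delivers the combined bound in (ii) once $\eps' = \eps'(\eps,N)$ is chosen small enough.

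The main technical obstacle is the \emph{consistent orientation} of the splitting maps $u^{x_i}$ across overlapping Whitney balls: each component $h_i$ must approximate $\dist_{\partial X}$ with the same sign, otherwise the gluing produces large cancellation errors rather than the required smallness. This consistency is precisely what the improved neck structure buys us: on the ambient $\delta$-boundary ball $B_4(p)$, $\dist_{\partial X}$ is globally close to the orthogonal coordinate under a fixed $\eps$-isometry by \autoref{lemma:distancetodistance}, hence once each $h_i$ is chosen to have $\fint |\nabla h_i - \nabla \dist_{\partial X}|^2$ small the orientation is automatically fixed, and on any pairwise overlap $B_{r_i/20}(x_i)\cap B_{r_j/20}(x_j)$ both $h_i,h_j$ are $W^{1,2}$-close to $\dist_{\partial X}$ and hence to each other, enabling the partition-of-unity cancellation.
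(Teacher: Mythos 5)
Your overall scaffold coincides with the paper's: a Whitney-type covering of $B_4(p)\setminus\partial X$ with a partition of unity adapted to $\dist_{\partial X}$, local harmonic approximants at each Whitney ball, and gluing via $b=\sum_i\phi_i h_i$; your estimates on $\nabla b$ and $\Delta b$ using $\sum_i\nabla\phi_i=0$ and $\sum_i\Delta\phi_i=0$ are the same ones the paper runs. The genuine difference is in how the local approximant is produced. The paper defines $b_{x,s}$ as the harmonic replacement of $\dist_{\partial X}$ itself on each Whitney ball (solve $\Delta b_{x,s}=0$ in $B_s(x)$ with $b_{x,s}=\dist_{\partial X}$ on $\partial B_s(x)$), and then deduces (2)--(3) by a compactness argument based on the continuity of harmonic replacement from \cite{AmbrosioHonda18} together with \autoref{lemma:distancetodistance}. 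You instead produce an $\eps'$-splitting map on each interior Whitney ball (via \autoref{thm:improvedneckregion}(iii) and \autoref{splitting vs isometry}(i)) and pick a rotated component that is $W^{1,2}$-close to $\dist_{\partial X}$. Both routes ultimately rest on the same compactness input ($\dist_{\partial X_n}\to\dist_{\partial\setR^N_+}$ uniformly and in $W^{1,2}$), but the paper's choice is cleaner: because the boundary data is $\dist_{\partial X}$ itself, the harmonic replacement is \emph{canonically} anchored to $\dist_{\partial X}$, so the ``orientation'' issue you flag as the main obstacle simply never arises — there is no rotation to choose, no constant to normalize, and no consistency to check across overlaps. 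Your orientation argument is, I believe, salvageable (in the blown-up limit $\dist_{\partial X}$ and the splitting map are both affine restrictions of global isometries of $\setR^N$, so a compatible rotation exists and is approximately unique), but it costs you an extra step and, as you note, this step has to be justified at every Whitney ball. Two small points: your invocation of ``Poincar\'e plus mean value'' for the $L^\infty$ closeness with the stated rate $C\sqrt{\eps'}\,r$ is not quite right — $h_i-\dist_{\partial X}$ is not harmonic, and Poincar\'e plus the Lipschitz bound gives a degraded exponent in $\eps'$; the clean way is to prove the $L^\infty$ bound directly by compactness, as the paper does for its claim (2). Also, \autoref{lemma:distancetodistance} as stated is for boundary-centered balls; to use it at an interior Whitney scale you should, as the paper does in its Step 2, pass through a nearby boundary point $q$ and apply \autoref{thm:improvedneckregion}(i) to see that $B_{6s}(q)$ is a $\delta'$-boundary ball, then scale. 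Modulo these tightenings, your argument goes through.
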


\begin{proof} 
	We divide the proof into four steps. The first one aims at building a  partition of unity suitable for the geometry of our problem. In the second step we build harmonic approximations of $\dist_{\partial X}$ on balls with radius proportional to their distance from the boundary and prove good estimates as in the theory of Ricci limits (cf. \cite{CheegerColding96}). The sought function is obtained averaging the harmonic approximations of the distance obtained in Step 2 by the partition of unity built in Step 1. The third step is devoted to the proof of (i) while in the last step we obtain (ii).
	
	\smallskip
	
	{\bf Step 1.} There exist a family of functions $\set{\phi_k: B_8(p)\to \setR_+}_{k\in \setN}$ and a family of balls $\set{B_{r_k}(x_k)}_{k\in \setN} $ satisfying the following conditions:
	\begin{itemize}
		\item[(a)] $r_k:= \dist_{\partial X}(x_k)/8$, $B_4(p)\setminus \partial X \subset \bigcup_k B_{\frac{2}{3} r_k}(x_k)$;
		\item[(b)] if $B_{\frac{3}{2}r_{k_1}}(x_{k_1})\cap \ldots \cap B_{\frac{3}{2}r_{k_m}}(x_{k_m})\neq \emptyset$ then $m\le C(N)$ and $r_{k_i}\le C(N) r_{k_j}$ for any $i,j=1,\ldots,m$;
		\item[(c)] $\phi_k\in \Lip(X)\cap D(\Delta)$,  $\supp \phi_k \subset B_{r_k}(x_k)$ and
		\[
		\phi_k + r_k |\nabla \phi_k| + r_k^2 |\Delta \phi_k| \le C(N)\, ;
		\] 
		\item[(d)] $\sum_k \phi_k = 1 $ on $B_4(p)\setminus\partial X$.
	\end{itemize}
	Let us briefly explain how to build a family of balls satisfying (a) and (b). For any $\alpha\in \setN$ we cover $B_4(p)\cap \set{2^{-\alpha} \le \dist_{\partial X} \le 2^{-\alpha + 2}}$ using balls $\set{B_{2^{-\alpha-1}}(x_{\alpha,i}):\, i=1,\ldots ,m_{\alpha}}$ with $x_{\alpha,i}\in \set{2^{-\alpha} \le \dist_{\partial X} \le 2^{-\alpha + 2}}$ such that $\set{B_{2^{-\alpha - 3}}(x_{\alpha,i})\, i=1,\ldots ,m_{\alpha}}$ is a disjoint family. The verification of the fact that $\set{B_{2^{-\alpha-1}}(x_{\alpha,i}):\, \alpha=1,\ldots,m_\alpha,\, i\in \setN}$ satisfies (a) and (b) follows from the following simply verified observations: 
	\begin{itemize}
		\item $m_{\alpha} \le C(N)$ for any $\alpha\in\setN$;
		\item if $B_{\frac{3}{2}\cdot 2^{-\alpha-1}}(x_{\alpha,i})\cap B_{\frac{3}{2}\cdot 2^{-\beta-1}}(x_{\beta,j})\neq \emptyset$ then $|\alpha - \beta| \le 2$.
	\end{itemize}
	
	We build now the partition of unity $\set{\phi_k}$ satisfying (c) and (d) following a standard procedure. For any $k\in \setN$ we use \autoref{lem:good_cut-off} to get a nonnegative function $\eta_k$ satisfying $\eta_k=1$ on $B_{\frac{2}{3}r_k}(x_k)$ and $\eta_k=0$ on $X\setminus B_{ r_k}(x_k)$ along with the bound
	\[
	\eta_k + r_k |\nabla \eta_k| + r_k^2 |\Delta \eta_k| \le C(N)\, .
	\]
	Then we set 
	\[
	\phi_k:= \frac{\eta_k}{\sum_i \eta_i}\, .
	\]
	The verification of (c) and (d) is straightforward and builds upon the observation that $1\le \sum_i \eta_i \le C(N)$ on $B_4(p)\setminus\partial X$.
	
	\medskip
	
    {\bf Step 2.} If $\delta<\delta(N,\eps)$, $x\in B_4(p)$, $s=\dist_{\partial X}(x)/5$ then there exists a unique solution $b_{x,s}$ to the Dirichlet boundary value problem\footnote{The Dirichlet boundary condition below is understood as $b_{x,s}-\dist_{\partial X}\in H^{1,2}_0(B_s(x))$.}
    \begin{equation}\label{eq:poissonproblem}
    	\begin{cases}
    	\Delta b_{x,s} =0 & \text{on $B_s(x)$}\\
    	b_{x,s}=\dist_{\partial X} & \text{on $\partial B_s(x)$}
    	\end{cases}
    \end{equation}
	which satisfies moreover the estimates
	\begin{itemize}
		\item[(1)] $b_{x,s}>0$, $|\nabla b_{x,s}|\le C(N)$ and $|\nabla b_{x,s}|\le 1+\eps$ on $B_{s/2}(x)$;
		\item[(2)] $|b_{x,s} - \dist_{\partial X}| \le \eps s$ on $B_s(x)$;
		\item[(3)] $\fint_{B_s(x)} |\nabla b_{x,s} - \nabla \dist_{\partial X}|^2 \di \haus^N \le \eps$.
	\end{itemize}
    Existence and uniqueness of solutions to \eqref{eq:poissonproblem} follow from classical functional analytic arguments (cf. \cite[(4.5)]{Cheeger99} and \cite[(4.11)]{AmbrosioHonda18}) since $X\setminus B_{(1+\eps)s}(x)\neq\emptyset$. The positivity of $b_{x,s}$ in (1) is a consequence of the maximum principle, while the gradient bounds follow from \cite{Jiang14} (for the non sharp one) and \autoref{rm:sharpgradientbound}, for the sharp one given (iii).
    
    In order to verify (2) and (3) let us consider a point $q\in  B_4(p)\cap \partial X$ such that $\dist(x,q) \le 5s$ and notice that $B_{6s}(q)$ is a $\delta'$-boundary ball for $\delta\le \delta(N,\delta')$, thanks to \autoref{thm:improvedneckregion} (i). Since $B_s(x) \subset B_{6s}(q)$ we can scale the space of a factor $3/2 s$ and verify (2) and (3) in the special case $\dist_{\partial X}(x)/5 =s=2/3$.
    In order to do so we rely on the continuity of the harmonic replacement (see \cite{AmbrosioHonda18}) arguing by contradiction.
    
    	First we observe that $1/n$-boundary balls $B_2(q_n)$ converge to $B_2^{\setR^N_+}(0)$ as $n\to\infty$. Then we recall that \autoref{lemma:distancetodistance} yields uniform and $W^{1,2}$ convergence of the distance functions from the boundaries along the converging sequence, and on any converging sequence of balls $B_{2/3}(x_n)$.\footnote{This stronger statement can be checked arguing as in the case of balls centered at boundary points, given the uniform convergence of the distance functions.} To conclude we observe that on the half space the distance from the boundary is harmonic away from the boundary and local spectral convergence holds for any ball far away from the boundary. Therefore the harmonic replacements of the distance from the boundary verify:
    \begin{equation}
    \norm{b_{x_n,2/3}-\dist_{\partial X_n}}_{W^{1,2}(B_{2/3}(x_n))}\to 0\;\;\;\text{and}\;\;\; \norm{b_{x_n,2/3}-\dist_{\partial X_n}}_{L^\infty(B_{2/3}(x_n))}\to 0\, ,
    \end{equation}	
    as $n\to\infty$, yielding the sought estimates (2) and (3).
\smallskip

{\bf Step 3.} Let $\phi_k$ and $B_{r_k}(x_k)$ be as in Step 1. We set $b_k:= b_{x_k,2r_k}$, where $b_{x_k,2r_k}$ is obtained by Step 2, and we define
\[
b := \sum_k \phi_k b_k\, .
\]
Let us show that
$|b(x) - \dist_{\partial X}(x)| \le C(N) \eps \dist_{\partial X}(x)$ for any $x\in B_2(p)$.

First let us consider $x\in B_2(p)\setminus\partial X$. Using (1), (2), (b) and (d) we get
\begin{align*}
	|b(x) - \dist_{\partial X}(x)| & \le \sum_{k} \phi_k(x)|b_k(x) - \dist_{\partial X}(x)|
	\\& \le 2\eps \sum_{\{k:\, \phi_k(x)\neq 0\}} \phi_k(x) r_k
	\\& \le C(N) \eps \dist_{\partial X}(x)\, .
\end{align*}
Then we can estimate 
\begin{align*}
\abs{\nabla b}(x)\le& \sum_{k}\abs{\nabla\phi_k}(x)|b_k - \dist_{\partial X}|(x)+\sum_{k}\phi_k(x)\abs{\nabla b_k}(x)\\
\le& \eps C(N) \sum_{\{k:\, \phi_k(x)\neq 0\}}\abs{\nabla \phi_k}(x)\dist_{\partial X}(x)+ 1+\eps\\
\le& 1+C(N)\eps\, ,
\end{align*}
for $\haus^N$-a.e. $x\in B_4(p)$. Above we exploited the very definition of $b$, together with (a)--(d) and (1), (2).\\
This gradient estimate, together with the previous one, allows to infer that $b$ is Lipschitz once we set $b=0$ on $\partial X$.

\smallskip

{\bf Step 4.} We now verify that $b\in D_{\loc}(\Delta, B_4(p)\setminus \partial X )$ and 
\begin{equation}
\fint_{B_r(x)} |\nabla b - \nabla \dist_{\partial X}|^2 \di \haus + r^2 \fint_{B_r(x)} |\Delta b|^2 \di \haus^N \le \eps
\quad \forall\, x\in B_2(p),\, r=\dist_{\partial X}(x)/3\, .
\end{equation}

For $\haus^N$-a.e. $x\in B_2(p)$ one has
\begin{align}
\nonumber	|\nabla (b - \dist_{\partial X})|(x) &\le \sum_{\{k:\, \phi_k(x)\neq 0\}} |\nabla (\phi_k(b_k-\dist_{\partial X}))|(x) \\
	&\le C(N)\eps + \sum_{\{k:\, \phi_k(x)\neq 0\}} | \nabla b_k - \nabla \dist_{\partial X}|(x)\label{zzzz3}\, ,
\end{align}
where we have used (b), (c) and (2).

Let us now observe that on $B_4(p)\setminus\partial X$ it holds
\begin{equation}\label{eq:exprlapl}
\Delta b= \sum_k \Delta \phi_k b_k + 2\sum_k\nabla \phi_k\cdot \nabla b_k\, .
\end{equation}
The first sum in \eqref{eq:exprlapl} can be easily bounded by using (b), (c), (d) and (2)
\begin{equation}\label{zzzz4}
\begin{split}
	\abs{\sum_k \Delta \phi_k(x) b_k(x)} & = \abs{\sum_k \Delta \phi_k(x) (b_k(x)-\dist_{\partial X}(x))}
	\\& \le C(N)\sum_{\{k:\, \phi_k(x)\neq 0\}} r_k^{-2} \eps r_k
\le C(N) \eps \dist_{\partial X}^{-1}(x)\, .
\end{split}
\end{equation}
The estimate of the second sum in \eqref{eq:exprlapl} uses (b) and (d):
\begin{equation}\label{zzzz5}
    \begin{split}
	\abs{\sum_k \nabla \phi_k\cdot \nabla b_k(x) } &\le \sum_k |\nabla \phi_k|(x) |\nabla b_k - \nabla \dist_{\partial X}|(x)
	\\& \le C(N)\dist_{\partial X}^{-1}(x) \sum_{\{k:\, \phi_k(x)\neq 0\}} | \nabla b_k - \nabla \dist_{\partial X}|(x)\, .
	\end{split}
\end{equation}
By combining \eqref{zzzz3}, \eqref{zzzz4} and \eqref{zzzz5} we find out 
\begin{align*}
	\fint_{B_r(x)} |\nabla b - \nabla \dist_{\partial X}|^2& \di \haus^N + r^2  \fint_{B_r(x)} |\Delta b|^2 \di \haus^N
	\\& \le C(N) \eps + C(N) \fint_{B_s(x)}  \sum_{\{k: \phi_k(z)\neq 0\}} |\nabla b_k - \dist_{\partial X}|^2(z) \di \haus^{N}(z)\, ,
\end{align*}
which easily yields the sought conclusion as a consequence of (b) and (3).

\end{proof}

\begin{proof}[Proof of \autoref{thm:boundaryballshomeo}]
	For $\delta<\delta(N,\delta')$ we build a $\delta'$-splitting map $u: B_8(p) \to \setR^{N-1}$ with $u(p)=0$ and a function $b:B_8(p)\to \setR_+$ satisfying (i) and (ii) in \autoref{lemma:distanceapproximation} with $\delta'$ in place of $\eps$. We claim that $F := (u,b)$ verifies (i)--(iv).  
\smallskip	
	
	Let $x,y \in B_1(p)$ and set $r:=\dist(x,y)$. The inequality $|F(x) - F(y)|\le C(N)\dist(x,y)$ follows from the Lipschitz regularity of $u$ and $b$.
	
	Aiming at proving the inequality $|F(x) - F(y)| \ge (1-\eps) \dist(x,y)^{1+\eps}$
	we are going to argue as in the proof of \cite[Theorem 7.10]{CheegerJiangNaber18} (see also the proof of \eqref{eq:holderest}), relying on the transformation theorem. Since in this case the target is the half-space and not the Euclidean space, we need to study separately the two cases $r\le \dist_{\partial X}(x)/3$ and $r > \dist_{\partial X}(x)/3$.
	\smallskip
	
	Assume first $r\le \dist_{\partial X}(x)/3$. Let $q\in \partial X\cap B_1(p)$ such that $\dist(x,q)= \dist_{\partial X}(x)$. For $\delta<\delta(N,\delta')$ the ball $B_s(q)$ is a $\delta'$-boundary ball for any $s\in (0,8)$, by \autoref{thm:improvedneckregion} (i). The transformation theorem \autoref{prop:transformation} (see also the matrix growth estimate in \autoref{cor:growth}) applied to $u: B_{2\dist_{\partial X}(x)}(q) \to \setR^{N-1}$ (taking into account the fact that $u:B_2(q)\to\setR^{N-1}$ is a $\delta'$-splitting map) implies the existence of a matrix $T_x$ such that
	\begin{itemize}
		\item $T_x \circ u : B_{2\dist_{\partial X}(x)}(q) \to \setR^{N-1}$ is an $\eps'$-splitting map;
		\item $|T_x| \le (2\dist_{\partial X}(x))^{-\eps'}\le (6r)^{-\eps'}$,
	\end{itemize}
	whenever $\delta'<\delta'(N,\eps')$. Assume $\delta'\le \eps'$. Setting $v:=(T_x\circ u, b)$, thanks to \autoref{lemma:improvedepsio} and \eqref{eq:almostsplit}, we have
	\begin{equation}
		\sum_{\alpha, \beta=1}^N \fint_{B_s(x)} |\nabla v_\alpha \cdot \nabla v_\beta - \delta_{\alpha,\beta}| \di \haus^N + \sum_{\alpha=1}^N s^2 \fint_{B_s(x)} |\Delta v_{\alpha}|^2 \di \haus^N \le \eps''\, ,
	\end{equation}
	for $s:= \dist_{\partial X}(x)/3$ and $\eps'\le \eps'(N,\eps'')$.\\
	Applying again \autoref{prop:transformation} to $v: B_s(x)\to \setR^{N}$ taking into account that $B_t(x)$ is a $(N,\delta')$-symmetric ball for any $r\le t<3/2s$ (see \autoref{thm:improvedneckregion} (iii)) when $\delta \le \delta(N,\delta')$, we get the existence of a matrix $A_x$ such that
	\begin{itemize}
		\item $w:=A_x\circ v : B_{r}(x) \to \setR^N$ is a $\eps''$-almost splitting map;
		\item $|A_x|\le r^{-\eps''}$,
	\end{itemize}
	for $\eps' \le \eps'(N,\eps'')$. Hence, if $\eps''\le \eps''(N,\eps''')$, $w: B_r(x)\to \setR^N$ is a $\eps'''$-GH isometry thanks to \autoref{rm:epssplitimplepsisoimpr} and \autoref{rmk:almostdeltasplitting}, yielding that
	\[
	||w(x) - w(y)|-\dist(x,y)|\le \eps''' r = \eps''' \dist(x,y)\, .
	\]
	This implies in turn
	\[
	|F(x) - F(y)| \ge (1-\eps) \dist(x,y)^{1+\eps}\, ,
	\]
	being $w := B\circ F$, where $B$ is a matrix satisfying $|B|\le (1+\eps)r^{-\eps}$ for $\eps',\eps'',\eps'''$ small enough.
	\medskip

	Let us now deal with the case $r > \dist_{\partial X}(x)/3$. Let $q\in \partial X\cap B_1(p)$ such that $\dist(x,q)= \dist_{\partial X}(x)$. Notice that $B_{8r}(q)$ is a $\delta'$-boundary ball for $\delta\le \delta(N,\delta')$ as a consequence of \autoref{thm:improvedneckregion} (i). We apply the transformation theorem \autoref{prop:transformation} to get a matrix $A_x$, such that $|A_x|\le (8r)^{-\eps'}$ and $A_x\circ u:B_{8r}(q)\to\setR^{N-1}$ is an $\eps'$-splitting map. Relying now on \autoref{lemma:improvedepsio} and on \autoref{lemma:distanceapproximation} (i) we infer that, when $\delta\le\delta(N,\eps'')$, $v:=(A\circ u, b):B_{4r}(q)\to \setR_+^N$ is an $\eps''$-GH isometry. Notice that $y\in B_{4r}(q)$, hence
	\[
	||v(x) - v(y)|-\dist(x,y)|\le 4r \eps'' = 4\eps'' \dist(x,y)\, .
	\]
    Arguing as above we deduce $|F(x) - F(y)|\ge (1-\eps)\dist(x,y)^{1+\eps}$, for $\eps''$ small enough.
    
    \medskip
    
    The assertion (ii) follows from the fact that $\set{b=0}\cap B_4(p)=\partial X\cap B_4(p)$ and \autoref{thm:goodpar} (iv). 
    \smallskip
        	
    	Observe that $F:B_1(p)\setminus\partial X\to\setR^{N}_+\setminus \{x_N=0\}$ is an open mapping by invariance of the domain. Indeed, under our assumptions $B_1(p)\setminus\partial X$ is a homeomorphic to a topological manifold thanks to \autoref{thm:improvedneckregion} (iii) and Reifenberg's theorem. Being $F$ continuous and injective, to prove that $F:B_1(p)\to\setR^N_+$ is an homeomorphism with its image it is sufficient to prove (iv). If this is the case, to prove that the image of any open set in $B_1(p)$ is open in $\setR^N_+$ we just need to observe that on any boundary ball, up to an invertible matrix, the restriction of $F$ verifies the same properties that $F$ verifies on the ball of radius $1$.
    	\smallskip
    	
    	Let us therefore move to the verification of (iv). In order to do this it is sufficient to prove that 
    	\begin{equation}
    	F(\overline B_1(p)	\cap \set{\dist_{\partial X}\ge \delta})\supset B_{1-2\eps}(0)\cap \set{x_N > \delta(1 + \eps)}\, ,
    	\end{equation}
    	for any $\delta>0$. This claim can be verified arguing as we did in the proof of \autoref{thm:goodpar} (iv), relying once more on the invariance of the domain and on the fact that $F(x)\in\{x_N=0\}$ if and only if $x\in\partial X$.
 \end{proof}

\begin{remark}
Arguing as in the proof of \cite[Theorem 7.10]{CheegerJiangNaber18} (see also the proof of \autoref{thm:boundaryballshomeo} above) and relying on the transformation \autoref{prop:transformation} it is possible to obtain the following regularity result, which is worth pointing out.

If $(X,\dist,\haus^N)$ is a noncollapsed $\RCD(-(N-1),N)$ space and $x\in X$ is a regular point, then for any $0<\alpha<1$ there exists an open neighbourhood of $x$ which is $C^{\alpha}$-homeomorphic to an open subset of $\setR^N$, and the homeomorphism can be chosen with harmonic coordinate maps.

This observation gives in particular a positive answer to a question raised in \cite[Question 2.1]{Petrunin03} about existence of homeomorphisms from a neighbourhood of a regular point of an Alexandrov space with curvature bounded from below to an open set in $\setR^N$ with harmonic coordinates.  Notice that the regularity of the harmonic map cannot be improved to biLipschitz, due to the presence of singular points where harmonic maps do degenerate, see \cite[Example 2.14]{CheegerNaber15}.
\end{remark}


\begin{thebibliography}{GMS13}

\bibitem[AB03]{AlexanderBishop}
\textsc{S. Alexander, R. Bishop:}
\textit{$\mathcal{F}K$-convex functions on metric spaces,} 
Manuscripta Math. 110 (2003), no. 1, 115–133.

\bibitem[A19]{Ambrosio19}
\textsc{L. Ambrosio:}
\textit{Calculus, heat flow and curvature-dimension bounds in metric measure spaces,} Proceedings of the International Congress of Mathematicians—Rio de Janeiro 2018. Vol. I. Plenary lectures, 301–340, World Sci. Publ., Hackensack, NJ, 2018. 

\bibitem[AB18]{AmbrosioBertrand18}
\textsc{L. Ambrosio, J. Bertrand:}
\textit{DC calculus,} 
Math. Z. 288 (2018), no. 3-4, 1037–1080. 

\bibitem[ABS19]{AmbrosioBrueSemola19}
\textsc{L. Ambrosio, E. Bru\'e, D. Semola:}
\textit{Rigidity of the 1-Bakry-\'Emery inequality and sets of finite perimeter in RCD spaces}.
Geom. Funct. Anal., {\bf 19} (2019), n.4, 949-1001
%

%
\bibitem[AGMR15]{AmbrosioGigliMondinoRajala15}
\textsc{L. Ambrosio, N. Gigli, A. Mondino, T. Rajala:}
\textit{Riemannian Ricci curvature lower bounds in metric measure spaces with $\sigma$-finite measure.}
Trans. Amer. Math. Soc., {\bf 367} (2015), 4661–4701. 
%

%
\bibitem[AGS14]{AmbrosioGigliSavare14}
\textsc{L. Ambrosio, N. Gigli, G. Savar\'e:}
\textit{ Metric measure spaces with Riemannian Ricci curvature bounded from below}.
Duke Math. J., {\bf 163} (2014), 1405--1490.   
%

%
\bibitem[AH17]{AmbrosioHonda}
\textsc{L. Ambrosio, S. Honda}:
\textit{New stability results for sequences of metric measure spaces with uniform Ricci bounds from below}.
Measure Theory in Non-Smooth Spaces, De Gruyter Open, Warsaw, (2017), 1--51.		
%

\bibitem[AH18]{AmbrosioHonda18}
\textsc{L. Ambrosio, S. Honda:}
\textit{Local spectral convergence in $\RCD^*(K,N)$ spaces,} 
Nonlinear Anal. 177 (2018), part A, 1–23.



\bibitem[AHT18]{AmbrosioHondaTewodrose}
\textsc{L. Ambrosio, S. Honda, D. Tewodrose:}
\textit{Short-time behavior of the heat kernel and Weyl's law on $\RCD^*(K,N)$-spaces}.
Ann. Global Anal. Geom., {\bf 53} (2018), 97--119.
%

%
\bibitem[AMS14]{AmbrosioMondinoSavare14}
\textsc{L. Ambrosio, A. Mondino, G. Savar\'e}:
\textit{On the Bakry-\'Emery condition, the gradient estimates and
the local-to-global property of $\RCD^*(K,N)$ metric measure spaces}.
J.  Geom. Anal., \textbf{26} (2014), 1-33.
	
		
\bibitem[AMS15]{AmbrosioMondinoSavare15}
\textsc{L. Ambrosio, A. Mondino, G. Savar\'e}:
\textit{Nonlinear diffusion equations and curvature conditions in metric measure spaces}.
Mem. Amer. Math. Soc. 262 (2019), no. 1270, v+121 pp.
			
\bibitem[AT14]{AmbrosioTrevisan14}
\textsc{L. Ambrosio, D. Trevisan:}
\textit{Well posedness of Lagrangian flows and continuity equations in metric measure spaces.}
Anal. PDE, {\bf 7} (2014), 1179--1234.
					
					
\bibitem[A90]{Anderson90}
\textsc{M. Anderson:}
\textit{Convergence and rigidity of manifolds under Ricci curvature bounds.} 
Invent. Math. {\bf 102} (1990), no. 2, 429–445. 					
			



\bibitem[ABS19]{AntonelliBrueSemola19}
\textsc{G. Antonelli, E. Bruè, D. Semola:}	
\textit{Volume bounds for the quantitative singular strata of noncollapsed $\RCD$ metric measure spaces,}
Anal. Geom. Metr. Spaces, {\bf 7} 2019, no. 1.

\bibitem[BS10]{BacherSturm10}
\textsc{K. Bacher, K.-T. Sturm:}
\textit{Localization and tensorization properties of the curvature-dimension condition for metric measure spaces,}
J. Funct. Anal. 259 (2010), no. 1, 28–56. 


\bibitem[B85]{Beer}
\textsc{G. Beer:}
\textit{On convergence of closed sets in a metric space and distance functions,}
Bull. Austral. Math. Soc. 31 (1985), no. 3, 421–432. 

					
\bibitem[BPS19]{BruePasqualettoSemola19}		
\textsc{E. Bruè, E. Pasqualetto, D. Semola:}	
\textit{Rectifiability of the reduced boundary for sets of finite perimeter over $\RCD(K,N)$ spaces,}
Preprint arXiv:1909.00381.
		
\bibitem[BPS20]{BruePasqualettoSemola20}		
\textsc{E. Bruè, E. Pasqualetto, D. Semola:}	
\textit{Rectifiability of $\RCD(K,N)$ spaces via $\delta$-splitting maps,}
to appear on Ann. Sci. Acc. Fenn., preprint arXiv:2001.07911.					
								
\bibitem[BS19]{BrueSemola18}
\textsc{E. Bru\'e, D. Semola:}
\textit{Constancy of the dimension for $RCD(K,N)$ spaces via regularity of Lagrangian flows.}
Comm. Pure Appl. Math., 73 (2020), no. 6, 1141–1204. .


\bibitem[BS18a]{BrueSemola18b}
\textsc{E. Bru\'e, D. Semola:}
\textit{Regularity of Lagrangian flows over $\RCD^*(K,N)$ spaces,}
J. Reine Angew. Math. 765 (2020), 171–203.

\bibitem[BCM19]{BuffaComiMiranda}
\textsc{V. Buffa, G. Comi, M. Miranda jr:}
\textit{On BV functions and essentially bounded divergence-measure fields in metric spaces,}
Preprint arXiv:1906.07432. 



\bibitem[BBI01]{BuragoBuragoIvanov01}
\textsc{D. Burago, Y. Burago, S. Ivanov:}
\textit{A course in metric geometry.} 
Graduate Studies in Mathematics, 33. American Mathematical Society, Providence, RI, 2001. xiv+415 pp. 


\bibitem[CM16]{CavallettiMilman16}
\textsc{F. Cavalletti, E. Milman:}
\textit{The Globalization theorem for the Curvature-Dimension condition,}
Preprint arXiv:1612.07623. 


\bibitem[CM18]{CavallettiMondino18}
\textsc{F. Cavalletti, A. Mondino:}
\textit{New formulas for the Laplacian of distance functions and applications,}
to appear on Analysis \& PDE, preprint arXiv:1803.09687. 
				
\bibitem[C99]{Cheeger99}
\textsc{J. Cheeger:}
\textit{Differentiability of Lipschitz functions on metric measure spaces.}
Geom. Funct. Anal., {\bf 9} (1999), 428–517.			
%
\bibitem[CC96]{CheegerColding96}
\textsc{J. Cheeger, T.-H. Colding:}
\textit{Lower bounds on Ricci curvature and the almost rigidity of warped products,}
Ann. of Math. (2), {\bf 144} (1996), 189--237.			
%
\bibitem[CC97]{CheegerColding97}
\textsc{J. Cheeger, T.-H. Colding:}
\textit{On the structure of spaces with Ricci curvature bounded below. I,}
J. Differential Geom., {\bf 46} (1997), 406--480.
%
					
\bibitem[CJN18]{CheegerJiangNaber18}
\textsc{J. Cheeger, W. Jiang, A. Naber:}
\textit{Rectifiability of singular sets in noncollapsed spaces with Ricci curvature bounded below,}
Preprint, arXiv:1805.07988 (2018).


\bibitem[CN13]{CheegerNaber13}
\textsc{J. Cheeger, A. Naber:}
\textit{Lower bounds on Ricci curvature and quantitative behavior of singular sets,}
Invent. Math. 191 (2013), no. 2, 321–339.


\bibitem[CN15]{CheegerNaber15}
\textsc{J. Cheeger, A. Naber:}
\textit{Regularity of Einstein manifolds and the codimension 4 conjecture,} 
Ann. of Math. (2) \textbf{182} (2015), no. 3, 1093–1165.


\bibitem[C97]{Colding97}
\textsc{T.-H. Colding:}
\textit{Ricci curvature and volume convergence,}
Ann. of Math. (2) \textbf{145} (1997), no. 3, 477–501. 



\bibitem[CoN12]{ColdingNaber13}
\textsc{T.-H. Colding, A. Naber:}
\textit{Sharp Hölder continuity of tangent cones for spaces with a lower Ricci curvature bound and applications,} 
Ann. of Math. (2) 176 (2012), no. 2, 1173–1229. 


\bibitem[CN13]{ColdingNaber13b}
\textsc{T.-H. Colding, A. Naber:}
\textit{Characterization of tangent cones of noncollapsed limits with lower Ricci bounds and applications,} 
Geom. Funct. Anal. \textbf{23} (2013), no. 1, 134–148. 


\bibitem[D20]{Deng20}
\textsc{Q. Deng:}
\textit{H\"older continuity of tangent cones on $\RCD(K,N)$ spaces and applications to non branching}
Preprint arXiv:2009.07956.

%
\bibitem[DPG16]{DePhilippisGigli16}
\textsc{G. De Philippis, N. Gigli:}
\textit{From volume cone to metric cone in the nonsmooth setting.}
Geom. Funct. Anal., {\bf 26} (2016), 1526--1587.			

			
\bibitem[DPG18]{DePhilippisGigli18}
\textsc{G. De Philippis, N. Gigli:}
\textit{ Non-collapsed spaces with Ricci curvature bounded from below}.
J. Éc. polytech. Math., {\bf 5} (2018), 613–650.			
%
\bibitem[DPMR17]{DePhlippisMarcheseRindler17}
\textsc{G. De Philippis, A. Marchese, F. Rindler:}
\textit{On a conjecture of Cheeger,} 
Measure theory in non-smooth spaces, 145–155,
Partial Differ. Equ. Meas. Theory, De Gruyter Open, Warsaw, 2017. 
%
%
\bibitem[EKS15]{ErbarKuwadaSturm15}
\textsc{M. Erbar, K. Kuwada, K.-T. Sturm:}
\textit{On the equivalence of the entropic curvature-dimension condition and Bochner’s inequality on metric measure spaces.}
Invent. Math., {\bf 201} (2015), 993--1071.		
%
%


%
\bibitem[G13]{Gigli13}
\textsc{N. Gigli:}
\textit{The splitting theorem in non-smooth context}. Preprint arXiv:1302.5555.
		
\bibitem[G14]{Gigli14}
\textsc{N. Gigli:}
\textit{An overview of the proof of the splitting theorem in spaces with non-negative Ricci curvature.}
Anal. Geom. Metr. Spaces, {\bf 2} (2014), 169–213.



%
\bibitem[G15]{Gigli15}
\textsc{N. Gigli:}
\textit{On the differential structure of metric measure spaces and applications.}
Mem. Amer. Math. Soc., {\bf 236} (2015), vi--91.
%
\bibitem[G18]{Gigli18}
\textsc{N. Gigli:}
\textit{Nonsmooth differential geometry: an approach tailored for spaces with Ricci curvature bounded from below.}
Mem. Amer. Math. Soc., {\bf 251} (2018), v--161.



\bibitem[GH15]{GigliHan15a}
\textsc{N. Gigli, B.-X. Han:}
\textit{The continuity equation on metric measure spaces},
Calc. Var. Partial Differential Equations, {\bf 53}, (2015), 149--177. 


%
\bibitem[GMS15]{GigliMondinoSavare15}	   
\textsc{N. Gigli, A. Mondino, G. Savar\'e:}
\textit{Convergence of pointed non-compact metric measure spaces and stability of Ricci curvature bounds and heat flows.}	   
Proc. Lond. Math. Soc. (3), {\bf 111} (2015), 1071–1129.
%
\bibitem[GP16a]{GigliPasqualetto16a}
\textsc{N. Gigli, E. Pasqualetto:}
\textit{Behaviour of the reference measure on $\RCD$ spaces under charts,} 
to appear on Communications in Analysis and Geometry, preprint arXiv:1607.05188.	   
%
%
	    

\bibitem[GRS16]{GigliRajalaSturm16}
\textsc{N. Gigli, T. Rajala, K.-T. Sturm:}
\textit{Optimal maps and exponentiation on finite-dimensional spaces with Ricci curvature bounded from below,}
J. Geom. Anal. 26 (2016), no. 4, 2914–2929. 

\bibitem[GT19]{GigliTamanini19}
\textsc{N. Gigli, L. Tamanini:}
\textit{Second order differentiation formula on $\RCD^*(K,N)$ spaces,}
To appear on J. Eur. Math. Soc. (JEMS), preprint arXiv:1802.02463.  	   	    

\bibitem[GV13]{GolubVanLoan13}
\textsc{G.-H. Golub, C.-F. Van Loan:}
\textit{Matrix computations,} 
Fourth edition. Johns Hopkins Studies in the Mathematical Sciences. Johns Hopkins University Press, Baltimore, MD, 2013. xiv+756 pp.

\bibitem[H17]{Han17}
\textsc{B.-X. Han:}
\textit{Measure rigidity of synthetic lower Ricci curvature bound on Riemannian manifolds,}
Adv. Math. 373 (2020), 107327. 

\bibitem[H19]{Honda19}
\textsc{S. Honda:}
\textit{New differential operator and non-collapsed $\RCD$ spaces,}
Preprint arXiv:1905.00123, to appear on Geometry \& Topology. 

\bibitem[JLZ14]{JangLiZhang}
\textsc{R. Jiang, H. Li, H. Zhang:}
\textit{Heat Kernel Bounds on Metric Measure Spaces and Some Applications.}
Potential Anal., {\bf 44} (2016), 601--627.		    

\bibitem[J14]{Jiang14}
\textsc{R. Jiang:}
\textit{Cheeger-harmonic functions in metric measure spaces revisited,}
J. Funct. Anal. 266 (2014), no. 3, 1373–1394. 



\bibitem[JN16]{JiangNaber16}
\textsc{W. Jiang, A. Naber:}
\textit{$L^2$ curvature bounds on manifolds with bounded Ricci curvature,}
Preprint (2016), arXiv:1605.05583.


\bibitem[KM19]{KapovitchMondino19}
\textsc{V. Kapovitch, A. Mondino:}
\textit{On the topology and the boundary of $N$-dimensional $\RCD(K,N)$ spaces,}
To appear on Geometry \& Topology, Preprint arXiv:1907.02614.



\bibitem[K15]{Ketterer15}
\textsc{C. Ketterer:}
\textit{Cones over metric measure spaces and the maximal diameter theorem.}
J. Math. Pures Appl. (9) 103 (2015), no. 5, 1228–1275. 


%
\bibitem[KM18]{KellMondino18}
\textsc{M. Kell, A. Mondino:}
\textit{On the volume measure of non-smooth spaces with Ricci curvature bounded below,}
Ann. Sc. Norm. Super. Pisa Cl. Sci. (5) {\bf 18} (2018), no. 2, 593–610. 
%


\bibitem[K94]{Kirchheim94}
\textsc{B. Kirchheim:}
\textit{Rectifiable metric spaces: local structure and regularity of the Hausdorff measure,}
Proc. Amer. Math. Soc. 121 (1994), no. 1, 113–123. 


\bibitem[KL16]{KitabeppuLakzian16}
\textsc{Y. Kitabeppu, S. Lakzian:}
\textit{Characterization of low dimensional $\RCD^*(K,N)$ spaces,}
Anal. Geom. Metr. Spaces \textbf{4} (2016), no. 1, 187–215. 

\bibitem[K19]{Kitabeppu19}
\textsc{Y. Kitabeppu:}
\textit{A sufficient condition to a regular set being of positive measure on $\RCD$ spaces,}
Potential Anal. \textbf{51} (2019), no. 2, 179–196.


\bibitem[K02]{Kosolvsky02}
\textsc{N.-N. Kosovski:}
\textit{Gluing of Riemannian manifolds of curvature $\ge k$.}
Algebra i Analiz 14 (2002), no. 3, 140–157; translation in 
St. Petersburg Math. J. {\bf 14} (2003), no. 3, 467–478.


\bibitem[LN19]{LiNaber19}
\textsc{N. Li, A. Naber:}
\textit{Quantitative Estimates on the Singular Sets of Alexandrov Spaces,}
Peking Math. J. 3 (2020), no. 2, 203–234.

\bibitem[LV09]{LottVillani}
\textsc{J. Lott, C. Villani:}
\textit{Ricci curvature for metric-measure spaces via optimal transport.}
Ann. of Math. (2), {\bf 169} (2009), 903--991.


\bibitem[MN19]{MondinoNaber19}
\textsc{A. Mondino, A. Naber:}
\textit{Structure theory of metric measure spaces with lower Ricci curvature bounds,}
J. Eur. Math. Soc. (JEMS) 21 (2019), no. 6, 1809–1854. 




%
\bibitem[Mi03]{MirandaJr}
\textsc{M. Miranda Jr.:}
\textit {Functions of bounded variation on ``good'' metric spaces.} 
J. Math. Pures Appl., {\bf 82} (2003), 975--1004.




\bibitem[NV17]{NaberValtorta17}
\textsc{A. Naber, D. Valtorta:}
\textit{Rectifiable-Reifenberg and the regularity of stationary and minimizing harmonic maps,}
Ann. of Math. (2) 185 (2017), no. 1, 131–227. 

\bibitem[NV19]{NaberValtorta19}
\textsc{A. Naber, D. Valtorta:}
\textit{Energy identity for stationary Yang Mills,}
Invent. Math. 216 (2019), no. 3, 847–925. 

\bibitem[NT08]{NguyenTorres}
\textsc{N.-C. Phuc, M. Torres:}
\textit{Characterizations of the existence and removable singularities of divergence-measure vector fields,}
Indiana Univ. Math. J. 57 (2008), no. 4, 1573–1597. 




\bibitem[P16]{Perales16}
\textsc{R. Perales,}
\textit{Volumes and limits of manifolds with Ricci curvature and mean curvature bounds,}
Differential Geom. Appl. 48 (2016), 23–37. 


\bibitem[P91]{Perelman91}
\textsc{G. Perelman:}
\textit{A.D. Alexandrov’s spaces with curvatures bounded from below, II,} available at http://www.math.psu.edu/petrunin/papers/alexandrov/perelmanASWCBFB2+.pdf, 1991.

\bibitem[P03]{Petrunin03}
\textsc{A. Petrunin:}
\textit{Harmonic functions on Alexandrov spaces and their applications,}
Electron. Res. Announc. Amer. Math. Soc. 9 (2003), 135–141. 

\bibitem[P11]{Petrunin}
\textsc{A. Petrunin:}
\textit{Alexandrov meets Lott-Villani-Sturm.} 
M\"unster J. Math., {\bf 4} (2011), 53--64.


\bibitem[PS18]{ProfetaSturm18}
\textsc{A. Profeta, K.-T. Sturm:}
\textit{Heat flow with Dirichlet boundary conditions via optimal transport and gluing of metric measure spaces,}
Calc. Var. Partial Differential Equations {\bf 59} (2020), no. 4, 117, 34 pp. 



\bibitem[R60]{Reifenberg}
\textsc{E. R. Reifenberg:}
\textit{Solution of the Plateau Problem for m-dimensional surfaces of varying topological type,} 
Acta Math. 104 (1960), 1–92.


\bibitem[S14]{Savare14}
\textsc{G. Savar\'e:}
\textit{Self-improvement of the Bakry-\'Emery condition and Wasserstein contraction of the heat flow in $\RCD(K,\infty)$ metric measure spaces,} 
Discrete Contin. Dyn. Syst. 34 (2014), no. 4, 1641–1661. 


\bibitem[S12]{Schlicting12}
\textsc{A. Schlichting:} 
\textit{Gluing Riemannian manifolds with curvature operators at least k,} 
ArXiv e-prints, available at arXiv:1210.2957.


\bibitem[S19]{Schultz19}
\textsc{T. Schultz:}
\textit{On one-dimensionality of metric measure spaces,}
Proc. Amer. Math. Soc. {\bf 149} (2021), 383-396 



%
\bibitem[S06a]{Sturm06a}
\textsc{K.-T. Sturm:}
\textit{On the geometry of metric measure spaces I.}
Acta Math., {\bf 196} (2006), 65--131.
%
\bibitem[S06b]{Sturm06b}
\textsc{K.-T. Sturm:}
\textit{On the geometry of metric measure spaces II.}
Acta Math., {\bf 196} (2006), 133--177.
%

\bibitem[V09]{Villani09}
\textsc{C. Villani:}
\textit{Optimal transport. Old and New.}
Grundlehren der Mathematischen Wissenschaften, {\bf 338}. Springer-Verlag Berlin, 2009.
%
\bibitem[VR08]{VonRenesse08}
\textsc{M.-K. Von Renesse:}
\textit{On local Poincaré via transportation.}
Math. Z., {\bf 259} (2008), 21--31.	



\bibitem[ZZ10]{ZhangZhu10}
\textsc{H.-C. Zhang, X.-P. Zhu:}
\textit{Ricci curvature on Alexandrov spaces and rigidity theorems,}
Comm. Anal. Geom. 18 (2010), no. 3, 503–553. 

			
\end{thebibliography}
\end{document}